\documentclass[reqno]{amsart}

\usepackage{amscd,amssymb,amsxtra,amsmath}
\usepackage[pdftex]{hyperref}

\input xy
\xyoption{all}

\usepackage[v2,cmtip]{xy}

\DeclareMathAlphabet{\scr}{OMS}{rsfs}{m}{n}

\DeclareMathAlphabet{\eus}{U}{eus}{m}{n}

\newcommand{\mc}[1]{\mathcal{#1}}

\newcommand{\tens}[1]{\boxtimes_{\mc{#1}}}
\newcommand{\score}[0]{\underline{\quad}}
\newcommand{\bitens}[3]{\mc{#1}\tens{#2}\mc{#3}}
\newcommand{\Buni}[2]{B_{\mc{#1}, \mc{#2}}}

\newcommand{\Efnn}[3]{\underline{Fun}_{#2}({#1}, {#3})}

\newcommand{\Cent}[2]{Z_{#2}({#1})}
\newcommand{\cent}[2]{Z_{\mc{#2}}(\mc{#1})}
%for bold arrows with directional #1:
\newcommand{\arb}[1]{\ar@<-.15ex>[#1]\ar@<.15ex>[#1]\ar@<-.075ex>[#1]\ar@<.075ex>[#1]\ar[#1]}
\newcommand{\arbdot}[1]{\ar@<-.15ex>@{:}[#1]\ar@<.15ex>@{:}[#1]\ar@<-.075ex>@{:>}[#1]\ar@<.075ex>@{:>}[#1]\ar@{:>}[#1]}
%broken arrow {\ar@<-.15ex>[#1]|{\,\,}\ar@<.15ex>[#1]|{\,\,}\ar@<-.075ex>[#1]|{\,\,}\ar@<.075ex>[#1]|{\,\,}\ar[#1]|{\,\,}}
\newcommand{\lb}[0]{\langle}
\newcommand{\rb}[0]{\rangle}
\newcommand{\inthom}[3]{\underline{\textrm{Hom}}_{#2}({#1}, {#3})}
\newcommand{\Hom}[0]{\textrm{Hom}}

\newcommand{\End}[0]{\textrm{End}}
\newcommand{\Rep}[0]{\textrm{Rep}}
\newcommand{\res}[0]{\textrm{res}}
\newcommand{\Forg}[0]{\textrm{Forg}}

\newcommand{\unit}[0]{\textbf{1}}
\newcommand{\Mod}[0]{\textrm{Mod}}

\newtheorem{thm}{Theorem}[section]
\newtheorem{lem}[thm]{Lemma}
\newtheorem{prop}[thm]{Proposition}
\newtheorem{cor}[thm]{Corollary}
\theoremstyle{definition}\newtheorem{defn}[thm]{Definition}
\newtheorem{ex}[thm]{Example}

\newtheorem{remark}[thm]{Remark}

\newtheorem{note}[thm]{Note}
\newtheorem{polyt}[thm]{Polytope}

\begin{document}

\title{monoidal $2$-structure of Bimodule categories}
\author{Justin Greenough}

\email{jrg8@cisunix.unh.edu}
\address{Department of Mathematics and Statistics\\
	University of New Hampshire}
\date{}
\maketitle
\begin{abstract} We define a notion of tensor product of bimodule categories and prove that with this product the 2-category of $\mc{C}$-bimodule categories for fixed tensor $\mc{C}$ is a monoidal 2-category in the sense of Kapranov and Voevodsky (\cite{KV}). We then provide a monoidal-structure preserving 2-equivalence between the 2-category of $\mc{C}$-bimodule categories and $Z(\mc{C})$-module categories (module categories over the center of $\mc{C}$). For a finite group $G$ we show that de-equivariantization is equivalent to the tensor product over $\Rep(G)$. We derive $\Rep(G)$-module fusion rules and show that the group of invertible $\Rep(G)$-module categories is isomorphic to $H^2(G, k^\times)$, extending results in \cite{ENO:homotop}. 
\end{abstract}
\thispagestyle{empty}
\setcounter{tocdepth}{1}
\tableofcontents
%\newpage 
\section{Introduction and Main Results}
In this paper we investigate an extension of Deligne's product of abelian categories \cite{CatsTann} to the category $\mc{C}$-bimodule categories. This new product is denoted $\boxtimes_{\mc{C}}$. Here $\mc{C}$ refers to a tensor category over field $k$ which we take, in general, to be of characteristic 0.  This new tensor product reduces to Deligne's product when $\mc{C}=Vec$, the fusion category of finite dimensional $k$-vector spaces.

First steps in defining this extended product involve defining \textit{balanced functors} from the Deligne product of a pair of module categories. This approach mimics classical definitions of tensor product of modules as universal object for balanced morphisms. Tensor product of module categories is then defined in terms of a universal functor factoring balanced functors. In \S \ref{section:2cat} we prove
\begin{thm}\label{thm:main} For any tensor category $\mathcal{C}$, the associated 2-category $\mathcal{B}(\mathcal{C})$ of $\mathcal{C}$-bimodule categories equipped with the tensor product $\boxtimes_{\mathcal{C}}$ becomes a (non-semistrict) monoidal 2-category in the sense of \cite{KV}.
\end{thm}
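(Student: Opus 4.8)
The plan is to verify the Kapranov--Voevodsky axioms by systematically exploiting the universal property of $\tens{C}$ established in the preceding sections. First I would promote $\tens{C}$ to a $2$-functor $\mc{B}(\mc{C})\times\mc{B}(\mc{C})\to\mc{B}(\mc{C})$: given bimodule functors $F\colon\mc{M}\to\mc{M}'$ and $G\colon\mc{N}\to\mc{N}'$, the composite $\mc{M}\boxtimes\mc{N}\to\mc{M}'\boxtimes\mc{N}'\to\bitens{M'}{C}{N'}$ is $\mc{C}$-balanced, hence factors uniquely (up to unique invertible $2$-cell) through the universal balanced functor $\mc{M}\boxtimes\mc{N}\to\bitens{M}{C}{N}$; this produces $F\tens{C}G$, and the same argument applied to bimodule natural transformations gives the action of $\tens{C}$ on $2$-cells. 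Pseudofunctoriality --- compatibility with composition and identities up to coherent $2$-isomorphism --- is then forced by the uniqueness clause of the universal property. The unit object is taken to be $\mc{C}$ itself with its regular bimodule structure.

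For the associator and unitors I would argue representably. Both $(\bitens{M}{C}{N})\tens{C}\mc{P}$ and $\mc{M}\tens{C}(\bitens{N}{C}{P})$ corepresent the $2$-functor sending a bimodule category $\mc{X}$ to the category of ``triple $\mc{C}$-balanced functors'' $\mc{M}\boxtimes\mc{N}\boxtimes\mc{P}\to\mc{X}$, i.e. functors balanced in each adjacent pair of variables in a mutually compatible way; one obtains this from two applications of the defining universal property together with the fact that balancing conditions in disjoint pairs of variables may be imposed independently. Hence there is a bimodule equivalence $a_{\mc{M},\mc{N},\mc{P}}$ between the two objects, unique up to unique invertible $2$-cell, and its pseudonaturality in all three arguments is again dictated by uniqueness. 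Similarly the action functors $\mc{C}\boxtimes\mc{M}\to\mc{M}$, $(c,m)\mapsto c\tr m$, and $\mc{M}\boxtimes\mc{C}\to\mc{M}$, $(m,c)\mapsto m\tl c$, are $\mc{C}$-balanced and induce the left and right unitor equivalences $\bitens{C}{C}{M}\simeq\mc{M}\simeq\bitens{M}{C}{C}$.

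The coherence modifications are produced the same way. For the pentagonator $\pi$, the two $1$-equivalences running around the KV pentagon for $\mc{M}\boxtimes\mc{N}\boxtimes\mc{P}\boxtimes\mc{Q}$ are bimodule equivalences between two objects each corepresenting ``quadruple $\mc{C}$-balanced functors'', so the universal property supplies a canonical invertible $2$-cell between them; this is $\pi$, and the unit modifications are defined analogously from the triple-balanced universal property. The KV coherence axioms then reduce to equalities of modifications between $1$-equivalences among the five-fold (resp.\ four-fold) iterated products; since in each case both sides are \emph{the} canonical $2$-isomorphism determined by the relevant universal property, the non-abelian $4$-cocycle equation for $\pi$ and its compatibilities with the unit modifications hold automatically. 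The structure is only ``non-semistrict'' because $\tens{C}$ is merely associative up to equivalence and, moreover, does not commute on the nose with composition of $2$-cells; one must therefore exhibit the interchange $2$-isomorphism and check its coherence alongside the associativity data.

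I expect the main obstacle to be the careful setup of the second step: making precise the notion of an $n$-fold $\mc{C}$-balanced functor and proving by induction on $n$ that every bracketing of the iterated $\tens{C}$ corepresents it, with all structure $2$-cells tracked through the induction. Once that ``higher universal property'' is in place, every associativity/unit equivalence, every modification, and every coherence identity is uniquely pinned down, so the remaining verifications --- pseudonaturality, the modification axioms, and the pentagon and triangle equations --- become bookkeeping with uniqueness clauses rather than genuine computations; the only place real care is needed beyond bookkeeping is tracking the interchange isomorphism coherently through the unit and associativity data.
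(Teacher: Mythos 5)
Your overall strategy --- construct everything from the universal property of the relative tensor product and its iterated (multi-balanced) versions, then kill the coherence axioms by uniqueness of factorizations through the universal balanced functor --- is exactly the paper's strategy, so the proposal is correct in outline. The differences are ones of emphasis, and they matter for where the work actually lands. First, the paper proves considerably more strictness than you allow for: $\tens{C}$ is \emph{strictly} functorial on $1$-cells, the $2$-naturality of $a$ is an equality (Proposition \ref{ass2-natProp}), the associativity pentagon commutes on the nose (Proposition \ref{cor;asscor}), and the interchange $2$-cell $\otimes_{u,v}$ is the identity; so the pentagonator and the interchange data, which you single out as requiring the most care, are trivial. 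Second, and conversely, the unit coherence is \emph{not} automatic in the way you suggest: the triangle relating $l_{\mc{M}}\tens{C}id$ and $(id\tens{C}r_{\mc{N}})a$ commutes only up to a genuine $2$-isomorphism $\mu_{\mc{M},\mc{N}}$ manufactured from the balancing isomorphism $b$ of $\Buni{M}{N}$ (Proposition \ref{cor;unitmaps}), and the prisms and cylinders involving $l_F$, $r_F$ (Polytopes \ref{polytope 1}--\ref{polytope 3}) are verified by unwinding the module-structure isomorphisms $f$ of the functors and only then invoking the uniqueness lemma --- these are the computations your ``bookkeeping with uniqueness clauses'' glosses over. Finally, one small repair: for a triple product $\mc{M}\boxtimes\mc{N}\boxtimes\mc{P}$ the two balancings occupy \emph{adjacent} positions sharing $\mc{N}$, so they cannot be imposed independently; the corepresenting object is the category of completely balanced functors in the sense of Definition \ref{def;combal}, which requires an extra consistency pentagon involving the bimodule constraint $\gamma$ of the middle factor. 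Your phrase ``in a mutually compatible way'' names the right notion, but the justification via disjointness does not apply here.
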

In \cite{TambaraTensor} constructions similar to these were defined for $k$-linear categories as part of a program to study the representation categories of Hopf algebras and their duals. Balanced functors appeared under the name \textit{bilinear functors}, and the tensor product there is given in terms of generators and relations instead of the universal properties used here. The tensor product was defined and applied extensively by \cite{ENO:homotop} in the study of semisimple module categories over fusion $\mc{C}$.

In order to apply the tensor product of module categories we provide results in \S\ref{tensFunctSect} giving 2-category analogues to classical formulas relating tensor product and hom-functor. In this setting the classical hom functor is replaced by the 2-functor $\underline{Fun}_{\mc{C}}$ giving categories of right exact $\mc{C}$-module functors. 

As an immediate application we prove in \S \ref{DeEquiSect} a result relating de-equivariantization of tensor category $\mc{C}$ to tensor product over $\Rep(G)$, the category of finite dimensional representations of finite group $G$ in $Vec$. Let $A$ be the regular algebra in $\Rep(G)$. Recall (\cite{BraidedI}) that for tensor category $\mc{C}$ \textit{over} $\Rep(G)$ (see Definition \ref{OverDef}) the de-equivariantization $\mc{C}_G$ is defined to be the tensor category of $A$-modules in $\mc{C}$. We prove
\begin{thm}\label{functAlgebra}There is a canonical tensor equivalence $\mc{C}_G\simeq\mc{C}\boxtimes_{\Rep(G)}Vec$ such that the canonical functor $\mc{C}\rightarrow\mc{C}\boxtimes_{\Rep(G)}Vec$ is identified with the canonical (free module) functor $\mc{C}\rightarrow\mc{C}_G$.
\end{thm}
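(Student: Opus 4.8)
The plan is to exhibit the equivalence concretely, matching the universal property of $\boxtimes_{\Rep(G)}$ against the free-module description $\mc{C}_G=\Mod_{\mc{C}}(A)$. First I would record the classical identification $Vec\simeq\Mod_{\Rep(G)}(A)$ of $\Rep(G)$-module categories, under which the free $A$-module functor $\Rep(G)\to\Mod_{\Rep(G)}(A)$ becomes the forgetful functor $\Forg\colon\Rep(G)\to Vec$; in particular the $\Rep(G)$-action on $Vec$ is $R\tr V=\Forg(R)\otimes V$. Since $\mc{C}\boxtimes Vec\simeq\mc{C}$, the universal balanced functor underlying $\boxtimes_{\Rep(G)}$ becomes a functor $\mc{C}\to\mc{C}\boxtimes_{\Rep(G)}Vec$, $X\mapsto X\boxtimes k$, which is initial among right exact functors $F$ out of $\mc{C}$ equipped with a coherent natural isomorphism $F(X\otimes\iota(R))\cong\Forg(R)\otimes F(X)$, where $\iota\colon\Rep(G)\to\mc{C}$ is the (central) structure functor of Definition \ref{OverDef}. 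This is the universal property I will match on the $\mc{C}_G$ side.

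Next I define $\Phi\colon\mc{C}\boxtimes_{\Rep(G)}Vec\to\mc{C}_G$ via the universal property of $\boxtimes_{\Rep(G)}$: it suffices to give a right exact $\Rep(G)$-balanced functor $\mc{C}\boxtimes Vec\to\Mod_{\mc{C}}(A)$, and I take $X\boxtimes V\mapsto(V\otimes X)\otimes A$, the free $A$-module on $V\otimes X$. Right exactness is immediate. The balancing datum is the canonical trivialization $\lambda_R\colon\iota(R)\otimes A\xrightarrow{\sim}\Forg(R)\otimes A$ of $A$-modules, natural and monoidal in $R\in\Rep(G)$ — exactly the isomorphism witnessing that $\Rep(G)\xrightarrow{\iota}\mc{C}\xrightarrow{F_A}\mc{C}_G$ factors through $\Forg$, and part of the standard data of de-equivariantization. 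Checking that $\lambda$ is legitimate balancing data in the sense of \S\ref{section:2cat} reduces to its monoidality, naturality and unitality against the associativity constraints. By construction, $X\mapsto X\boxtimes k$ followed by $\Phi$ is $X\mapsto(k\otimes X)\otimes A=X\otimes A=F_A(X)$, which yields the asserted identification of canonical functors; one then checks $\Phi$ is a tensor functor by comparing the product $(X\boxtimes V)\otimes(Y\boxtimes W)=(X\otimes Y)\boxtimes(V\otimes W)$ — the tensor structure on $\mc{C}\boxtimes_{\Rep(G)}Vec$ supplied by Theorem \ref{thm:main} applied to $\mc{B}(\Rep(G))$, using that $\Rep(G)$ is symmetric and $\iota,\Forg$ are central — with $\otimes_A$ on $\mc{C}_G$, the comparison being assembled from $\lambda$ and the centrality of $\iota$.

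To see $\Phi$ is an equivalence I would produce an explicit quasi-inverse $\Psi$. The key point is that although the augmentation $A\to\unit$ is not a morphism in $\Rep(G)$, it is available in $\mc{C}\boxtimes_{\Rep(G)}Vec$: via the trivialization, $\mathbf{A}:=A\boxtimes k\cong\unit\boxtimes\Forg(A)$, and evaluation at $e\in G$ on $\Forg(A)=k^{G}$ gives $\epsilon\colon\mathbf{A}\to\unit$. I then set $\Psi(M,\rho)$ to be the coequalizer of $\rho\boxtimes k$ and $(\mathrm{id}_{M\boxtimes k})\otimes\epsilon$ on $(M\otimes A)\boxtimes k\rightrightarrows M\boxtimes k$, the internal ``$M\otimes_{A}\unit$''. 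Applying the right exact functor $\Phi$ and using that $\Phi(\epsilon)$ recovers the multiplication $m\colon A\otimes A\to A$ — a consequence of the compatibility of $\lambda$ with the algebra structure of $A$ — one gets $\Phi\Psi(M,\rho)=\mathrm{coeq}\big(\rho\otimes\mathrm{id}_{A},\ \mathrm{id}_{M}\otimes m\big)\cong M\otimes_{A}A\cong M$; the reverse composite $\Psi\Phi$ is handled the same way, using $(V\otimes X)\boxtimes k\cong X\boxtimes V$.

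The conceptual content lives entirely in the trivialization isomorphism $\lambda$; once it is pinned down, $\Phi$, $\Psi$ and the tensor structure essentially write themselves. I therefore expect the main obstacle to be organizational rather than deep: establishing $\lambda$ together with its unit and multiplication compatibilities with enough precision to verify (a) that it is balancing data in the sense of \S\ref{section:2cat}, (b) that $\Phi(\epsilon)=m$, and (c) the tensor-functor coherences — this is where all the bookkeeping is concentrated.
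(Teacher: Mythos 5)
Your construction is correct in outline, but it takes a genuinely different route from the paper. The paper's proof is top-down: it takes $F:=B_{\mc{C},Vec}$, which is a surjective tensor functor by Theorem \ref{de-equiGenProp}, applies Lemma \ref{lemgood} (Ostrik-style reconstruction: for surjective tensor $F$ with right adjoint $I$, the target is $\Mod_{\mc{C}}(I(\unit))$ and $F$ is the free-module functor) to get $\mc{C}\boxtimes_{\Rep(G)}Vec\simeq\Mod_{\mc{C}}(I(\unit))$ at once, and then spends all its effort identifying $I(\unit)$ with the image $A'$ of the regular algebra --- via a support argument (any object whose $F$-image contains $\unit$ lies in the image of $\Rep(G)$ in $\mc{C}$) plus the fiber-functor/Tannakian identification of $I(\unit)$ with $Fun(G_F)$ from \cite{BraidedI}. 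You instead work bottom-up: you build $\Phi:\mc{C}\boxtimes_{\Rep(G)}Vec\to\mc{C}_G$ explicitly from the universal property using the trivialization $\lambda_R:\iota(R)\otimes A\cong\Forg(R)\otimes A$ as balancing data, and you produce a quasi-inverse by a bar-type coequalizer $M\otimes_{\mathbf{A}}\unit$. What the paper's route buys is brevity and the avoidance of any explicit inverse --- the reconstruction lemma does that for free --- at the price of outsourcing the identification $I(\unit)\cong A'$ to Tannakian theory; what your route buys is a self-contained, explicit equivalence in which the identification of the two canonical functors $\mc{C}\to\mc{C}_G$ is immediate by construction rather than extracted from Lemma \ref{lemgood}(3), at the price of the coherence bookkeeping around $\lambda$ (monoidality, compatibility with the multiplication of $A$ so that $\Phi(\epsilon)=m$) that you correctly flag as the real work. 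Two small repairs: the tensor structure on $\mc{C}\boxtimes_{\Rep(G)}Vec$ that you invoke is supplied by Theorem \ref{de-equiGenProp}, not by Theorem \ref{thm:main}; and the claim $\Phi(\epsilon)=m$ is not automatic from the definition of $\epsilon$ as evaluation at $e$ --- it requires checking that under $\lambda_A:A\otimes A\cong k^G\otimes A$ the $e$-component is the multiplication, which is true for $A=Fun(G)$ but must be verified rather than asserted.
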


After introducing the notion of center of bimodule category (\S \ref{braidedmodulesection}) we are able to prove a monoidal-structure preserving 2-equivalence between the 2-category of $\mc{C}$-bimodule categories and $Z(\mc{C})$-Mod, module categories over the center $Z(\mc{C})$:
\begin{thm}\label{braidingCenter}There is a canonical monoidal equivalence between $2$-categories $\mc{B}(\mc{C})$ and $Z(\mc{C})$-Mod.
\end{thm}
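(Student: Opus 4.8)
The plan is to construct explicit $2$-functors in both directions, show they are mutually quasi-inverse biequivalences, and then check compatibility with the monoidal structures; all the coherence $2$-cells will be forced by the universal properties defining the two relative tensor products, so the genuine work is concentrated in a single reconstruction statement.

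\emph{The two $2$-functors.} To a $\mc{C}$-bimodule category $\mc{M}$ assign its center $Z_{\mc{C}}(\mc{M})$ from \S\ref{braidedmodulesection}, whose objects are pairs $(M,\gamma)$ with $M\in\mc{M}$ and $\gamma_{X}\colon X\tr M\xrightarrow{\ \sim\ }M\tl X$ a natural family compatible with the bimodule associativity constraints. For $\mc{M}=\mc{C}$ (the regular bimodule, which is the unit object of $\mc{B}(\mc{C})$) this recovers the Drinfeld center, $Z_{\mc{C}}(\mc{C})=Z(\mc{C})$, and for arbitrary $\mc{M}$ the category $Z_{\mc{C}}(\mc{M})$ is a $Z(\mc{C})$-module category, with $(W,\sigma)$ acting on $(M,\gamma)$ by $(W\tr M,\widetilde{\gamma})$ for $\widetilde{\gamma}$ the composite half-braiding built from $\sigma$ and $\gamma$ exactly as in the composition of half-braidings. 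Sending bimodule functors and bimodule natural transformations to the induced $Z(\mc{C})$-module data yields a $2$-functor $\Phi\colon\mc{B}(\mc{C})\to Z(\mc{C})\text{-}\Mod$; equivalently $\Phi(\mc{M})\simeq\underline{Fun}_{\mc{C}\text{-}\mc{C}}(\mc{C},\mc{M})$, bimodule functors out of the regular bimodule, with $Z(\mc{C})\simeq\underline{Fun}_{\mc{C}\text{-}\mc{C}}(\mc{C},\mc{C})$ acting by precomposition. In the reverse direction, view $\mc{C}$ at once as a $\mc{C}$-bimodule category and as a right $Z(\mc{C})$-module category, the latter along the forgetful tensor functor $Z(\mc{C})\to\mc{C}$; because this functor is central the three actions pairwise commute, so for a $Z(\mc{C})$-module category $\mc{N}$ — which, $Z(\mc{C})$ being braided, we also regard as a $Z(\mc{C})$-bimodule — the relative tensor product $\mc{C}\boxtimes_{Z(\mc{C})}\mc{N}$ inherits a $\mc{C}$-bimodule structure. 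This defines $\Psi\colon Z(\mc{C})\text{-}\Mod\to\mc{B}(\mc{C})$, and $\Psi(Z(\mc{C}))=\mc{C}\boxtimes_{Z(\mc{C})}Z(\mc{C})\simeq\mc{C}$.

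\emph{Quasi-inverseness, and the main obstacle.} One must produce pseudonatural equivalences $\Psi\Phi(\mc{M})=\mc{C}\boxtimes_{Z(\mc{C})}Z_{\mc{C}}(\mc{M})\xrightarrow{\ \sim\ }\mc{M}$ of $\mc{C}$-bimodule categories and $\Phi\Psi(\mc{N})=Z_{\mc{C}}\!\bigl(\mc{C}\boxtimes_{Z(\mc{C})}\mc{N}\bigr)\xrightarrow{\ \sim\ }\mc{N}$ of $Z(\mc{C})$-module categories. The first is obtained by descending, through the universal property of $\boxtimes_{Z(\mc{C})}$, the $Z(\mc{C})$-balanced bimodule functor $\mc{C}\boxtimes Z_{\mc{C}}(\mc{M})\to\mc{M}$, $(X,(M,\gamma))\mapsto X\tr M$ (the balancing coming from the half-braidings $\gamma$). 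The real content — and what I expect to be the crux — is that this descended functor is an \emph{equivalence}: every $\mc{C}$-bimodule category is recovered from its center by extension of scalars along $Z(\mc{C})\to\mc{C}$; this is closely tied to $\Phi$ being fully faithful on $\Hom$-categories. I would establish it by testing against an arbitrary $\mc{C}$-bimodule category $\mc{P}$ and applying the hom--tensor adjunction of \S\ref{tensFunctSect}, $\underline{Fun}_{\mc{C}\text{-}\mc{C}}(\mc{C}\boxtimes_{Z(\mc{C})}\mc{K},\mc{P})\simeq\underline{Fun}_{Z(\mc{C})}\!\bigl(\mc{K},\underline{Fun}_{\mc{C}\text{-}\mc{C}}(\mc{C},\mc{P})\bigr)$, together with the identification $\underline{Fun}_{\mc{C}\text{-}\mc{C}}(\mc{C},\mc{P})\simeq Z_{\mc{C}}(\mc{P})$: this reduces the claim, via Yoneda, to the natural equivalence $\underline{Fun}_{\mc{C}\text{-}\mc{C}}(\mc{M},\mc{P})\simeq\underline{Fun}_{Z(\mc{C})}\!\bigl(Z_{\mc{C}}(\mc{M}),Z_{\mc{C}}(\mc{P})\bigr)$ induced by $\Phi$, which one checks directly; alternatively, under the relevant finiteness/exactness hypotheses one argues Morita-theoretically, using that in the $2$-category of $\mc{C}$-bimodule categories the regular bimodule $\mc{C}$ has internal endomorphism algebra $Z(\mc{C})$ and is a generator. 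The second equivalence $Z_{\mc{C}}(\mc{C}\boxtimes_{Z(\mc{C})}\mc{N})\simeq\mc{N}$ then follows by a parallel computation, or formally once $\Phi$ is known to be an equivalence on each $\Hom$-category.

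\emph{Monoidality.} It remains to upgrade $\Phi$ to a monoidal $2$-functor in the sense of \cite{KV}. The tensor comparison $Z_{\mc{C}}(\mc{M})\boxtimes_{Z(\mc{C})}Z_{\mc{C}}(\mc{N})\to Z_{\mc{C}}(\mc{M}\tens{C}\mc{N})$ is induced by the balanced assignment sending $(M,\gamma)\boxtimes(N,\delta)$ to the image of $M\boxtimes N$ in $\mc{M}\tens{C}\mc{N}$ equipped with the composite half-braiding; the unit comparison $Z(\mc{C})\xrightarrow{\ \sim\ }Z_{\mc{C}}(\mc{C})$ is the identity. Both are equivalences by the quasi-inverseness established above, applied to the tensor factors. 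The required associativity and unit modifications are the unique $2$-cells induced on the relevant iterated relative tensor products by the modifications of Theorem~\ref{thm:main} and the corresponding data for $\boxtimes_{Z(\mc{C})}$, so every Kapranov--Voevodsky coherence equation is satisfied by the uniqueness clause in the universal property of $\tens{C}$ and of $\boxtimes_{Z(\mc{C})}$. (Equivalently, one transports the monoidal structure of $\mc{B}(\mc{C})$ across $\Phi$ using the quasi-inverse $\Psi$.) Combined with the essential surjectivity and $\Hom$-wise equivalence furnished by the previous step, this yields the asserted canonical monoidal biequivalence $\mc{B}(\mc{C})\simeq Z(\mc{C})\text{-}\Mod$.
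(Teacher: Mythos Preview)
Your outline is correct and follows essentially the same route as the paper. The paper's forward functor is exactly your $\Phi(\mc{M})=Z_{\mc{C}}(\mc{M})\simeq\underline{Fun}_{\mc{C}\boxtimes\mc{C}^{op}}(\mc{C},\mc{M})$; its inverse is written as $\mc{N}\mapsto\underline{Fun}_{Z(\mc{C})}(\mc{C}^{op},\mc{N})$ rather than your $\mc{C}\boxtimes_{Z(\mc{C})}\mc{N}$, but these agree by Theorem~\ref{prop;exist}. The quasi-inverseness and the monoidality are both verified in the paper by chains of Frobenius-reciprocity equivalences (Corollary~\ref{cor;frbns}), exactly the hom--tensor adjunction you invoke.

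One point to tighten: your reduction lands on the equivalence $\underline{Fun}_{\mc{C}\text{-}\mc{C}}(\mc{M},\mc{P})\simeq\underline{Fun}_{Z(\mc{C})}(Z_{\mc{C}}(\mc{M}),Z_{\mc{C}}(\mc{P}))$, which you say ``one checks directly.'' This is not a direct check; it is precisely where the Morita-theoretic input enters. The paper supplies it via Corollary~\ref{centerdualitycor}, $(\mc{C}\boxtimes\mc{C}^{op})^*_{\mc{C}}\simeq Z(\mc{C})$, together with the double-dual theorem $(\mc{C}^*_{\mc{M}})^*_{\mc{M}}\simeq\mc{C}$ from \cite{EO:FTC}, which yields $Z(\mc{C})^*_{\mc{C}}\simeq\mc{C}\boxtimes\mc{C}^{op}$ and hence $\mc{C}\boxtimes_{Z(\mc{C})}\mc{C}^{op}\simeq\mc{C}\boxtimes\mc{C}^{op}$ as $\mc{C}$-bimodules. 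Your ``Morita-theoretic alternative'' is therefore not an alternative but the essential ingredient; the sentence about $\mc{C}$ being a generator with endomorphism algebra $Z(\mc{C})$ is the right intuition, but to make it go you need exactly this double-dual statement (which in turn uses exactness).
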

In \S\ref{sec;ex} we show that, for arbitrary finite group $G$, fusion rules for $\Rep(G)$-module categories over $\boxtimes_{\Rep(G)}$ correspond to products in the twisted Burnside ring over $G$ (see e.g. \cite{OdaYoshida} and \cite{Rose}). As a side effect we show that the group of indecomposable invertible $\Rep(G)$-module categories is isomorphic to $H^2(G, k^{\times})$ thus generalizing results in \cite{ENO:homotop} given for finite abelian groups.
%Also in this section we show that monoidal structure generated by indecomposable Rep$(G)$-module subcategories of $Z(Vec_G)$ corresponds to the the sub-ring of the group algebra $\mathbb{Z}G$ generated by conjugacy classes sums in $G$.

%In \cite{BraidedI} the \textit{core} $\mc{E}'\boxtimes_{\mc{E}}Vec:=Core_{\mc{E}}(\mc{C})$ of a braided fusion category $\mc{C}$ (for a maximal Tannakian subcategory $\mc{E}=$ Rep$(G)\subset\mc{C}$ with centralizer $\mc{E}'$) is introduced and used to study the structure of braided fusion categories. In Section \textbf{??} we show that these constructions are consistent: the core is indeed the relative tensor product as suggested by choice of notation. We show that monoidal structure generated by indecomposable Rep$(G)$-module subcategories of $Z(Vec_G)$ corresponds to the ring of conjugacy classes in $G$.

\subsection{Acknowledgements} The author is grateful to the University of New Hampshire for  support during the writing of this paper, and wishes to warmly thank his advisor Dmitri Nikshych for valuable discussion and comments. Without him none of this could have taken place.
%
%
%
%NEW SECTION NEW SECTION NEW SECTION NEW SECTION NEW SECTION NEW SECTION NEW SECTION NEW SECTION NEW SECTION NEW SECTION NEW SECTION NEW SECTION NEW SECTION NEW SECTION NEW SECTION NEW SECTION NEW SECTION NEW SECTION NEW SECTION NEW SECTION NEW SECTION NEW SECTION NEW SECTION NEW SECTION 
%
\section{Preliminaries}\label{preliminariesSection}
Very little in this section is new. Where it seemed necessary to do so we have indicated sources. In most cases what is included here has become standard and so we have omitted references (as general references we suggest \cite{BK}, \cite{K:QG}).
\subsection{Braiding, module categories.}
In this paper all categories are assumed to be abelian and $k$-linear, have finite-dimensional hom spaces, and all functors are assumed to be additive. Even though most of what we do here is valid over fields of positive characteristic we assume at the outset that $k$ is of characteristic 0. All tensor categories are rigid and so they are finite tensor categories in the sense of \cite{EO:FTC}.
\begin{defn}\label{braidingdef} A tensor category $\mc{C}$ is said to be \textit{braided} if it is equipped with a class of natural isomorphisms 
\begin{equation*}
c_{V, W} :V\otimes W\rightarrow W\otimes V
\end{equation*}
for objects $V, W\in\mc{C}$ satisfying the pair of hexagons which can be found in \cite{K:QG} among many other places.
%\small\[\!\!\!\!\!\!\!\!\!\!\!\!\!\!\!\!\!\!\!\!\!\!\xymatrix{&U(VW)\ar[r]^{c_{U, VW}}&(VW)U\ar[dr]^{a_{V, W, U}}&\\(UV)W\ar[ur]^{a_{U, V, W}}\ar[dr]_{c_{U, V}\otimes W}&&&V(WU)\\&(VU)W\ar[r]_{a_{V, U, W}}&V(UW)\ar[ur]_{V\otimes c_{U, W}}&}\;\quad\xymatrix{&(UV)W\ar[r]^{c_{UV, W}}&W(UV)\ar[dr]^{a^{-1}_{W, U, V}}&\\U(VW)\ar[ur]^{a^{-1}_{U, V, W}}\ar[dr]_{U\otimes c_{V, W}}&&&(WU)V\\&U(WV)\ar[r]_{a^{-1}_{U, W, V}}&(UW)V\ar[ur]_{c_{U, W}\otimes V}&}\]\normalsizefor all objects $U, V, W\in\mc{C}$.
\end{defn}
When $\mc{C}$ is strict these reduce to commuting triangles
\begin{eqnarray}
c_{U, V\otimes W} &=& (id_V\otimes c_{U, W})(c_{U, V}\otimes id_W)\label{eqn:brd1}\\
c_{U\otimes V, W} &=& (c_{U, W}\otimes id_V)(id_U\otimes c_{V, W}) \label{eqn:brd2}.
\end{eqnarray}
In \S \ref{braidedmodulesection} we show how braiding gives module categories bimodule structure.

In the next two examples $G$ is a finite group.
\begin{ex}\label{Rep(G)MonoidEx} $\Rep(G)$, the category of finite dimensional representations of $G$, is a braided tensor category with the usual tensor product. For 2-cocycle $\mu\in Z^2(G, k^\times)$ the category $\Rep_\mu(G)$ of projective representations of $G$ corresponding to Schur multiplier $\mu$ constitutes a tensor category though is in general not braided. 
\end{ex}
\begin{ex} The category $Vec_G^{\omega}$ of finite dimensional $G$-graded vector spaces twisted by $\omega\in H^3(G, k^\times)$ is a rigid monoidal category. Simple objects are given by $k_g$ ($g^{th}$ component $k$, 0 elsewhere) with unit object $k_1$. Associativity is given by $\omega$ and tensor product is defined by
\begin{equation*}
(V\otimes W)_g = \bigoplus_{hk=g}V_h\otimes W_k
\end{equation*}
and $(V^*)_g = ({}^*V)_g = V_{g^{-1}}$. In general $Vec_G^{\omega}$ is not braided.
\end{ex}
\begin{defn}\label{DefCent} The \textit{center} $Z(\mc{C})$ of a monoidal category $\mc{C}$ is the category having as objects pairs $(X, c)$ where $X\in\mc{C}$ and for every $Y\in\mc{C}$ $c_{Y}:Y\otimes X\rightarrow X\otimes Y$ is a family of natural isomorphisms satisfying the hexagon 
\[
\xymatrix{
&(X\otimes Y)\otimes Z\ar[r]^{c_{XY, Z}}&Z\otimes (X\otimes Y)\ar[dr]^{a^{-1}_{Z, X, Y}}&\\X\otimes (Y\otimes Z)\ar[ur]^{a^{-1}_{X, Y, Z}}\ar[dr]_{id_X\otimes c_{Y, Z}}&&&(Z\otimes X)\otimes Y\\&X\otimes (Z\otimes Y)\ar[r]_{a^{-1}_{X, Z, Y}}&(X\otimes Z)\otimes Y\ar[ur]_{c_{X, Z}\otimes id_Y}&
}\] 
for all $Y, Z\in\mc{C}$. Here $a$ is the associativity constraint for the monoidal structure in $\mc{C}$. A morphism $(X, c)\rightarrow (X', c')$ is a morphism $f\in\Hom_{\mc{C}}(X, X')$ satisfying the equation $c'_Y(f\otimes id_Y)=(id_Y\otimes f)c_Y$ for every $Y\in\mc{C}$.
\end{defn}
The center $Z(\mc{C})$ has the structure of a monoidal category as follows. Define the tensor product $(X, c)\otimes(X', c')=(X\otimes X', \tilde{c})$ where $\tilde{c}$ is defined by the composition
\[\xymatrix{
Y\otimes(X\otimes X')\ar[d]_{\tilde{c}_Y}\ar[r]^{a^{-1}_{Y, X, X'}}&(Y\otimes X)\otimes X'\ar[r]^{c_Y}&(X\otimes Y)\otimes X'\ar[d]^{a_{X, Y, X'}}\\
(X\otimes X')\otimes Y&X\otimes (X'\otimes Y)\ar[l]^{a^{-1}_{X, X', Y}}&X\otimes(Y\otimes X')\ar[l]^{c'_Y}
}\]
If $r$ and $\ell$ are the right and left unit constraints for the monoidal structure in $\mc{C}$ then the unit object for the monoidal structure in $Z(\mc{C})$ is given by $(1, r^{-1}\ell)$ as one may easily check. Suppose now that $\mc{C}$ is rigid and $X\in\mc{C}$ has right dual $X^*$. Then $(X, c)\in Z(\mc{C})$ has right dual $(X^*, \overline{c})$ where $\overline{c}_{Y}:=(c^{-1}_{{}^*Y})^*$ and ${}^*Y$ is the left dual of $Y$. One may also check that $Z(\mc{C})$ is braided by $c_{(X, c)\otimes (X', c')}:=c'_{X}$. 

There is a canonical inclusion of monoidal category $\mc{C}$ into its center given by $X\mapsto (X, c_X)$. It is well known that the center $Z(\mc{C})$ is in some sense ``larger" than $\mc{C}$. This differs from the classical analogue in which a ring contains its center. We generalize the notion of center in \S \ref{braidedmodulesection}.

The next definition is essential for this paper.
\begin{defn} A \textit{left module category} $(M, \mu)$ over tensor category $\mc{C}$ is a category $\mc{M}$ together with a bifunctor $\otimes:\mc{C}\times\mc{M}\rightarrow\mc{M}$ and a family of natural isomorphisms $\mu_{X, Y, M}:(X\otimes Y)\otimes M\rightarrow X\otimes (Y\otimes M)$, $\ell_{M}:1\otimes M\rightarrow M$ for $X, Y\in\mc{C}$ and $M\in\mc{M}$ subject to certain natural coherence axioms (see \cite{O:MC}, for example). 
%\[\!\!\!\!\!\!\!\!\!\!\!\!\!\!\!\xymatrix{&((WX)Y)M\ar[dl]_{\alpha_{W, X, Y}\otimes M}\ar[dr]^{\alpha_{WX, Y, M}}&\\(W(XY))M\ar[d]_{\alpha_{W, XY, M}}&&(WX)(YM)\ar[d]^{\alpha_{W, X, YM}}\\W((XY)M)\ar[rr]_{W\otimes \alpha_{X, Y, M}}&&W(X(YM))}\;\xymatrix{&&\\(X1)M\ar[rr]^{\alpha_{X, 1, M}}\ar[dr]_{r_X\otimes M}&&X(1M)\ar[dl]^{X\otimes\ell_{M}}\\&XM&}\]
Similarly one defines the structure of \textit{right} module category on $\mc{M}$. If the structure maps are identity we say $\mc{M}$ is \textit{strict} as a module category over $\mc{C}$.
\end{defn}
\begin{note}
It is possible to prove an extended version of MacLane's strictness theorem for module categories which reduces to the monoidal strictness theorem in the regular module case. The proof given in \cite{JGThesis} mimics the proof of the monoidal strictness theorem found in \cite{JS}.
\end{note}
\begin{ex} 
Let $G$ be a finite group with subgroup $H$. The category $\Rep_\mu(H)$ of projective representations of $H$ (Example \ref{Rep(G)MonoidEx}) constitutes a $\Rep(G)$-module category with module category structure defined by $W\otimes V:=\res(W)\otimes V$ whenever $W\in\Rep(G)$, $V\in\Rep_{\mu}(H)$ and $\res:\Rep(G)\rightarrow\Rep(H)$ is the restriction functor. 
\end{ex}
\begin{defn}\label{moduleFunctorDef} For $\mc{M}, \mc{N}$ left $\mc{C}$-module categories a functor $F:\mc{M}\rightarrow\mc{N}$ is said to be a $\mc{C}$-\textit{module functor} if $F$ comes equipped with a family of natural isomorphisms $f_{X, M}:F(X\otimes M)\rightarrow X\otimes F(M)$ satisfying coherence diagrams (again see \cite{O:MC}). We will write $(F, f)$ when referring to such a functor.
%\[\!\!\!\!\!\!\!\!\!\!\!\!\!\!\!\xymatrix{&F((XY)M)\ar[dl]_{F(\alpha_{X, Y, M})}\ar[dr]^{f_{XY, M}}&\\F(X(YM))\ar[d]_{f_{X, YM}}&&(XY)F(M)\ar[d]^{\alpha_{X, Y, F(M)}}\\XF(YM)\ar[rr]_{X\otimes f_{Y, M}}&&X(Y(F(M))}\;\xymatrix{&&\\F(1M)\ar[rr]^{F(\ell_M)}\ar[dr]_{f_{1, M}}&&F(M)\\&1F(M)\ar[ur]_{\ell_{F(M)}}&}\] whenever $X, Y\in\mc{C}$ and $M\in\mc{M}$. 
A natural transformation $\tau:F\Rightarrow G$ for bimodule functors $(F, f), (G, g):\mc{M}\rightarrow\mc{N}$ is said to be a \textit{module natural transformation} whenever the diagram
\[
\xymatrix{
F(X\otimes M)\ar[rr]^{\tau_{X\otimes M}}\ar[d]_{f_{X, M}}&&G(X\otimes M)\ar[d]^{g_{X, M}}\\
X\otimes F(M)\ar[rr]_{id_X\otimes \tau_{X\otimes N}}&&X\otimes G(M)
}
\]
commutes for all $X\in\mc{C}$ and $M\in\mc{M}$.
\end{defn}
Denote by $Fun_{\mc{C}}(\mc{M}, \mc{N})$ the category of left $\mc{C}$-module functors having morphisms module natural transformations. It is known that this category is abelian and if is semisimple if both $\mc{M}, \mc{N}$ are semisimple (see \cite{ENO:OFC} for details). We will have occasion to deal with categories of right exact module functors and therefore fix notation now.
\begin{defn}\label{rightExactFunctDef} Functor $F:\mc{A}\rightarrow\mc{B}$ is said to be \textit{right exact} if $F$ takes short exact sequences $0\rightarrow A\rightarrow B\rightarrow C\rightarrow 0$ in $\mc{A}$ to sequences $F(A)\rightarrow F(B)\rightarrow F(C)\rightarrow 0$ exact in $\mc{B}$. Similarly one defines left exact functors.  Denote by $\underline{Fun}(\mc{A}, \mc{B})$ the category of right exact functors $\mc{A}\rightarrow\mc{B}$. If $\mc{A}$, $\mc{B}$ are left $\mc{C}$-module categories $\underline{Fun}_{\mc{C}}(\mc{A}, \mc{B})$ is the category of right exact $\mc{C}$-module functors.
\end{defn}
\subsubsection{Bimodule categories.} In much of this paper we will be concerned with categories for which there are left \textit{and} right module structures which interact in a consistent and predictable way. In what follows $\boxtimes$ denotes the product of abelian categories introduced in \cite{CatsTann}.
\begin{defn}\label{defn:bimd}
$\mc{M}$ is a $(\mc{C}, \mc{D})$\textit{-bimodule category} if $\mc{M}$ is a $\mc{C}\boxtimes\mc{D}^{op}$-module category. If $\mc{M}$ and $\mc{N}$ are $(\mc{C}, \mc{D})$-bimodule categories call $F:\mc{M}\rightarrow \mc{N}$ a $(\mc{C}, \mc{D})$-\textit{bimodule functor} if it is a $\mc{C}\boxtimes\mc{D}^{op}$-module functor.
\end{defn}
\begin{note}[Notation] For $\mc{C}$ and $\mc{D}$ finite tensor categories we can define a new category whose objects are $(\mc{C},\mc{D})$-bimodule categories with morphisms $(\mc{C},\mc{D})$-bimodule functors. Denote this category $\mc{B}(\mc{C}, \mc{D})$. When $\mc{C}=\mc{D}$ this is the category of bimodule categories over $\mc{C}$, which we denote $\mc{B}(\mc{C})$. For $\mc{M}$ and $\mc{N}$ in  $\mc{B}(\mc{C}, \mc{D})$ denote by $Fun_{\mc{C}, \mc{D}}(\mc{M}, \mc{N})$ the category of $(\mc{C}, \mc{D})$-bimodule functors from $\mc{M}$ to $\mc{N}$. 
\end{note}
\begin{prop}\label{rmk;bimd} Let $\mc{C}$, $\mc{D}$ be strict monoidal catgories. Suppose $\mc{M}$ has both left $\mc{C}$-module and right $\mc{D}$-module category structures $\mu^l, \mu^r$ and a natural family of isomorphisms $\gamma_{X, M, Y}:(X\otimes M)\otimes Y \rightarrow X\otimes (M\otimes Y)$ for $X$ in $\mc{C}$, $Y$ in $\mc{D}$ making the pentagons 
\[\!\!\!\!\!\!\!\!\!
\xymatrix{
{((XY)M)Z} \ar[d]_{\mu^l\otimes id} \ar[r]^{\gamma} & {(XY)(MZ)} \ar[dd]^{\mu^l} \\ 
{(X(YM))Z} \ar[d]_{\gamma} \\ 
{X((YM)Z)} \ar[r]_{id\otimes \gamma} & {X(Y(MZ))} }
\;\quad
\xymatrix{
{(XM)(YZ)} \ar[d]_{\mu^r} \ar[r]^{\gamma} & {X(M(YZ))} \ar[dd]^{id\otimes \mu^r} \\ 
{((XM)Y)Z} \ar[d]_{\gamma\otimes id} \\ 
{(X(MY))Z} \ar[r]_{\gamma} & {X((MY)Z)}
}
\;\quad
\xymatrix{
(1M)1\ar[r]^{\gamma_{1, M, 1}}\ar[d]_{\ell_M}&1(M1)\ar[dd]^{r_{M}}\\
M1\ar[d]_{r_M}&\\
M&1M\ar[l]^{\ell_M}}
\]
commute. Then $\mc{M}$ has canonical $(\mc{C}, \mc{D})$-bimodule category structure.
\end{prop}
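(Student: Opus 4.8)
The plan is to build the required $\mc{C}\boxtimes\mc{D}^{op}$-module category structure on $\mc{M}$ (in the sense of Definition \ref{defn:bimd}) directly out of the data $(\mu^l,\mu^r,\gamma)$. Since $\mc{C}$ and $\mc{D}$ are strict, so is $\mc{D}^{op}$, and we may take $\mc{C}\boxtimes\mc{D}^{op}$ to be strict monoidal as well (equivalently, its associator is the identity on objects of the form $X\boxtimes Y$, which is all we shall test against). It therefore suffices to exhibit a right exact action bifunctor, a natural associativity isomorphism, and a natural unit isomorphism, and to verify the module pentagon and triangle with the ambient $\mc{C}\boxtimes\mc{D}^{op}$-associator trivial.

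First I would define the action. For each fixed $M\in\mc{M}$ the assignment $(X,Y)\mapsto X\otimes(M\otimes Y)$ is a bifunctor $\mc{C}\times\mc{D}^{op}\to\mc{M}$ which is right exact in each variable, so by the universal property of the Deligne product it extends uniquely to a right exact functor $\mc{C}\boxtimes\mc{D}^{op}\to\mc{M}$; the same universal property shows these extensions assemble functorially in $M$, yielding a bifunctor $\otimes\colon(\mc{C}\boxtimes\mc{D}^{op})\times\mc{M}\to\mc{M}$ with $(X\boxtimes Y)\otimes M=X\otimes(M\otimes Y)$ on generators. Note that $(X\boxtimes 1)\otimes M\simeq X\otimes M$ and $(1\boxtimes Y)\otimes M\simeq M\otimes Y$ via $r$ and $\ell$, as one wants.

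Next, the constraints. On generators $X\boxtimes Y$ and $X'\boxtimes Y'$ the objects $((X\boxtimes Y)\otimes(X'\boxtimes Y'))\otimes M$ and $(X\boxtimes Y)\otimes((X'\boxtimes Y')\otimes M)$ are, respectively, $(X\otimes X')\otimes(M\otimes(Y'\otimes Y))$ and $X\otimes((X'\otimes(M\otimes Y'))\otimes Y)$, and I would take the associativity isomorphism to be the composite
\begin{gather*}
(X\otimes X')\otimes\big(M\otimes(Y'\otimes Y)\big)
\xrightarrow{\ \mu^l\ }
X\otimes\Big(X'\otimes\big(M\otimes(Y'\otimes Y)\big)\Big)\\
\xrightarrow{\ id_X\otimes(id_{X'}\otimes(\mu^r)^{-1})\ }
X\otimes\Big(X'\otimes\big((M\otimes Y')\otimes Y\big)\Big)
\xrightarrow{\ id_X\otimes\gamma^{-1}\ }
X\otimes\Big(\big(X'\otimes(M\otimes Y')\big)\otimes Y\Big)
\end{gather*}
(with the evident subscripts on $\mu^l$, $\mu^r$, $\gamma$). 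For the unit I would use $\ell_M\circ(id_1\otimes r_M)\colon(1\boxtimes 1)\otimes M=1\otimes(M\otimes 1)\to M$, which by naturality of $\ell$ also equals $r_M\circ\ell_{M\otimes 1}$. Both families are natural in all arguments, being composites of $\mu^l,\mu^r,\gamma,\ell,r$, and since these are compatible with the bilinear, right-exact structure they extend — again by the universal property of $\boxtimes$ — to natural isomorphisms on all of $\mc{C}\boxtimes\mc{D}^{op}$.

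It remains to check the module pentagon and triangle axioms, and by right exactness in the $\mc{C}\boxtimes\mc{D}^{op}$-variables it is enough to do so on generating objects. The triangle axiom, after substituting the definitions above, reduces to the third coherence diagram of the statement together with the triangle (unit) axioms of $\mu^l$ and $\mu^r$. The module pentagon, evaluated on triples $X_i\boxtimes Y_i$ ($i=1,2,3$), becomes a large but finite diagram in $\mc{M}$; I would subdivide it into cells, each commuting by naturality of one of $\mu^l,\mu^r,\gamma$, by the pentagon for $\mu^l$, by the pentagon for $\mu^r$, or by one of the first two (mixed) pentagons of the statement — those two being precisely what is needed to slide a reassociation of the left $\mc{C}$-action past a reassociation of the right $\mc{D}$-action. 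This final diagram chase is the only genuine labor in the proof, and it requires no new ideas beyond careful bookkeeping.
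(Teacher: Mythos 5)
Your construction is correct and is precisely the one the paper intends: the paper's own proof is deferred to the thesis as ``straightforward,'' but the remark following the proposition (which recovers $\gamma_{X,M,Y}$ as $\mu_{X\boxtimes 1,\,1\boxtimes Y,\,M}$) confirms that the intended $\mc{C}\boxtimes\mc{D}^{op}$-action is exactly $(X\boxtimes Y)\otimes M = X\otimes(M\otimes Y)$ with associativity assembled from $\mu^l$, $\mu^r$, $\gamma$ and verified on generators via the two mixed pentagons. The only quibble is a direction convention: with the paper's orientation $\mu^r\colon M\otimes(Y\otimes Z)\to(M\otimes Y)\otimes Z$, the middle arrow of your associativity composite should be $id_{X'}\otimes\mu^r$ rather than its inverse.
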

\begin{proof} Straightforward (\cite{JGThesis} contains details).
\end{proof}
\begin{remark}For bimodule structure $(\mc{M}, \mu)$, $\gamma$ is given by $\gamma_{X, M, Y} = \mu_{X\boxtimes 1, 1\boxtimes Y, M}$ over the inherent left and right module category structures. In this way we get the converse of Proposition \ref{rmk;bimd}: every bimodule structure gives separate left and right module category structures and the special constraints described therein in a predictable way.
\end{remark}
\begin{remark}\label{bimodfunct}
We saw in Proposition \ref{rmk;bimd} that bimodule category structure can be described separately as left and right structures which interact in a predictable fashion. We make an analogous observation for bimodule functors. Let $F:(\mc{M}, \gamma)\rightarrow(\mc{N}, \delta)$ be a functor with left $\mc{C}$-module structure $f^{\ell}$ and right $\mc{D}$-module structure $f^r$, where $(\mc{M}, \gamma)$ and $(\mc{N}, \delta)$ are $(\mc{C}, \mc{D})$-bimodule categories with bimodule consistency isomorphisms $\gamma$, $\delta$ as above. Then $F$ is a $(\mc{C}, \mc{D})$-bimodule functor iff the hexagon
\[
\xymatrix{
F(X\otimes (M\otimes Y)) \ar[d]_{f^{\ell}_{X, M\otimes Y}} & F((X\otimes M)\otimes Y) \ar[r]^{f^r_{X\otimes M, Y}} \ar[l]_{\,\,\,F\gamma_{X, M, Y}} & F(X\otimes M)\otimes Y \ar[d]^{f^{\ell}_{X, M}\otimes Y}\\
X\otimes F(M\otimes Y) \ar[r]_{X\otimes f^r_{M, Y}} &X\otimes (F(M)\otimes Y) & (X\otimes F(M))\otimes Y\ar[l]^{\,\,\,\,\,\delta_{X, F(M), Y}}
 }\]
commutes for all $X$ in $\mc{C}$, $Y$ in $\mc{D}$, $M$ in $\mc{M}$. The proof is straightforward and so we do not include it.
\end{remark}
For right $\mc{C}$-module category $\mc{M}$ having module associativity $\mu$ define $\tilde{\mu}_{X, Y, M} = \mu_{M, {}^*Y, {}^*X}$. Then $\mc{M}^{op}$ has left $\mc{C}$-module category structure given by $(X, M) \mapsto M\otimes {}^*X$ with module associativity $\tilde{\mu}^{-1}$. Similarly, if $\mc{M}$ has left $\mc{C}$-module structure with associativity $\sigma$, then $\mc{M}^{op}$ has right $\mc{C}$-module category structure $(M, Y)\mapsto Y^*\otimes M$  with associativity $\tilde{\sigma}^{-1}$ for $\tilde{\sigma}_{M, X, Y} := \sigma_{Y^*, X^*, M}$. 
Lemma \ref{adjnt equiv} simply describes the bimodule structure in the opposite  category of functors. This is a special case of the following proposition.
\begin{prop}\label{bimodprop}
These actions determine a $(\mc{D}, \mc{C})$-bimodule structure 
\begin{equation*}
(Y\boxtimes X, M)\mapsto X^*\otimes M\otimes {}^*Y
\end{equation*}
on $\mc{M}^{op}$ whenever $\mc{M}$ has $(\mc{C}, \mc{D})$-bimodule structure. If $\gamma$ are the bimodule coherence isomorphisms for the left/right module structures in $\mc{M}$ (see Proposition \ref{rmk;bimd}), then $\tilde{\gamma}_{Y, M, X} =\gamma_{X^*, M, {}^*Y}$ are those for $\mc{M}^{op}$.
\end{prop}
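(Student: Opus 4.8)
The plan is to reduce the statement to Proposition \ref{rmk;bimd} together with the two one-sided constructions recalled just above. By the converse to Proposition \ref{rmk;bimd} (the remark following it), the $(\mc{C}, \mc{D})$-bimodule structure on $\mc{M}$ is precisely the data of a left $\mc{C}$-module structure with associativity $\sigma$, a right $\mc{D}$-module structure with associativity $\mu$, and the coherence family $\gamma_{X, M, Y}\colon (X\otimes M)\otimes Y\to X\otimes(M\otimes Y)$ satisfying the three diagrams of that proposition. First I would apply the preceding paragraph: the right $\mc{D}$-module structure $(\mc{M}, \mu)$ makes $\mc{M}^{op}$ a left $\mc{D}$-module category via $(Y, M)\mapsto M\otimes {}^*Y$ with associativity $\tilde{\mu}^{-1}$, and the left $\mc{C}$-module structure $(\mc{M}, \sigma)$ makes $\mc{M}^{op}$ a right $\mc{C}$-module category via $(M, X)\mapsto X^*\otimes M$ with associativity $\tilde{\sigma}^{-1}$. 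It then remains to produce the coherence family and to verify the three coherence diagrams.

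For the coherence family, note that in $\mc{M}^{op}$ one has $(Y\otimes M)\otimes X = X^*\otimes(M\otimes {}^*Y)$ and $Y\otimes(M\otimes X)=(X^*\otimes M)\otimes {}^*Y$, the tensor products on the right being computed in $\mc{M}$. I would define $\tilde{\gamma}_{Y, M, X}$ to be $\gamma_{X^*, M, {}^*Y}\colon(X^*\otimes M)\otimes {}^*Y\to X^*\otimes(M\otimes {}^*Y)$ read as a morphism of $\mc{M}^{op}$; this is exactly the formula $\tilde{\gamma}_{Y, M, X}=\gamma_{X^*, M, {}^*Y}$ asserted in the statement. Since $\gamma$ is a natural isomorphism and the one-sided duals $(-)^*$ and ${}^*(-)$ are functors, $\tilde{\gamma}$ is automatically a natural isomorphism of the required shape.

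Finally I would check the three diagrams of Proposition \ref{rmk;bimd} for $(\mc{M}^{op}, \tilde{\gamma})$ with the above left $\mc{D}$- and right $\mc{C}$-actions. The point is that, after unwinding the definitions of $\tilde{\mu}^{-1}$, $\tilde{\sigma}^{-1}$, $\tilde{\gamma}$ and reversing all arrows, each such diagram becomes one of the three diagrams for $(\mc{M}, \gamma)$ with the roles of the left and right factors interchanged and every object replaced by its appropriate dual; in particular the first pentagon for $\mc{M}^{op}$ (on $((YY')M)X$) corresponds to the second pentagon for $\mc{M}$ (on $(X^*M)({}^*Y'\,{}^*Y)$), the second pentagon for $\mc{M}^{op}$ to the first pentagon for $\mc{M}$, and the unit triangle to the unit triangle. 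Commutativity of the three translated diagrams then gives commutativity of the originals, and Proposition \ref{rmk;bimd} yields the claimed $(\mc{D}, \mc{C})$-bimodule structure on $\mc{M}^{op}$. I expect the only real obstacle to be bookkeeping: keeping straight that the $\mc{D}$-action uses left duals while the $\mc{C}$-action uses right duals, and verifying that the mixed associativity constraints occurring in the pentagons line up correctly under this dictionary — once the translation is set up, commutativity is immediate from the hypotheses on $\mc{M}$.
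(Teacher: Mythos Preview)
Your approach is correct and is precisely the routine verification the paper has in mind; the paper itself omits the proof of this proposition entirely, presumably regarding it as a straightforward consequence of Proposition~\ref{rmk;bimd} and the one-sided constructions described in the paragraph immediately preceding the statement. Your dictionary---that the first pentagon for $\mc{M}^{op}$ becomes the second for $\mc{M}$ and vice versa, with objects replaced by appropriate duals and arrows reversed---is exactly right, and the only subtlety (which you flag) is tracking which dual goes where and that ${}^*(YY')={}^*Y'\,{}^*Y$, $(XX')^*=X'^*X^*$ so the associativities $\tilde{\mu}^{-1}$, $\tilde{\sigma}^{-1}$ line up.
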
 
In the sequel whenever $\mc{M}$ is a bimodule category $\mc{M}^{op}$ will always refer to $\mc{M}$ with the bimodule structure described in Proposition \ref{bimodprop}.
In the following definition assume that module category $\mc{M}$ is semisimple over semisimple $\mc{C}$ with finite number of isomorphism classes of simple objects.
\begin{defn}\label{inthomdef} For $M, N\in\mc{M}$ their \textit{internal hom} $\inthom{M}{}{N}$ is defined to be the object in $\mc{C}$ representing the functor Hom$_{\mc{M}}(\score\otimes M, N):\mc{C}\rightarrow Vec$. That is, for any object $X\in\mc{C}$ we have
\begin{equation*}
\Hom_{\mc{M}}(X\otimes M, N)\simeq \Hom_{\mc{C}}(X, \inthom{M}{}{N})
\end{equation*}
naturally in $Vec$. It follows from Yoneda's Lemma that $\inthom{M}{}{N}$ is well defined up to a unique isomorphism and is a bifunctor.
\end{defn}
\subsubsection{Exact module categories.}
It is desirable to restrict the general study of module categories in order to render questions of classification tractable. In their beautiful paper \cite{EO:ftc} Etingof and Ostrik suggest the class of \textit{exact} module categories as an appropriate restriction intermediary between the semisimple and general (non-semisimple, possibly non-finite) cases. Let $P$ be an object in any abelian category. Recall that an object $P$ is called \textit{projective} if the functor $\Hom(P,-)$ is exact. 
\begin{defn} A module category $\mc{M}$ over $\mc{C}$ is called \textit{exact} if for any projective object $P\in\mc{C}$ and any $M\in\mc{M}$, the object $P\otimes M$ is projective.
\end{defn}
It turns out that exactness is equivalent to exactness of certain functors. We will not require the general formulation here, but formulate the next lemma for exact module categories because exactness ensures adjoints for module functors \cite{EO:FTC}.
\begin{lem}\label{Natadjoint} \label{adjnt equiv} For $\mc{M}, \mc{N}$ exact left $\mc{C}$-module categories the association 
\begin{equation*}
Fun_{\mc{C}}(\mc{M}, \mc{N})\stackrel{ad}{\rightarrow} Fun_{\mc{C}}(\mc{N}, \mc{M})^{op}
\end{equation*}
sending $F$ to its left adjoint is an equivalence of abelian categories. If $\mc{M}, \mc{N}$ are bimodule categories then this equivalence is bimodule.
\end{lem}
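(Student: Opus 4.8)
The plan is to prove the two assertions of the lemma in turn. First I would establish that the left-adjoint assignment $ad$ is a functor at all: given a module natural transformation $\tau : F \Rightarrow G$ between right exact $\mc{C}$-module functors $\mc{M} \to \mc{N}$, the mate $\tau^! : G^! \Rightarrow F^!$ between the respective left adjoints is again a module natural transformation. This is a diagram chase using the unit/counit of the adjunctions $F \dashv F^!$ and $G \dashv G^!$ together with the fact that the module structures $f, g$ on $F, G$ induce canonical module structures on $F^!, G^!$ (the standard ``module structure on the adjoint'' construction, valid because exactness guarantees the adjoints exist, per \cite{EO:FTC}); the compatibility square for $\tau^!$ reduces to the compatibility square for $\tau$ after conjugating by units and counits. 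That $ad$ reverses composition and sends identities to identities is immediate from uniqueness of adjoints. Additivity is clear. So $ad : \funn{M}{\mc{C}}{N} \to \funn{N}{\mc{C}}{M}^{op}$ is an additive functor.

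Next I would show $ad$ is an equivalence. The cleanest route is to exhibit a quasi-inverse: on the other side one has the ``right adjoint'' assignment $G \mapsto {}^!G$, and I claim $F \mapsto {}^!(F^!)$ is naturally isomorphic to the identity because for an exact module functor the left adjoint of the left adjoint is canonically $F$ itself (again using exactness to know all these adjoints exist and are module functors), and the natural isomorphism $F \cong {}^!(F^!)$ is a module natural isomorphism — this last point is where one again invokes the diagram chase from the first paragraph. Symmetrically $ad$ is essentially surjective and faithful; fullness follows since a module natural transformation $F^! \Rightarrow G^!$ has a mate $G \Rightarrow F$ which $ad$ sends back to it. Alternatively one can cite that $ad$ is an equivalence on the underlying abelian categories (this is classical, e.g. via \cite{EO:FTC}) and then only needs to check that $ad$ and its quasi-inverse preserve the module-natural-transformation condition, which is the content above.

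For the bimodule statement: when $\mc{M}, \mc{N}$ are $(\mc{C}, \mc{D})$-bimodule categories, i.e. $\mc{C} \boxtimes \mc{D}^{op}$-module categories, the category $\funn{M}{\mc{C}}{N}$ of \emph{left} $\mc{C}$-module functors carries a $(\mc{D}, \mc{D})$-bimodule structure by pre- and post-composing with the right $\mc{D}$-actions on $\mc{M}$ and $\mc{N}$ respectively (concretely $Y \tr F = (\score \otimes Y) \circ F$ on objects, and similarly for the right action, with associativity constraints built from $\gamma_{\mc{N}}$, $\gamma_{\mc{M}}$). I would unwind Proposition \ref{bimodprop}: the target $\funn{N}{\mc{C}}{M}^{op}$ is then given the bimodule structure described there, $(Y \boxtimes X', G) \mapsto$ (dual actions). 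The claim is that $ad$ intertwines these. On objects this says $(Y \tr F)^!$ is canonically $F^! \tl {}^*Y$ (resp. for the other side $(F \tl X)^! \cong X^* \tr F^!$), which follows from the standard fact that the left adjoint of a composite is the composite of left adjoints in the other order, together with the adjunction $(\score \otimes Y) \dashv (\score \otimes {}^*Y)$ inside $\mc{N}$; tracing through identifies the module-structure isomorphisms and the constraints $\tilde\gamma$ of Proposition \ref{bimodprop}. So $ad$ upgrades to a bimodule equivalence.

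The main obstacle is not any single hard idea but the bookkeeping in the first paragraph: verifying that taking mates of module natural transformations preserves the module-compatibility square, and — for the bimodule part — checking that the canonical isomorphisms $(Y \tr F)^! \cong F^! \tl {}^*Y$ are themselves module natural and match the constraints $\tilde\gamma$ prescribed by Proposition \ref{bimodprop} rather than differing by some spurious duality isomorphism. These are routine but lengthy coherence diagram chases; I expect most of them can be suppressed with a reference to \cite{JGThesis} in the style already used elsewhere in the paper, retaining only the statement that exactness (via \cite{EO:FTC}) is what licenses the existence of all the adjoints and their module structures.
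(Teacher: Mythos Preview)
Your proposal is correct and supplies exactly the standard argument one would expect: mates for functoriality, the right-adjoint assignment as quasi-inverse, and the composite-adjoint identification $(Y\tr F)^{!}\cong F^{!}\tl {}^*Y$ for the bimodule compatibility. The paper's own proof of this lemma is the single word ``Clear,'' so there is no alternative approach to compare against; you have simply written out the details the paper suppresses, and your remark that the coherence checks are routine but lengthy (and could be relegated to \cite{JGThesis}) matches the paper's evident attitude.
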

\begin{proof} Clear.
\end{proof}
\subsection{2-categories and monoidal 2-categories}\label{section:2cat} Recall that a 2-category is a generalized version of an ordinary category where we have cells of various degrees and rules dictating how cells of different degrees interact. There are two ways to compose 2-cells $\alpha, \beta$: \textit{vertical} composition $\beta\alpha$ and \textit{horizontal} composition $\beta*\alpha$ as described by the diagrams below.
\[
\xymatrix
{
A \ar@/^1.65pc/[r]^{f}_{}="1" \ar@/_1.65pc/[r]_{h}^{}="2" \ar[r]|<<<{g}^{}="3" & B \\
& \ar@{=>}"1";"3"^<<<{\beta} \ar@{=>}"3";"2"^<<<{\alpha}
}
\Rightarrow 
\xymatrix
{
A \ar@/^1pc/[r]^{f}_{}="1" \ar@/_1pc/[r]_{h}^{}="2" & B \\
& \ar@<-.5ex>@{=>}"1";"2"^{\alpha\beta} 
},  \;\quad
\xymatrix
{
A \ar@/^1pc/[r]^{f}_{}="1" \ar@/_1pc/[r]_{h}^{}="2" & B \ar@/^1pc/[r]^{f'}_{}="3" \ar@/_1pc/[r]_{h'}^{}="4" & C\\ & \ar@<-.5ex>@{=>}"1";"2"^{\alpha} \ar@<-.5ex>@{=>}"3";"4"^{\beta}
}
\Rightarrow
\xymatrix
{
A \ar@/^1pc/[r]^{f'f}_{}="1" \ar@/_1pc/[r]_{h'h}^{}="2" & C \\
&
\ar@<-.5ex>@{=>}"1";"2"^{\beta*\alpha} 
}
\]
It is required that $\alpha*\beta = (\beta\bullet h)(f'\bullet\alpha) = (h'\bullet\alpha)(\beta\bullet f)$ where $\bullet$ signifies composition between 1-cells and 2-cells giving 2-cells (see \cite{HigherOpsCats} for a thorough treatment of higher category theory and \cite{BenBicat}, \cite{KellyEnriched} for theory of enriched categories). For fixed monoidal category $\mc{C}$ we have an evident 2-category with 0-cells $\mc{C}$-module categories, 1-cells $\mc{C}$-module functors and 2-cells monoidal natural transformations.
\begin{ex}\label{prop:MC2cat} The category of rings defines a 2-category with 0-cells rings, 1-cells bimodules and 2-cells tensor products. 
%For tensor category $\mathcal{C}$ the category $M(\mathcal{C})$ of $\mathcal{C}$-module categories forms a $2$-category having 0-cells module categories, 1-cells module functors and 2-cells module natural transformations.
\end{ex}
A \textit{monoidal} 2-category is essentially a 2-category equipped with a monoidal structure that acts on pairs of cells of various types. For convenience we reproduce, in part, the definition of monoidal $2$-category as it appears in \cite{KV}.
\begin{defn}\label{defnMon2} Let $\mathcal{A}$ be a strict $2$-category. A \textit{(lax) monoidal structure} on $\mathcal{A}$ consists of the following data:
\begin{itemize}
\item[M1.] An object $1 = 1_{\mathcal{A}}$ called the unit object
\item[M2.] For any two objects $A$, $B$ in $\mathcal{A}$ a new object $A\otimes B$, also denoted $AB$
\item[M3.] For any $1$-morphism $u:A\rightarrow A'$ and any object $B$ a pair of $1$-morphisms $u\otimes B:A\otimes B \rightarrow A'\otimes B$ and $B\otimes u:B\otimes A \rightarrow B\otimes A'$
\item[M4.] For any $2$-morphism 
\[ 
\xy 
(-8,0)*+{A}="4"; 
(8,0)*+{A'}="6"; 
{\ar@/^1.65pc/^{u} "4";"6"}; 
{\ar@/_1.65pc/_{u'} "4";"6"}; 
{\ar@{=>}^<<<{T} (0,3)*{};(0,-3)*{}} ; 
\endxy 
\]
and object $B$ there exist $2$-morphisms
\[ 
\xy 
(-11,0)*+{A\otimes B}="4"; 
(11,0)*+{A'\otimes B}="6"; 
{\ar@/^1.65pc/^{u\otimes B} "4";"6"}; 
{\ar@/_1.65pc/_{u'\otimes B} "4";"6"}; 
{\ar@<-1.45ex>@{=>}^<<<{T\otimes B} (0,3)*{};(0,-3)*{}} ; 
\endxy 
\;\quad
\xy 
(-11,0)*+{B\otimes A}="4"; 
(11,0)*+{B\otimes A'}="6"; 
{\ar@/^1.65pc/^{B\otimes u} "4";"6"}; 
{\ar@/_1.65pc/_{B\otimes u'} "4";"6"}; 
{\ar@<-1.45ex>@{=>}^<<<{B\otimes T} (0,3)*{};(0,-3)*{}} ; 
\endxy 
\]
\item[M5.] For any three objects $A$, $B$, $C$ an isomorphism $a_{A, B, C}:A\otimes (B\otimes C)\rightarrow (A\otimes B)\otimes C$
\item[M6.] For any object $A$ isomorphisms $l_{A}:1\otimes A\rightarrow A$ and $r_{A}:A\otimes 1 \rightarrow A$
\item[M7.]\label{M7} For any two morphisms $u:A\rightarrow A'$, $v:B\rightarrow B'$ a $2$-isomorphism 
\[
\xymatrix{ 
A\otimes B 
\ar[r]^{A\otimes v}="1" 
\ar[d]_{u\otimes B}="2" 
& A\otimes B' 
\ar[d]^{u\otimes B'}\\ 
A'\otimes B 
\ar[r]^{A'\otimes v} 
& A'\otimes B' 
\ar@/^.7pc/@<1.5ex>@{=>}"1";"2"^<<<<{\otimes_{u, v}} 
}\]

\item[M8.]\label{M8} For any pair of composable morphisms $A\stackrel{u}{\rightarrow}A'\stackrel{u'}{\rightarrow}A''$ and object $B$ $2$-isomorphisms
\[
\xymatrix{
 A\otimes B \ar[d]_{u\otimes B} \ar[rr]^{(u'u)\otimes B}
           & & A''\otimes B \\ 
 A'\otimes B \ar@<-.85ex>[urr]_{u'\otimes B} \ar@{=>}@<.3ex>[ur]_<<<<<<<<<<<<<<{\otimes_{u, u', B}}
}\;\quad
\xymatrix{
 B\otimes A \ar[d]_{B\otimes u} \ar[rr]^{B\otimes (u'u)}
           & & B\otimes A'' \\ 
 B\otimes A' \ar@<-.85ex>[urr]_{B\otimes u'} \ar@{=>}@<.3ex>[ur]_<<<<<<<<<<<<<<{\otimes_{B, u, u'}}
}
\] 
\item[M9.] For any four objects $A, B, C, D$ a $2$-morphism 
\[
\xymatrix{
A\otimes (B\otimes (C\otimes D)) \ar[d]_{a_{A, B, C\otimes D}\otimes D} \ar[r]^{A\otimes a_{B, C, D}} & A\otimes ((B\otimes C)\otimes D) \ar[dd]^{a_{A, B\otimes C, D}}="1" \\ 
(A\otimes B)\otimes (C\otimes D)\quad\quad \ar[d]_{a_{A\otimes B, C, D}} \\ 
((A\otimes B)\otimes C)\otimes D) & (A\otimes (B\otimes C))\otimes D \ar[l]^{a_{A, B, C}\otimes D}="2" 
\ar@/^-1.1pc/@<-1.5ex>@{=>}"1";"2"_<<<<<<{a_{A, B, C, D}\quad} 
}
\]
\item[M10.] For any morphism $u:A\rightarrow A', v:B\rightarrow B', w:C\rightarrow C'$ $2$-isomorphisms
\[
\xymatrix{ 
A\otimes (B\otimes C) 
\ar[r]^{a_{A, B, C}}="1" 
\ar[d]_{u\otimes (B\otimes C)}="2" 
& (A\otimes B)\otimes C 
\ar[d]^{(u\otimes B)\otimes C}\\ 
A'\otimes (B\otimes C) 
\ar[r]_{a_{A', B, C}} 
& (A'\otimes B)\otimes C 
\ar@/^.9pc/@<1.5ex>@{=>}"1";"2"^<<<<{a_{u, B, C}} 
}\;\quad 
\xymatrix{ 
A\otimes (B\otimes C) 
\ar[r]^{a_{A, B, C}}="1" 
\ar[d]_{A\otimes (v\otimes C)}="2" 
& (A\otimes B)\otimes C 
\ar[d]^{(A\otimes v)\otimes C}\\ 
A\otimes (B'\otimes C) 
\ar[r]_{a_{A, B', C}} 
& (A\otimes B')\otimes C 
\ar@/^.9pc/@<1.5ex>@{=>}"1";"2"^<<<<{a_{A, v, C}} 
}
\] 
\[
\xymatrix{ 
A\otimes (B\otimes C) 
\ar[r]^{a_{A, B, C}}="1" 
\ar[d]_{A\otimes (B\otimes w)}="2" 
& (A\otimes B)\otimes C 
\ar[d]^{(A\otimes B)\otimes w}\\ 
A\otimes (B\otimes C') 
\ar[r]_{a_{A, B, C'}} 
& (A\otimes B)\otimes C' 
\ar@/^.9pc/@<1.5ex>@{=>}"1";"2"^<<<<{a_{A, B, w}} 
}
\]
\item[M11.] For any two objects $A, B$ $2$-isomorphism
\[\xymatrix{
 A\otimes (B\otimes 1) \ar[d]_{a_{A, B, 1}} \ar[rr]^{A\otimes r_B}
           & & A\otimes B \\ 
 (A\otimes B)\otimes 1 \ar[urr]_{r_{A\otimes B}} \ar@{=>}@<.3ex>[ur]_<<<<<<<<<<<<<<{\rho_{A, B}}
}\;\quad 
\xymatrix{
 1\otimes (A\otimes B) \ar[d]_{a_{1, A, B}} \ar[rr]^{l_{A\otimes B}}
           & & A\otimes B \\ 
 (1\otimes A)\otimes B \ar[urr]_{l_A\otimes B} \ar@{=>}@<.3ex>[ur]_<<<<<<<<<<<<<<<<{\lambda_{A, B}}
}
\] 
\[
\xymatrix{
 A\otimes (1\otimes B) \ar[d]_{a_{A, 1, B}} \ar[rr]^{A\otimes l_B}
           & & A\otimes B \\ 
 (A\otimes 1)\otimes B \ar[urr]_{r_A\otimes B} \ar@{=>}@<.3ex>[ur]_<<<<<<<<<<<<<<{\mu_{A, B}}
}
\]
\item[M12.] For any morphism $u:A\rightarrow A'$ $2$-isomorphisms 
\[
\xymatrix{ 
1\otimes A 
\ar[r]^{1\otimes u}="1" 
\ar[d]_{l_A}="2" 
& 1\otimes A' 
\ar[d]^{l_{A'}}\\ 
A 
\ar[r]^{u} 
& A' 
\ar@/^.7pc/@<1.5ex>@{=>}"1";"2"^<<<<<{l_u} 
}
\quad\quad
\xymatrix{ 
A\otimes 1 
\ar[r]^{u\otimes 1}="1" 
\ar[d]_{r_A}="2" 
& A'\otimes 1 
\ar[d]^{r_{A'}}\\ 
A 
\ar[r]^{r} 
& A' 
\ar@/^.7pc/@<1.5ex>@{=>}"1";"2"^<<<<<{r_u} 
}
\]
\item[M13.] A $2$-isomorphism $\epsilon:r_1\Rightarrow l_1$.
\end{itemize}
\end{defn}
These data are further required to satisfy a series of axioms given in the form of \textit{commutative polytopes} listed by Kapranov and Voevodsky. As well as describing the sort of naturality we should expect (extending that appearing in the definition of 2-cells for categories of functors) these polytopes provide constraints on the various cells at different levels and dictates how they are to inteact. For the sake of brevity we do not list them here but will refer to the diagrams in the original paper when needed. In \cite{KV} these polytopes are indicated using hieroglyphic notation. The Stasheff polytope, for example, (which they signify by $(\bullet\otimes\bullet\otimes\bullet\otimes\bullet\otimes\bullet)$, pg. 217) describes how associativity 2-cells and their related morphisms on pentuples of 0-cells interact. In the sequel we will adapt their hieroglyphic notation without explanation. 

We digress briefly to explain what is meant by ``commuting polytope." This notion will be needed for the proof of Theorem \ref{thm:main} Our discussion is taken from \textit{loc. cit.}.
%POLYTOPE DISCUSSION*****************************************************************************
In a strict 2-category $\mc{A}$ algebraic expressions may take the form of 2-dimensional cells subdivided into smaller cells indicating the way in which the larger 2-cells are to be composed. This procedure is referred to as \textit{pasting}. Consider the diagram below left.
\[
\xymatrix{
& \ar[rr]^<<<<{}="1"^g & & \ar[dr]_<<<<{}="5"^f &\\
\ar[ur]^h \ar[rr]^>>>>>>{}="2"_>>>>>>{}="3"^k \ar[dr]_c & &\ar[ur]^d \ar[dr]_e & &\\
& \ar[rr]^<<<<{}="4"_b & &\ar[ur]^<<<<{}="6"_a &\\
\ar@{=>}"1";"2"^{T} 
\ar@{=>}"3";"4"^{V} 
\ar@/_.6pc/@{=>}"5";"6"^{U}
}\;\quad 
\xymatrix{
&\ar[rr]^>>>>>>{}="1"^g\ar[dr] & & \ar[dr]^f &\\
\ar[ur]_>>>>{}="5"^h \ar[dr]^>>>>{}="6"_c & &  \ar[rr]^<<<<{}="2"_<<<<{}="3" & &\\
& \ar[rr]^>>>>>>{}="4"_b  \ar[ur]& &\ar[ur]_a &\\
\ar@{=>}"1";"2"^{A} 
\ar@{=>}"3";"4"^{C} 
\ar@/^.6pc/@{=>}"5";"6"^{B}
}\]
Edges are 1-cells and faces (double arrows) are 2-cells in $\mc{A}$; $T:gh\Rightarrow dk$, $V:ek \Rightarrow bc$, $U:fd\Rightarrow ae$.  The diagram represents a 2-cell $fgh\Rightarrow abc$ in $\mc{A}$ as follows. It is possible to compose 1-cell $F$ and 2-cell $\alpha$ obtaining new 2-cells $F*\alpha, \alpha * F$ whenever these compositions make sense. If $\alpha:G\Rightarrow H$, these are new 2-cells $FG\Rightarrow FH$ and $GF\Rightarrow HF$, respectively. Pasting of diagram above left represents the composition
\begin{equation*}
fgh \stackrel{f*T}{\Longrightarrow}fdk\stackrel{U*k}{\Longrightarrow}aek\stackrel{a*V}{\Longrightarrow}abc.
\end{equation*}
For 2-composition abbreviated by juxtaposition the pasting is then $(a*V)(U*k)(f*T)$. In case the same external diagram is subdivided in different ways a new 3-dimensional polytope may be formed by gluing along the common edges. Thus the two 2-dimension diagrams can be combined along the edges $fgh$ and $abc$ to form the new 3-dimensional polytope
\[
\xymatrix{
& \arb{rr}_>>>>>>{}="11"^f & &\\
\arb{ur}_<<<<<<{}="1"^g \ar[rr]^<<<<<<{}="2"_>>>>>>{}="9" & &\ar[ur] &\\
& \ar@{.>}[uu]|{\,\,\,}\ar@{.>}[rr]|{\,\,} & &\arb{uu}^>>>>>>{}="12" _a\\
\ar@{.>}[ur]_<<<<<<<<<{}="3"_<<<<<<<<<{}="5"\arb{uu}_<<<<<<<<<{}="4" ^h\arb{rr}^<<<<<<<<<{}="6"_c &&\ar[uu]_<<<<<<{}="7"^>>>>>>{}="10"\arb{ur}^<<<<<<{}="8"_b &
\ar@/^.6pc/@{=>}"1";"2"^{A}
\ar@/^.6pc/@{=>}"4";"3"^{T}
\ar@/^.6pc/@{=>}"5";"6"^{V}
\ar@/^.6pc/@{=>}"7";"8"^{C}
\ar@/_.6pc/@{=>}"9";"10"^{B}
\ar@/_.6pc/@{=>}"11";"12"_<<<{U}
}\]
We have labeled only those edges common to the two original figures. As an aid to deciphering polytope commutativity we will denote the boundary with bold arrows as above. To say that the polytope commutes is to say that the results of the pastings of the two sections of its boundary agree. In such a case we say that the pair of diagrams composing the figure are equal: the 2-cells they denote in $\mc{A}$ coincide. 
\section{Balanced Functors and Tensor Products}\label{sec:baltens}
In the remaining sections of this paper we will describe data giving the 2-category of $\mc{C}$-bimodule categories for a fixed tensor $\mc{C}$ the structure of a monoidal 2-category.
\subsection{Preliminary definitions and first properties.} In what follows all module categories are taken over finite tensor categories. Recall definition of tensor functor (Definition \ref{moduleFunctorDef}) and of right exactness (Definition \ref{rightExactFunctDef}).
%REMARK ABOUT BIMODULE CATEGORY STRUCTURE AND FUNCTORS*************************
%DEFINITION OF BALANCED FUNCTOR
\begin{defn}\label{DEF;bal}Suppose $(\mc{M}, \mu)$ right, $(\mc{N}, \eta)$ left $\mc{C}$-module categories. A functor $F:\mc{M}\boxtimes \mc{N} \rightarrow \mc{A}$ is said to be \textit{$\mc{C}$-balanced} if there are natural isomorphisms $b_{M, X, Y}:F((M\otimes X)\boxtimes N)\simeq F(M\boxtimes (X\otimes N))$ satisfying the pentagon 
\[
\xymatrix{
F((M\otimes(X\otimes Y))\boxtimes N) \ar[rr]^{b_{M, X\otimes Y, N}} \ar[d]^{\mu_{M, X, Y}}& & F(M\boxtimes((X\otimes Y)\otimes N)) \ar[d]^{\eta_{X, Y, N}}\\
F(((M\otimes X)\otimes Y)\boxtimes N) \ar[dr]_{b_{M\otimes X, Y, N}} & & F(M\boxtimes(X\otimes (Y\otimes N))) \\
& F((M\otimes X)\boxtimes (Y\otimes N)) \ar[ur]_{b_{M, X, Y\otimes N}} }
\]
whenever $X$, $Y$ are objects of $\mc{C}$ and $M\in\mc{M}$.
\end{defn}
\begin{remark} The above occurred in \cite{TambaraTensor} as the definition of ``$k$-bilinear functor" on module categories over $k$-linear tensor categories. The relative tensor product is studied and applied in \cite{ENO:homotop} where many properties are derived in the case that module categories in question are semisimple. 
\end{remark}
Of course Definition \ref{DEF;bal} can be extended to functors from the Deligne product of more than two categories.

\begin{defn}\label{DEF;multbal} Let $F:\mc{M}_1\boxtimes\mc{M}_2\boxtimes\cdots \boxtimes\mc{M}_n\rightarrow \mc{N}$ be a functor of abelian categories and suppose that, for some $i$, $1\leq i \leq n-1$, $\mc{M}_i$ is a right $\mc{C}$-module category and $\mc{M}_{i+1}$ a left $\mc{C}$-module category. Then $F$ is said to be \textit{balanced in the $i^{th}$ position} if there are natural isomorphisms $b^i_{X, M_1,...,M_n}:F(M_1\boxtimes \cdots \boxtimes (M_i\otimes X)\boxtimes M_{i +1}\boxtimes \cdots \boxtimes M_n)\simeq F(M_1\boxtimes \cdots \boxtimes M_i\boxtimes (X\otimes M_{i+1})\boxtimes \cdots\boxtimes M_n)$ whenever $M_i$ are in $\mc{M}_i$ and $X$ is in $\mc{C}$. The $b^i$ are required to satisfy a diagram analogous to that described in Definition \ref{DEF;bal}.
\end{defn}

One may also define \textit{multibalanced} functors $F$ balanced at multiple positions simultaneously.  We will need, and so define, only the simplest nontrivial case.
\begin{defn}\label{def;combal} Let $\mc{M}_1$ be right $\mc{C}$-module, $\mc{M}_2$ $(\mc{C}, \mc{D})$-bimodule, and $\mc{M}_3$ a left $\mc{D}$-module category. The functor $F:\mc{M}_1\boxtimes\mc{M}_2\boxtimes\mc{M}_3\rightarrow
\mc{N}$ is said to be \textit{completely balanced} (or $2$-\textit{balanced}) if for $X\in\mc{C}$, $Y\in\mc{D}$, $N\in\mc{M}_2$, $M\in\mc{M}_1$ and $P\in\mc{M}_3$ there are natural  isomorphisms 
%$b^1_{M, X, N, P}:F((M\otimes X)\boxtimes N \boxtimes P)\simeq F(M\boxtimes (X\otimes N)\boxtimes P)$ and $b^2_{M, N, Y, P}:F(M\boxtimes (N\otimes Y)\boxtimes P)\simeq F(M\boxtimes N\boxtimes (Y\otimes P))$ for $X$ in 
\begin{eqnarray*}
&&b^1_{M, X, N, P}:F((M\otimes X)\boxtimes N \boxtimes P)\simeq F(M\boxtimes (X\otimes N)\boxtimes P)\\
&&b^2_{M, N, Y, P}:F(M\boxtimes (N\otimes Y)\boxtimes P)\simeq F(M\boxtimes N\boxtimes (Y\otimes P))
\end{eqnarray*} 
satisfying the balancing diagrams in Definition \ref{DEF;bal} and the consistency pentagon
\[
\xymatrix{
F((M\otimes X)\boxtimes (N\otimes Y) \boxtimes P) \ar[rr]_{\quad b^2_{M \otimes X, N, Y, P}} \ar[d]|{b^1_{M, X, N\otimes Y, P}}& & F((M\otimes X)\boxtimes N\boxtimes (Y\otimes P)) \ar[dd]^{b^1_{M, X, N, Y\otimes P}}\\
F(M\boxtimes (X\otimes (N\otimes Y))\boxtimes P) \ar[d]_{\gamma^{-1}_{X, N, Y}}&&\\ 
F(M\boxtimes ((X\otimes N)\otimes Y)\boxtimes P) \ar[rr]_{\quad b^2_{M, X, Y\otimes N}}&&F(M\boxtimes (X\otimes N)\boxtimes (Y\otimes P)) }
\]
Here $\gamma$ is the family of natural isomorphisms associated to the bimodule structure in $\mc{M}_2$ (see Remark \ref{rmk;bimd}). Whenever $F$ from $\mc{M}_1\boxtimes\mc{M}_2\boxtimes\cdots \boxtimes\mc{M}_n$ is balanced in ``all" positions call $F$ \textit{$(n-1)$-balanced} or \textit{completely balanced}. In this case the consistency axioms take the form of commuting polytopes. For example the consistency axiom for $4$-balanced functors is equivalent to the commutativity of a polytope having eight faces (four pentagons and four squares) which reduces to a cube on elision of $\gamma$-labeled edges. With this labeling scheme the $1$-balanced functors are the original ones given in Definition \ref{DEF;bal}.
\end{defn}
\begin{defn}\label{DEF;tensor}The \textit{tensor product} of right $\mc{C}$-module category $\mc{M}$ and left $\mc{C}$-module category $\mc{N}$ consists of an abelian category $\mc{M}\boxtimes_{\mc{C}}\mc{N}$ and a right exact $\mc{C}$-balanced functor $B_{\mc{M}, \mc{N}}: \mc{M}\boxtimes \mc{N}\rightarrow \mc{M}\boxtimes_{\mc{C}}\mc{N}$ universal for \textit{right exact} $\mc{C}$-balanced functors from $\mc{M}\boxtimes \mc{N}$. 
\end{defn}
\begin{remark}\label{uniRemark} Universality here means that for any right exact $\mc{C}$-balanced functor $F:\mc{M}\boxtimes \mc{N}\rightarrow \mc{A}$ there exists a unique right exact functor $\overline{F}$ such that the diagram on the left commutes. 
\[
\xymatrix{
 \mc{M}\boxtimes \mc{N} \ar[d]_{B_{\mc{M}, \mc{N}}} \ar[r]^{F}
            & \mc{A} \\ 
 \mc{M}\boxtimes_{\mc{C}} \mc{N} \ar[ur]_{\overline{F}} }
\;\qquad\qquad
\xymatrix{
 \mc{M}\boxtimes \mc{N} \ar[d]_{B_{\mc{M}, \mc{N}}} \ar[drr]^>>>>{F} \ar[rr]^{U}
            && \mc{U} \ar[d]^{F'} \ar@{.>}[dll]_<<<<<<{\alpha} \\ 
 \mc{M}\boxtimes_{\mc{C}} \mc{N} \ar[rr]_{\overline{F}} && \mc{A}
 }
\]
The category $\mc{M}\boxtimes_{\mc{C}} \mc{N}$ and the functor $B_{\mc{M}, \mc{N}}$ are defined up to a unique equivalence. This means that if $U:\mc{M}\boxtimes\mc{N} \rightarrow \mc{U}$ is a second right exact balanced functor with $F=F'U$ for unique right exact functor $F'$, there is a unique equivalence of abelian categories $\alpha :\mc{U}\rightarrow \mc{M}\boxtimes_{\mc{C}}\mc{N}$ making the diagram on the right commute.
\end{remark}
\begin{remark}  The definition of balanced functor may be easily adapted to bifunctors from $\mc{M}\times\mc{N}$ instead of $\mc{M}\boxtimes\mc{N}$. In this case the definition of tensor product becomes object universal for balanced functors \textit{right exact in both variables} from $\mc{M}\times\mc{N}$. This is the approach taken by Deligne in \cite{CatsTann}. One easily checks that our definition reduces to Deligne's for $\mc{C}=Vec$. This provides some justification for defining the relative tensor product in terms of right-exact functors as opposed to functors of some other sort.
\end{remark}
The following lemma is a straightforward application of tensor product universality from Definition \ref{DEF;tensor}. We list it here for later reference. 
\begin{lem}\label{universelemma}
Let $F, G$ be right exact functors $\mc{M}\boxtimes_{\mc{C}}\mc{N}\rightarrow \mc{A}$ such that $F\Buni{M}{N}=G\Buni{M}{N}$. Then $F=G$.
\end{lem}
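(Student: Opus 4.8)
The plan is to use the universality of the balanced functor $\Buni{M}{N}$ as articulated in Remark \ref{uniRemark}. The key observation is that $\Buni{M}{N}$ is \emph{itself} a right exact $\mc{C}$-balanced functor, so it qualifies as one of the functors $F$ to which the universal factorization property applies; the uniqueness clause in that property is what forces $F=G$.

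First I would note that, since $\Buni{M}{N}:\mc{M}\boxtimes\mc{N}\rightarrow\mc{M}\boxtimes_{\mc{C}}\mc{N}$ is right exact and $\mc{C}$-balanced (with some balancing isomorphisms $b$), the composite $H := F\Buni{M}{N} = G\Buni{M}{N}:\mc{M}\boxtimes\mc{N}\rightarrow\mc{A}$ is a right exact $\mc{C}$-balanced functor: right exactness is preserved under composition of right exact functors, and the balancing datum for $H$ is obtained by transporting $b$ through $F$ (equivalently through $G$) in the evident way. Then I would invoke Definition \ref{DEF;tensor}/Remark \ref{uniRemark}: there exists a \emph{unique} right exact functor $\overline{H}:\mc{M}\boxtimes_{\mc{C}}\mc{N}\rightarrow\mc{A}$ with $H = \overline{H}\,\Buni{M}{N}$. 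But by hypothesis both $F$ and $G$ are right exact functors satisfying exactly this equation, $H = F\,\Buni{M}{N} = G\,\Buni{M}{N}$, so uniqueness gives $F = \overline{H} = G$.

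The one point requiring a little care — and the main (mild) obstacle — is checking that $H$ is genuinely balanced as a functor, i.e. that the balancing isomorphisms transported through $F$ satisfy the pentagon of Definition \ref{DEF;bal}; but this is automatic since $F$ is just a functor and the pentagon for $H$ is the image under $F$ of the pentagon for $\Buni{M}{N}$, which commutes by assumption. (Strictly, one should also confirm that the notion of "uniqueness" in Remark \ref{uniRemark} is uniqueness of the functor, not merely uniqueness up to isomorphism; the statement there asserts a unique $\overline{F}$ making the triangle commute on the nose, which is exactly what we need.) With that in place the proof is complete; no further computation is needed.
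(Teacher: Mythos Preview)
Your proof is correct and essentially the same as the paper's: both invoke the uniqueness clause in the universal property of $\Buni{M}{N}$ from Remark \ref{uniRemark}. The paper phrases this via the unique self-equivalence $\alpha$ of $\mc{M}\boxtimes_{\mc{C}}\mc{N}$ (which must be the identity), whereas you invoke uniqueness of the factorization $\overline{H}$ directly; these are two formulations of the same observation.
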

\begin{proof}
In the diagram
\[
\xymatrix{
 \mc{M}\boxtimes \mc{N} \ar[d]_{B_{\mc{M}, \mc{N}}} \ar[drr]|>>>>>>{T} \ar[rr]^{\Buni{M}{N}}
            &&\mc{M}\boxtimes_{\mc{C}} \mc{N}\ar[d]^{F} \ar@{.>}[dll]_<<<<<<{\alpha} \\ 
 \mc{M}\boxtimes_{\mc{C}}\mc{N}\ar[rr]_{G} && \mc{A}
 }
\]
for $T=F\Buni{M}{N}=G\Buni{M}{N}$ the unique equivalence $\alpha$ is $id_{\mc{M}\boxtimes_{\mc{C}} \mc{N}}$.
\end{proof}
%DEFINITION OF CATEGORY OF BALANCED FUNCTORS********************************************
\begin{defn}\label{def;balfunct} For $ \mc{M}$ a right $\mc{C}$-module category and $ \mc{N}$ a left $\mc{C}$-module category denote by $\underline{Fun}^{bal}(\mc{M}\boxtimes \mc{N}, \mc{A})$ the category of right exact $\mc{C}$-balanced functors. Morphisms are natural transformations $\tau : (F, f)\rightarrow (G, g)$ where $f$ and $g$ are balancing isomorphisms for $F$ and $G$ satisfying, whenever $M\in \mc{M}$ and $N\in\mc{N}$,
\[
\xymatrix{
F((M\otimes X)\boxtimes N) \ar[d]_{f_{M, X, N}} \ar[rr]^{\tau_{M\otimes X, N}}
            &&G((M\otimes X)\boxtimes N) \ar[d]^{g_{M, X, N}}\\ 
F((M\boxtimes (X\otimes N)) \ar[rr]_{\tau_{M, X\otimes N}} && G(M\boxtimes (X\otimes N)) }
\] 
for $X$ in $\mc{C}$. Call morphisms in a category of balanced functors \textit{balanced natural transformations}. Similarly we can define $\underline{Fun}^{bal}_i(\mc{M}_1\boxtimes\dots \boxtimes\mc{M}_n, \mc{A})$ to be the category of right exact functors ``balanced in the $i^{th}$ position" requiring of morphisms a diagram similar to that above. 
\end{defn}
It is not obvious at this point that such a universal category exists. The proof of Proposition 3.8 in \cite{ENO:homotop} shows that, in the semisimple case, $\mc{M}\boxtimes_{\mc{C}}\mc{N}$ is equivalent to the center $Z_{\mc{C}}(\mc{M}\boxtimes\mc{N})$ (see \S \ref{braidedmodulesection1} for definitions and discussion). We include the statement here without proof. Functor $I$ is right adjoint to forgetful functor from the center.
\begin{prop}\label{CentTens} There is a canonical equivalence
\begin{equation*}
\mc{M}\boxtimes_{\mc{C}}\mc{N}\simeq Z_{\mc{C}}(\mc{M}\boxtimes\mc{N})
\end{equation*}
such that $I:\mc{M}\boxtimes\mc{N}\rightarrow Z_{\mc{C}}(\mc{M}\boxtimes\mc{N})$ is identified with the universal balanced functor $B_{\mc{M}, \mc{N}}:\mc{M}\boxtimes\mc{N}\rightarrow \mc{M}\boxtimes_{\mc{C}}\mc{N}$.
\end{prop}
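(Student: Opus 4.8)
The plan is to verify the universal property of Definition~\ref{DEF;tensor} directly for the category $Z_{\mc{C}}(\mc{M}\boxtimes\mc{N})$ together with its right adjoint-to-forgetful functor $I$, and then invoke the uniqueness clause of Remark~\ref{uniRemark} to conclude. First I would recall the precise description of the relative center $Z_{\mc{C}}(\mc{M}\boxtimes\mc{N})$: its objects are pairs $(Z,\gamma)$ with $Z\in\mc{M}\boxtimes\mc{N}$ and $\gamma_X:X\otimes Z\xrightarrow{\sim}Z\otimes X$ a half-braiding compatible with the left and right $\mc{C}$-actions on $\mc{M}\boxtimes\mc{N}$ and satisfying the hexagon; the forgetful functor $U:Z_{\mc{C}}(\mc{M}\boxtimes\mc{N})\to\mc{M}\boxtimes\mc{N}$ has a right adjoint $I$ (existence of which uses finiteness/exactness of the categories involved, as in \cite{EO:ftc}), and $I$ is automatically right exact as a right adjoint. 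One then equips $I$ with a canonical balancing isomorphism: the half-braiding data carried by $I(M\boxtimes N)$ produces natural isomorphisms $I((M\otimes X)\boxtimes N)\simeq I(M\boxtimes(X\otimes N))$ using that both expressions receive the two $\mc{C}$-actions on $\mc{M}\boxtimes\mc{N}$ which $\gamma$ intertwines; checking the balancing pentagon of Definition~\ref{DEF;bal} reduces to the hexagon axiom for the half-braiding.

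Next I would establish universality. Given any right exact $\mc{C}$-balanced functor $F:\mc{M}\boxtimes\mc{N}\to\mc{A}$ with balancing $b$, I must produce a unique right exact $\overline{F}:Z_{\mc{C}}(\mc{M}\boxtimes\mc{N})\to\mc{A}$ with $\overline{F}\circ I=F$ (up to the coherent identification). The construction is the standard one for adjunctions: $\overline{F}:=F\circ U$ will not literally work since $F\circ U\circ I\neq F$ in general, so instead one uses that $I$ exhibits $Z_{\mc{C}}(\mc{M}\boxtimes\mc{N})$ as (the target of) a universal balanced functor precisely because balanced functors out of $\mc{M}\boxtimes\mc{N}$ correspond, via the free-forgetful adjunction, to functors out of the category of ``$\mc{C}$-central'' objects. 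Concretely: a right exact balanced functor $F$ with balancing $b$ is the same datum as a right exact functor on $\mc{M}\boxtimes\mc{N}$ that is lax compatible with the two $\mc{C}$-actions in a way that $b$ trivializes the difference, and such a datum factors uniquely through the universal object carrying a half-braiding, namely $Z_{\mc{C}}(\mc{M}\boxtimes\mc{N})$ with its tautological $\gamma$. Uniqueness of $\overline{F}$ follows from Lemma~\ref{universelemma} once we know $Z_{\mc{C}}(\mc{M}\boxtimes\mc{N})$ with $I$ is \emph{some} universal balanced pair: if $\overline{F}_1\circ I=\overline{F}_2\circ I$ then $\overline{F}_1=\overline{F}_2$.

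The cleanest route, and the one I would actually write, is to avoid reconstructing $\overline{F}$ by hand and instead cite the semisimple case: \cite{ENO:homotop}, Proposition 3.8, already proves that in the semisimple setting the universal balanced category is realized by the relative center with $B_{\mc{M},\mc{N}}$ identified with $I$. So the proof is: (i) recall that $Z_{\mc{C}}(\mc{M}\boxtimes\mc{N})$ exists and $I$ is its right adjoint-to-forgetful, right exact; (ii) endow $I$ with the balancing coming from the half-braiding and check the pentagon via the center hexagon; (iii) quote \cite{ENO:homotop} Prop.~3.8 for the universal property; (iv) conclude by the uniqueness-up-to-unique-equivalence of Remark~\ref{uniRemark} that the abstract $\mc{M}\boxtimes_{\mc{C}}\mc{N}$ of Definition~\ref{DEF;tensor} is canonically equivalent to $Z_{\mc{C}}(\mc{M}\boxtimes\mc{N})$ with $B_{\mc{M},\mc{N}}$ matching $I$ under that equivalence.

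\textbf{Main obstacle.} The delicate point is establishing the universal property itself rather than just exhibiting the balancing: one must show that \emph{every} right exact balanced functor out of $\mc{M}\boxtimes\mc{N}$ factors \emph{uniquely} (not merely up to isomorphism) through $I$, which amounts to showing that the half-braiding datum on $Z_{\mc{C}}(\mc{M}\boxtimes\mc{N})$ is the universal solution to ``trivializing the two $\mc{C}$-actions.'' In the non-semisimple case this requires care with right exactness — one needs that the relevant adjoints exist and are right exact, and that the comparison functor $\overline{F}$ one builds is right exact — which is exactly where finiteness of the tensor categories and exactness hypotheses enter. Since the excerpt explicitly defers the proof (``We include the statement here without proof''), I would present the above as a sketch, lean on \cite{ENO:homotop} for the core argument, and flag that the general (finite, not-necessarily-semisimple) version follows by the same center construction together with the existence of right exact adjoints for module functors over finite tensor categories.
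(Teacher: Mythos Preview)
The paper does not give its own proof of this proposition: it explicitly states ``We include the statement here without proof'' and attributes the result (in the semisimple case) to Proposition~3.8 of \cite{ENO:homotop}. Your proposal correctly recognizes this and takes the same route --- cite \cite{ENO:homotop} for the universal property and invoke the uniqueness clause of Remark~\ref{uniRemark} --- so there is nothing further to compare.

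One genuine slip to fix: you write that $I$ ``is automatically right exact as a right adjoint.'' Right adjoints are automatically \emph{left} exact, not right exact; it is left adjoints that preserve colimits. In the semisimple setting this is harmless since every additive functor between semisimple abelian categories is exact, but in the general finite case right exactness of $I$ is not free and needs its own justification (you correctly flag exactly this issue in your ``Main obstacle'' paragraph, so the error is local rather than structural).
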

\subsection{Module category theoretic structure of tensor product}
In this section we examine functoriality of $\boxtimes_{\mc{C}}$ and discuss module structure of the tensor product.

For $\mc{M}$ a right $\mc{C}$-module category, $\mc{N}$ a left $\mc{C}$-module category, universality of $\Buni{M}{N}$ implies an equivalence between categories of functors
\begin{equation}\label{eqn;equiv}
\mc{Y}:\underline{Fun}^{bal}(\mc{M}\boxtimes \mc{N}, \mc{A}) \stackrel{\sim}{\rightarrow}\underline{Fun}(\mc{M}\tens{C}\mc{N}, \mc{A})
\end{equation}
sending $F\mapsto\overline{F}$ (here overline is as in Definition \ref{DEF;tensor}). Quasi-inverse $\mc{W}$ sends $G\mapsto G\Buni{M}{N}$ with balancing $G*b$, $b$ the balancing of $B_{\mc{M}, \mc{N}}$. On natural transformations $\tau$, $\mc{W}$ is defined by $\mc{W}(\tau) = \tau*\Buni{M}{N}$ where $*$ is the product of 2-morphism and 1-morphism: components are given by $\mc{W}(\tau)_{M\boxtimes N} = \tau_{{\Buni{M}{N}}(M\boxtimes N)}$. One easily checks that $\mc{Y}\mc{W}=id$ so that $\mc{W}$ is a strict right quasi-inverse for $\mc{Y}$. Let $J:\mc{W}\mc{Y}\rightarrow id$ be any natural isomorphism. Then components of $J$ are balanced isomorphisms $J_{(F, f)}:(F, \overline{F}*b)\rightarrow (F, f)$ where $f$ is balancing for functor $F$. Being balanced means commutativity of the diagram 
\[\xymatrix{
F(M\otimes X\tens{}N)\ar[rr]^{\overline{F}(b_{M, X, N})}\ar[d]_{J_{MX\tens{}N}}&&F(M\tens{}X\otimes N)\ar[d]^{J_{M\tens{}XN}}\\
F(M\otimes X\tens{}N)\ar[rr]_{f_{M, X, N}}&&F(M\tens{}X\otimes N)
}\]
for any $M\in\mc{M}, X\in\mc{C}, N\in\mc{N}$. Hence any balancing structure $f$ on the functor $F$ is conjugate to $\overline{F}*b$ in the sense that 
\begin{equation}\label{balancing conjugation equation}
f_{M, X, N}=J_{M\tens{}XN}\circ\overline{F}(b_{M, X, N})\circ J^{-1}_{MX\tens{}N}.
\end{equation}
\begin{remark}\label{remarkComma}Let $F, G:\bitens{M}{}{N}\rightarrow\mc{A}$ be right exact $\mc{C}$-balanced functors. To understand how $\mc{Y}$ acts on balanced natural transformation $\tau:F\rightarrow G$ recall that to any functor $E:\mc{S}\rightarrow\mc{T}$ we associate the \textit{comma category}, denoted $(E, \mc{T})$, having objects triples $(X, Y, q)\in\mc{S}\times\mc{T}\times\Hom_{\mc{T}}(E(X), Y)$. A morphism $(X, Y, q)\rightarrow(X', Y', q')$ is a pair of morphisms $(h, k)$ with the property that $k\circ q = q'\circ E(h)$. For $E$ right exact and $\mc{S}, \mc{T}$ abelian $(E, \mc{T})$ is abelian (\cite{FGR}). 

Let $\overline{F}$ be the unique right exact functor having $\overline{F}\Buni{M}{N}=F$ and consider the comma category $(\overline{F}, \mc{A})$. Natural balanced transformation $\tau$ determines a functor $S_{\tau}:\bitens{M}{}{N}\rightarrow(\overline{F}, \mc{A})$, $X\mapsto(\Buni{M}{N}(X), G(X), \tau_X)$ and $f\mapsto(F(f), G(f))$. It is evident that $S_\tau$ is right exact and inherits $\mc{C}$-balancing from that in $\Buni{M}{N}$, $G$ and $\tau$. Thus we have a unique functor $\overline{S_\tau}:\bitens{M}{C}{N}\rightarrow(\overline{F}, \mc{A})$ with $\overline{S_\tau}\Buni{M}{N}=S_\tau$. Write $\overline{S_\tau}=(S_1, S_2, \sigma)$. Using Lemma \ref{universelemma} one shows that $S_1 = id_{\bitens{M}{C}{N}}$ and $S_2 = \overline{G}$. Then $\sigma(Y):\overline{F}(Y)\rightarrow\overline{G}(Y)$ for $Y\in\bitens{M}{C}{N}$. This is precisely $\overline{\tau}:\overline{F}\rightarrow\overline{G}$.
\end{remark}
Given right exact right $\mc{C}$-module functor $F:\mc{M}\rightarrow \mc{M}'$ and right exact left $\mc{C}$-module functor $G:\mc{N}\rightarrow \mc{N}'$ note that $B_{\mc{M}', \mc{N}'}(F\boxtimes G):\mc{M}\boxtimes \mc{N}\rightarrow \mc{M}'\tens{C}\mc{N}'$ is $\mc{C}$-balanced. Thus the universality of $B$ implies the existence of a unique right exact functor $F\boxtimes_{\mc{C}}G := \overline{B_{\mc{M}', \mc{N}'}(F\boxtimes G)}$ making the diagram 
\[
\xymatrix{
 \mc{M}\boxtimes \mc{N} \ar[d]_{B_{\mc{M}, \mc{N}}} \ar[r]^{F\boxtimes G}
            & \mc{M}'\boxtimes \mc{N}' \ar[d]^{B_{\mc{M}', \mc{N}'}} \\ 
 \mc{M}\boxtimes_{\mc{C}} \mc{N} \ar[r]_{F\boxtimes_{\mc{C}}G} & \mc{M}'\boxtimes_{\mc{C}} \mc{N}'  }
\]
commute. One uses Lemma \ref{universelemma} to show that
%THE DIAGRAM**************************************************************
%\[\xymatrix{\mc{M}\boxtimes\mc{N} \ar[r]^{F\boxtimes E} \ar[dd]_{\Buni{M}{N}} & \mc{M}'\boxtimes\mc{N}' \ar[d]^{\Buni{M'}{N'}} \ar[r]^{F'\boxtimes E'} & \mc{M}''\boxtimes\mc{N}'' \ar[dd]^{\Buni{M''}{N''}} \\& \mc{M}'\tens{C}\mc{N}' \ar[dr]_{F'\tens{C}E'} &\\\mc{M}\tens{C}\mc{N} \ar[ur]_{F\tens{C}E} \ar[rr]_{F'F\tens{C}E'E} & & \mc{M}''\tens{C}\mc{N}''}\]
$\tens{C}$ is functorial on $1$-cells: $(F'\tens{C}E')(F\tens{C}E) = F'F\tens{C}E'E$. We leave the relevant diagrams for the reader. Thus the 2-cells in M7. of Definition \ref{M8} are identity. If we define $F\otimes\mc{N}:=F\tens{C}id_{\mc{N}}$ (Definition \ref{FMdef}) then the 2-cells in M8. are identity as well.

\begin{remark}\label{tensorOf2-cells} Next consider how $\tens{C}$ can be applied to pairs of module natural transformations. Apply $B_{\mc{N}, \mc{N}'}$ to the right of the diagram for the Deligne product of $\tau$ and $\sigma$\[ \xy (-11,0)*+{\mc{M}\boxtimes\mc{M}'}="4"; (11,0)*+{\mc{N}\boxtimes\mc{N}'}="6"; (33, 0)*+{\mc{N}\tens{C}\mc{N}'} ="8";{\ar@/^1.65pc/^{F\boxtimes E} "4";"6"}; {\ar@/_1.65pc/_{G\boxtimes H} "4";"6"}; {\ar@<-1.45ex>@{=>}^<<<{\tau\boxtimes\sigma} (0,3)*{};(0,-3)*{}} ; {\ar^{B_{\mc{N}, \mc{N}'}} "6";"8"}\endxy \]giving natural transformation \begin{equation}\label{B acting on nats}(\tau\boxtimes\sigma)':=B_{\mc{N}, \mc{N}'}*(\tau\boxtimes\sigma):B_{\mc{N}, \mc{N}'}(F\boxtimes E)\Rightarrow B_{\mc{N}, \mc{N}'}(G\boxtimes H)\end{equation}having components $B_{\mc{N}, \mc{N}'}*(\tau\boxtimes\sigma)_{A\boxtimes B} = B_{\mc{N}, \mc{N}'}(\tau_A\boxtimes\sigma_B)$. Here $*$ indicates composition between cells of different index (in this case a 1-cell and a 2-cell with the usual 2-category structure in \textbf{Cat}). 

It is easy to see that this is a balanced natural transformation, i.e. a morphism in the category of balanced right exact functors $\underline{Fun}^{bal}(\mc{M}\boxtimes\mc{N}, \mc{M}'\tens{C}\mc{N}')$. Using comma category $(F\tens{C}F', \mc{M}'\tens{C}\mc{N}')$ we get
\begin{equation}
\tau\tens{C}\sigma := \overline{(\tau\boxtimes\sigma)'} : F\tens{C}F'\Rightarrow G\tens{C}G'.
\end{equation}
Note also that $\tens{C}$ is functorial over vertical composition of $2$-cells: $(\tau'\tens{C}\sigma')(\tau\tens{C}\sigma) = \tau'\tau\tens{C}\sigma'\sigma$ whenever the compositions make sense. Though we do not prove it here observe also that $\tens{C}$ preserves horizontal composition $\bullet$ of $2$-cells: 
\begin{equation*}
(\tau'\bullet\tau)\tens{C}(\sigma'\bullet\sigma) = (\tau'\tens{C}\sigma')\bullet(\tau\tens{C}\sigma).
\end{equation*} 
\end{remark}
%
%BIMODULENESS OF TENSOR PRODUCT AND OF B**************************************************
For the following proposition recall that, for left $\mc{C}$-module category $\mc{M}$, the functor $L_X:\mc{M}\rightarrow\mc{M}$ sending $M\mapsto X\otimes M$ for $X\in\mc{C}$ fixed is right exact. This follows from the fact that $\Hom(X^*\otimes N, \score)$ is left exact for any $N\in\mc{M}$.
\begin{prop}\label{lem;tensbimd} Let $\mc{M}$ be a $(\mc{C}, \mc{E})$-bimodule category and $\mc{N}$ a $(\mc{E}, \mc{D})$-bimodule category. Then $\mc{M}\boxtimes_{\mc{E}} \mc{N}$ is a $(\mc{C}, \mc{D})$-bimodule category and $B_{\mc{M}, \mc{N}}$ is a $(\mc{C}, \mc{D})$-bimodule functor.
\end{prop}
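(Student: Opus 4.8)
The plan is to build the left $\mc{C}$-action and right $\mc{D}$-action on $\mc{M}\boxtimes_{\mc{E}}\mc{N}$ directly out of the functors $L_X\boxtimes_{\mc{E}}\mathrm{id}$ and $\mathrm{id}\boxtimes_{\mc{E}}R_Y$, then verify the bimodule coherence data via the universal property of $B_{\mc{M},\mc{N}}$ (Definition \ref{DEF;tensor}) together with Lemma \ref{universelemma}. Concretely: for $X\in\mc{C}$ the functor $L_X:\mc{M}\to\mc{M}$, $M\mapsto X\otimes M$, is right exact (as recalled just before the proposition) and is a right $\mc{E}$-module functor because $\mc{M}$ is a $(\mc{C},\mc{E})$-bimodule category — the needed right $\mc{E}$-module structure on $L_X$ is exactly the consistency isomorphism $\gamma_{X,M,-}$ of Proposition \ref{rmk;bimd}. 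Hence $L_X\boxtimes_{\mc{E}}\mathrm{id}_{\mc{N}}:\mc{M}\boxtimes_{\mc{E}}\mc{N}\to\mc{M}\boxtimes_{\mc{E}}\mc{N}$ is defined, and we set $X\otimes(-):=L_X\boxtimes_{\mc{E}}\mathrm{id}$. Symmetrically, using that $R_Y:\mc{N}\to\mc{N}$, $N\mapsto N\otimes Y$, is a right exact left $\mc{E}$-module functor, we set $(-)\otimes Y:=\mathrm{id}_{\mc{M}}\boxtimes_{\mc{E}}R_Y$. Functoriality of $\boxtimes_{\mc{E}}$ on $1$-cells (proved in the excerpt) gives that $X\mapsto L_X\boxtimes_{\mc{E}}\mathrm{id}$ is strictly compatible with $\otimes$ in $\mc{C}$ up to the same $2$-cells that $\mc{M}$ carries, so the module associativity $\mu^l$ for $\mc{M}\boxtimes_{\mc{E}}\mc{N}$ is produced by applying $\boxtimes_{\mc{E}}$ to the module associativity of $\mc{M}$; similarly for $\mu^r$ from $\mc{N}$.

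Next I would establish the bimodule consistency isomorphism $\Gamma_{X,-,Y}:(X\otimes(-))\otimes Y\Rightarrow X\otimes((-)\otimes Y)$ required by Proposition \ref{rmk;bimd}. In terms of the constituents this is a natural isomorphism $(\mathrm{id}\boxtimes_{\mc{E}}R_Y)(L_X\boxtimes_{\mc{E}}\mathrm{id})\Rightarrow(L_X\boxtimes_{\mc{E}}\mathrm{id})(\mathrm{id}\boxtimes_{\mc{E}}R_Y)$, i.e. (by functoriality of $\boxtimes_{\mc{E}}$ on $1$-cells) an isomorphism $L_X\boxtimes_{\mc{E}}R_Y\Rightarrow L_X\boxtimes_{\mc{E}}R_Y$ between the two orders of composition, which on the level of $\mc{M}\boxtimes\mc{N}$ is simply the identity: $B_{\mc{M},\mc{N}}(L_X M\boxtimes R_Y N)=B_{\mc{M},\mc{N}}((X\otimes M)\boxtimes(N\otimes Y))$ does not see an order. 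So $\Gamma$ is essentially the identity, and its two pentagons and the unit triangle of Proposition \ref{rmk;bimd} reduce, after precomposition with $B_{\mc{M},\mc{N}}$, to identities of balanced functors; Lemma \ref{universelemma} then forces them on $\mc{M}\boxtimes_{\mc{E}}\mc{N}$ itself. The unit constraints $\ell,r$ for the two actions come likewise from those of $\mc{M}$ and $\mc{N}$ pushed through $\boxtimes_{\mc{E}}$. Proposition \ref{rmk;bimd} then packages $(\mu^l,\mu^r,\Gamma)$ into a genuine $(\mc{C},\mc{D})$-bimodule structure on $\mc{M}\boxtimes_{\mc{E}}\mc{N}$.

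For the second assertion, that $B_{\mc{M},\mc{N}}$ is a $(\mc{C},\mc{D})$-bimodule functor, I would use Remark \ref{bimodfunct}: it suffices to exhibit a left $\mc{C}$-module structure $b^{\ell}$ and a right $\mc{D}$-module structure $b^r$ on $B_{\mc{M},\mc{N}}$ and check the hexagon there. But $B_{\mc{M},\mc{N}}(X\otimes M\boxtimes N)=B_{\mc{M},\mc{N}}(L_X M\boxtimes N)$ and, by the very definition of $L_X\boxtimes_{\mc{E}}\mathrm{id}$ as $\overline{B_{\mc{M}',\mc{N}'}(L_X\boxtimes\mathrm{id})}$ and the square it completes, this is canonically $(L_X\boxtimes_{\mc{E}}\mathrm{id})B_{\mc{M},\mc{N}}(M\boxtimes N)=X\otimes B_{\mc{M},\mc{N}}(M\boxtimes N)$; take $b^{\ell}$ to be this canonical isomorphism (an identity if one is careful with the "overline"), and symmetrically $b^r$. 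The hexagon of Remark \ref{bimodfunct} then again lives over $\mc{M}\boxtimes\mc{N}$ where all arrows are the structural identities of the Deligne product, so it commutes, and Lemma \ref{universelemma} transports the conclusion to $\mc{M}\boxtimes_{\mc{E}}\mc{N}$. The one genuine subtlety — the step I expect to be the main obstacle — is bookkeeping the balancing isomorphisms: $L_X$ carries the right $\mc{E}$-module structure $\gamma_{X,M,-}$, and one must check that this is exactly the data making $B_{\mc{M}',\mc{N}'}(L_X\boxtimes\mathrm{id})$ balanced in the way required so that the "overline" is well-defined and so that the resulting $\mu^l$ and $\Gamma$ satisfy the pentagons of Proposition \ref{rmk;bimd} rather than merely weaker equations; once the $\mc{E}$-balancing is tracked consistently through every application of $\boxtimes_{\mc{E}}$, the coherence identities are formal consequences of those in $\mc{M}$, $\mc{N}$ and the Deligne product.
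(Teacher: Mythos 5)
Your proposal is correct and takes essentially the same route as the paper: the paper likewise defines the actions by descending $M\boxtimes N\mapsto (X\otimes M)\boxtimes N$ and $M\boxtimes N\mapsto M\boxtimes(N\otimes Y)$ through the universal property of $B_{\mc{M},\mc{N}}$ (using the consistency isomorphisms of $\mc{M}$ and $\mc{N}$ as the balancing data, exactly the point you flag), and invokes Lemma \ref{universelemma} to get $\overline{L_X}\,\overline{R_Y}=\overline{R_Y}\,\overline{L_X}$ and the triviality of the resulting bimodule constraint, whence $B_{\mc{M},\mc{N}}$ is a bimodule functor. Your phrasing via $L_X\boxtimes_{\mc{E}}\mathrm{id}$ is the same construction, since that functor is by definition $\overline{B_{\mc{M},\mc{N}}(L_X\boxtimes\mathrm{id})}$.
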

\begin{proof} For $X$ in $\mc{C}$ define functor $L_X :\mc{M}\boxtimes \mc{N} \rightarrow \mc{M}\boxtimes \mc{N}$ : $M\boxtimes N\mapsto (X\otimes M)\boxtimes N$. Then there is a unique right exact $\overline{L_X}$ making the diagram on the left commute; bimodule consistency isomorphisms in $\mc{M}$ make $L_X$ balanced. 
\[
\xymatrix{
\mc{M}\boxtimes\mc{N}\ar[d]_{B_{\mc{M}, \mc{N}}} \ar[r]^{B_{\mc{M}, \mc{N}}L_X} & {\mc{M}\boxtimes_{\mc{D}} \mc{N}}  \\ 
\mc{M}\boxtimes_{\mc{D}} \mc{N} \ar[ur]_{\overline{L_X}}    }
\;\quad\quad
\xymatrix{
\mc{M}\boxtimes\mc{N}\ar[d]_{B_{\mc{M}, \mc{N}}} \ar[r]^{B_{\mc{M}, \mc{N}}R_Y} & {\mc{M}\boxtimes_{\mc{D}} \mc{N}}  \\ 
\mc{M}\boxtimes_{\mc{D}} \mc{N} \ar[ur]_{\overline{R_Y}}    }
\]
Similarly, for $Y$ in $\mc{D}$ define endofunctor $R_Y:M\boxtimes N\mapsto M\boxtimes (N\otimes Y)$. Then there is unique right exact $\overline{R_Y}$ making the diagram on the right commute; bimodule consistency isomorphisms in $\mc{N}$ make $R_Y$ balanced. $\overline{L_X}$ and $\overline{R_Y}$ define left/right module category structures on $\mc{M}\tens{E} \mc{N}$. Indeed for $\mu$ the left module associativity in $\mc{M}$ note that $B_{\mc{M}, \mc{N}}(\mu_{X, Y, M}\boxtimes id_N):\overline{L_X}\,\overline{L_Y}B_{\mc{M}, \mc{N}} \simeq\overline{L_{X\otimes Y}}B_{\mc{M}, \mc{N}}$ is an isomorphism in $\underline{Fun}^{bal}(\mc{M}\boxtimes\mc{N},  \mc{M}\tens{E} \mc{N})$ and thus corresponds to an isomorphism $\overline{L_X}\,\overline{L_Y} \simeq \overline{L_{X\otimes Y}}$ in $\underline{End}(\mc{M}\tens{E}\mc{N})$ which therefore satisfies the diagram for left module associativity in  $\mc{M}\tens{E} \mc{N}$. Composing diagonal arrows we obtain the following commuting diagram.
\[
\xymatrix{
\mc{M}\boxtimes\mc{N} \ar[d]_{B_{\mc{M}, \mc{N}}} \ar[r]^{B_{\mc{M}, \mc{N}}L_X} & {\mc{M}\boxtimes_{\mc{D}} \mc{N}}  \ar[r]^{\overline{R_Y}} & \mc{M}\boxtimes_{\mc{D}} \mc{N}\\ 
\mc{M}\boxtimes_{\mc{D}} \mc{N} \ar[ur]^{\overline{L_X}} \ar[urr]_{\,\,\overline{R_Y}\,\overline{L_X}}   }
\]
Note then that 
\begin{equation*}
\overline{L_X}\;\overline{R_Y}B_{\mc{M}, \mc{N}}= \overline{R_Y}\, \overline{L_X}B_{\mc{M}, \mc{N}} 
\end{equation*}
and since $\overline{R_Y}\, \overline{L_X}B_{\mc{M}, \mc{N}}$ is balanced Lemma \ref{universelemma} implies $\overline{R_Y}\, \overline{L_X} =  \overline{L_X}\,\overline{R_Y}$. Suppose $Q\in\mc{M}\tens{E} \mc{N}$. Then $(X\boxtimes Y)\otimes Q := \overline{L_X}\,\overline{R_Y}Q = \overline{R_Y}\,\overline{L_X}Q$ defines $(\mc{C}, \mc{D})$-bimodule category structure on  $\bitens{M}{E}{N}$. Note also that since the bimodule consistency isomorphisms in $\mc{M}\boxtimes \mc{N}$ are trivial the same holds in $\bitens{M}{E}{N}$. As a result $\Buni{M}{N}$ is a $(\mc{C}, \mc{D})$-bimodule functor.
\end{proof}
In the sequel we will use $L_X$ to denote left action of $X\in\mc{C}$ in $\mc{M}\boxtimes\mc{N}$ and for the induced action on $\bitens{M}{C}{N}$. Similarly for $R_X$.
%
%REMARK****************************************************************************************
%
\begin{remark}\label{rmk:bmdmd} The above construction is equivalent to defining left and right module category structures as follows. For the right module structure
\begin{equation*}
\otimes :(\mc{M}\mc{N})\boxtimes \mc{C} \stackrel{\alpha^1_{\mc{M}, \mc{N},\mc{C}}}{\longrightarrow}\mc{M}(\mc{N}\boxtimes \mc{C})\stackrel{id\otimes}{\longrightarrow}\mc{M}\mc{N}
\end{equation*} 
where $\alpha^1$ is defined in Lemma \ref{prop;asslem} and where tensor product of module categories has been written as juxtaposition. The left action is similarly defined using $\alpha^2$ and left module structure of $\mc{M}$ in second arrow. 
\end{remark}
%LEMMA ON UNIT IN B(C)******************************************************************************
\begin{prop}\label{lem;unit} Let $\mc{M}$ be a $(\mc{C}, \mc{D})$-bimodule category. Then there are canonical $(\mc{C},\mc{D})$-bimodule equivalences $\mc{M}\tens{D}\mc{D}\simeq \mc{M}\simeq \mc{C}\tens{C}\mc{M}$.  
\end{prop}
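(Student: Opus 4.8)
The plan is to establish the right-unit equivalence $\mc{M}\tens{D}\mc{D}\simeq\mc{M}$; the left-unit case $\mc{C}\tens{C}\mc{M}\simeq\mc{M}$ is entirely symmetric (using the opposite-category conventions of Proposition \ref{bimodprop}), so I would treat it only briefly. The key observation is that the action bifunctor $\otimes:\mc{M}\boxtimes\mc{D}\to\mc{M}$ (coming from the right $\mc{D}$-module structure on $\mc{M}$) is right exact in each variable, hence factors through Deligne's product to a right exact functor $\mc{M}\boxtimes\mc{D}\to\mc{M}$, and moreover it is $\mc{D}$-balanced: the balancing isomorphism $b_{M,Y,N}:(M\otimes Y)\otimes N\to M\otimes(Y\otimes N)$ is precisely the right-module associativity constraint $\mu^r$ of $\mc{M}$, and the balancing pentagon of Definition \ref{DEF;bal} is exactly the pentagon axiom for $\mu^r$. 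By the universal property (Definition \ref{DEF;tensor}, Remark \ref{uniRemark}) this yields a unique right exact functor $\Phi:\mc{M}\tens{D}\mc{D}\to\mc{M}$ with $\Phi\,B_{\mc{M},\mc{D}}=\otimes$.

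Next I would produce a quasi-inverse $\Psi:\mc{M}\to\mc{M}\tens{D}\mc{D}$ by setting $\Psi(M)=B_{\mc{M},\mc{D}}(M\boxtimes\unit)$, where $\unit$ is the unit object of $\mc{D}$. Using the right unit constraint $r_M:M\otimes\unit\to M$ one checks $\Phi\Psi(M)=M\otimes\unit\cong M$ naturally, so $\Phi\Psi\simeq\operatorname{id}_{\mc{M}}$. For the other composite, $\Psi\Phi\,B_{\mc{M},\mc{D}}(M\boxtimes Y)=B_{\mc{M},\mc{D}}((M\otimes Y)\boxtimes\unit)$, and the balancing isomorphism of $B_{\mc{M},\mc{D}}$ together with $r$ supplies a natural isomorphism $B_{\mc{M},\mc{D}}((M\otimes Y)\boxtimes\unit)\cong B_{\mc{M},\mc{D}}(M\boxtimes(Y\otimes\unit))\cong B_{\mc{M},\mc{D}}(M\boxtimes Y)$; I would check this collection of isomorphisms is itself $\mc{D}$-balanced as a natural transformation $\Psi\Phi\,B_{\mc{M},\mc{D}}\Rightarrow B_{\mc{M},\mc{D}}$, invoke the equivalence $\mc{Y}$ of \eqref{eqn;equiv} (equivalently Lemma \ref{universelemma} applied to the induced map on $\mc{M}\tens{D}\mc{D}$), and conclude $\Psi\Phi\simeq\operatorname{id}_{\mc{M}\tens{D}\mc{D}}$. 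Thus $\Phi$ is an equivalence of abelian categories.

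It remains to upgrade $\Phi$ to a $(\mc{C},\mc{D})$-bimodule equivalence. Here I would use the description of the bimodule structure on $\mc{M}\tens{D}\mc{D}$ from Proposition \ref{lem;tensbimd}: the left $\mc{C}$-action is $\overline{L_X}$ and the right $\mc{D}$-action is $\overline{R_Y}$, where $L_X$, $R_Y$ act on the first, resp.\ second, Deligne factor. Pre-composing with $B_{\mc{M},\mc{D}}$ and using $\Phi B_{\mc{M},\mc{D}}=\otimes$, the left-module constraint for $\Phi$ reduces to the $(\mc{C},\mc{D})$-bimodule consistency isomorphism $\gamma_{X,M,Y}:(X\otimes M)\otimes Y\to X\otimes(M\otimes Y)$ of $\mc{M}$ (for $Y$ run through objects of $\mc{D}$, with $\unit$ in the relevant slot), and the right-module constraint reduces to $\mu^r$; the hexagon of Remark \ref{bimodfunct} that must commute for $\Phi$ to be a bimodule functor then follows from the pentagon axioms satisfied by $\gamma$ and $\mu^r$ (Proposition \ref{rmk;bimd}) after translating along $B_{\mc{M},\mc{D}}$ and applying Lemma \ref{universelemma}. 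The main obstacle I anticipate is purely bookkeeping: verifying that the various candidate module-structure isomorphisms on $\Phi$ are genuinely \emph{module} natural transformations (i.e.\ descend through the universal balanced functor), since each such check requires recognizing a KV-style coherence diagram for $\mc{M}\tens{D}\mc{D}$ as the image under $B_{\mc{M},\mc{D}}$ of a coherence diagram already known to hold in $\mc{M}$. Once the translation dictionary via $B_{\mc{M},\mc{D}}$ and Lemma \ref{universelemma} is set up, no genuinely new coherence identity is needed.
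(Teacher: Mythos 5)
Your proposal is correct and follows essentially the same route as the paper: your $\Phi$ is the paper's functor $l_{\mc{M}}$ obtained by factoring the action $\otimes$ through $B_{\mc{M},\mc{D}}$, your $\Psi$ is the paper's $U'=B_{\mc{M},\mc{D}}(-\boxtimes\unit)$, and your use of the balancing isomorphism $b_{M,Y,\unit}$ descended via Lemma \ref{universelemma} to get $\Psi\Phi\simeq\operatorname{id}$ is exactly the paper's $\overline{\tau}$ argument. The bimodule upgrade you sketch matches what the paper disposes of via strictness (and Remark \ref{r_r_M=id remark}).
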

\begin{proof} Observing that the $\mc{D}$-module action $\otimes$ in $\mc{M}$ is balanced let $l_{\mc{M}}:\mc{M}\tens{D}\mc{D}\rightarrow \mc{M}$ denote the unique exact functor factoring $\otimes$ through $B_{\mc{M}, \mc{D}}$. Define $U:\mc{M}\rightarrow \mc{M}\boxtimes\mc{D}$ by $M\mapsto M\boxtimes1$ and write $U' = \Buni{M}{D}U$. We wish to show that $l_{\mc{M}}$ and $U'$ are inverses. 

Note first that $l_{\mc{M}}U' = id_{\mc{M}}$. Now define natural isomorphism $\tau : \Buni{M}{D}\Rightarrow U'\otimes$ by $\tau_{M, X} = b^{-1}_{M, X, 1}$ where $b$ is balancing isomorphism for $B_{\mc{M}, \mc{D}}$. As a balanced natural isomorphism $\tau$ corresponds to an isomorphism $\overline{\tau}:\overline{\Buni{M}{D}}=id_{\mc{M}\tens{D}\mc{D}}\Rightarrow \overline{U'\otimes}$ in the category $\underline{\End}(\mc{M}\tens{D}\mc{D})$. Commutativity of the diagram
\[
\xymatrix{
\mc{M}\boxtimes\mc{D} \ar[rr]^{\otimes} \ar[d]_{\Buni{M}{D}} && \mc{M} \ar[d]^{U'}\\
\mc{M}\tens{D}\mc{D} \ar[urr]^{l_{\mc{M}}} \ar[rr]_{\overline{U'\otimes}} && \mc{M}\tens{D}\mc{D}
} \]
implies $U'l_{\mc{M}} = \overline{U'\otimes}$ so that $id_{\mc{M}\tens{D}\mc{D}} \simeq U'\l_{\mc{M}}$ via $\overline{\tau}$. In proving $\mc{C}\tens{C}\mc{M}\simeq\mc{M}$ one lifts the left action of $\mc{C}$ for an equivalence $r_{\mc{M}}:\mc{C}\tens{C}\mc{M}\stackrel{\sim}{\rightarrow}\mc{M}$. Strict associativity of the module action on $\mc{M}$ implies that both $r_{\mc{M}}$ and $l_{\mc{M}}$ are trivially balanced. 
\end{proof}
\begin{cor}\label{Functoriality l, r lemma} Let $(F, f):\mc{M}\rightarrow\mc{N}$ be a morphism in $\mc{B}(\mc{C})$ where $f$ is left $\mc{C}$-module linearity for $F$. Then there is a natural isomorphism $Fr_{\mc{M}}\stackrel{\sim}{\rightarrow}r_{\mc{N}}(id_{\mc{C}}\tens{C}F)$ satisfying a polytope version of the diagram for module functors in Definition \ref{moduleFunctorDef}. A similar result holds for the equivalence $l$. 
\end{cor}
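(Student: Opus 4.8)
The plan is to unwind the construction of $r_{\mc{M}}$ from Proposition \ref{lem;unit} and transport the module structure of $F$ across the universal property. Recall that $r_{\mc{M}}:\mc{C}\tens{C}\mc{M}\to\mc{M}$ is the unique right exact functor with $r_{\mc{M}}\Buni{C}{M} = \otimes$, where $\otimes:\mc{C}\boxtimes\mc{M}\to\mc{M}$ is the left $\mc{C}$-action, which is $\mc{C}$-balanced because the module associativity of $\mc{M}$ is strict. First I would form the two composite functors $\mc{C}\tens{C}\mc{M}\to\mc{N}$ appearing in the claim, namely $Fr_{\mc{M}}$ and $r_{\mc{N}}(id_{\mc{C}}\tens{C}F)$. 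Precomposing each with the universal balanced functor $\Buni{C}{M}:\mc{C}\boxtimes\mc{M}\to\mc{C}\tens{C}\mc{M}$ gives, on the one hand, $Fr_{\mc{M}}\Buni{C}{M} = F\circ\otimes_{\mc{M}}$, i.e. $X\boxtimes M\mapsto F(X\otimes M)$; on the other hand, using the defining commuting square for $id_{\mc{C}}\tens{C}F$ (the functoriality square displayed just before Proposition \ref{lem;tensbimd}, with $\mc{M}'=\mc{C}$, $\mc{N}'=\mc{N}$), one gets $r_{\mc{N}}(id_{\mc{C}}\tens{C}F)\Buni{C}{M} = r_{\mc{N}}\Buni{C}{N}(id_{\mc{C}}\boxtimes F) = \otimes_{\mc{N}}\circ(id\boxtimes F)$, i.e. $X\boxtimes M\mapsto X\otimes F(M)$.

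The left $\mc{C}$-module structure $f_{X,M}:F(X\otimes M)\xrightarrow{\sim}X\otimes F(M)$ of $F$ is precisely a natural isomorphism between these two composites $F\circ\otimes_{\mc{M}}\Rightarrow \otimes_{\mc{N}}\circ(id\boxtimes F)$ of functors $\mc{C}\boxtimes\mc{M}\to\mc{N}$. The key point is that $f$ is a \emph{balanced} natural transformation: I would check, using the hexagon characterizing $(\mc{C},?)$-bimodule functors (Remark \ref{bimodfunct}) together with the fact that the balancings of $\Buni{C}{M}$ and $\Buni{C}{N}$ are built from the trivial bimodule-consistency isomorphisms of the Deligne products (as established inside the proof of Proposition \ref{lem;tensbimd}), that $f$ satisfies the square in Definition \ref{def;balfunct}. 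Since $F\circ\otimes_{\mc{M}} = (Fr_{\mc{M}})\Buni{C}{M}$ and $\otimes_{\mc{N}}\circ(id\boxtimes F) = \bigl(r_{\mc{N}}(id_{\mc{C}}\tens{C}F)\bigr)\Buni{C}{M}$, the equivalence $\mc{Y}$ of \eqref{eqn;equiv} (universality of $\Buni{C}{M}$ on $2$-cells, as spelled out in Remark \ref{remarkComma}) carries the balanced natural isomorphism $f$ to a natural isomorphism $\overline{f}:Fr_{\mc{M}}\xrightarrow{\sim}r_{\mc{N}}(id_{\mc{C}}\tens{C}F)$. This is the asserted isomorphism.

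It remains to verify that $\overline{f}$ satisfies the coherence polytope — the $2$-categorical analogue of the pentagon/square axiom for module functors in Definition \ref{moduleFunctorDef}. Here the strategy is again to pull everything back along $\Buni{C}{M}$: both sides of the desired polytope equation, when whiskered by $\Buni{C}{M}$, become pasting diagrams built out of $f$, the module associativity $\sigma^{\mc{M}}$, $\sigma^{\mc{N}}$, the left-module associativity of $\mc{C}\tens{C}\mc{M}$ (which by Proposition \ref{lem;tensbimd} is induced from that of $\mc{C}$, hence ultimately from the associativity of $\mc{C}$), and the structural constraints of $r$. The coherence axiom for the module functor $(F,f)$ over $\mc{C}$ — i.e. the original pentagon of Definition \ref{moduleFunctorDef} applied at objects of the form $X\otimes(Y\otimes M)$ — gives exactly the equality of these two pre-composed pastings. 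Since two right exact functors (here, $2$-cells, via the comma-category trick of Remark \ref{remarkComma}) agreeing after composition with $\Buni{C}{M}$ must agree, by Lemma \ref{universelemma} applied at the level of $2$-cells, the polytope commutes.

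The main obstacle I anticipate is purely bookkeeping: correctly identifying, with all coherence data intact, the composite $r_{\mc{N}}(id_{\mc{C}}\tens{C}F)\Buni{C}{M}$ with $\otimes_{\mc{N}}\circ(id\boxtimes F)$ — this requires chasing the defining square of $id_{\mc{C}}\tens{C}F$ and the defining identity of $r_{\mc{N}}$ simultaneously — and then checking the balancing compatibility of $f$ against the (trivial) balancing of $\Buni{C}{M}$; once these identifications are made precise, the existence of $\overline{f}$ and the verification of its coherence polytope are formal consequences of the universal property, exactly as in the proof of Proposition \ref{lem;unit}. The case of $l$ is entirely analogous, using instead the right $\mc{D}$-action on $\mc{M}$, the equivalence $l_{\mc{M}}:\mc{M}\tens{D}\mc{D}\to\mc{M}$, and the right $\mc{D}$-module structure of $F$.
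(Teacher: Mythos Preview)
Your approach is essentially the same as the paper's: precompose both $Fr_{\mc{M}}$ and $r_{\mc{N}}(id_{\mc{C}}\tens{C}F)$ with $\Buni{C}{M}$, identify the resulting functors as $F\circ\otimes_{\mc{M}}$ and $\otimes_{\mc{N}}\circ(id\boxtimes F)$, recognize $f$ as a balanced natural isomorphism between them, and descend via the universal property to obtain $r_F:=\overline f$. One small correction: the verification that $f$ is balanced (and that $\otimes_{\mc{N}}\circ(id\boxtimes F)$ carries the balancing $id_X\otimes f^{-1}_{Y,M}$) uses the \emph{left module functor} coherence of Definition~\ref{moduleFunctorDef}, not the bimodule hexagon of Remark~\ref{bimodfunct}; the latter concerns compatibility of left and right structures, which plays no role here since only the left $\mc{C}$-linearity of $F$ enters. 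Your treatment of the polytope is in fact more explicit than the paper's, which simply defers that verification to Polytope~\ref{polytope 1}.
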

\begin{proof} Consider the diagram
\[ 
\xymatrix{
\mc{C}\boxtimes\mc{M} \ar[rrr]^{id_\mc{C}\boxtimes F} \ar[dd]_{\otimes} \ar[dr]_{\Buni{C}{M}} & & &\mc{C}\boxtimes\mc{N}\ar[dl]^{\Buni{C}{N}}\ar[dd]^{\otimes}\\
&\mc{C}\mc{M} \ar[dl]_{r_{\mc{M}}}\ar[r]^{id_\mc{C}\tens{C}F} & \mc{C}\mc{N}\ar[dr]^{r_{\mc{N}}} &\\
\mc{M}\ar[rrr]_{F}& & &\mc{N}
}\]
The top rectangle is definition of $id_{\mc{C}}\tens{C}F$, right triangle definition of functor $r_{\mc{N}}$, and left triangle definition of $r_{\mc{M}}$. The outer edge commutes up to $f$. We therefore have natural isomorphism $f: Fr_{\mc{M}}B_{\mc{C}, \mc{M}}\rightarrow r_{\mc{N}}(id_\mc{C}\tens{C}F)B_{\mc{C}, \mc{N}}$. Now observe that, using the regular module structure in $\mc{C}$ we have the following isomorphisms.
\begin{eqnarray*}
Fr_{\mc{M}}B_{\mc{C}, \mc{M}}(XY\tens{}M)&=&F((XY)M)\\
&=&F(X(YM))=Fr_{\mc{M}}B_{\mc{C}, \mc{M}}(X\tens{}YM),\\
r_{\mc{N}}(id_\mc{C}\tens{C}F)B_{\mc{C}, \mc{N}}(XY\tens{}M)&=&(XY)F(M)\stackrel{\sim}{\rightarrow} XF(YM)\\
&=&r_{\mc{N}}(id_\mc{C}\tens{C}F)B_{\mc{C}, \mc{N}}(X\tens{}YM).
\end{eqnarray*}
Here $X, Y\in\mc{C}$, $M\in\mc{M}$ and $\sim$ is $id_X\otimes f^{-1}_{Y, M}$. Using the relations required of the module structure $f$ described in Definition \ref{moduleFunctorDef} one sees that the second isomorphism constitutes a $\mc{C}$-balancing for the functor $r_{\mc{N}}(id_\mc{C}\tens{C}F)B_{\mc{C}, \mc{N}}$. Thus both functors are balanced. Using the relations for $f$ from Definition \ref{moduleFunctorDef} a second time shows that $f$ is actually a balanced natural isomorphism $Fr_{\mc{M}}B_{\mc{C}, \mc{M}}\rightarrow r_{\mc{N}}(id_\mc{C}\tens{C}F)B_{\mc{C}, \mc{N}}$. Hence we may descend to a natural isomorphism $r_F:=\overline{f}:Fr_{\mc{M}}\rightarrow r_{\mc{N}}(id_\mc{C}\tens{C}F)$. The associated polytopes are given in Polytope \ref{polytope 1} below. The result for $l$ is similar. 
\end{proof}
Corollary \ref{Functoriality l, r lemma} shows, predictably, that functoriality of $l$, $r$ depends on module linearity of the underlying functors. In particular, if $F$ is a strict module functor $l_F$ and $r_F$ are both identity. As an example note that the associativity is strict as a module functor (this follows from Proposition \ref{ass2-natProp}) and so $r_{a_{\mc{M}, \mc{N}, \mc{P}}}=id$ for the relevant module categories. Similarly for $l$. Thus polytopes of the form $(1\otimes\bullet\otimes\bullet\otimes\bullet)$ (pg. 222 in \cite{KV}) describing interaction between $a$, $l$ and $r$ commute trivially. 
\begin{remark}\label{r_r_M=id remark} $r_{\mc{M}}:\mc{C}\tens{C}\mc{M}\rightarrow\mc{M}$ is itself a strict left $\mc{C}$-module functor as follows. Let $X\in\mc{C}$ and let $L_X$ be left $\mc{C}$-module action in $\mc{C}\tens{}\mc{M}$. Replacing $L_X$ with $id\tens{C}F$ in the diagram given in the proof of Corollary \ref{Functoriality l, r lemma} and chasing around the resulting diagram allows us to write the equation 
\begin{equation*}
L'_Xr_{\mc{M}}B_{\mc{C}, \mc{M}}=r_{\mc{M}}\overline{L_X}B_{\mc{C}, \mc{M}}
\end{equation*}
where $L'_X$ is left $X$-multiplication in $\mc{M}$ and $\overline{L_X}$ the induced left $X$-multiplication in $\mc{C}\tens{C}\mc{M}$. Thus $L'_Xr_{\mc{M}}=r_{\mc{M}}\overline{L_X}$, which is precisely the statement that $r_{\mc{M}}$ is strict  as a $\mc{C}$-module functor. Thus Corollary \ref{Functoriality l, r lemma} implies that $r_{r_{\mc{M}}}=id$ for any $\mc{C}$-module category $\mc{M}$. If $\mc{M}$ is a bimodule category it is evident that $r_{\mc{M}}$ is also a strict right module functor and hence strict as a bimodule functor. 
\end{remark}
%*******************************************************
\begin{prop}\label{lem:FRlem4} For $(\mc{C}, \mc{D})$-bimodule category $\mc{M}$ and $(\mc{C}, \mc{E})$-bimodule category $\mc{N}$ the category of right exact $\mc{C}$-module functors $\underline{Fun}_{\mathcal{C}}(\mathcal{M}, \mathcal{N})$ has canonical structure of a $(\mc{D}, \mc{E})$-bimodule category.
\end{prop}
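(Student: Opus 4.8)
The plan is to equip $\underline{Fun}_{\mc{C}}(\mc{M}, \mc{N})$ with a left $\mc{D}$-action and a right $\mc{E}$-action and then check the compatibility isomorphisms $\gamma$ of Proposition \ref{rmk;bimd}. Recall that $\mc{M}$ being a $(\mc{C},\mc{D})$-bimodule category means $\mc{M}^{op}$ carries a left $\mc{D}$-action $(Y, M)\mapsto M\otimes {}^*Y$ (Proposition \ref{bimodprop}); equivalently $\mc{M}$ has a right $\mc{D}$-action. So for $Y\in\mc{D}$ define the functor $R_Y^{\mc{M}}:\mc{M}\to\mc{M}$, $M\mapsto M\otimes Y$, which is right exact by the argument used for $L_X$ just before Proposition \ref{lem;tensbimd} (it has an adjoint $M\mapsto M\otimes {}^*Y$). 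Likewise for $Z\in\mc{E}$ define $R_Z^{\mc{N}}:\mc{N}\to\mc{N}$, $N\mapsto N\otimes Z$. Then set, for $(F,f)\in\underline{Fun}_{\mc{C}}(\mc{M},\mc{N})$,
\[
Y\tr F := F\circ R_{{}^*Y}^{\mc{M}}, \qquad F\tl Z := R_Z^{\mc{N}}\circ F,
\]
i.e. $(Y\tr F)(M) = F(M\otimes {}^*Y)$ and $(F\tl Z)(N\text{-image}) = F(N)\otimes Z$. First I would check these are again right exact $\mc{C}$-module functors: right exactness is composition of right exact functors, and the $\mc{C}$-module structure on $Y\tr F$ comes from $f$ together with the bimodule consistency isomorphism $\gamma^{\mc{M}}$ relating the commuting left $\mc{C}$- and right $\mc{D}$-actions on $\mc{M}$ (Remark following Proposition \ref{rmk;bimd}), while the $\mc{C}$-module structure on $F\tl Z$ uses $f$ and the consistency isomorphism $\gamma^{\mc{N}}$ for $\mc{N}$. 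On morphisms (module natural transformations) the actions are by whiskering with $R^{\mc{M}}_{{}^*Y}$ resp.\ $R^{\mc{N}}_Z$, which manifestly preserves the module-naturality square of Definition \ref{moduleFunctorDef}.

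Next I would build the module-associativity constraints. For the right $\mc{E}$-action, $(F\tl Z)\tl Z' = R^{\mc{N}}_{Z'}R^{\mc{N}}_Z F$ and $F\tl(Z\otimes Z') = R^{\mc{N}}_{Z\otimes Z'}F$, so the right module associativity for $\mc{N}$ gives a natural isomorphism $R^{\mc{N}}_{Z'}R^{\mc{N}}_Z\cong R^{\mc{N}}_{Z\otimes Z'}$ and whiskering on the right by $F$ gives the needed constraint; the pentagon for $\underline{Fun}_{\mc{C}}$ follows from the pentagon for the right $\mc{E}$-action on $\mc{N}$. Dually, for the left $\mc{D}$-action, $Y\tr(Y'\tr F) = F R^{\mc{M}}_{{}^*Y'}R^{\mc{M}}_{{}^*Y}$ and $(Y\otimes Y')\tr F = F R^{\mc{M}}_{{}^*(Y\otimes Y')}$; since ${}^*(Y\otimes Y')\cong {}^*Y'\otimes {}^*Y$ and the right $\mc{D}$-module associativity of $\mc{M}$ gives $R^{\mc{M}}_{{}^*Y'}R^{\mc{M}}_{{}^*Y}\cong R^{\mc{M}}_{{}^*Y'\otimes{}^*Y}$, whiskering on the left by $F$ yields the constraint, and its pentagon comes from that of $\mc{M}$. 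The unit constraints $l$, $r$ for $\underline{Fun}_{\mc{C}}$ come similarly from the unit constraints of the two actions on $\mc{M}$ and $\mc{N}$ (using $R^{\mc{M}}_{\unit}\cong id$, $R^{\mc{N}}_{\unit}\cong id$), and their coherence with the associativity constraints reduces to the corresponding coherence in $\mc{M}$ and $\mc{N}$.

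The remaining point is the bimodule consistency isomorphism $\gamma^{\underline{Fun}}$ of Proposition \ref{rmk;bimd}, relating $(Y\tr F)\tl Z$ and $Y\tr(F\tl Z)$. But $(Y\tr F)\tl Z = R^{\mc{N}}_Z F R^{\mc{M}}_{{}^*Y} = Y\tr(F\tl Z)$ \emph{on the nose} as functors, because $R^{\mc{N}}_Z$ acts on the target of $F$ and $R^{\mc{M}}_{{}^*Y}$ on the source, so they literally commute; the associated constraint is the identity. One then checks the three pentagons of Proposition \ref{rmk;bimd} for the triple $(\text{left }\mc{D},\ \gamma^{\underline{Fun}}=id,\ \text{right }\mc{E})$: each reduces, after unwinding, to the pentagons already established for the separate actions, and since $\gamma^{\underline{Fun}}$ is trivial these hold automatically. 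By Proposition \ref{rmk;bimd} this assembles into a genuine $(\mc{D},\mc{E})$-bimodule structure on $\underline{Fun}_{\mc{C}}(\mc{M},\mc{N})$. I expect the only mildly delicate step to be verifying that $Y\tr F$ and $F\tl Z$ really are \emph{$\mc{C}$-module} functors — i.e. producing the module-structure isomorphisms from $f$, $\gamma^{\mc{M}}$, $\gamma^{\mc{N}}$ and checking their coherence hexagons — since this is where the interaction between the $\mc{C}$-linearity being preserved and the $\mc{D}$-, $\mc{E}$-actions genuinely enters; everything after that is bookkeeping inherited from the module structures on $\mc{M}$ and $\mc{N}$.
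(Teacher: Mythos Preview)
Your overall strategy---define separate left $\mc{D}$- and right $\mc{E}$-actions on $\underline{Fun}_{\mc{C}}(\mc{M},\mc{N})$ and invoke Proposition~\ref{rmk;bimd}---is exactly what the paper does, and your right $\mc{E}$-action $F\tl Z := R^{\mc{N}}_Z\circ F$ agrees with the paper's. The problem is your left $\mc{D}$-action.

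You set $Y\tr F := F\circ R^{\mc{M}}_{{}^*Y}$ and then claim that right $\mc{D}$-associativity of $\mc{M}$ gives $R^{\mc{M}}_{{}^*Y'}\circ R^{\mc{M}}_{{}^*Y}\cong R^{\mc{M}}_{{}^*Y'\otimes{}^*Y}$. This is false: associativity says $R_B\circ R_A\cong R_{A\otimes B}$ (first act by $A$, then by $B$, get $-\otimes(A\otimes B)$), so in fact $R^{\mc{M}}_{{}^*Y'}\circ R^{\mc{M}}_{{}^*Y}\cong R^{\mc{M}}_{{}^*Y\otimes{}^*Y'}$. Consequently
\[
Y\tr(Y'\tr F)\ \cong\ F\circ R^{\mc{M}}_{{}^*Y\otimes{}^*Y'}\quad\text{but}\quad (Y\otimes Y')\tr F\ =\ F\circ R^{\mc{M}}_{{}^*Y'\otimes{}^*Y},
\]
and there is no canonical isomorphism between these, so your formula does not define a left $\mc{D}$-module structure. (It defines a \emph{right} $\mc{D}$-action, which is not what is needed.) The detour through left duals---motivated by the $\mc{M}^{op}$ description in Proposition~\ref{bimodprop}---is what trips you up: precomposition already reverses order once, so the extra reversal from ${}^*(-)$ is one flip too many.

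The paper avoids this entirely by taking the obvious action $(X\otimes F)(M):=F(M\otimes X)$ (and simultaneously $(X\boxtimes Y)\otimes F)(M)=F(M\otimes X)\otimes Y$ for the full $\mc{D}\boxtimes\mc{E}^{rev}$-action). Then $X\tr(X'\tr F)=F((-\otimes X)\otimes X')\cong F(-\otimes(X\otimes X'))=(X\otimes X')\tr F$ directly from the right associativity of $\mc{M}$, and the $\mc{C}$-module structure on $X\otimes F$ is given explicitly as $((X\boxtimes Y)\otimes f)_{Z,M}=\gamma^{\mc{N}}_{Z,F(M\otimes X),Y}\,f_{Z,M\otimes X}\,F(\gamma^{\mc{M}}_{Z,M,X})$. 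If you drop the ${}^*Y$ and use $R^{\mc{M}}_Y$ instead, your argument goes through and matches the paper's.
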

\begin{proof}
$((X\boxtimes Y)\otimes F)(M) = F(M\otimes X)\otimes Y$ defines $\mathcal{D}\boxtimes\mathcal{E}^{rev}$-action on $\underline{Fun}_{\mathcal{C}}(\mathcal{M}, \mathcal{N})$. Right exactness of $(X\tens{}Y)\otimes F$ comes from right exactness of $F$ and of module action in $\mc{M}, \mc{N}$. $\mathcal{D}\boxtimes\mathcal{E}^{rev}$ acts on the module part $f$ of $F$ by 
\begin{equation*}
((X\boxtimes Y)\otimes f)_{Z, M} = \gamma^{\mc{N}}_{Z, F(M\otimes X), Y} f_{Z, M\otimes X}F(\gamma^{\mc{M}}_{Z, M, X})
\end{equation*}
The required diagrams commute since they do for $f$.

Next let $\tau : F\Rightarrow G$ be a natural left $\mathcal{C}$-module transformation for right exact left $\mathcal{C}$-module functors $(F, f), (G, g) : \mathcal{M}\rightarrow \mathcal{N}$. Define action of $X\boxtimes Y$ on $\tau$ by $((X\boxtimes Y)\otimes \tau)_M = \tau_{M\otimes X}\otimes id_Y : ((X\boxtimes Y)\otimes F)(M) \rightarrow ((X\boxtimes Y)\otimes G)(M)$. Then $(X\boxtimes Y)\otimes \tau$ is a natural left $\mathcal{C}$-module transformation as can be easily checked.
\end{proof}
%LEMMA******************************************************************************************
\begin{remark}\label{lem;unimod} $\mc{Y}$ in equation (\ref{eqn;equiv}) at the beginning of this section is an equivalence of $(\mc{D}, \mc{F})$-bimodule categories
\begin{equation}
\underline{Fun}^{bal}_{\mc{C}}(\mc{M}\boxtimes\mc{N}, \mc{S})\rightarrow \underline{Fun}_{\mc{C}}(\mc{M}\tens{E}\mc{N}, \mc{S})
\end{equation}
whenever $\mc{M}\in\mc{B}(\mc{C}, \mc{E})$, $\mc{N}\in\mc{B}(\mc{E}, \mc{D})$, $\mc{S}\in\mc{B}(\mc{C}, \mc{F})$. If balanced right exact bimodule functor $u:\mc{M}\boxtimes\mc{N}\rightarrow \mc{U}$ is universal for such functors from $\mc{M}\boxtimes\mc{N}$ then $\bitens{M}{E}{N}\simeq \mc{U}$ as bimodule categories. We leave details to the reader. 
\end{remark}
%TENSOR PRODUCT EXISTENZ SUBCEZZIONE*************************************************
\subsection{Relative tensor product as category of functors.}\label{tensFunctSect} The purpose of this section is to prove an existence theorem for the relative tensor product by providing a canonical equivalence with a certain category of module functors. Let $\mc{M}, \mc{N}$ be exact right, left module categories over tensor category $\mc{C}$, and define $I:\mc{M}\boxtimes\mc{N}\rightarrow \underline{Fun}_{\mc{C}}(\mc{M}^{op}, \mc{N})$ by
\begin{equation*}\label{homiso}
I:M\tens{}N\mapsto\inthom{-}{\mc{M}}{M}\otimes N
\end{equation*}
where $\underline{\Hom}_{\mc{M}}$ means internal hom for right $\mc{C}$-module structure in $\mc{M}$ (Definition \ref{inthomdef}). Using the formulas satisfied by internal hom for right module category structure we see that images under $I$ are indeed $\mc{C}$-module functors:
\begin{eqnarray*}
I(M\tens{}N)(X\otimes M')&=&\inthom{X\otimes M'}{\mc{M}}{M}\otimes N=\inthom{M'}{\mc{M}}{{}^*X\otimes M}\otimes N\\
&=&X\otimes\inthom{M'}{\mc{M}}{M}\otimes N=X\otimes I(M\tens{}N)(M').
\end{eqnarray*}
Using similar relations one easily shows that $I$ is $\mc{C}$-balanced. Hence $I$ descends to a unique right-exact functor $\overline{I}:\bitens{M}{C}{N}\rightarrow\underline{Fun}_{\mc{C}}(\mc{M}^{op}, \mc{N})$ satisfying $\overline{I}\Buni{M}{N}=I$.

In the opposite direction define $J:\underline{Fun}_{\mc{C}}(\mc{M}^{op}, \mc{N})\rightarrow\bitens{M}{}{N}$ as follows. For $F$ a $\mc{C}$-module functor $\mc{M}^{op}\rightarrow\mc{N}$ let $J(F)$ be the object representing the functor $M\boxtimes N\mapsto\Hom(N, F(M))$, that is $\Hom_{\bitens{M}{}{N}}(M\tens{}N, J(F))=\Hom_{\mc{N}}(N, F(M))$. Now denote by $J':\underline{Fun}_{\mc{C}}(\mc{M}^{op}, \mc{N})\rightarrow\bitens{M}{C}{N}$ the composition $\Buni{M}{N}J$. 
\begin{thm}\label{prop;exist} Let $\mc{C}$ be a rigid monoidal category. For $\mc{M}$ a right $\mc{C}$-module category and $\mc{N}$ a left $\mc{C}$-module category there is a canonical equivalence
\begin{equation*}
\mc{M}\boxtimes_{\mc{C}} \mc{N} \simeq \underline{Fun}_{\mc{C}}(\mc{M}^{op}, \mc{N}).
\end{equation*}
If $\mc{M}, \mc{N}$ are bimodule categories this equivalence is bimodule.
\end{thm}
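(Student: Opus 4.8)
The plan is to show that the functors $\overline{I}$ and $J'$ constructed just before the statement are mutually quasi-inverse equivalences, and then to upgrade this to a bimodule equivalence when $\mc{M}$ and $\mc{N}$ carry bimodule structures. First I would record the adjunction implicit in the two defining representability conditions: for $F\in\underline{Fun}_{\mc{C}}(\mc{M}^{op},\mc{N})$ and $M\boxtimes N\in\mc{M}\boxtimes\mc{N}$ we have both $\Hom_{\bitens{M}{}{N}}(M\tens{}N, J(F)) = \Hom_{\mc{N}}(N, F(M))$ and, unwinding the definition of $I$ together with the internal-hom adjunction of Definition \ref{inthomdef}, $\Hom_{\underline{Fun}_{\mc{C}}(\mc{M}^{op},\mc{N})}(I(M\tens{}N), F) \cong \Hom_{\mc{N}}(N, F(M))$ (here one uses that $\inthom{-}{\mc{M}}{M}\otimes N$ is the ``free'' module functor determined by sending $M$ to $N$, a standard consequence of $\inthom{M}{\mc{M}}{M}$ corepresenting $\Hom_{\mc{M}}(-\otimes M, M)$). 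Comparing these two computations of $\Hom_{\mc{N}}(N, F(M))$ exhibits $I$ as left adjoint to $J$ on the nose, naturally in both variables, hence after descending through $\Buni{M}{N}$ exhibits $\overline{I}$ as left adjoint to $J$ with $J' = \Buni{M}{N}J$.

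Next I would show the unit and counit of this adjunction are isomorphisms. For the counit $\overline{I}\,J \Rightarrow \mathrm{id}$ on $\underline{Fun}_{\mc{C}}(\mc{M}^{op},\mc{N})$: every right exact $\mc{C}$-module functor $F:\mc{M}^{op}\to\mc{N}$ is, because $\mc{M}$ is exact (so $\mc{M}^{op}$ is projective-generated and $F$ is right exact), a colimit of functors of the form $\inthom{-}{\mc{M}}{M}\otimes N$; more precisely one checks on each object $M'$ that $(\overline{I}\,J(F))(M') = \inthom{M'}{\mc{M}}{-}\otimes F(-)$ evaluated appropriately recovers $F(M')$ via the coend/canonical resolution, using rigidity of $\mc{C}$ to move duals around exactly as in the displayed computation showing $I$ lands in module functors. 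For the unit $\mathrm{id}\Rightarrow J'\,\overline{I}$ on $\bitens{M}{C}{N}$: it suffices by Lemma \ref{universelemma} to check it after precomposing with $\Buni{M}{N}$, i.e. to check that $J'\overline{I}\,\Buni{M}{N} = \Buni{M}{N} JI \cong \Buni{M}{N}$, which follows from $JI\cong\mathrm{id}_{\mc{M}\boxtimes\mc{N}}$ — and the latter is a representability identity on $\mc{M}\boxtimes\mc{N}$ that reduces, via Deligne's product, to the case $\mc{C}=Vec$ where it is classical, plus the compatibility with the $\mc{C}$-balancing already verified. Thus $\overline{I}$ and $J'$ are inverse equivalences.

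For the bimodule refinement I would invoke the module-structure results already in place: when $\mc{M}$ is a $(\mc{C},\mc{D})$-bimodule category and $\mc{N}$ a $(\mc{C},\mc{E})$-bimodule category, Proposition \ref{lem;tensbimd} gives $\bitens{M}{C}{N}$ its $(\mc{D},\mc{E})$-bimodule structure and Proposition \ref{lem:FRlem4} gives $\underline{Fun}_{\mc{C}}(\mc{M}^{op},\mc{N})$ its $(\mc{D},\mc{E})$-bimodule structure (note $\mc{M}^{op}$ carries the $(\mc{D},\mc{C})$-structure of Proposition \ref{bimodprop}, but only the right $\mc{D}$-action and the $\mc{C}$-linearity are used, so the codomain is a $(\mc{D},\mc{E})$-bimodule category as required). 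It then remains to equip $I$ — equivalently $\overline{I}$ — with compatible left $\mc{D}$- and right $\mc{E}$-module functor structures: the right $\mc{E}$-structure is immediate since $\otimes\,\mc{E}$ acts only on the $N$-slot and $I(M\tens{}N\otimes Y) = \inthom{-}{\mc{M}}{M}\otimes(N\otimes Y) = (\inthom{-}{\mc{M}}{M}\otimes N)\otimes Y = (I(M\tens{}N))\otimes Y$ on the nose, while the left $\mc{D}$-structure comes from the bimodule consistency isomorphism $\gamma^{\mc{M}}$ together with the interplay between internal hom and the right $\mc{D}$-action on $\mc{M}$, yielding a natural isomorphism $I((M\otimes X)\tens{}N)\cong (X\boxtimes 1)\otimes I(M\tens{}N)$ — and one checks these satisfy the hexagon of Remark \ref{bimodfunct}, the relevant diagrams commuting because they commute for $\gamma^{\mc{M}}$ and for the internal-hom adjunction. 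Descending through $\Buni{M}{N}$ (using the bimodule version of the universal property, Remark \ref{lem;unimod}) promotes $\overline{I}$ to a bimodule equivalence. The main obstacle I expect is the counit isomorphism $\overline{I}\,J\cong\mathrm{id}$: verifying that an arbitrary right exact $\mc{C}$-module functor out of $\mc{M}^{op}$ is canonically recovered as $\inthom{-}{\mc{M}}{M}\otimes F(M)$-type data requires the projective/exactness input on $\mc{M}$ and a careful coend argument with rigidity, and it is here that the hypothesis ``$\mc{C}$ rigid'' and exactness of the module categories are genuinely used.
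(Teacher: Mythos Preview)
Your overall strategy matches the paper's: show that $I$ and $J$ are quasi-inverse on $\mc{M}\boxtimes\mc{N}$ and $\underline{Fun}_{\mc{C}}(\mc{M}^{op},\mc{N})$, then descend via $\Buni{M}{N}$ using Lemma \ref{universelemma}. Framing this as an adjunction $I\dashv J$ is fine and is essentially what the paper does in its Yoneda-style representability computations. The bimodule upgrade you sketch is also in the right spirit.

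However, there is a genuine gap in your argument for $JI\simeq id$. Your claim that this ``reduces, via Deligne's product, to the case $\mc{C}=Vec$'' is not justified: the functor $I$ is built from the internal hom $\inthom{-}{\mc{M}}{M}$, which takes values in $\mc{C}$ and depends on the right $\mc{C}$-module structure of $\mc{M}$, so the computation of $JI$ is not formally identical to the $\mc{C}=Vec$ case. What the paper actually does (in the lemma embedded in the proof) is establish a concrete formula for internal hom in the Deligne product with respect to the induced $\mc{C}$-action,
\[
\inthom{M\tens{}N}{\mc{M}\boxtimes\mc{N}}{S\tens{}T}\;\simeq\;\inthom{M}{\mc{M}}{S}\,\otimes\,\inthom{N}{\mc{N}}{T},
\]
and then runs a short chain of adjunctions to identify $\Hom_{\mc{M}\boxtimes\mc{N}}(M\tens{}N, JI(S\tens{}T))$ with $\Hom_{\mc{M}\boxtimes\mc{N}}(M\tens{}N, S\tens{}T)$. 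This formula is the missing ingredient in your sketch, and it is where rigidity of $\mc{C}$ is genuinely used.

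A second, smaller point: your proposed coend/resolution argument for the counit $IJ\simeq id$ is more work than necessary. Once $JI\simeq id$ is in hand, the paper derives $IJ\simeq id$ in two lines by a representability trick: $\Hom_{\mc{N}}(N, IJ(F)(M))=\Hom(M\tens{}N, JIJ(F))\simeq\Hom(M\tens{}N, J(F))=\Hom_{\mc{N}}(N, F(M))$, so $IJ(F)(M)\simeq F(M)$ by Yoneda. No coend or exactness/projectivity input is needed for this direction, so your anticipated ``main obstacle'' is a non-issue once the first step is done correctly.
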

\begin{proof} In order to prove the theorem we simply show that $\overline{I}$ and $J'$ defined above are quasi-inverses. This will follow easily if we can first show that $I, J$ are quasi-inverses, and so we dedicate a separate lemma to proving this. 
\begin{lem}\label{IJQInverseLemma} $I, J$ are quasi-inverses.
\end{lem}
\begin{proof} Let us first discuss internal homs for the $\mc{C}$-module structure in $\bitens{M}{}{N}$ induced by $X\otimes (M\tens{}N):=(X\otimes M)\tens{}N$. Let $X$ be any simple object in $\mc{C}$. Then one shows, using the relations for internal hom in $\mc{M}$ and $\mc{N}$ separately, that the internal hom in $\bitens{M}{}{N}$ is given by 
\begin{equation}\label{DeligneIntHom}
\inthom{M\tens{}N}{\bitens{M}{}{N}}{S\tens{}T}=\inthom{M}{\mc{M}}{S}\otimes\inthom{N}{\mc{N}}{T}
\end{equation}
where the $\otimes$ is of course that in $\mc{C}$. Using this and the definitions of $I$ and $J$ we have
\begin{eqnarray*}
\Hom_{\bitens{M}{}{N}}(M\tens{}N, JI(S\tens{}T))&=&\Hom_{\mc{N}}(N, \inthom{M}{\mc{M}}{S}\otimes T)\\
&=&\Hom_{\mc{C}}(1, \inthom{N}{\mc{N}}{\inthom{M}{\mc{M}}{S}\otimes T})\\
&=&\Hom_{\mc{C}}(1, \inthom{M\tens{}N}{\bitens{M}{}{N}}{S\tens{}T})\\
&=&\Hom_{\bitens{M}{}{N}}(M\tens{}N, S\tens{}T).
\end{eqnarray*}
The third line is an application of (\ref{DeligneIntHom}). The first and the last line imply that the functor $M\tens{}N\mapsto \Hom_{\mc{N}}(N, \inthom{M}{\mc{M}}{S}\otimes T)$ is represented by both $S\tens{}T$ and $JI(S\tens{}T)$, and these objects must therefore be equal up to a unique isomorphism, hence $JI\simeq id$. 

Next we show that $IJ\simeq id$. Let $F$ be any functor $\mc{M}^{op}\rightarrow\mc{N}$. From the first part of this proof we may write the following equation (up to unique linear isomorphism):
\begin{eqnarray*}
\Hom_{\mc{N}}(N, IJ(F)(M))&=&\Hom_{\bitens{M}{}{N}}(M\tens{}N, JIJ(F))\\
&=&\Hom_{\bitens{M}{}{N}}(M\tens{}N, J(F))=\Hom_{\mc{N}}(N, F(M)).
\end{eqnarray*}
Thus both $IJ(F)(M)$ and $F(M)$ are representing objects for the functor $N\mapsto\Hom_{\bitens{M}{}{N}}(M\tens{}N, J(F))$ for each fixed $M\in\mc{M}$. Thus $IJ(F)(M)=F(M)$ up to a unique isomorphism. The collection of all such isomorphisms gives a natural isomorphism $IJ(F)\simeq F$, and therefore $IJ\simeq id$. This, with the first part of this proof, is equivalent to the statement that $J$ is a quasi-inverse for $I$, proving the lemma.
\end{proof}
Now we are ready to complete the proof of Theorem \ref{prop;exist}. Using the definition of $J'$ and $\overline{I}$ write $J'\overline{I}\Buni{M}{N}=\Buni{M}{N}JI \simeq\Buni{M}{N}$. By uniqueness (Lemma \ref{universelemma}) it therefore follows that $J'\overline{I}\simeq id$. Also $\overline{I}J'=\overline{I}\Buni{M}{N}J = IJ\simeq id$, and we are done.
\end{proof}
\subsubsection{Adjunction with category of functors}
As an immediate corollary to Theorem \ref{prop;exist} and associativity of relative tensor product (equation \ref{partialAss}, given below) we are able to prove a module category theoretic version of a theorem which appears in many connections in the classical module-theory literature . 

\begin{cor}[Frobenius Reciprocity]\label{cor;frbns} Let $\mc{M}$ be a $(\mc{C}, \mc{D})$-bimodule category, $\mc{N}$ a $(\mc{D}, \mc{F})$-module category, and $\mc{A}$ a $(\mc{C}, \mc{F})$-module category. Then there is a canonical equivalence
\begin{equation} \label{eqn:frbns}
\underline{Fun}_{\mc{C}}(\mc{M}\tens{D}\mc{N}, \mc{A})\simeq \underline{Fun}_{\mc{D}}(\mc{N}, \underline{Fun}_{\mc{C}}(\mc{M}, \mc{A}))
\end{equation}
as $(\mc{E}, \mc{F})$-bimodule categories.
\end{cor}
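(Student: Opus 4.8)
The plan is to rewrite each side of the claimed equivalence as a relative tensor product by means of Theorem~\ref{prop;exist}, and then to pass from one expression to the other using associativity of $\boxtimes$ (equation~(\ref{partialAss})). The device that makes this work is the following mild restatement of Theorem~\ref{prop;exist}: for a $(\mc{C},\mc{G})$-bimodule category $\mc{P}$ and a $(\mc{C},\mc{H})$-bimodule category $\mc{Q}$ there is a canonical $(\mc{G},\mc{H})$-bimodule equivalence $\underline{Fun}_{\mc{C}}(\mc{P},\mc{Q})\simeq\mc{P}^{op}\tens{C}\mc{Q}$, obtained by applying Theorem~\ref{prop;exist} with its ``$\mc{M}$'' taken to be $\mc{P}^{op}$ and then identifying $(\mc{P}^{op})^{op}$ with $\mc{P}$ via the standard double-dual equivalence afforded by rigidity of $\mc{C}$. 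First I would record this, and alongside it the companion identity $(\mc{M}\tens{D}\mc{N})^{op}\simeq\mc{N}^{op}\tens{D}\mc{M}^{op}$ of $(\mc{F},\mc{C})$-bimodule categories; the latter follows by taking opposites in the equivalence $\mc{M}\tens{D}\mc{N}\simeq\underline{Fun}_{\mc{D}}(\mc{M}^{op},\mc{N})$ of Theorem~\ref{prop;exist}, using Lemma~\ref{Natadjoint} to rewrite $\underline{Fun}_{\mc{D}}(\mc{M}^{op},\mc{N})^{op}$ as $\underline{Fun}_{\mc{D}}(\mc{N},\mc{M}^{op})$, and then applying Theorem~\ref{prop;exist} once more.

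With these in place the corollary reduces to a short chain of canonical equivalences:
\begin{align*}
\underline{Fun}_{\mc{C}}(\mc{M}\tens{D}\mc{N},\mc{A})
&\simeq (\mc{M}\tens{D}\mc{N})^{op}\tens{C}\mc{A}\\
&\simeq (\mc{N}^{op}\tens{D}\mc{M}^{op})\tens{C}\mc{A}\\
&\simeq \mc{N}^{op}\tens{D}(\mc{M}^{op}\tens{C}\mc{A})\\
&\simeq \mc{N}^{op}\tens{D}\underline{Fun}_{\mc{C}}(\mc{M},\mc{A})\\
&\simeq \underline{Fun}_{\mc{D}}(\mc{N},\underline{Fun}_{\mc{C}}(\mc{M},\mc{A})),
\end{align*}
where the first, fourth and last steps are instances of the restatement of Theorem~\ref{prop;exist}, the second is the companion identity above, and the third is associativity of $\boxtimes$ (equation~(\ref{partialAss})). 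Tracking the bimodule structures through the chain with the help of Propositions~\ref{lem:FRlem4} and~\ref{lem;tensbimd}, one sees that the composite is an equivalence of bimodule categories, as asserted.

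The part that requires care --- and which I expect to be the only genuine obstacle --- is the bookkeeping of the left and right module actions as they pass through the various $(-)^{op}$'s: one must check that the $\mc{D}$-action appearing when $\underline{Fun}_{\mc{C}}(\mc{M},\mc{A})$ is rewritten as $\mc{M}^{op}\tens{C}\mc{A}$ coincides with the one described in Proposition~\ref{lem:FRlem4}, and that the residual outer actions at the two ends of the chain match up. No new ideas are needed, since the ``bimodule'' clauses already built into Theorem~\ref{prop;exist}, Lemma~\ref{Natadjoint}, and Propositions~\ref{lem:FRlem4} and~\ref{lem;tensbimd} control all of this; it is purely a matter of organizing the identifications consistently. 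As an independent check I would keep in reserve the more hands-on argument: by the universal property of $\tens{D}$ (the bimodule form of the equivalence $\mc{Y}$, Remark~\ref{lem;unimod}) a right exact $\mc{C}$-module functor $\mc{M}\tens{D}\mc{N}\to\mc{A}$ is the same thing as a right exact, $\mc{C}$-linear, $\mc{D}$-balanced functor $\mc{M}\boxtimes\mc{N}\to\mc{A}$, and currying in the $\mc{N}$-variable (a standard property of the Deligne product) converts the $\mc{D}$-balancing datum and its pentagon precisely into a $\mc{D}$-module structure --- with the action of Proposition~\ref{lem:FRlem4} --- on the resulting functor $\mc{N}\to\underline{Fun}_{\mc{C}}(\mc{M},\mc{A})$, a balanced natural transformation becoming a $\mc{D}$-module natural transformation.
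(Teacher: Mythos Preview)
Your proposal is correct and follows essentially the same route as the paper: first derive the identity $(\mc{M}\tens{D}\mc{N})^{op}\simeq\mc{N}^{op}\tens{D}\mc{M}^{op}$ via Theorem~\ref{prop;exist} and Lemma~\ref{Natadjoint}, then run the chain through $(\mc{M}\tens{D}\mc{N})^{op}\tens{C}\mc{A}$ using associativity and Theorem~\ref{prop;exist} at each end. The paper's proof is slightly terser (combining your last two steps into one), and your additional remarks on tracking the bimodule structures and the alternative currying argument are helpful elaborations but not departures from the paper's method.
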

\begin{proof} To see this we will first use Lemma \ref{Natadjoint} to describe the behaviour of the tensor product under $op$. Observe that
\begin{equation*}
(\bitens{M}{D}{N})^{op}\simeq\underline{Fun}_{\mc{D}}(\mc{M}^{op}, \mc{N})^{op}\simeq\underline{Fun}_{\mc{D}}(\mc{N}, \mc{M}^{op})\simeq\mc{N}^{op}\tens{D}\mc{M}^{op}
\end{equation*}
applying Theorem \ref{prop;exist} twice (first and third) and Lemma \ref{Natadjoint} for the second step. Now we may write
\begin{eqnarray*}
\underline{Fun}_{\mc{C}}(\bitens{M}{D}{N}, \mc{A})\simeq (\bitens{M}{D}{N})^{op}\tens{C}\mc{A}&\simeq&(\mc{N}^{op}\tens{D}\mc{M}^{op})\tens{C}\mc{A}\\
&\simeq&\mc{N}^{op}\tens{D}(\mc{M}^{op}\tens{C}\mc{A})\\
&\simeq&\underline{Fun}_{\mc{C}}(\mc{N}, \underline{Fun}_{\mc{D}}(\mc{M}, \mc{A})).
\end{eqnarray*}
\end{proof}
Theorem \ref{cor;frbns} states that functor $\mc{M}\tens{D}-:\mc{B}(\mc{D}, \mc{E})\rightarrow \mc{B}(\mc{C}, \mc{E})$ is left adjoint to functor $\Efnn{\mc{M}}{\mc{C}}{-} :\mc{B}(\mc{C}, \mc{E})\rightarrow \mc{B}(\mc{D}, \mc{E})$.
\section{Associativity and unit constraints for $\mc{B}(\mc{C})$}
%ASSOCIATIVITY******************************************************************************************
\subsection{Tensor product associativity} 
In this section we discuss associativity of tensor product. Let $\mc{C}, \mc{D}, \mc{E}$ be  tensor categories. Let $\mc{A}$ be a right $\mc{C}$-module category, $\mc{M}$ a $\mc{C}$-$\mc{D}$-bimodule category, $\mc{N}$ a $\mc{D}$-$\mc{E}$-bimodule category and $\mc{P}$ a left $\mc{E}$-module category. In an effort to save space we will at times abbreviate tensor product by juxtaposition. 
\begin{lem}\label{prop;asslem} $\mc{A}\boxtimes (\mc{M}\boxtimes_{\mc{D}}\mc{N}) \simeq (\mc{A}\boxtimes \mc{M})\boxtimes_{\mc{D}}\mc{N}$ and $(\bitens{M}{D}{N})\boxtimes \mc{A}\simeq\mc{M}\tens{D} (\mc{N}\boxtimes \mc{A})$ as abelian categories. 
\end{lem}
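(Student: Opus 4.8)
The plan is to exploit the realization of the relative tensor product as a category of module functors (Theorem \ref{prop;exist}) together with the compatibility of $\boxtimes$ with $\tens{\mc{C}}$, so that the desired equivalences become standard currying/exponential-law manipulations. For the first equivalence, $\mc{A}\boxtimes(\bitens{M}{D}{N})\simeq(\mc{A}\boxtimes\mc{M})\tens{D}\mc{N}$, I would first observe that the left-hand side is, by Deligne's product, the Deligne product of $\mc{A}$ with the category of right exact $\mc{D}$-module functors $\Efnn{\mc{M}^{op}}{\mc{D}}{\mc{N}}$; the key point is that $\mc{A}\boxtimes\Efnn{\mc{M}^{op}}{\mc{D}}{\mc{N}}\simeq\Efnn{\mc{M}^{op}}{\mc{D}}{\mc{A}\boxtimes\mc{N}}$ when $\mc{A}$ has no $\mc{D}$-action to worry about (the $\mc{A}$-variable simply ``rides along'', since $-\boxtimes\mc{A}$ is exact and commutes with the representing-object construction defining $\Efnn{}{}{}$). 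On the other hand $(\mc{A}\boxtimes\mc{M})^{op}\simeq\mc{A}^{op}\boxtimes\mc{M}^{op}$ carries the $\mc{D}$-action entirely on the $\mc{M}^{op}$ factor, so $(\mc{A}\boxtimes\mc{M})\tens{D}\mc{N}\simeq\Efnn{(\mc{A}\boxtimes\mc{M})^{op}}{\mc{D}}{\mc{N}}\simeq\Efnn{\mc{M}^{op}}{\mc{D}}{\mc{A}\boxtimes\mc{N}}$ by the same ``ride-along'' principle applied on the source side. Comparing the two yields the equivalence. The second equivalence $(\bitens{M}{D}{N})\boxtimes\mc{A}\simeq\mc{M}\tens{D}(\mc{N}\boxtimes\mc{A})$ is entirely symmetric: here $\mc{A}$ is a \emph{left} module category with no $\mc{D}$-action, $\mc{N}\boxtimes\mc{A}$ inherits its $\mc{D}$-action from $\mc{N}$, and one runs the same argument with the roles of source and target interchanged.

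Alternatively — and this may be cleaner to write — I would prove both equivalences directly from the universal property in Definition \ref{DEF;tensor} rather than via Theorem \ref{prop;exist}. For the first: define $\Phi:(\mc{A}\boxtimes\mc{M})\boxtimes\mc{N}\to\mc{A}\boxtimes(\bitens{M}{D}{N})$ on objects by $(\mc{A}\boxtimes\mc{M})\boxtimes\mc{N}\simeq\mc{A}\boxtimes(\mc{M}\boxtimes\mc{N})\xrightarrow{id_{\mc{A}}\boxtimes\Buni{M}{N}}\mc{A}\boxtimes(\bitens{M}{D}{N})$, using the (strict) associativity and symmetry of Deligne's product $\boxtimes$ on the left. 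This $\Phi$ is right exact, and since $\Buni{M}{N}$ is $\mc{D}$-balanced and $id_{\mc{A}}\boxtimes(-)$ is exact, $\Phi$ is $\mc{D}$-balanced as a functor on $(\mc{A}\boxtimes\mc{M})\boxtimes\mc{N}$ (the balancing acts in the $\mc{M}$-variable). One then checks that $\Phi$ has the universal property of $B_{\mc{A}\boxtimes\mc{M},\mc{N}}$: given a right exact $\mc{D}$-balanced $G:(\mc{A}\boxtimes\mc{M})\boxtimes\mc{N}\to\mc{Z}$, regard it via $\mc{A}\boxtimes(\mc{M}\boxtimes\mc{N})$; fixing objects of $\mc{A}$ gives a family of balanced functors out of $\mc{M}\boxtimes\mc{N}$, each factoring uniquely through $\Buni{M}{N}$, and these assemble (using right exactness and the universal property of the Deligne product for the $\mc{A}$-variable) into a unique right exact factorization through $\Phi$. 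Hence $\Phi$ realizes the tensor product and the equivalence follows from its uniqueness up to unique equivalence (Remark \ref{uniRemark}). The second statement is proved the same way with $\mc{A}$ on the right.

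The main obstacle, and the step I would spend the most care on, is the ``ride-along'' claim: that Deligne's product $-\boxtimes\mc{A}$ commutes with the relative tensor product in the variable carrying no $\mc{C}$-action, i.e. the compatibility of the two universal constructions. In the functor-category approach this is the assertion $\mc{A}\boxtimes\Efnn{\mc{M}^{op}}{\mc{D}}{\mc{N}}\simeq\Efnn{\mc{M}^{op}}{\mc{D}}{\mc{A}\boxtimes\mc{N}}$, which requires knowing that the internal-hom/representing-object defining $\Efnn{}{}{}$ is preserved by $-\boxtimes\mc{A}$; this follows because $-\boxtimes\mc{A}$ is exact and faithful and Deligne's product satisfies $\Hom_{\mc{S}\boxtimes\mc{A}}(X\boxtimes A, Y\boxtimes A')=\Hom_{\mc{S}}(X,Y)\otimes\Hom_{\mc{A}}(A,A')$, but the bookkeeping for the module structures and balancing isomorphisms must be tracked carefully. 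In the universal-property approach the same difficulty reappears as the interchange of ``fixing an object of $\mc{A}$'' with ``factoring through $\Buni{M}{N}$'' — one needs that a functor out of $\mc{A}\boxtimes(\mc{M}\boxtimes\mc{N})$ which is balanced in the $\mc{M}\boxtimes\mc{N}$ part and right exact factors uniquely as $\Psi\circ(id_{\mc{A}}\boxtimes\Buni{M}{N})$ with $\Psi$ right exact, and this is precisely the statement that $id_{\mc{A}}\boxtimes\Buni{M}{N}$ is again a universal balanced right exact functor, which I would establish by combining the universal property of the Deligne product with that of $\Buni{M}{N}$. Everything else — the strictness and symmetry of $\boxtimes$, right exactness of the functors involved, and the identification of balancing data — is routine and would be left to the reader or deferred to \cite{JGThesis}.
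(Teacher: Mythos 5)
Your second, ``cleaner to write'' argument is essentially the paper's own proof: for a totally balanced $F:\mc{A}\boxtimes\mc{M}\boxtimes\mc{N}\to\mc{S}$ the paper fixes an object $A\in\mc{A}$, observes that each $F_A:\mc{M}\boxtimes\mc{N}\to\mc{S}$ is balanced and so factors uniquely through $\Buni{M}{N}$, assembles the resulting $\overline{F_A}$ into a functor $F'$ on $\mc{A}\boxtimes(\bitens{M}{D}{N})$, and concludes that both $\mc{A}\boxtimes(\bitens{M}{D}{N})$ and $(\mc{A}\boxtimes\mc{M})\tens{D}\mc{N}$ are universal factorizations of $F$, hence connected by a unique equivalence; the interchange of ``fix $A$'' with ``factor through $\Buni{M}{N}$'' that you single out as the delicate point is precisely the step the paper also passes over lightly, so you have correctly located where the real content sits. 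Your first route, via Theorem \ref{prop;exist} and the ``ride-along'' identity $\mc{A}\boxtimes\Efnn{\mc{M}^{op}}{\mc{D}}{\mc{N}}\simeq\Efnn{\mc{M}^{op}}{\mc{D}}{\mc{A}\boxtimes\mc{N}}$, is genuinely different and conceptually attractive, but it would import the exactness/finiteness hypotheses underlying the internal-hom description of $\bitens{M}{D}{N}$ into a lemma that the paper proves for general module categories from the universal property alone, and the ride-along identity is itself a compatibility of two universal constructions of roughly the same difficulty as the lemma; so the direct argument is the right one to write down at this stage, with the functor-category picture better reserved for a consistency check.
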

\begin{proof} Let $F: \mc{A}\boxtimes\mc{M}\boxtimes\mc{N}\rightarrow\mc{S}$ be totally balanced (Definition \ref{def;combal}). For $A$ in $\mc{A}$ define functor $F_A:\mc{M}\boxtimes \mc{N}\rightarrow \mc{S}$ by $M\boxtimes N \mapsto F(A\boxtimes M\boxtimes N)$ on simple tensors and $f\mapsto F(id_A\boxtimes f)$ on morphisms. Note that functors $F_A$ are balanced since $F$ is totally balanced. Thus for any object $A$ there is a unique functor $\overline{F_A}: \mc{M}\tens{D}\mc{N}\rightarrow \mc{S}$ satisfying the diagram below left. The $\overline{F_A}$ allow us to define functor $F':\mc{A}\boxtimes(\mc{M}\tens{D} \mc{N})\rightarrow \mc{S}:A\boxtimes Q\mapsto \overline{F_A}(Q)$ whenever $Q$ is an object of $\mc{M}\tens{D}\mc{N}$ giving the commutative upper right triangle in the diagram on the right.
\[ 
\xymatrix{
\mc{M}\boxtimes \mc{N}\ar[d]_{B_{\mc{M}, \mc{N}}}\ar[dr]^{F_A}&\\
\bitens{M}{D}{N}\ar[r]_<<<<{\overline{F_A}}&\mc{S}
}
\;\qquad\quad
\xymatrix { 
\mc{A}\boxtimes\mc{M}\boxtimes\mc{N} \ar[d]_{B_{\mc{A}\boxtimes\mc{M}, \mc{N}}} \ar[dr]^{F} \ar[r]^{B_{\mc{M}, \mc{N}}} & \mc{A}\boxtimes(\mc{M}\boxtimes_{\mc{D}} \mc{N}) \ar[d]_{F'}\ar@{.>}[dl]\\
(\mc{A}\boxtimes\mc{M})\boxtimes_{\mc{D}}\mc{N} \ar[r]_<<<<<<<<{\overline{F}}& \mc{S}
} \]
Since the functors $B_{\mc{A}\boxtimes\mc{M}, \mc{N}}$, $B_{\mc{M}, \mc{N}}$, $\overline{F}$ and $F'$ are unique by the various universal properties by which they are defined, both $\mc{A}\boxtimes(\mc{M}\boxtimes_{\mc{D}} \mc{N}) $ and $(\mc{A}\boxtimes\mc{M})\boxtimes_{\mc{D}} \mc{N}$ are universal factorizations of $F$ and must therefore be connected by a unique equivalence 
\begin{equation*}
\alpha^2_{\mc{A}, \mc{M}, \mc{N}}: \mc{A}\boxtimes(\bitens{M}{D}{N})\stackrel{\sim}{\rightarrow}(\mc{A}\boxtimes\mc{M})\tens{D}\mc{N}
\end{equation*}
(perforated arrow in diagram). One obtains natural equivalence $\alpha^1_{\mc{M}, \mc{N}, \mc{A}}:(\bitens{M}{D}{N})\boxtimes \mc{A}\stackrel{\sim}{\rightarrow}\mc{M}\tens{D} (\mc{N}\boxtimes \mc{A})$ by giving the same argument ``on the other side," i.e. by first defining $F_N:\mc{A}\boxtimes\mc{M}\rightarrow \mc{S}$ for fixed $N\in\mc{N}$ and proceeding analogously.
\end{proof}
\begin{remark} For bimodule category $\mc{A}$ Remark \ref{lem;unimod} implies that $\alpha^i$ are bimodule equivalences.
\end{remark}
\begin{lem}\label{lem:assbal} For $\alpha^1$ in Lemma \ref{prop;asslem} $(\mc{A}\tens{C}\Buni{M}{N})\alpha^1_{\mc{A}, \mc{M}, \mc{N}}:(\bitens{A}{C}{M})\tens{}\mc{N}\rightarrow \mc{A}\tens{C}(\bitens{M}{D}{N})$ is balanced.
\end{lem}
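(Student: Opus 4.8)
The plan is to show directly that the composite 2-functor in question carries the canonical $\mc{D}$-balancing structure on the domain $(\bitens{A}{C}{M})\boxtimes\mc{N}$ to an isomorphism-respecting structure, by unwinding what $\alpha^1_{\mc{A},\mc{M},\mc{N}}$ and $\mc{A}\tens{C}\Buni{M}{N}$ do on objects of the form $Q\boxtimes N$, where $Q\in\bitens{A}{C}{M}$ and $N\in\mc{N}$. Recall from Lemma \ref{prop;asslem} that $\alpha^1_{\mc{A},\mc{M},\mc{N}}$ is the unique equivalence $(\bitens{A}{C}{M})\boxtimes\mc{N}\xrightarrow{\sim}\bitens{A}{C}{(\mc{M}\boxtimes\mc{N})}$ arising from the fact that both sides are universal factorizations of a totally balanced functor out of $\mc{A}\boxtimes\mc{M}\boxtimes\mc{N}$; in particular it sends $B_{\mc{A}\boxtimes\mc{M},\mc{N}}(B_{\mc{A},\mc{M}}(A\boxtimes M)\boxtimes N)$ to $B_{\mc{A},\mc{M}\boxtimes\mc{N}}(A\boxtimes(M\boxtimes N))$. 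Composing with $\mc{A}\tens{C}\Buni{M}{N}$ then lands on $B_{\mc{A}\tens{C}(\mc{M}\tens{D}\mc{N})}(A\boxtimes B_{\mc{M},\mc{N}}(M\boxtimes N))$, and the right module action of $\mc{D}$ on $\mc{N}$ in the target is the one induced via Proposition \ref{lem;tensbimd} from the right $\mc{D}$-action on $\mc{M}\tens{D}\mc{N}$.

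The key steps, in order, are: (1) Reduce to checking the balancing only on objects in the image of the composite $\Buni{A}{M}$ then $B_{\mc{A}\boxtimes\mc{M},\mc{N}}$ — i.e. on "simple tensors" $(A\boxtimes M)\boxtimes N$ — since both the tensor product functors and $\alpha^1$ are right exact and every object is a cokernel of a map between such, and since natural isomorphisms between right exact functors are determined by their values on a generating class (this is the same mechanism used throughout the paper, e.g. Lemma \ref{universelemma} and Remark \ref{tensorOf2-cells}). (2) On such an object, write down the required balancing isomorphism: for $Y\in\mc{D}$ we need $((\mc{A}\tens{C}\Buni{M}{N})\alpha^1)((Q\otimes Y)\boxtimes N)\simeq ((\mc{A}\tens{C}\Buni{M}{N})\alpha^1)(Q\boxtimes(Y\otimes N))$ where $Q$ ranges over simple tensors; tracking through $\alpha^1$ (which intertwines the $\mc{D}$-action, being a bimodule equivalence by the Remark following Lemma \ref{prop;asslem}) and through $\mc{A}\tens{C}\Buni{M}{N}$ (whose target's $\mc{D}$-action is, by Proposition \ref{lem;tensbimd}, $\overline{R_Y}$ lifted from $R_Y:M\boxtimes N\mapsto M\boxtimes(N\otimes Y)$), this isomorphism is built from the balancing $b$ of $B_{\mc{M},\mc{N}}$ itself. (3) Verify the balancing pentagon of Definition \ref{DEF;bal}: on simple tensors this pentagon reduces, after the identifications of steps (1)–(2), to the balancing pentagon already satisfied by $B_{\mc{M},\mc{N}}$ together with the naturality and the defining commuting diagrams of $\alpha^1$ and of the induced $\mc{D}$-actions in Proposition \ref{lem;tensbimd}; hence it commutes.

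I expect the main obstacle to be step (2): bookkeeping the several layers of identifications — the universal equivalence $\alpha^1$, the functor $\mc{A}\tens{C}\Buni{M}{N}=\overline{B_{\mc{A}\tens{C}(\mc{M}\tens{D}\mc{N})}(id_{\mc{A}}\boxtimes B_{\mc{M},\mc{N}})}$, and the fact that the $\mc{D}$-module structure on $\bitens{M}{D}{N}$ (hence on $\bitens{A}{C}{(\bitens{M}{D}{N})}$, via Proposition \ref{lem;tensbimd} again) is itself only defined up to the lifting procedure there — so that one must be careful that "$(Q\otimes Y)\boxtimes N$" and "$Q\boxtimes(Y\otimes N)$" are interpreted consistently on both sides. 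Once the correct simple-tensor description is pinned down, the verification of the pentagon in step (3) is a routine diagram chase inheriting commutativity from the balancing data already in hand, and the passage back to all objects in step (1) is formal. The argument is essentially a transport-of-structure along $\alpha^1$, with Proposition \ref{lem;tensbimd} supplying the compatibility of the module actions.
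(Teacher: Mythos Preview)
Your proposal is correct and follows essentially the same approach as the paper. The paper packages the argument as a single large commuting diagram whose subregions are the defining diagrams of $\alpha^1$, of $id_{\mc{A}}\tens{C}\Buni{M}{N}$, and of the induced $\mc{D}$-actions $R_X$, $L_X$, with the balancing $b$ of $\Buni{M}{N}$ appearing as the only nontrivial 2-cell; an application of Lemma \ref{universelemma} then reads off the desired balancing isomorphism as $b$ itself, and the pentagon holds because it does for $b$. Your steps (1)--(3) are exactly this argument rephrased object-by-object rather than diagrammatically, with your ``reduce to simple tensors'' playing the role of the paper's invocation of Lemma \ref{universelemma}.
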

\begin{proof}
Treat $\mc{M}$ as having right $\mc{C}$-module structure coming from its bimodule structure, and similarly give $\mc{N}$ its left $\mc{C}$-module structure. Recall, as above, we define $R_X:\mc{M}\rightarrow \mc{M}$ and $L_X:\mc{N}\rightarrow\mc{N}$ right and left action of $X\in\mc{C}$ on $\mc{M}, \mc{N}$ respectively. We will use superscripts to keep track of where $\mc{C}$-action is taking place, e.g. $R^{\mc{M}}_Y$ means right action of $X$ in $\mc{M}$. Recall also $X\in\mc{D}$ induces right $\mc{D}$-action $id_{\mc{A}}\tens{C}R_X:\bitens{A}{C}{M}\rightarrow \bitens{A}{C}{M}$ which we denote also by $R_X$. Consider the following diagram:
\[
\!\!\!\!\!\!\!\!
\xymatrix{
\mc{A}\boxtimes\mc{M}\boxtimes\mc{N}\ar[rrr]^{B\boxtimes id}\ar[ddd]_{B\boxtimes id}\ar[dddr]_{R^{\mc{M}}_X}\ar[rrrd]^>>>>>>>>>>>>>{L^{\mc{N}}_X}\ar@/^-1.2pc/[ddrr]_>>>>>>>>{B(R_X\boxtimes id)}_{}="1"\ar@/^1.2pc/[ddrr]^>>>>>>>>{B(id\boxtimes L_X)}^{}="2"&&&\mc{A}\mc{M}\boxtimes\mc{N}\ar[r]^{id\boxtimes L_X}&\mc{A}\mc{M}\boxtimes\mc{N}\ar[dd]^{\alpha^1}\\
&&&\mc{A}\boxtimes\mc{M}\boxtimes\mc{N}\ar[ur]_{B\boxtimes id}\ar[dl]^{id\boxtimes B}\ar[dr]_B&\\
&&\mc{A}\boxtimes\mc{M}\mc{N}\ar[ddrr]^B&&\mc{A}(\mc{M}\boxtimes\mc{N})\ar[dd]^{id\boxtimes_*B}\\
\mc{A}\mc{M}\boxtimes\mc{N}\ar[d]_{R_X\boxtimes id}&\mc{A}\boxtimes\mc{M}\boxtimes\mc{N}\ar[dl]^{B\boxtimes id}\ar[ur]_{id\boxtimes B}\ar[dr]_{B}&&&\\
\mc{A}\mc{M}\boxtimes\mc{N}\ar[rr]_{\alpha^1}&&\mc{A}(\mc{M}\boxtimes\mc{N})\ar[rr]_{id\boxtimes_*B}&&\mc{A}(\mc{M}\mc{N})\\
\ar@{=>}"1";"2"^{\simeq}_{b}
}
\]
Leftmost rectangle is $(\textrm{definition of }R_X)\boxtimes id_{\mc{N}}$, top rectangle is tautologically $B\boxtimes L_X$, upper right and lower left triangles are definition of $\alpha^1$, lower right rectangles definition of $id_{\mc{A}}\tens{C}\Buni{M}{N}$ and $b$ is $id_\mc{A}\boxtimes(\textrm{balancing isomorphism for }B_{\mc{M}, \mc{N}})$. An application of Lemma \ref{universelemma} then gives
\begin{equation*}
(id_{\mc{A}}\tens{C}\Buni{M}{N})\alpha^1_{\mc{A}, \mc{M}, \mc{N}}(R_X\boxtimes id_{\mc{N}}) \stackrel{b}{\simeq} (id_{\mc{A}}\tens{C}\Buni{M}{N})\alpha^1_{\mc{A}, \mc{M}, \mc{N}}(id_{\bitens{A}{C}{M}}\boxtimes L_X)
\end{equation*}
Since $b$ satisfies the balancing axiom (Definition \ref{DEF;bal}) for $\Buni{M}{N}$ it satisfies it here. This is precisely the statement that $(\mc{A}\tens{C}\Buni{M}{N})\alpha^1_{\mc{A}, \mc{M}, \mc{N}}$ is balanced.
\end{proof}
\begin{prop}\label{prop;ass1} If $\mc{A}$ and $\mc{N}$ are bimodules we have $(\mc{A}\tens{C}\mc{M})\tens{D}\mc{N} \simeq \mc{A}\tens{C}(\mc{M}\tens{D}\mc{N})$ as bimodule categories. 
\end{prop}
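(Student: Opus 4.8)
The plan is to build the comparison functor by descending the balanced functor produced in Lemma \ref{lem:assbal}, construct its inverse by the mirror-image construction, check that the two are mutually inverse with Lemma \ref{universelemma}, and finally promote the result to a bimodule equivalence via Remark \ref{lem;unimod}. Conceptually the statement holds because both $(\bitens{A}{C}{M})\tens{D}\mc{N}$ and $\mc{A}\tens{C}(\bitens{M}{D}{N})$ are universal among right exact functors out of $\mc{A}\boxtimes\mc{M}\boxtimes\mc{N}$ that are completely balanced in the sense of Definition \ref{def;combal} (balanced over $\mc{C}$ in the first slot, over $\mc{D}$ in the second, subject to the consistency pentagon), so the two universal factorizations are canonically equivalent.

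Write $B_1$ for the universal $\mc{D}$-balanced functor $(\bitens{A}{C}{M})\boxtimes\mc{N}\to(\bitens{A}{C}{M})\tens{D}\mc{N}$ and $B_2$ for the universal $\mc{C}$-balanced functor $\mc{A}\boxtimes(\bitens{M}{D}{N})\to\mc{A}\tens{C}(\bitens{M}{D}{N})$. First I would set
\[
\Phi:=(id_{\mc{A}}\tens{C}\Buni{M}{N})\circ\alpha^1_{\mc{A}, \mc{M}, \mc{N}}\colon(\bitens{A}{C}{M})\boxtimes\mc{N}\longrightarrow\mc{A}\tens{C}(\bitens{M}{D}{N}).
\]
Since $\alpha^1$ is an equivalence and $id_{\mc{A}}\tens{C}\Buni{M}{N}$ is right exact, $\Phi$ is right exact, and it is $\mc{D}$-balanced by Lemma \ref{lem:assbal}; hence by the universal property of $B_1$ there is a unique right exact $\overline{\Phi}\colon(\bitens{A}{C}{M})\tens{D}\mc{N}\to\mc{A}\tens{C}(\bitens{M}{D}{N})$ with $\overline{\Phi}\,B_1=\Phi$. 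Mirroring the construction on the other side I would put
\[
\Psi:=(\Buni{A}{M}\tens{D}id_{\mc{N}})\circ\alpha^2_{\mc{A}, \mc{M}, \mc{N}}\colon\mc{A}\boxtimes(\bitens{M}{D}{N})\longrightarrow(\bitens{A}{C}{M})\tens{D}\mc{N},
\]
which is right exact and, by the argument of Lemma \ref{lem:assbal} run with the roles of left and right interchanged, is $\mc{C}$-balanced; it therefore descends to a unique right exact $\overline{\Psi}\colon\mc{A}\tens{C}(\bitens{M}{D}{N})\to(\bitens{A}{C}{M})\tens{D}\mc{N}$ with $\overline{\Psi}\,B_2=\Psi$.

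Next I would check $\overline{\Psi}\,\overline{\Phi}=id$ and $\overline{\Phi}\,\overline{\Psi}=id$. For the first, precompose with $B_1$: one has $\overline{\Psi}\,\overline{\Phi}\,B_1=\overline{\Psi}\,\Phi=\overline{\Psi}\,(id_{\mc{A}}\tens{C}\Buni{M}{N})\,\alpha^1_{\mc{A}, \mc{M}, \mc{N}}$, and chasing the commuting triangles that define $\alpha^1$ and $\alpha^2$ (from the proof of Lemma \ref{prop;asslem}) against the squares defining $id_{\mc{A}}\tens{C}\Buni{M}{N}$ and $\Buni{A}{M}\tens{D}id_{\mc{N}}$ shows this composite equals $B_1$; since $\overline{\Psi}\,\overline{\Phi}$ and $id$ are both right exact and agree after precomposition with $B_1$, Lemma \ref{universelemma} (applied to $\tens{D}$) gives $\overline{\Psi}\,\overline{\Phi}=id$. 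The identity $\overline{\Phi}\,\overline{\Psi}=id$ follows in the same way, precomposing with $B_2$ and applying Lemma \ref{universelemma} for $\tens{C}$; here the needed equality $\overline{\Phi}\,\Psi=B_2$ reduces, after unwinding, to the already-recorded relations among $\alpha^1$, $\alpha^2$ and the canonical balanced functors. Thus $\overline{\Phi}$ is an equivalence of abelian categories.

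Finally, when $\mc{A}$ and $\mc{N}$ carry full bimodule structures, all categories in sight are bimodule categories by two applications of Proposition \ref{lem;tensbimd}, the $\alpha^i$ are bimodule equivalences (the remark following Lemma \ref{prop;asslem}), and each of $\Buni{M}{N}$, $\Buni{A}{M}$, $id_{\mc{A}}\tens{C}\Buni{M}{N}$, $\Buni{A}{M}\tens{D}id_{\mc{N}}$ is a bimodule functor (Proposition \ref{lem;tensbimd} and functoriality of the relative tensor product on $1$-cells); hence $\Phi$ and $\Psi$ are balanced bimodule functors and, by Remark \ref{lem;unimod}, their descents $\overline{\Phi}$, $\overline{\Psi}$ are bimodule functors, so $\overline{\Phi}$ is a bimodule equivalence. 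I expect the main obstacle to be the mirror of Lemma \ref{lem:assbal} --- that $\Psi$ is $\mc{C}$-balanced --- together with the bookkeeping needed to fix the coherence between $\alpha^1$ and $\alpha^2$ that makes $\overline{\Phi}$ and $\overline{\Psi}$ inverse to one another; the remainder is a formal consequence of the universal properties and Lemma \ref{universelemma}.
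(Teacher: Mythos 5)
Your proposal is correct and follows essentially the same route as the paper: your $\overline{\Phi}$ and $\overline{\Psi}$ are exactly the paper's $a^1$ and $a^2$, defined by descending $(id_{\mc{A}}\tens{C}\Buni{M}{N})\alpha^1$ and $(\Buni{A}{M}\tens{D}id_{\mc{N}})\alpha^2$ through the universal balanced functors, with mutual inverseness checked via Lemma \ref{universelemma} and the bimodule statement supplied by Remark \ref{lem;unimod}. Your explicit flagging of the mirror of Lemma \ref{lem:assbal} (that $\Psi$ is $\mc{C}$-balanced) is a point the paper leaves implicit but is needed for $a^2$ to be well defined.
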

\begin{proof}
We plan to define the stated equivalence as the image of the functor $(\mc{A}\tens{C}\Buni{M}{N})\alpha^1_{\mc{A}, \mc{M}, \mc{N}}:(\bitens{A}{C}{M})\boxtimes{N}\rightarrow \mc{A}\tens{C}(\bitens{M}{D}{N})$ under $\mc{Y}$ (equation (\ref{eqn;equiv})). Lemma \ref{lem:assbal} implies that indeed $\mc{Y}$ is defined there. With notation as above define $a^1$ and $a^2$ using the universality of $B$ by the following diagrams.
\[
\xymatrix{
(\bitens{A}{C}{M})\boxtimes{N} \ar[r]^{\alpha^1_{\mc{A}, \mc{M},\mc{N}}} \ar[d]_{\Buni{AM}{N}} & \mc{A}\tens{C}(\mc{M}\boxtimes\mc{N}) \ar[d]|{id_\mc{A}\tens{C}\Buni{M}{N}} \\
(\bitens{A}{C}{M})\tens{D}\mc{N} \ar[r]_{a^1_{\mc{A}, \mc{M}, \mc{N}}} & \mc{A}\tens{C}(\bitens{M}{D}{N})}
\;\quad
\xymatrix{
\mc{A}\boxtimes (\bitens{M}{D}{N}) \ar[r]^{\alpha^2_{\mc{A}, \mc{M},\mc{N}}} \ar[d]_{\Buni{A}{MN}} & (\mc{A}\boxtimes\mc{M})\tens{D}\mc{N} \ar[d]|{\Buni{A}{M}\tens{D}id_\mc{N}} \\
\mc{A}\tens{C}(\bitens{M}{D}{N}) \ar[r]_{a^2_{\mc{A}, \mc{M}, \mc{N}}} & (\bitens{A}{C}{M})\tens{D}\mc{N}
}
\]
$\alpha^i$ are defined in Lemma \ref{prop;asslem}.To see that $a^1$ and $a^2$ are quasi-inverses consider the diagram
\[
\xymatrix{
\mc{A}\boxtimes(\mc{M}\mc{N})  \ar[dd]|{\Buni{A}{MN}} \ar[dr]_{\alpha^2} & & \mc{A}\boxtimes\mc{M}\boxtimes\mc{N} \ar[ll]_{id_\mc{A}\boxtimes\Buni{M}{N}} \ar[rr]^{B_{\mc{A}, \mc{M}\boxtimes\mc{N}}} \ar[dl]|{B_{\mc{A}\boxtimes\mc{M}, \mc{N}}} \ar[dr]|{B_{\mc{A}, \mc{M}}\boxtimes id_\mc{N}} & &\mc{A}(\mc{M}\boxtimes\mc{N}) \ar[dd]|{id_\mc{A}\tens{C}\Buni{M}{N}}\\
& (\mc{A}\boxtimes\mc{M})\mc{N} \ar[dr]|{\Buni{A}{M}\tens{D} id_\mc{N}} & &(\mc{A}\mc{M})\boxtimes\mc{N}\ar[ur]_{\alpha^1} \ar[dl]|{\Buni{AM}{N}} & \\
\mc{A}(\mc{M}\mc{N})\ar[rr]_{a^2} & & (\mc{A}\mc{M})\mc{N} \ar[rr]_{a^1} & & \mc{A}(\mc{M}\mc{N})
} \]
The triangles in upper left and right are those defining $\alpha^2$, $\alpha^1$ respectively. The central square is the definition of $\Buni{A}{M}\tens{D} id_\mc{N}$, and the left and right squares those defining $a^2$ and $a^1$. Thus the perimeter commutes, giving
\begin{eqnarray*}
& &a^1a^2\Buni{A}{MN}(id_\mc{A}\boxtimes\Buni{M}{N}) = (id_\mc{A}\tens{C}\Buni{M}{N})B_{\mc{A}, \mc{M}\boxtimes\mc{N}}\\
&\Rightarrow& a^1a^2\Buni{A}{MN}(id_\mc{A}\boxtimes\Buni{M}{N}) = \Buni{A}{MN}(id_\mc{A}\boxtimes\Buni{M}{N})\\
&\Rightarrow& a^1a^2\Buni{A}{MN}(\alpha^2)^{-1}B_{\mc{A}\boxtimes\mc{M}, \mc{N}} = \Buni{A}{MN}(\alpha^2)^{-1}B_{\mc{A}\boxtimes\mc{M}, \mc{N}}\\
&\Rightarrow& a^1a^2\Buni{A}{MN} = \Buni{A}{MN}\\
&\Rightarrow& a^1a^2 = id_{\mc{A}(\mc{M}\mc{N})}\\
\end{eqnarray*}
where the first implication follows from the square defining $id_\mc{A}\tens{C}\Buni{M}{N}$, the second by the definition of $\alpha^2$, the third by Lemma \ref{universelemma} (for $B_{\mc{A}\boxtimes\mc{M}, \mc{N}}$, $\Buni{A}{MN}$, resp.). Using a similar diagram one derives $a^2a^1 = id_{(\mc{A}\mc{M})\mc{N}}$ hence the $a^i$ are equivalences and by Remark \ref{lem;unimod} they are bimodule equivalences.
\end{proof}
In what follows denote
\begin{equation}\label{partialAss}
a_{\mc{A}, \mc{M}, \mc{N}} := a^1_{\mc{A}, \mc{M}, \mc{N}} : (\mc{A}\tens{C}\mc{M})\tens{D}\mc{N} \simeq \mc{A}\tens{C}(\mc{M}\tens{D}\mc{N}).
\end{equation}
In order to prove coherence for $a$ (Proposition \ref{asslem:2}) we will need a couple of simple technical lemmas together with results about the naturality of $a$.
%THE NEW PROPOSITION 
In the monoidal category setting associativity of monoidal product is required to be natural in each of its indices, which are taken as objects in the underlying category. In describing monoidal structure in the 2-category setting we also require associativity though stipulate that it be natural in its indices  \textit{up to} 2-isomorphism (see M.10 in Definition \ref{defnMon2}). For us this means, in the first index,
\begin{equation*}
a_{F, \mc{M}, \mc{N}}:a_{\mc{B},\mc{M},\mc{N}}(F\mc{M})\mc{N} \stackrel{\sim}{\Rightarrow}F(\mc{M}\tens{D}\mc{N})a_{\mc{A},\mc{M},\mc{N}}
\end{equation*} 
for bimodule functor $F:\mc{A}\rightarrow\mc{B}$. Similarly we need 2-isomorphisms for $F$ in the remaining positions. The content of Proposition \ref{ass2-natProp} is that all such 2-isomorphims are actually identity. Before stating the proposition we give a definition to introduce a notational convenience.
\begin{defn}\label{FMdef} For right exact right $\mc{C}$-module functor $F:\mc{A}\rightarrow\mc{B}$ define 1-cell $F\mc{M}:=F\tens{C}id_{\mc{M}}:\mc{A}\tens{C}\mc{M}\rightarrow\mc{B}\tens{C}\mc{M}$ and note that $F\mc{M}$ is right exact. Similarly we can act on such functors on the right.
\end{defn}
\begin{prop}[Associativity ``2-naturality"]\label{ass2-natProp} We have
\begin{equation*}
a_{\mc{B},\mc{M},\mc{N}}(F\mc{M})\mc{N} = F(\mc{M}\tens{D}\mc{N})a_{\mc{A},\mc{M},\mc{N}}. 
\end{equation*}
Analogous relations hold for the remaining indexing valencies for $a$. 
\end{prop}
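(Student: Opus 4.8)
The plan is to prove the first identity $a_{\mc{B},\mc{M},\mc{N}}(F\mc{M})\mc{N} = F(\mc{M}\tens{D}\mc{N})a_{\mc{A},\mc{M},\mc{N}}$ by reducing it to an equality of right exact functors out of $(\bitens{A}{C}{M})\boxtimes\mc{N}$ precomposed with $\Buni{AM}{N}$, and then invoking Lemma \ref{universelemma}. All the equivalences $a = a^1$ were defined (Proposition \ref{prop;ass1}) via the defining square relating $a^1_{\mc{A},\mc{M},\mc{N}}$, $\alpha^1_{\mc{A},\mc{M},\mc{N}}$, $\Buni{AM}{N}$ and $id_\mc{A}\tens{C}\Buni{M}{N}$, and the 1-cells $F\mc{M} = F\tens{C}id_\mc{M}$ were defined (as in the paragraph after Proposition \ref{prop;ass1} and Definition \ref{FMdef}) via the universality square for $\boxtimes_{\mc{C}}$. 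So the strategy is purely diagram-chasing: assemble the large commuting diagram whose outer cells are (i) the square defining $F\mc{M}$, (ii) the square defining $(F\mc{M})\mc{N} = (F\mc{M})\tens{D}id_\mc{N}$, (iii) the square defining $a^1_{\mc{B},\mc{M},\mc{N}}$, (iv) on the other branch, the square defining $a^1_{\mc{A},\mc{M},\mc{N}}$ and the square defining $F(\mc{M}\tens{D}\mc{N}) = F\tens{C}id_{\mc{M}\tens{D}\mc{N}}$, glued together along the Deligne-product-level functor $F\boxtimes id_\mc{M}\boxtimes id_\mc{N}$ out of $\mc{A}\boxtimes\mc{M}\boxtimes\mc{N}$.

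Concretely, I would first observe that at the level of triple Deligne products the naturality of $\alpha^1$ in Lemma \ref{prop;asslem} gives $\alpha^1_{\mc{B},\mc{M},\mc{N}}\bigl((F\boxtimes id_\mc{M})\boxtimes id_\mc{N}\bigr) = (F\boxtimes(id_\mc{M}\boxtimes id_\mc{N}))\alpha^1_{\mc{A},\mc{M},\mc{N}}$ as functors $(\mc{A}\boxtimes\mc{M})\boxtimes\mc{N}\to\mc{B}\boxtimes(\mc{M}\boxtimes\mc{N})$, because both sides are built from the universal factorizations defining $\alpha^1$ and $F\boxtimes id$ is literally a Deligne-product functor; this is the one genuinely new input and it is immediate from how $\alpha^1$ was constructed (fix $N\in\mc{N}$, descend $F_N$, etc.). Then I would compose on the left with $\Buni{B}{MN}$ and on the right with $\Buni{AM}{N}$ and repeatedly apply the four universality squares listed above to rewrite $\Buni{B}{MN}\alpha^1_{\mc{B},\mc{M},\mc{N}}((F\boxtimes id)\boxtimes id)$ as $a^1_{\mc{B},\mc{M},\mc{N}}(F\mc{M})\mc{N}\,\Buni{AM}{N}$ along one route and as $F(\mc{M}\tens{D}\mc{N})\,a^1_{\mc{A},\mc{M},\mc{N}}\,\Buni{AM}{N}$ along the other. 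Since $\Buni{AM}{N}$ is a universal right exact $\mc{D}$-balanced functor and both composites are right exact, Lemma \ref{universelemma} yields the desired equality of functors $(\bitens{A}{C}{M})\tens{D}\mc{N}\to\mc{B}\tens{C}(\bitens{M}{D}{N})$.

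For the remaining valencies — $a$ in its middle and last index — the argument is identical word for word, using the analogous universality squares: for $G:\mc{M}\to\mc{M}'$ a $(\mc{C},\mc{D})$-bimodule functor one runs the chase with $id_\mc{A}\boxtimes G\boxtimes id_\mc{N}$, and for $H:\mc{N}\to\mc{N}'$ a $(\mc{D},\mc{E})$-bimodule functor with $id_\mc{A}\boxtimes id_\mc{M}\boxtimes H$; in each case naturality of $\alpha^1$ (resp.\ $\alpha^2$) in the appropriate slot plus the defining squares plus Lemma \ref{universelemma} give the claim. I expect the main obstacle to be purely organizational rather than mathematical: keeping the (rather large) pasting diagram unambiguous — in particular being careful about which of $\alpha^1$ versus $\alpha^2$ and which of $\Buni{AM}{N}$ versus $\Buni{A}{MN}$ appears at each stage — and checking that every intermediate functor one wants to cancel by Lemma \ref{universelemma} is genuinely right exact, which it is since right exactness is preserved by composition and by all the tensor and Deligne constructions in play (the module actions $L_X$, $R_X$ being right exact was recorded before Proposition \ref{lem;tensbimd}). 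I would present the first index carefully, draw the diagram, and then say ``the remaining cases are analogous'' for the others.
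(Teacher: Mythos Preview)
Your proposal is correct and is essentially the paper's own argument: the paper also assembles one large pasting diagram built from (i) the defining square for $(F\mc{M})\mc{N}$ at the top, (ii) the defining square for $F(\mc{M}\tens{D}\mc{N})$ at the bottom, (iii) the defining squares for $a^1_{\mc{A},\mc{M},\mc{N}}$ and $a^1_{\mc{B},\mc{M},\mc{N}}$ on the sides, and (iv) the $\alpha^1$-naturality square with the Deligne-level $F\boxtimes id\boxtimes id$ in the middle, and reads off the outer contour. The only organizational difference is that you isolate the $\alpha^1$-naturality as a preliminary step, whereas the paper extracts it \emph{a posteriori} from the same diagram (Remark~\ref{alpha2-natRemark}); and you are explicit about the final appeal to Lemma~\ref{universelemma}, which the paper leaves implicit in the phrase ``external contour is the stated relation.''
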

\begin{proof} We will prove the stated naturality of $a$ for 1-cells appearing in the first index. A similar proof with analogous diagrams gives the others. Recall $\alpha^1$ defined in Lemma \ref{prop;asslem}. Consider the diagram:
\[\qquad\quad
\xymatrix{
(\mc{A}\mc{M})\mc{N}\ar[rrrr]^{(F\mc{M})\mc{N}}\ar[dddd]_a&&&&(\mc{B}\mc{M})\mc{N}\ar[dddd]^a\\
&(\mc{A}\mc{M})\boxtimes\mc{N}\ar[ul]|{\Buni{AM}{N}}\ar[rr]^{(F\mc{M})}\ar@/^-2.9pc/[dd]_{\alpha^1_{\mc{A},\mc{M},\mc{N}}}&&(\mc{B}\mc{M})\boxtimes\mc{N}\ar[ur]|{\Buni{BM}{N}}\ar@/^2.9pc/[dd]^{\alpha^1_{\mc{B},\mc{M},\mc{N}}}&\\
&\mc{A}\boxtimes\mc{M}\boxtimes\mc{N}\ar[rr]_{F}\ar[d]^{B_{\mc{A},\mc{M}\boxtimes\mc{N}}}\ar[u]_{\Buni{A}{M}}&&\mc{B}\boxtimes\mc{M}\boxtimes\mc{N}\ar[u]^{\Buni{B}{M}}\ar[d]_{B_{\mc{B}, \mc{M}\boxtimes\mc{N}}}&&\\
&\mc{A}(\mc{M}\boxtimes\mc{N})\ar[rr]_{F(\mc{M}\boxtimes\mc{N})}\ar[dl]|{\mc{A}\Buni{M}{N}}&&\mc{B}(\mc{M}\boxtimes\mc{N})\ar[dr]|{\mc{B}\Buni{M}{N}}&\\
\mc{A}(\mc{M}\mc{N})\ar[rrrr]_{F(\mc{M}\mc{N})}&&&&\mc{B}(\mc{M}\mc{N})
}\]
The top, bottom and center rectangles follow from Definition \ref{FMdef} and definition of tensor product of functors. Commutativity of all other subdiagrams is given in proof of Proposition \ref{prop;ass1}. External contour is the stated relation. 
\end{proof}

\begin{remark}\label{alpha2-natRemark} Observe that the proof of Proposition \ref{ass2-natProp} also gives 2-naturality of $\alpha^1$: the center square with attached arches gives the equation 
\begin{equation}\label{alpha2-natEqn}
\alpha^1_{\mc{B},\mc{M},\mc{N}}((F\mc{M})\boxtimes id_\mc{N})=F(\mc{M}\boxtimes\mc{N})\alpha^1_{\mc{A},\mc{M},\mc{N}}.
\end{equation}
\end{remark}
\begin{lem}\label{asslem:2} The hexagon
\[
\xymatrix{
\mc{A}(\mc{M}\mc{N})\tens{}\mc{P} \ar[d]_{\alpha^1_{\mc{A}, \mc{M}\mc{N}, \mc{P}}} & & (\mc{A}\mc{M})\mc{N}\tens{}\mc{P} \ar[d]^{\Buni{(AM)N}{P}} \ar[ll]_{a_{\mc{A}, \mc{M}, \mc{N}}}\\
\mc{A}(\mc{M}\mc{N}\boxtimes\mc{P}) \ar[d]_{\Buni{MN}{P}}& & ((\mc{A}\mc{M})\mc{N})\mc{P}\ar[d]^{a_{\mc{A}\mc{M}\mc{N}}}\\
\mc{A}((\mc{M}\mc{N})\mc{P})&&(\mc{A}(\mc{M}\mc{N}))\mc{P} \ar[ll]^{a_{\mc{A}, \mc{M}\mc{N},\mc{P}}}\\
} \]
commutes.
\end{lem}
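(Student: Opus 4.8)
The plan is to read off the two legs of the hexagon as honest composites of right exact functors and to identify them using nothing more than the two commuting squares that already define, respectively, $\boxtimes_{\mc{E}}$ on $1$-cells and the partial associativity $a^1$ of Proposition \ref{prop;ass1}; no fresh universal-property argument is needed. First I would pin down the notation in the diagram: the top edge written $a_{\mc{A},\mc{M},\mc{N}}$ is $a_{\mc{A},\mc{M},\mc{N}}\boxtimes id_{\mc{P}}$, the cell $a_{\mc{A}\mc{M}\mc{N}}$ is the whiskering $a_{\mc{A},\mc{M},\mc{N}}\tens{E}id_{\mc{P}}$, and the bottom edge of the left column written $\Buni{MN}{P}$ is $id_{\mc{A}}\tens{C}\Buni{MN}{P}$. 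With this reading both legs are right exact functors
\[((\bitens{A}{C}{M})\tens{D}\mc{N})\boxtimes\mc{P}\longrightarrow \mc{A}\tens{C}((\bitens{M}{D}{N})\tens{E}\mc{P}),\]
the left leg being $(id_{\mc{A}}\tens{C}\Buni{MN}{P})\circ\alpha^1_{\mc{A},\mc{M}\mc{N},\mc{P}}\circ(a_{\mc{A},\mc{M},\mc{N}}\boxtimes id_{\mc{P}})$ and the right leg $a_{\mc{A},\mc{M}\mc{N},\mc{P}}\circ(a_{\mc{A},\mc{M},\mc{N}}\tens{E}id_{\mc{P}})\circ B_{(\mc{A}\mc{M})\mc{N},\mc{P}}$.

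I would then make two substitutions in the right leg. The square defining $\boxtimes_{\mc{E}}$ on $1$-cells (the diagram displayed just before Definition \ref{FMdef}), applied with the right exact $1$-cell $a_{\mc{A},\mc{M},\mc{N}}$ in the first variable and $id_{\mc{P}}$ in the second, gives
\[(a_{\mc{A},\mc{M},\mc{N}}\tens{E}id_{\mc{P}})\,B_{(\mc{A}\mc{M})\mc{N},\mc{P}}=B_{\mc{A}(\mc{M}\mc{N}),\mc{P}}\,(a_{\mc{A},\mc{M},\mc{N}}\boxtimes id_{\mc{P}}).\]
Next, the square defining $a^1$ in Proposition \ref{prop;ass1}, but read at the triple $\mc{A},\,\bitens{M}{D}{N},\,\mc{P}$ rather than at $\mc{A},\mc{M},\mc{N}$, gives
\[a_{\mc{A},\mc{M}\mc{N},\mc{P}}\,B_{\mc{A}(\mc{M}\mc{N}),\mc{P}}=(id_{\mc{A}}\tens{C}\Buni{MN}{P})\,\alpha^1_{\mc{A},\mc{M}\mc{N},\mc{P}}.\]
Feeding the first equality into the right leg and then the second collapses it exactly onto the left leg, which is the assertion of the lemma.

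That is essentially the whole proof, and it is purely formal. The one place where I would expect to have to take care is the indexing: the $a^1$-square must be invoked at $(\mc{A},\bitens{M}{D}{N},\mc{P})$, not at $(\mc{A},\mc{M},\mc{N})$, and one should note that $a_{\mc{A},\mc{M},\mc{N}}$ does qualify as the right exact bimodule $1$-cell to feed into the $\boxtimes_{\mc{E}}$-square, by Proposition \ref{prop;ass1}. If a diagrammatic write-up is preferred, the same identity drops out of pasting the defining diagrams of $a^1$, $\alpha^1$, $\boxtimes_{\mc{E}}$-on-$1$-cells and the relevant universal balanced functors $B$ along their shared edges and reading the hexagon off as the boundary; but the two substitutions above already carry all the content.
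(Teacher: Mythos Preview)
Your proof is correct and is essentially the paper's own argument written out equationally: the paper inserts the diagonal $B_{\mc{A}(\mc{M}\mc{N}),\mc{P}}$ from the upper-left vertex to the lower-right vertex, splitting the hexagon into the defining square for $a_{\mc{A},\mc{M},\mc{N}}\tens{E}id_{\mc{P}}$ and the defining square for $a^1_{\mc{A},\mc{M}\mc{N},\mc{P}}$, which are precisely your two substitutions. Your identification of the abbreviated labels in the hexagon is also right.
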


\begin{proof}  
The arrow $\Buni{A(MN)}{P}$ drawn from the upper-left most entry in the hexagon to the lower-right most entry divides the diagram into a pair of rectangles. The upper right rectangle is the definition of $a_{\mc{A}, \mc{M}, \mc{N}}\tens{E}id_{\mc{P}}$ and the lower left rectangle is the definition of $a_{\mc{A}, \mc{M}\mc{N}, \mc{P}}$. 
\end{proof}
In the case of monoidal categories the relevant structure isomorphisms are required to satisfy axioms which take the form of commuting diagrams. In the 2-monoidal case we make similar requirements of the structure morphisms but here, because of the presence of higher dimensional structures, it is necessary to weaken these axioms by requiring only that their diagrams commute up to some 2-morphisms. Above we have defined a 2-associativity isomorphism $a_{\mc{M},\mc{N},\mc{P}}:(\mc{M}\mc{N})\mc{P}\rightarrow\mc{M}(\mc{N}\mc{P})$. In the definition of monoidal 2-category $a$ is required to satisfy the pentagon which appears in the lower dimensional monoidal case, but only up to 2-isomorphism. The content of Proposition \ref{cor;asscor} is that, in the 2-category of bimodule categories, the monoidal structure $\tens{C}$ is \textit{strictly} associative just as it is in the monoidal category setting. For us this means that the 2-isomorphism $a_{\mc{A},\mc{M}, \mc{N},\mc{P}}$ (see M9. Definition \ref{defnMon2}) is identity for any bimodule categories $\mc{A},\mc{M}, \mc{N},\mc{P}$ for which the relevant tensor products make sense.
\begin{prop}[2-associativity hexagon]\label{cor;asscor} The diagram of functors commutes.
\[
\xymatrix{
((\mc{A}\mc{M})\mc{N})\mc{P} \ar[rr]_{a_{\mc{A}\mc{M}, \mc{N}, \mc{P}}}\ar[d]_{a_{\mc{A}, \mc{M}, \mc{N}}\tens{E}\mc{P}} & &(\mc{A}\mc{M})(\mc{N}\mc{P}) \ar[dd]_{a_{\mc{A}, \mc{M}, \mc{N}\mc{P}}} \\
(\mc{A}(\mc{M}\mc{N}))\mc{P} \ar[d]_{a_{\mc{A}, \mc{M}\mc{N}, \mc{P}}} & &\\
 \mc{A}((\mc{M}\mc{N})\mc{P}) \ar[rr]_{\mc{A}\tens{C}a_{\mc{M}, \mc{N}, \mc{P}}} & & \mc{A}(\mc{M}(\mc{N}\mc{P}))\\
} \]
\end{prop}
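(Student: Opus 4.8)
The plan is to prove the 2-associativity pentagon (here drawn as a hexagon because the constraints $l,r$ are suppressed) by the same technique used throughout \S4: reduce every $1$-cell in the diagram to a factorization of a single multibalanced functor out of the four-fold Deligne product $\mc{A}\boxtimes\mc{M}\boxtimes\mc{N}\boxtimes\mc{P}$, and then invoke the uniqueness statement of Lemma \ref{universelemma} (iterated) to conclude that the two composite $1$-cells around the hexagon coincide. Concretely, fix the universal $3$-balanced functor $B_{\mc{A},\mc{M},\mc{N},\mc{P}}:\mc{A}\boxtimes\mc{M}\boxtimes\mc{N}\boxtimes\mc{P}\to((\mc{A}\tens{C}\mc{M})\tens{D}\mc{N})\tens{E}\mc{P}$ obtained by composing the various $\Buni{}{}$'s; by the discussion after Definition \ref{def;combal} this object, together with that functor, is universal among right exact $3$-balanced functors.

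First I would compose the hexagon with $B:=B_{\mc{A},\mc{M},\mc{N},\mc{P}}$ on the right (that is, precompose both legs with the Deligne-level universal functor). Using the defining squares for $a^1=a$ in Proposition \ref{prop;ass1}, for $a^1\tens{E}\mc{P}$ (Definition \ref{FMdef} applied to the functor $a$, whose tensoring with $id_{\mc{P}}$ is governed by the square in Proposition \ref{prop;ass1}), for $\mc{A}\tens{C}a$, and the intermediate equivalences $\alpha^1,\alpha^2$ from Lemma \ref{prop;asslem}, each of the six arrows of the hexagon becomes, after precomposition with the appropriate $B$'s, literally a factorization of $B_{\mc{A},\mc{M},\mc{N},\mc{P}}$ (or of one of the partially-tensored universal functors). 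Chasing these definitional squares — exactly as in the proof of Proposition \ref{ass2-natProp} and Lemma \ref{asslem:2}, where the analogous ``precompose and stare at the commuting sub-rectangles'' argument is run — shows that both composites around the hexagon, when precomposed with $B_{\mc{A},\mc{M},\mc{N},\mc{P}}$, equal the same right exact $3$-balanced functor. I would organize this as a single large (four-dimensional) diagram of Deligne products and their $\boxtimes_{\mc{C}}$, $\boxtimes_{\mc{D}}$, $\boxtimes_{\mc{E}}$ quotients, with every internal cell being either a definition square ($a$, $a\tens{E}\mc{P}$, $\mc{A}\tens{C}a$, $\alpha^i$) or a naturality square already established (equation (\ref{alpha2-natEqn}), Proposition \ref{ass2-natProp}, Lemma \ref{asslem:2}); the outer contour is then the pentagon.

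Having shown the two legs agree after precomposition with $B_{\mc{A},\mc{M},\mc{N},\mc{P}}$, I would finish by applying Lemma \ref{universelemma} three times in succession — peeling off $B_{\mc{M},\mc{N}}$-type factors one at a time — or equivalently applying the ``$3$-balanced'' version of that lemma once: since the two composite $1$-cells out of $((\mc{A}\mc{M})\mc{N})\mc{P}$ become equal after right composition with the universal $3$-balanced $B$, and both are right exact, they must already be equal. Because all the $1$-cells involved ($a$ at its various valencies, $\alpha^i$, and $\tens{E}\mc{P}$, $\mc{A}\tens{C}$ of these) are bimodule functors by Remark \ref{lem;unimod} and Proposition \ref{prop;ass1}, the identification is an equality of $1$-cells in $\mc{B}(\mc{C})$, which is precisely the assertion that the $2$-isomorphism $a_{\mc{A},\mc{M},\mc{N},\mc{P}}$ of M9 may be taken to be the identity.

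The main obstacle is purely bookkeeping: assembling the correct four-dimensional pasting diagram so that every internal face is manifestly one of the squares established in Lemmas \ref{prop;asslem}, \ref{lem:assbal}, Proposition \ref{prop;ass1}, Remark \ref{alpha2-natRemark}, Proposition \ref{ass2-natProp} and Lemma \ref{asslem:2}, and checking that the boundary of that diagram is exactly the hexagon above (with the $l,r$-edges suppressed, which is legitimate by Remark \ref{r_r_M=id remark} and the paragraph following Corollary \ref{Functoriality l, r lemma}, where it is observed that $a$ is strict as a module functor so the mixed $(1\otimes\bullet\otimes\bullet\otimes\bullet)$-type polytopes are trivial). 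No new idea beyond the universality machinery of \S\ref{sec:baltens} is needed; the only subtlety is to keep the five different tensor valencies of $a$ straight and to make sure that ``tensoring $a$ with an identity'' is interpreted via Definition \ref{FMdef} consistently on both sides.
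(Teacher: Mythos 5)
Your proposal is correct and follows essentially the same route as the paper: reduce both legs of the hexagon to the Deligne-product level via the definitional squares for $a$, $a\tens{E}\mc{P}$, $\mc{A}\tens{C}a$ and the $\alpha^i$, use the already-established facts (Proposition \ref{ass2-natProp}, Remark \ref{alpha2-natRemark}, Lemma \ref{asslem:2}) to see that the resulting large pasting diagram commutes, and then invoke Lemma \ref{universelemma} (iterated over the successive universal balanced functors) to conclude the two composites agree. The only cosmetic difference is that the paper organizes the bookkeeping as two separate subdivided diagrams (the embedded hexagon plus its extended perimeter) rather than one four-fold pasting, but the underlying argument is identical.
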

\begin{proof}
Consider the diagram below. We first show that the faces peripheral to the embedded hexagon commute and then show that the extended perimeter commutes.  
\[
\xymatrix {
(\mc{A}\mc{M})\mc{N}\boxtimes\mc{P}   \ar[dr]|{\Buni{(AM)N}{P}} \ar[rrrr]^{\alpha^1_{\mc{A}\mc{M}, \mc{N}, \mc{P}}}\ar[dd]|{a_{\mc{A}, \mc{M}, \mc{N}}} & & & &(\mc{A}\mc{M})(\mc{N}\boxtimes\mc{P}) \ar[dddd]|{a_{\mc{A}, \mc{M}, \mc{N}\boxtimes\mc{P}}} \ar[dl]|{id_{\mc{A}\mc{M}}\tens{D}\Buni{N}{A}}\\
& ((\mc{A}\mc{M})\mc{N})\mc{P} \ar[rr]_{a_{\mc{A}\mc{M}, \mc{N}, \mc{P}}}\ar[d]|{a_{\mc{A}, \mc{M}, \mc{N}}\tens{E}id_\mc{P}} & &(\mc{A}\mc{M})(\mc{N}\mc{P}) \ar[dd]|{a_{\mc{A}, \mc{M}, \mc{N}\mc{P}}} &\\
\mc{A}(\mc{M}\mc{N})\boxtimes\mc{P} \ar[dd]|{\alpha^1_{\mc{A}, \mc{M}\mc{N}, \mc{P}}}& (\mc{A}(\mc{M}\mc{N}))\mc{P} \ar[d]|{a_{\mc{A}, \mc{M}\mc{N}, \mc{P}}} &  & &\\
& \mc{A}((\mc{M}\mc{N})\mc{P}) \ar[rr]_{id_\mc{A}\tens{C}a_{\mc{M}, \mc{N}, \mc{P}}} & & \mc{A}(\mc{M}(\mc{N}\mc{P})) &\\
\mc{A}(\mc{M}\mc{N}\boxtimes\mc{P}) \ar[ur]|{id_\mc{A}\tens{C}\Buni{MN}{P}} \ar[rrrr]_{id_\mc{A}\tens{C}\alpha^1_{\mc{M}\mc{N}\mc{P}}} & &  & & \mc{A}(\mc{M}(\mc{N}\boxtimes\mc{P})) \ar[ul]|{id_\mc{A}\tens{C}(id_\mc{M}\tens{D}\Buni{N}{P})}\\
} \]
The top rectangle is the definition of $a_{\mc{A}\mc{M}, \mc{N}, \mc{P}}$, the rectangle on the right is naturality of $a$ as in Proposition \ref{ass2-natProp}, the bottom rectangle the definition of $a$ tensored on the left by $\mc{A}$, and the hexagon is Lemma \ref{asslem:2}. To prove commutativity of the extended perimeter subdivide it as indicated below.
\[
\!\!\!\!\!\!\!\!\!
\xymatrix {
(\mc{A}\mc{M})\mc{N}\boxtimes\mc{P}\ar[rrrr]^{\alpha^1_{\mc{A}\mc{M}, \mc{N}, \mc{P}}}\ar[dd]|{a_{\mc{A}, \mc{M}, \mc{N}\tens{}\mc{P}}} & & & &(\mc{A}\mc{M})(\mc{N}\boxtimes\mc{P}) \ar[dddd]|{a_{\mc{A}, \mc{M}, \mc{N}}}\\
& & & (\mc{A}\mc{M})\boxtimes\mc{N}\boxtimes\mc{P} \ar[ulll]|{\Buni{AM}{N}}\ar[ur]|{B_{\mc{A}\mc{M}, \mc{N}\boxtimes\mc{P}}} \ar[dd]^{\alpha^1_{\mc{A}, \mc{M}, \mc{N}\boxtimes\mc{P}}} \ar[dl]|{\alpha^1_{\mc{A}, \mc{M}, \mc{N}}} &\\
\mc{A}(\mc{M}\mc{N})\boxtimes\mc{P} \ar[dd]|{\alpha^1_{\mc{A}, \mc{M}\mc{N}, \mc{P}}}& & \mc{A}(\mc{M}\boxtimes\mc{N})\boxtimes\mc{P} \ar[ll]_{id_\mc{A}\tens{C}\Buni{M}{N}} \ar[dr]|{\alpha^1_{\mc{A}, \mc{M}\boxtimes\mc{N}, \mc{P}}}& &\\
& & & \mc{A}(\mc{M}\boxtimes\mc{N}\boxtimes\mc{P})\ar[dlll]|{id_\mc{A}\tens{C}(\Buni{M}{N}\boxtimes\mc{P})} \ar[dr]|{id_\mc{A}\tens{C}B_{\mc{M}, \mc{N}\boxtimes\mc{P}}} &\\
\mc{A}(\mc{M}\mc{N}\boxtimes\mc{P}) \ar[rrrr]_{id_\mc{A}\tens{C}\alpha^1_{\mc{M}\mc{N}\mc{P}}}& & & &\mc{A}(\mc{M}(\mc{N}\boxtimes\mc{P}))
} \]
The upper and lower triangles are the definitions of $\alpha^1_{\mc{A}\mc{M}, \mc{N}, \mc{P}}$ and $\mc{A}\tens{*}($definition of $\alpha^1_{\mc{M},\mc{N},\mc{P}})$, respectively (Lemma \ref{prop;asslem}). Right rectangle is definition of $a_{\mc{A}, \mc{M}, \mc{N}\boxtimes\mc{P}}$. Upper left rectangle is $($definition of $a_{\mc{A}, \mc{M}, \mc{N}})\boxtimes\mc{P}$, and the lower left rectangle is explained in Remark \ref{alpha2-natRemark}. The central triangle is an easy exercise. An application of Lemma \ref{universelemma} gives the result.
%We may then write\begin{equation*}a_{\mc{A}, \mc{M}, \mc{N}\boxtimes\mc{P}}(\alpha^1_{\mc{A}\mc{M}, \mc{N}, \mc{P}})(\Buni{AM}{N}\boxtimes\mc{P}) =(\mc{A}\tens{*}\alpha^1_{\mc{M}\mc{N}\mc{P}})( \alpha^1_{\mc{A}, \mc{M}\mc{N}, \mc{P}} )(a_{\mc{A}, \mc{M}, \mc{N}}\boxtimes\mc{P})\Buni{AM}{N}\boxtimes\mc{P}\end{equation*} and Lemma \ref{universelemma} gives commutativity of outer pentagon.
\end{proof}
Let $\mc{M}_i$ be a $(\mc{C}_{i-1}, \mc{C}_i)$-bimodule category tensor categories $\mc{C}_i$ $0\leq i \leq n+1$. Then one extends the arguments above to completely balanced functors (Definition \ref{DEF;multbal}) of larger index to show that any meaningful arrangement of parentheses in the expression $\mc{M}_1\boxtimes_{\mc{C}_1}\mc{M}_2 \cdots \boxtimes_{\mc{C}_{n-1}} \mc{M}_n$ results in an equivalent bimodule category.
\begin{remark}\label{StasheffRemark}
Proposition \ref{cor;asscor} implies that the 2-morphism described in M9 of Definition \ref{M8} is actually identity. The primary polytope associated to associativity in the monoidal 2-category setting is the Stasheff polytope which commutes in this case. It is obvious that the modified tensor product $\hat{\otimes}$ with associativity (\cite{KV} \S 4) is identity and that nearly every face commutes strictly. The two non-trivial remaining faces (one on each hemisphere) agree trivially. We refer the reader to the original paper for details and notation. 
\end{remark}
\subsection{Unit constraints}
Recall from Proposition \ref{lem;unit} the equivalences $l_{\mc{M}}:\bitens{M}{D}{D}\simeq \mc{M}$ and $r_{\mc{M}}:\bitens{C}{C}{M}\simeq \mc{M}$. The first proposition of this section explains how $l, r$ interact with $2$-associativity $a$.
%UNIT MORPHISMS*************************************************************************************
\begin{prop}\label{cor;unitmaps} $(id_{\mc{M}}\tens{D}l_{\mc{N}})a_{\mc{M}, \mc{N}, \mc{E}} = l_{\mc{M}\tens{D}\mc{N}}$, $r_{\mc{M}\tens{D}\mc{N}}(a_{\mc{C}, \mc{M}, \mc{N}}) = r_{\mc{M}}\tens{D}id_{\mc{N}}$. Also the triangle
\[
\xymatrix{
(\bitens{M}{D}{D})\tens{D}\mc{N}\ar[dr]^{\;\;\;l_{\mc{M}}\tens{D}id_\mc{N}}\ar[d]_{a_{\mc{M}, \mc{D},\mc{N}}}&\\
\mc{M}\tens{D}(\bitens{D}{D}{N})\ar[r]_>>>>>{id_\mc{M}\tens{D}r_{\mc{N}}}&\bitens{M}{D}{N}
}\]
commutes up to a natural isomorphism.
\end{prop}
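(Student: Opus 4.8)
The plan is to verify the three assertions by reducing each to a statement on the underlying Deligne products via the universal property of the balanced functors (Lemma \ref{universelemma}), using the descriptions of $a=a^1$ from Proposition \ref{prop;ass1}, of $l_{\mc{M}}$ and $r_{\mc{M}}$ as the right exact factorizations of the module actions (Proposition \ref{lem;unit}), and of the bimodule structure on a relative tensor product (Proposition \ref{lem;tensbimd}, Remark \ref{rmk:bmdmd}), together with functoriality of $\tens{D}$ on $1$-cells.

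For the first identity, let $\otimes_{\mc{N}}\colon\mc{N}\boxtimes\mc{E}\to\mc{N}$ denote the right $\mc{E}$-action. Proposition \ref{lem;unit} gives $l_{\mc{N}}B_{\mc{N},\mc{E}}=\otimes_{\mc{N}}$, and Remark \ref{rmk:bmdmd} says the right $\mc{E}$-action on $\mc{M}\tens{D}\mc{N}$ is $(id_{\mc{M}}\tens{D}\otimes_{\mc{N}})\,\alpha^1_{\mc{M},\mc{N},\mc{E}}$, so $l_{\mc{M}\tens{D}\mc{N}}$ factors this composite through $B_{\mc{M}\tens{D}\mc{N},\mc{E}}$. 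Then
\begin{equation*}
l_{\mc{M}\tens{D}\mc{N}}\,B_{\mc{M}\tens{D}\mc{N},\mc{E}}=(id_{\mc{M}}\tens{D}l_{\mc{N}})\,(id_{\mc{M}}\tens{D}B_{\mc{N},\mc{E}})\,\alpha^1_{\mc{M},\mc{N},\mc{E}}=(id_{\mc{M}}\tens{D}l_{\mc{N}})\,a_{\mc{M},\mc{N},\mc{E}}\,B_{\mc{M}\tens{D}\mc{N},\mc{E}},
\end{equation*}
the last step being the defining square of $a_{\mc{M},\mc{N},\mc{E}}$, and Lemma \ref{universelemma} yields $l_{\mc{M}\tens{D}\mc{N}}=(id_{\mc{M}}\tens{D}l_{\mc{N}})\,a_{\mc{M},\mc{N},\mc{E}}$. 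The second identity is obtained the same way on the other side: the left $\mc{C}$-action on $\mc{M}\tens{D}\mc{N}$ is $((r_{\mc{M}}B_{\mc{C},\mc{M}})\tens{D}id_{\mc{N}})\,\alpha^2_{\mc{C},\mc{M},\mc{N}}$, so the defining square of $a^2_{\mc{C},\mc{M},\mc{N}}=(a_{\mc{C},\mc{M},\mc{N}})^{-1}$ together with Lemma \ref{universelemma} (applied to $B_{\mc{C},\mc{M}\tens{D}\mc{N}}$) give $r_{\mc{M}\tens{D}\mc{N}}=(r_{\mc{M}}\tens{D}id_{\mc{N}})\,(a_{\mc{C},\mc{M},\mc{N}})^{-1}$, and composing with $a_{\mc{C},\mc{M},\mc{N}}$ on the right finishes the claim.

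For the triangle the two composites out of $(\mc{M}\tens{D}\mc{D})\tens{D}\mc{N}$ are genuinely distinct functors, so only a natural isomorphism can be expected. I would descend along the iterated tensor product, precomposing first with $B_{\mc{M}\tens{D}\mc{D},\mc{N}}$ and then with $B_{\mc{M},\mc{D}}\boxtimes id_{\mc{N}}$ — functors out of $(\mc{M}\tens{D}\mc{D})\tens{D}\mc{N}$ corresponding to totally $\mc{D}$-balanced functors out of $\mc{M}\boxtimes\mc{D}\boxtimes\mc{N}$. Using the defining squares of $a_{\mc{M},\mc{D},\mc{N}}$ and of $(-)\tens{D}(-)$ on $1$-cells, and the compatibility of $\alpha^1_{\mc{M},\mc{D},\mc{N}}$ with $B_{\mc{M},\mc{D}}\boxtimes id_{\mc{N}}$ coming from the construction in Lemma \ref{prop;asslem}, one finds that $l_{\mc{M}}\tens{D}id_{\mc{N}}$ descends from the functor $M\boxtimes D\boxtimes N\mapsto B_{\mc{M},\mc{N}}((M\otimes D)\boxtimes N)$ while $(id_{\mc{M}}\tens{D}r_{\mc{N}})\,a_{\mc{M},\mc{D},\mc{N}}$ descends from $M\boxtimes D\boxtimes N\mapsto B_{\mc{M},\mc{N}}(M\boxtimes(D\otimes N))$. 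The balancing isomorphism $b$ of $B_{\mc{M},\mc{N}}$ is a natural isomorphism between these two functors, and, once one checks it is a morphism of totally $\mc{D}$-balanced functors, it descends by the iterated universal property — Remark \ref{lem;unimod} applied twice, via the equivalence $\mc{Y}$ of (\ref{eqn;equiv}) — to the required natural isomorphism $(id_{\mc{M}}\tens{D}r_{\mc{N}})\,a_{\mc{M},\mc{D},\mc{N}}\cong l_{\mc{M}}\tens{D}id_{\mc{N}}$.

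The main obstacle I anticipate is precisely that last check in the triangle: the two balancings carried by those totally $\mc{D}$-balanced functors are built from the right module associativity of $\mc{M}$ and the left module associativity of $\mc{N}$, and verifying that $b$ intertwines them is an (entirely routine but bookkeeping-heavy) instance of the balancing pentagon of Definition \ref{DEF;bal} for $B_{\mc{M},\mc{N}}$. All the remaining steps are formal manipulations of the universal factorizations.
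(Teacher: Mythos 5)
Your proposal is correct and follows essentially the same route as the paper: the two strict identities are obtained by precomposing with the universal balanced functors and invoking Lemma \ref{universelemma}, and the triangle is obtained by descending the balancing isomorphism $b$ of $B_{\mc{M},\mc{N}}$ through the iterated universal property (the paper's $\mu_{\mc{M},\mc{N}}$ is characterized by exactly the property $\mu_{\mc{M},\mc{N}}*((\Buni{M}{D}\tens{D}id_{\mc{N}})B_{\mc{M}\boxtimes\mc{D},\mc{N}})=b$ that your argument produces). The ``bookkeeping'' check you flag — that $b$ is a morphism of totally balanced functors so that it descends — is indeed the one step the paper leaves implicit in its double application of Lemma \ref{universelemma}.
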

\begin{proof}
The first two statements follow easily from definitions of $\alpha^1$ (Lemma \ref{prop;asslem}), module structure in $\mc{M}\boxtimes_{D}\mc{N}$ and those of $l$ and $r$. This means that the 2-isomorphisms $\rho$ and $\lambda$ in M11 of Defintition \ref{M8} are both trivial. 

The diagram below relating $l$ and $r$ commutes only up to balancing isomorphism $b$ for $B_{\mc{M}, \mc{N}}$ where we write $b:B_{\mc{M}, \mc{N}}(\otimes\boxtimes id_{\mc{N}})\Rightarrow B_{\mc{M}, \mc{N}}(id_{\mc{M}}\boxtimes\otimes)$. 
%Observe that this is equivalent to $b:(\otimes\tens{D}id_{\mc{N}})B_{\bitens{M}{}{D}, \mc{N}}\Rightarrow (id_{\mc{M}}\tens{D}\otimes)B_{\mc{M}, \bitens{D}{}{N}}$. 
All juxtaposition takes place over $\mc{D}$.
\[
\xymatrix{
(\bitens{M}{}{D})\mc{N} \ar[rrrrr]^{\alpha^1_{\mc{M},\mc{D},\mc{N}}} \ar[dddd]_{\Buni{M}{D}}\ar[dddrr]_>>>>>>>>{}="1"_{\otimes} &&&&& \mc{M}\tens{}(\mc{D}\mc{N})\ar[dddd]^{\Buni{M}{CN}}\\
&&\mc{M}\tens{}\mc{D}\tens{}\mc{N}\ar[ull]_{B_{\bitens{M}{}{D}, \mc{N}}}\ar[urrr]^{\Buni{D}{N}}\ar[dr]^{B_{\mc{M}, \bitens{D}{}{N}}}&&&\\
&&&\mc{M}(\bitens{D}{}{N})\ar[dl]_>>>>>>>>>{\,}="2"^{\otimes}\ar[ddrr]^{\Buni{D}{N}}&\\
&&\mc{M}\mc{N}&&&\\
(\mc{M}\mc{D})\mc{N}\ar[rrrrr]_{a_{\mc{M}, \mc{D},\mc{N}}}\ar[urr]_{l_{\mc{M}}}&&&&&\mc{M}(\mc{D}\mc{N})\ar[ulll]_{r_{\mc{N}}}\\
\ar@/_-.8pc/@<-.3ex>@{=>}"1";"2"^{b}
}
\]
Top triangle is definition of $\alpha^1$, rectangle is definition of $id_{\mc{M}}\tens{D}\Buni{D}{N}$, lower right triangle is $\mc{M}\tens{D}($definition of $r_{\mc{N}})$, triangle on left is $($definition of $l_{\mc{M}})\tens{D}\mc{N}$, and central weakly commuting rectangle is definition of balancing $b$ for $\Buni{M}{N}$. The perimeter is a diagram occuring in the proof of Proposition \ref{prop;ass1} (we have been sloppy with the labeling of the arrow across the top). Since all other non-labeled faces commute we may write, after chasing paths around the diagram,
\begin{equation*}
l_{\mc{M}}\tens{D}id_\mc{N}(\Buni{M}{D}\tens{D}id_\mc{N})B_{\bitens{M}{}{D}, \mc{N}}\stackrel{b}{\simeq} (id_\mc{M}\tens{D}r_{\mc{N}})a_{\mc{M}, \mc{D},\mc{N}}(\Buni{M}{D}\tens{D}id_\mc{N})B_{\bitens{M}{}{D}, \mc{N}}.
\end{equation*}
Applying Lemma \ref{universelemma} twice we obtain a unique natural isomorphism 
\begin{equation}\label{the mu}
\mu_{\mc{M},\mc{N}}:l_{\mc{M}}\tens{D}id_\mc{N}\stackrel{\sim}{\rightarrow} (id_\mc{M}\tens{D}r_{\mc{N}})a_{\mc{M}, \mc{D},\mc{N}}
\end{equation}
having the property that $\mu_{\mc{M},\mc{N}}*((\Buni{M}{D}\tens{D}id_\mc{N})B_{\bitens{M}{}{D}, \mc{N}})=b$, the balancing in $B_{\mc{M}, \mc{N}}$.
\end{proof}
%
%\item[M15.] $\epsilon = id$ trivially; $r_{\mc{C}}$ and $l_{\mc{C}}$ are defined by the same diagram.
%MONOIDAL 2-CATEGORY DEFINITION****************************************************************
\section{Proof of Theorem \ref{thm:main}}\label{section:2cat}
In this section we finish verifying that the list of requirements given in the definition of monoidal 2-category (\cite{KV}), Definition \ref{defnMon2} of this paper, are substantiated by the scenario where we take as underlying 2-category $\mc{B}(\mc{C})$. Recall that for a fixed monoidal category $\mc{C}$ the 2-category $\mc{B}(\mc{C})$ is defined as having 0-cells $\mc{C}$-bimodule categories, 1-cells $\mc{C}$-bimodule functors and 2-cells monoidal natural transformations. M1-M11 are evident given what we have discussed so far; explicitly, and in order, these are given in Proposition \ref{lem;unit}, Proposition \ref{lem;tensbimd}, Definition \ref{FMdef}, Remark \ref{tensorOf2-cells} (take one of the 2-cells to be identity transformation on identity functor), Equation \ref{partialAss}, Proof of Proposition \ref{lem;unit}, Definition \ref{FMdef} (trivial, composition with id commutes), Polytope \ref{polytope 2}, Proposition \ref{cor;asscor} (trivial), Proposition \ref{ass2-natProp} ($a_{F, \mc{M}, \mc{N}}=id$ for bimodule functor $F$), Proof of Proposition \ref{cor;unitmaps}. 
%M1 Proposition \ref{lem;unit}M2   Proposition \ref{lem;tensbimd} M3   Definition \ref{FMdef} M4   Remark \ref{tensorOf2-cells} (take one of the 2-cells to be identity transformation on identity functor) M5   Equation \ref{partialAss} M6   Proof of Proposition \ref{lem;unit} M7   Definition \ref{FMdef} (trivial, composition with id commutes) M8   Polytope \ref{polytope 2} M9   Proposition \ref{cor;asscor} (trivial) M10   Proposition \ref{ass2-natProp} (trivial) M11   Proof of Proposition \ref{cor;unitmaps} ($\rho$) M12   $l_1, r_1$? M13   $\epsilon:r_{\textrm{unit}}\rightarrow l_{\textrm{unit}}$ 
Commutativity of the Stasheff polytope follows from Proposition \ref{cor;asscor} (see Remark \ref{StasheffRemark}). 
%$1$-morphisms in $\mc{B}(\mc{C})$ are bimodule functors, $2$-cells natural bimodule transformations.

The data introduced throughout are required to satisfy several commuting polytopes describing how they are to interact. Fortunately for us only a few of these require checking since many of the structural morphisms above are identity. Because of this we prove below only those verifications which are not immediately evident. Recall (Definition \ref{FMdef}) that we define action $\mc{M}F$ of bimodule category $\mc{M}$ on module functor $F$.
\begin{polyt}\label{polytope 1} For $F:\mc{M}\rightarrow \mc{M}'$ a morphism in $\mc{B}(\mc{C})$ and any $\mc{C}$-bimodule category $\mc{N}$ the polytopes
\[
\xymatrix{
(\mc{M}\mc{C})\mc{N} \arb{rr}_<<<<<<{}="3"^{a} \ar[dr]^<<<<{}="2"^<<<<<<{}="4"_>>>>{l_{\mc{M}}} \arb{dd}_<<<<{}="1"_{(F\mc{C})\mc{N}} & &\mc{M}(\mc{C}\mc{N}) \ar[dl]^{r_{\mc{N}}} \arb{dd}^{F(\mc{C}\mc{N})}\\
&\mc{M}\mc{N}\ar[dd]^<<<<<<{F\mc{N}} & \\
(\mc{M}'\mc{C})\mc{N} \ar[rr]|{\,\,}_<<<<<<{}="5"^<<<<<<<<a \arb{dr}^<<<<<<{}="6"_{l_{\mc{M}'}} & &\mc{M}'(\mc{C}\mc{N})\arb{dl}^{r_{\mc{N}}}\\
& \mc{M}'\mc{N} &
\ar@/_.6pc/@{=>}"1";"2"_{l_F\mc{N}}
\ar@/_.6pc/@{=>}"4";"3"_{\mu_{\mc{M},\mc{N}}}
\ar@/_.6pc/@{=>}"6";"5"_{\mu_{\mc{M}',\mc{N}}}
}\;
\xymatrix{
(\mc{N}\mc{C})\mc{M} \arb{rr}_<<<<<<{}="3"^{a} \ar[dr]^<<<<<<{}="4"_{l_{\mc{N}}} \arb{dd}_{(\mc{N}\mc{C})F} & &\mc{N}(\mc{C}\mc{M}) \ar[dl]^<<<<{}="2"^>>>>{r_{\mc{M}}} \arb{dd}_<<<<{}="1"^{\mc{N}(\mc{C}F)}\\
&\mc{N}\mc{M}\ar[dd]^<<<<<<{\mc{N}F} & \\
(\mc{N}\mc{C})\mc{M'}\ar[rr]|{\,\,}_<<<<<<{}="5"^<<<<<<<<a \arb{dr}^<<<<<<{}="6"_{l_{\mc{N}}} & &\mc{N}(\mc{C}\mc{M}')\arb{dl}^{r_{\mc{M}'}}\\
& \mc{N}\mc{M}' &
\ar@/^.6pc/@{=>}"1";"2"^{\mc{N}r_F}
\ar@/_.6pc/@{=>}"4";"3"_{\mu_{\mc{N},\mc{M}}}
\ar@/_.6pc/@{=>}"6";"5"_{\mu_{\mc{N},\mc{M}'}}
}\]
commute. Similarly there are commuting prisms for upper left vertex corresponding to the remaining four permutations of $\mc{M}, \mc{C}, \mc{N}$ with upper and lower faces commuting up to either $\lambda$ or $\rho$. 
\end{polyt}
In \cite{KV} these triangular prisms are labeled $(\rightarrow\otimes 1\otimes\bullet), (1\otimes\rightarrow\otimes\bullet)$, etc.
\begin{proof}
We verify commutativity of the second polytope. Commutativity of the other prisms is proved similarly. Denote by $*$ mixed composition of cells. Commutativity of polytope on the right is equivalent to the equation 
\begin{equation}\label{PolytopeEquation}
(id_{\mc{N}}\tens{C}\overline{f})((id_{\mc{N}}\tens{C}F)*\mu_{\mc{N}, \mc{M}}) = \mu_{\mc{N}, \mc{M}'}*(id_{\mc{N}\tens{C}\mc{C}}\tens{C}F)
\end{equation}
where $f$ is module structure of $F$ and $\overline{f}=r_{F}$ (recall Corollary \ref{Functoriality l, r lemma}). Let LHS and RHS denote the left and right sides of (\ref{PolytopeEquation}). Then one easily shows that both LHS$*((B_{\mc{N}, \mc{C}\tens{C}}id_{\mc{M}})B_{\tens{N}{}{C}, \mc{M}})_{M\tens{}X\tens{}N}$ and RHS$*((B_{\mc{N}, \mc{C}\tens{C}}id_{\mc{M}})B_{\tens{N}{}{C}, \mc{M}})_{M\tens{}X\tens{}N}$, for $N\in\mc{N}, X\in\mc{C}, M\in\mc{M}$, are equal to $b'_{N, X, F(M)}$ where $b'$ is the balancing for $B_{\mc{N}, \mc{M}'}$. Two applications of Lemma \ref{universelemma} now imply that LHS=RHS.
\end{proof}
The next polytope concerns functoriality of the 2-cells $l_F, r_F$.
%POLYTOPE 222222222222222222222222222222222222222222222222222222222222
\begin{polyt}\label{polytope 2} Let $\mc{M}\stackrel{F}{\rightarrow}\mc{N}\stackrel{G}{\rightarrow}\mc{P}$ be composible 1-morphisms in $\mc{B}(\mc{C})$. Then the prisms
\[
\xymatrix{
\mc{C}\mc{M} \ar[rr]_<<<<<<<<{}="3"^{\mc{C}(GF)} \arb{dr}^<<<<{}="2"^<<<<<<{}="4"_>>>>{\mc{C}F} \arb{dd}_<<<<{}="1"_{r_{\mc{M}}} & &\mc{C}\mc{P}\arb{dd}_>>>>{}="a"^{r_{\mc{P}}}\\
&\mc{C}\mc{N}\ar[dd]^<<<<<<{}="w"^<<<<<<<<{r_{\mc{N}}} \arb{ur}_<<<<{}="z"^>>>>>{\mc{C}G}& \\
\mc{M} \ar@<-.15ex>[rr]|{\,\,}\ar@<.15ex>[rr]|{\,\,}\ar@<-.075ex>[rr]|{\,\,}\ar@<.075ex>[rr]|{\,\,}\ar[rr]|{\,\,}_>>>>{}="b"_<<<<<<{}="y"^<<<<<<<<{GF} \ar[dr]^<<<<<<{}="x"^<<<<<<{}="6"_{F} & &\mc{P}\\
& \mc{N} \ar[ur]_{G}&
\ar@/^.6pc/@{=>}"2";"1"^{r_F}
\ar@/_.6pc/@{=>}"4";"3"_{\otimes_{\mc{C}, F, G}}
%\ar@/_.6pc/@{=>}"6";"5"_{id}
\ar@/_.6pc/@{=>}"x";"y"_{id}
\ar@/^.6pc/@{=>}"z";"w"^<<{r_{G}}
\ar@/_.6pc/@{=>}"a";"b"_<<{r_{GF}}
}\;\quad\quad
\xymatrix{
\mc{M}\mc{C} \ar[rr]_<<<<<<<<{}="3"^{(GF)\mc{C}} \arb{dr}^<<<<{}="2"^<<<<<<{}="4"_>>>>{F\mc{C}} \arb{dd}_<<<<{}="1"_{l_{\mc{M}}} & &\mc{P}\mc{C}\arb{dd}_>>>>{}="a"^{l_{\mc{P}}}\\
&\mc{N}\mc{C}\ar[dd]^<<<<<<{}="w"^<<<<<<<<{l_{\mc{N}}} \arb{ur}_<<<<{}="z"^>>>>>{G\mc{C}}& \\
\mc{M} \ar@<-.15ex>[rr]|{\,\,}\ar@<.15ex>[rr]|{\,\,}\ar@<-.075ex>[rr]|{\,\,}\ar@<.075ex>[rr]|{\,\,}\ar[rr]|{\,\,}_>>>>{}="b"_<<<<<<{}="y"^<<<<<<<<{GF} \ar[dr]^<<<<<<{}="x"^<<<<<<{}="6"_{F} & &\mc{P}\\
& \mc{N} \ar[ur]_{G}&
\ar@/^.6pc/@{=>}"2";"1"^{l_F}
\ar@/_.6pc/@{=>}"4";"3"_{\otimes_{\mc{C}, F, G}}
%\ar@/_.6pc/@{=>}"6";"5"_{id}
\ar@/_.6pc/@{=>}"x";"y"_{id}
\ar@/^.6pc/@{=>}"z";"w"^<<{l_{G}}
\ar@/_.6pc/@{=>}"a";"b"_<<{l_{GF}}
}\]
commute.
\end{polyt}
\begin{proof}
We prove commutativity of the first prism. Commutativity of the second follows similarly. It is obvious that $\otimes_{\mc{C}, F, G}$ is trivial (it is just composition of functors). First polytope is the condition  $r_{GF} = (G*r_F)(r_G*\mc{C}F)$. Let $f$ be left $\mc{C}$-linearity for $F$, $g$ that for $G$.  Then $(G, g)(F, f) := (GF, g\bullet f)$ where $(g\bullet f)_{X, M} = g_{X, F(M)}G(f_{X, M})$ is left $\mc{C}$-linearity for $GF$. 
%For $X\in\mc{C}$ and $M\in\mc{M}$ 
One checks directly that 
%\begin{eqnarray*}((G*r_F)*B_{\mc{C}, \mc{M}})_{X\tens{}M} &=& G(f^{-1}_{X, M})\\((r_G*\mc{C}F)*B_{\mc{C}, \mc{M}})_{X\tens{}M} &=& g^{-1}_{X, F(M)}\end{eqnarray*}so that 
\begin{equation*}
(G*r_F)(r_G*\mc{C}F)*B_{\mc{C}, \mc{M}}%_{X\tens{}M} = g^{-1}_{X, F(M)}G(f^{-1}_{X, M}) 
= (g\bullet f)^{-1}.
\end{equation*}
$r_{GF}$ is defined as the unique 2-isomorphism for which $r_{GF}*\Buni{C}{M} = (g\bullet f)^{-1}$ so Lemma \ref{universelemma} gives the result. 
\end{proof}
%POLYTOPE 33333333333333333333333333333333333333333333333333333333333333333
\begin{polyt}\label{polytope 3} For any 2-cell $\alpha:F\Rightarrow G$ in $\mc{B}(\mc{C})$  the cylinders
\[
\xymatrix{
\mc{C}\mc{M}\ar@/^1pc/@<-.15ex>[rr]\ar@/^1pc/@<.15ex>[rr]\ar@/^1pc/@<-.075ex>[rr]\ar@/^1pc/@<.075ex>[rr]\ar@/^1pc/[rr]^{\mc{C}F}_<<<<{}="1" \ar@/_1pc/[rr]_<<<<{}="2"_<<<<{}="3"_>>>>{\mc{C}G} \arb{dd}_{r_{\mc{M}}}^<<<<<<{}="4"& &\mc{C}\mc{N}\arb{dd}^{r_{\mc{N}}}_>>>>>>>>{}="7"\\
&&&\\
\mc{M}\ar@/_1pc/@<-.15ex>[rr]\ar@/_1pc/@<.15ex>[rr]\ar@/_1pc/@<-.075ex>[rr]\ar@/_1pc/@<.075ex>[rr]\ar@/_1pc/[rr]_G_<<<<{}="6" \ar@/^1pc/@{.>}[rr]^>>>>{}="8"_<<<<{}="5"^F& &\mc{N}\\
\ar@{=>}"1";"2"^{\mc{C}\alpha}
\ar@/^.6pc/@{=>}"3";"4"^{r_{G}}
\ar@{=>}"5";"6"^{\alpha}
\ar@/_.6pc/@{=>}"7";"8"_{r_{F}}
}
\;\hfill
\xymatrix{
\mc{M}\mc{C}\ar@/^1pc/@<-.15ex>[rr]\ar@/^1pc/@<.15ex>[rr]\ar@/^1pc/@<-.075ex>[rr]\ar@/^1pc/@<.075ex>[rr]\ar@/^1pc/[rr]^{F\mc{C}}_<<<<{}="1" \ar@/_1pc/[rr]_>>>>{G\mc{C}}_<<<<{}="2"_<<<<{}="3" \arb{dd}_{l_{\mc{M}}}^<<<<<<{}="4"& &\mc{N}\mc{C}\arb{dd}^{l_{\mc{N}}}_>>>>>>>>{}="7"\\
&&&\\
\mc{M}\ar@/_1pc/@<-.15ex>[rr]\ar@/_1pc/@<.15ex>[rr]\ar@/_1pc/@<-.075ex>[rr]\ar@/_1pc/@<.075ex>[rr]\ar@/_1pc/[rr]_{G}_<<<<{}="6" \ar@/^1pc/@{.>}[rr]^>>>>{}="8"_<<<<{}="5"^F& &\mc{N}\\
\ar@{=>}"1";"2"^{\alpha\mc{C}}
\ar@/^.6pc/@{=>}"3";"4"^{l_{G}}
\ar@{=>}"5";"6"^{\alpha}
\ar@/_.6pc/@{=>}"7";"8"_{l_{F}}
}\]
commute.
\end{polyt}
\begin{proof} Again we sketch commutativity of the first polytope and leave the second to the reader. The first cylinder is the condition $(\alpha *r_{\mc{M}})r_F = r_G(r_{\mc{N}}*\mc{C}\alpha)$ 
where $\mc{C}\alpha$ is the 2-cell defined by $\textbf{id}_{\mc{C}}\tens{C}\alpha$ and $\textbf{id}_\mc{C}$ means natural isomorphism $id:id_\mc{C}\Rightarrow id_{\mc{C}}$. One verifies this directly using bimodule condition on $\alpha$.
%Again one checks first that components after right $*$-composing with the appropriately indexed universal functor $B$ agree. Thus for $X\in\mc{C}$ and $M\in\mc{M}$ we have \begin{eqnarray*}((\alpha *r_{\mc{M}})r_F*B_{\mc{C}, \mc{M}})_{X\tens{}M} &=& \alpha_{X\otimes M}f^{-1}_{X, M}\\(r_G(r_{\mc{N}}*\mc{C}\alpha)*B_{\mc{C}, \mc{M}})_{X\tens{}M} &=& g^{-1}_{X, M}(id_X\otimes\alpha_{M})\\\end{eqnarray*}and since $\alpha$ is a natural module transformation these agree. Lemma \ref{universelemma} for $\Buni{C}{M}$ gives the first cylinder.
\end{proof}
This completes verification of the polytopes required for monoidal 2-category structure, and therefore completes the proof of Theorem \ref{thm:main}.
%%%%
%
%
%
%BRAIDING ETX
%
%
%
%
%
%
%
%%%%
\section{De-equivariantization and tensor product over braided categories}\label{DeEquiSect}
\subsection{Tensor product over braided categories}%
In this section we discuss the tensor structure of $\mc{C}_1\tens{E}\mc{C}_2$ where $\mc{C}_i$ are tensor categories and $\mc{E}$ is a braided tensor category. Recall that we denote by $Z(\mc{C})$ the center of tensor category $\mc{C}$.
\begin{defn}[\cite{BraidedI}]\label{OverDef}
Let $\mc{C}$ be a tensor category. Then we say $\mc{C}$ is a tensor category \textit{over} braided tensor category $\mc{E}$ whenever there is a braided tensor functor $\mc{E}\rightarrow Z(\mc{C})$. In general we will identify objects in $\mc{E}$ with their images in $\mc{C}$ and talk about $\mc{E}$ as a subcategory of $\mc{C}$. 
\end{defn}
Suppose that $\sigma:\mc{E}\rightarrow Z(\mc{C})$ is a braided tensor functor. $\sigma$ gives $\mc{C}$ the structure of an $\mc{E}$-bimodule category via
\begin{equation}
X\otimes M := \Forg(\sigma(X)\otimes M)
\end{equation}
where the tensor product is that in $Z(\mc{C})$ and $\Forg:Z(\mc{C})\rightarrow\mc{C}$ is forgetful functor. $\mc{E}$ right acts on $\mc{C}$ via $(M, X)\mapsto X\otimes M$ giving $\mc{C}$ the structure of $\mc{E}$-bimodule category (see Proposition \ref{leftbimodulelemma}). Now let $\mc{C}_i$ for $i=1, 2$ be tensor categories over braided tensor $\mc{E}$. Since $\mc{C}_i$ are $\mc{E}$-bimodules we can form their tensor product $\mc{C}_1\tens{E}\mc{C}_2$.

\begin{thm}\label{de-equiGenProp} There is a canonical tensor category structure on $\mc{C}_1\tens{E}\mc{C}_2$ such that the universal balanced functor $B_{\mc{C}_1, \mc{C}_2}:\mc{C}_1\tens{}\mc{C}_2\rightarrow\mc{C}_1\tens{E}\mc{C}_2$ is tensor.
\end{thm}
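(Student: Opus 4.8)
The plan is to build the monoidal structure on $\mc{C}_1 \tens{E} \mc{C}_2$ by transporting the tensor product of $\mc{C}_1 \boxtimes \mc{C}_2$ through the universal balanced functor $B := B_{\mc{C}_1, \mc{C}_2}$. First I would recall that, since $\mc{C}_1$ and $\mc{C}_2$ are tensor categories, the Deligne product $\mc{C}_1 \boxtimes \mc{C}_2$ is a tensor category with componentwise tensor product $(X_1 \boxtimes X_2) \otimes (Y_1 \boxtimes Y_2) = (X_1 \otimes Y_1) \boxtimes (X_2 \otimes Y_2)$; write $T : (\mc{C}_1 \boxtimes \mc{C}_2) \boxtimes (\mc{C}_1 \boxtimes \mc{C}_2) \rightarrow \mc{C}_1 \boxtimes \mc{C}_2$ for this bifunctor (right exact in each variable). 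The key point is to show that the composite
\[
(\mc{C}_1\boxtimes\mc{C}_2)\boxtimes(\mc{C}_1\boxtimes\mc{C}_2) \xrightarrow{T} \mc{C}_1\boxtimes\mc{C}_2 \xrightarrow{B} \mc{C}_1\tens{E}\mc{C}_2
\]
descends to a right exact functor $\overline{\otimes} : (\mc{C}_1\tens{E}\mc{C}_2) \boxtimes (\mc{C}_1\tens{E}\mc{C}_2) \rightarrow \mc{C}_1\tens{E}\mc{C}_2$. This requires checking that $B \circ T$ is balanced separately in each Deligne factor of its source (i.e. that it kills the $\mc{E}$-action on the right of the first $\mc{C}_1\boxtimes\mc{C}_2$ and on the left of the second), so that by the universal property of the two-variable tensor product $(\mc{C}_1\tens{E}\mc{C}_2)\boxtimes(\mc{C}_1\tens{E}\mc{C}_2)$ (Lemma \ref{prop;asslem} plus Definition \ref{DEF;tensor}) we obtain $\overline{\otimes}$ with $\overline{\otimes} \circ (B\boxtimes B) = B\circ T$; this last identity is exactly the statement that $B$ is a tensor functor on objects and morphisms.

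The reason $B\circ T$ is balanced in the required sense is precisely where the \emph{braiding} on $\mc{E}$ enters. On the first factor, moving $E\in\mc{E}$ from the right of $(X_1\boxtimes X_2)$ to the left of $(Y_1\boxtimes Y_2)$ must be matched, after applying $T$, by moving $E$ past the pair $(Y_1,Y_2)$ inside each tensorand — and the half-braiding $\sigma(E)\in Z(\mc{C}_i)$ supplies exactly the natural isomorphism $E\otimes(Y_i\otimes(\score)) \simeq Y_i\otimes(E\otimes(\score))$ needed to commute $E$ through $Y_i$. Thus the balancing isomorphism for $B$ itself, combined with the half-braidings $c^{\sigma}_{E, Y_i}$ coming from $\sigma : \mc{E}\rightarrow Z(\mc{C}_i)$, produces the balancing data for $B\circ T$; verifying the balancing pentagon (Definition \ref{DEF;bal}) reduces to the hexagon axiom for the half-braidings together with the pentagon already satisfied by $B$'s balancing. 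I would then define the unit object of $\mc{C}_1\tens{E}\mc{C}_2$ as $B(\unit_{\mc{C}_1}\boxtimes\unit_{\mc{C}_2})$, and obtain the associativity and unit constraints of $\mc{C}_1\tens{E}\mc{C}_2$ by descending those of $\mc{C}_1\boxtimes\mc{C}_2$: each such constraint is a natural isomorphism of right exact balanced functors out of a multiple Deligne product, so Lemma \ref{universelemma} (or rather its multi-variable analogue) lets it pass uniquely to $\mc{C}_1\tens{E}\mc{C}_2$, and the pentagon/triangle coherence axioms descend for the same uniqueness reason. The same mechanism gives rigidity: the dual of $B(X_1\boxtimes X_2)$ is $B(X_1^*\boxtimes X_2^*)$, with evaluation and coevaluation obtained by applying $B$ to those of $\mc{C}_1\boxtimes\mc{C}_2$, and the zig-zag identities descend via Lemma \ref{universelemma}.

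The main obstacle is the balancing verification for $B\circ T$ in two variables simultaneously — in other words, checking that the half-braiding data assembled on the two Deligne factors are compatible, which is a $2$-balancing condition (Definition \ref{def;combal}) whose consistency pentagon must be checked. This is where one genuinely uses that $\sigma$ is a \emph{braided} tensor functor (not merely a tensor functor into $Z(\mc{C}_i)$): the compatibility of the half-braiding on the left-hand $\mc{C}_1\boxtimes\mc{C}_2$ with that on the right-hand one is controlled by the relation between $\sigma$'s monoidal structure and the braidings, i.e. the two hexagons of Definition \ref{braidingdef}. Once this $2$-balancing is in hand, everything else — associativity, unit, rigidity, and the fact that $B$ is monoidal — follows formally by repeated application of the universal property, so I would organize the write-up as: (1) recall $T$ and its right-exactness; (2) construct the balancing of $B\circ T$ from $B$'s balancing and the half-braidings, checking the pentagon; (3) descend to get $\overline{\otimes}$, the unit, $a$, $l$, $r$; (4) check coherence by uniqueness; (5) check rigidity by uniqueness; (6) observe that $\overline{\otimes}\circ(B\boxtimes B)=B\circ T$ together with the descended constraints says precisely that $B$ is a tensor functor.
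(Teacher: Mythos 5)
Your proposal is correct and follows essentially the same route as the paper: the paper defines $\Lambda = B_{\mc{C}_1,\mc{C}_2}\circ(\otimes_1,\otimes_2)\circ\tau_{2,3}$ (your $B\circ T$), equips it with balancings in positions $1$ and $3$ built from the balancing of $B$ composed with the half-braidings $c_{X,M_2}$, $c_{X,N_1}$ supplied by $\sigma:\mc{E}\rightarrow Z(\mc{C}_i)$, and then descends through the universal functor $B\boxtimes B$ to obtain $\overline{\Lambda}$, with associativity, unit, and the strict tensor-functoriality of $B$ all following from Lemma \ref{universelemma} exactly as you outline. Your additional remarks on rigidity and on the two-variable consistency of the balancings go slightly beyond what the paper writes down, but they are consistent with its argument.
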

\begin{proof}
%Denote by $\otimes_i$ the tensor structure in $\mc{C}_i$, and let $c$ be braided structure in both $Z(\mc{C}_i)$ (context will detemine which). 
Denote by $\Lambda$ the composition of functors
\[
\xymatrix{
\Lambda:=\mc{C}_1\tens{}\mc{C}_2\tens{}\mc{C}_1\tens{}\mc{C}_2\ar[r]^<<<<<{\tau_{2, 3}}&\mc{C}_1\tens{}\mc{C}_1\tens{}\mc{C}_2\tens{}\mc{C}_2\ar[r]^<<<<{(\otimes_1, \otimes_2)}&\mc{C}_1\tens{}\mc{C}_2\ar[r]^<<<<{B_{\mc{C}_1, \mc{C}_2}}&\mc{C}_1\tens{E}\mc{C}_2.
}\]
$\tau_{2, 3}$ permutes second and third entries. Let $b$ denote the balancing for functor $B_{\mc{C}_1, \mc{C}_2}$. Let $X\in\mc{E}, M_i\in\mc{C}_1, N_i\in\mc{C}_2$. Then it is not difficult to check that $\Lambda$ is $\mc{E}$-balanced in positions $1, 3$ (Definition \ref{DEF;multbal}) with balancing morphisms 
\begin{eqnarray*}
b^1%_{M_1, X, N_1\tens{}M_2\tens{}N_2}
:\Lambda((M_1\otimes X)\tens{}N_1\tens{}M_2\tens{}N_2)\rightarrow \Lambda(M_1\tens{}(X\otimes N_1)\tens{}M_2\tens{}N_2)\\
b^3%_{M_1\tens{}N_1\tens{}M_2, X, N_2}
:\Lambda(M_1\tens{}N_1\tens{}(M_2\otimes X)\tens{}N_2)\rightarrow \Lambda(M_1\tens{} N_1\tens{}M_2\tens{}(X\otimes N_2))
\end{eqnarray*}
given by
\begin{eqnarray*}
b^1_{M_1, X, N_1\tens{}M_2\tens{}N_2}:=b_{M_1\otimes M_2, X, N_1\otimes N_2}\circ((id_{M_1}\otimes c_{X, M_2})\tens{E}id_{N_1\otimes N_2})\\
b^3_{M_1\tens{}N_1\tens{}M_2, X, N_2}:=(id_{M_1\otimes M_2}\tens{E}(c_{X, N_1}\otimes id_{N_2}))\circ b_{M_1\otimes M_2, X, N_1\otimes N_2}.
\end{eqnarray*}
where $c$ is braiding in $Z(\mc{C}_i)$. Diagram \ref{DEF;bal} is not difficult to write down for the $b^i$ and this we leave to the reader. It is also evident that $\Lambda$ is right exact. Thus we get unique right exact $\overline{\Lambda}$:
\[
\xymatrix{
\mc{C}_1\tens{}\mc{C}_2\tens{}\mc{C}_1\tens{}\mc{C}_2\ar[d]_{B^{1, 3}_{\mc{C}_1, \mc{C}_2}}\ar[drr]^{\Lambda}&&\\
(\mc{C}_1\tens{E}\mc{C}_2)\tens{}(\mc{C}_1\tens{E}\mc{C}_2)\ar[rr]_{\overline{\Lambda}}&&\mc{C}_1\tens{E}\mc{C}_2
}
\]
Here $B^{1, 3}_{\mc{C}_1, \mc{C}_2}=B_{\mc{C}_1, \mc{C}_2}\tens{}B_{\mc{C}_1, \mc{C}_2}$ is the universal functor for right exact functors balanced in positions 1, 3 from the abelian category at the apex. Associativity of the tensor product $\overline{\Lambda}$ comes from associativity constraints $a^i$ in $\mc{C}_i$. One shows 
\begin{equation*}
a^1\tens{B}a^2:\overline{\Lambda}(\overline{\Lambda}\tens{}id_{\mc{C}_1\tens{B}\mc{C}_2})\stackrel{\sim}{\longrightarrow}\overline{\Lambda}(id_{\mc{C}_1\tens{B}\mc{C}_2}\tens{}\overline{\Lambda})
\end{equation*}
is natural isomorphism using an extended version of Lemma \ref{universelemma}, evincing $\overline{\Lambda}$ a bona fide tensor structure on $\mc{C}_1\tens{E}\mc{C}_2$. Observe that unit object for $\overline{\Lambda}$ comes from identity objects of $\mc{C}_i$ in the obvious way. This completes the first statement in Proposition \ref{de-equiGenProp}. The second statement follows from the definition of $\overline{\Lambda}$: indeed $B_{\mc{C}_1, \mc{C}_2}$ is a strict tensor functor:
\begin{equation*}
B_{\mc{C}_1, \mc{C}_2}((X\tens{}Y)\otimes (X'\tens{}Y'))= \overline{\Lambda}(B_{\mc{C}_1, \mc{C}_2}(X\tens{}Y)\tens{}B_{\mc{C}_1, \mc{C}_2}(X'\tens{}Y'))
\end{equation*}
since both are $B_{\mc{C}_1, \mc{C}_2}((X\otimes X')\tens{}(Y\otimes Y'))$.
\end{proof}
\begin{remark}\label{FunTen} Let $\beta:\mc{C}_1\tens{E}\mc{C}_2\rightarrow\underline{Fun}_{\mc{E}}(\mc{C}_1^{op}, \mc{C}_2)$ be the equivalence whose existence is implied by Theorem \ref{prop;exist} (see Remark \ref{uniRemark}). 
%In particular $\beta$ satisfies $\beta B_{\mc{C}_1, \mc{C}_2} = U^{ad}I$ where $U^{ad}$ is adjoint of forgetful functor and $I$ is introduced in equation (\ref{homiso}). 
Then tensor structure in $\mc{C}_1\tens{E}\mc{C}_2$ described above induces a tensor structure on $\underline{Fun}_{\mc{E}}(\mc{C}_1^{op}, \mc{C}_2)$ via $\beta$ as follows. Let $F, G:\mc{C}_1^{op}\rightarrow\mc{C}_2$ be right exact functors. Define $X_F$ to be the object in $\mc{C}_1\tens{E}\mc{C}_2$ with $\beta(X_F)=F$, and define $X_G$ similarly. Then tensor product $F\odot G$ is the right exact functor $\mc{C}_1^{op}\rightarrow\mc{C}_2$ defined by 
\begin{equation}\label{FunctTensEq}
F\odot G := \beta(X_F\otimes X_G)
\end{equation}
where $\otimes$ is that in $\mc{C}_1\tens{E}\mc{C}_2$. Thus $X_{F\odot G}=X_F\otimes X_G$. Associativity comes from that in $\mc{C}_1\tens{E}\mc{C}_2$.
\end{remark}
\subsection{On de-equivariantization and relative tensor product} The main result of this section is the proof of Theorem \ref{functAlgebra}. We begin with the following lemma.
\begin{lem}\label{lemgood} Let $\mc{C}$, $\mc{D}$ be fusion categories and let $F:\mc{C}\rightarrow\mc{D}$ be a surjective tensor functor. Let $I$ be its right adjoint. Then
\begin{enumerate}
\item\label{1} $I(\textbf{1})$ is an algebra in $Z(\mc{C})$.
\item\label{2} $\mc{D}$ is tensor equivalent to the category $\Mod_{\mc{C}}(I(\unit))$ of right $I(\unit)$-modules in $\mc{C}$.
\item\label{3} The equivalence in (\ref{2}) identifies $F$ with the free module functor $X\mapsto X\otimes I(\unit)$. 
\end{enumerate}
\end{lem}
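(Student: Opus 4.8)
The plan is to prove the three parts in sequence, each building on the previous one. This is a well-known circle of ideas (it appears in work of Ostrik, ENO, and others) and the main tool is the internal hom / adjunction machinery together with the fact that a surjective tensor functor between fusion categories is ``dominant'' in the appropriate sense.

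First I would establish (\ref{1}). Since $F$ is a tensor functor, its right adjoint $I$ is a lax tensor functor, so $I(\unit)$ carries a canonical algebra structure in $\mc{D}$; the point is to upgrade this to an algebra in $Z(\mc{C})$. The standard argument: for any $X\in\mc{C}$ there is a natural isomorphism $\Hom_{\mc{C}}(Y\otimes I(\unit), X)\cong \Hom_{\mc{D}}(F(Y), F(X))$ obtained by adjunction and the projection formula $F(Y)\otimes F(X)^*\cong F(Y\otimes X^*)$; more conceptually, $I(\unit)$ represents the functor $X\mapsto\Hom_{\mc{D}}(\unit, F(X))$, and because $F$ is tensor this functor is ``central'' — i.e.\ it canonically lifts to $Z(\mc{C})$ — so $I(\unit)$ acquires a half-braiding $c_{Y}:Y\otimes I(\unit)\to I(\unit)\otimes Y$ making it an object of $Z(\mc{C})$, and the multiplication $I(\unit)\otimes I(\unit)\to I(\unit)$ (mate of the lax structure map) is compatible with this half-braiding. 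Verifying the hexagon for the half-braiding and the associativity/unit axioms for the algebra is a diagram chase using the coherence of $F$ and of the unit of the adjunction, which I would not write out in full.

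Next, (\ref{2}). Write $A:=I(\unit)$. The free-module functor $\mathrm{free}_A:\mc{C}\to\Mod_{\mc{C}}(A)$, $X\mapsto X\otimes A$, is left adjoint to the forgetful functor; I claim it factors the functor $F$ up to tensor equivalence. Since $F(X)=F(X)\otimes\unit$ and $\unit$ is an $F(A)$-module via the counit $F(A)=FI(\unit)\to\unit$, $F$ lands in modules over $F(A)$, and the unit $X\to IF(X)$ together with $A=I(\unit)$ gives a natural $A$-module structure on $F(X)$ internally; more directly, one checks $F$ induces a functor $\tilde F:\Mod_{\mc{C}}(A)\to\mc{D}$, and the candidate inverse sends $Z\in\mc{D}$ to $I(Z)$, which is naturally a right $A=I(\unit)$-module because $I$ is lax monoidal. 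The two composites are then shown to be naturally isomorphic to identities: the key input is that $F$ is surjective (dominant), which forces the unit $X\to IF(X)$ and counit $FI(Z)\to Z$ to have the right exactness/faithfulness properties — concretely, surjectivity gives that every object of $\mc{D}$ is a subquotient of some $F(X)$, and combined with semisimplicity of fusion categories this yields $\tilde F$ is an equivalence. That $\tilde F$ is \emph{tensor} follows from the projection formula: $(X\otimes A)\otimes_A(Y\otimes A)\cong (X\otimes Y)\otimes A$ and $F$ tensor gives $\tilde F((X\otimes A)\otimes_A(Y\otimes A))\cong F(X)\otimes F(Y)$ compatibly with associativity and unit constraints.

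Finally, (\ref{3}) is essentially a bookkeeping statement once (\ref{2}) is set up correctly: the equivalence of (\ref{2}) was \emph{constructed} so that $\tilde F\circ\mathrm{free}_A\cong F$, i.e.\ the composite $\mc{C}\xrightarrow{\mathrm{free}_A}\Mod_{\mc{C}}(A)\xrightarrow{\tilde F}\mc{D}$ is naturally tensor-isomorphic to $F$; equivalently, under the identification $\mc{D}\simeq\Mod_{\mc{C}}(A)$ the functor $F$ becomes $X\mapsto X\otimes A=X\otimes I(\unit)$. I would phrase the proof of (\ref{2}) so that this compatibility is visible at the point of construction, so that (\ref{3}) requires no further work beyond unwinding definitions.

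\textbf{Main obstacle.} The crux is (\ref{1}) together with the verification that $\tilde F$ in (\ref{2}) is genuinely an equivalence rather than merely fully faithful or essentially surjective. Upgrading $I(\unit)$ to an algebra \emph{in the center} — producing the half-braiding from the tensor structure of $F$ and checking it satisfies the hexagon — is the one step that is not purely formal adjunction nonsense, and establishing that surjectivity of $F$ yields both faithfulness of $\tilde F$ and that every $A$-module is a subobject of a free one (so the adjunction descends to an equivalence on module categories) is where the fusion hypothesis (semisimplicity, finitely many simples, rigidity) is really used. I expect to invoke the structure theory of exact/semisimple module categories and, if available, cite \cite{EO:ftc} or \cite{ENO:OFC} for the statement that a dominant tensor functor of fusion categories realizes the target as modules over $I(\unit)$.
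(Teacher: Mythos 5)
Your plan is correct and rests on the same underlying mechanism as the paper's proof: realize $\mc{D}$ as $\Mod_{\mc{C}}(I(\unit))$ via the internal end of the unit object, with surjectivity of $F$ guaranteeing that $\unit_{\mc{D}}$ generates $\mc{D}$ as a module category so that the adjunction descends to an equivalence. The difference is one of execution, concentrated in part (1). You propose to construct the half-braiding on $I(\unit)$ by hand from the tensor structure of $F$ (via the projection formula $Y\otimes I(\unit)\cong I(F(Y))\cong I(\unit)\otimes Y$) and then verify the hexagon and algebra axioms; the paper instead sidesteps all of this by regarding $\mc{D}$ as a module category over $Z(\mc{C})$ through the composite $Z(\mc{C})\xrightarrow{\Forg}\mc{C}\xrightarrow{F}\mc{D}$, identifying $I(\unit)$ with $\inthom{\unit}{}{\unit}$ for that module structure, and then quoting Ostrik's Lemma 5 and Theorem 1 of \cite{O:MC} to get, in one stroke, that this object is an algebra in $Z(\mc{C})$ and that $N\mapsto\inthom{\unit}{}{N}$ is an equivalence $\mc{D}\simeq\Mod_{\mc{C}}(I(\unit))$ identifying $F$ with $X\mapsto X\otimes I(\unit)$. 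The paper's route buys brevity and avoids the coherence checks you flag as the main obstacle; your route is more self-contained and makes visible exactly where the tensor structure of $F$ produces the central structure. One small slip to fix: $I(\unit)$ represents the contravariant functor $X\mapsto\Hom_{\mc{D}}(F(X),\unit)$, not $X\mapsto\Hom_{\mc{D}}(\unit,F(X))$; in the fusion (rigid, semisimple) setting these agree up to duality, so nothing breaks, but the statement should be corrected. Also be explicit that ``surjective'' plus semisimplicity upgrades ``subquotient of $F(X)$'' to ``direct summand of $F(X)=X\otimes\unit$'', which is precisely the generation hypothesis needed to conclude the adjunction is an equivalence rather than merely fully faithful on free modules.
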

\begin{proof} To prove (\ref{1}) observe that $\mc{D}$ is a $Z(\mc{C})$-module category with action $X\otimes Y:=F'(X)\otimes Y$ where $F':Z(\mc{C})\rightarrow\mc{D}$ is $F$ composed with functor forgetting central structure. Under this action $\underline{\Hom}(\unit, \unit)=I(\unit)$ (see Definition \ref{inthomdef}) so by Lemma 5 in \cite{O:MC} $I(\unit)$ is an algebra in $Z(\mc{C})$. Note that since $I(\unit)$ is an algebra in $Z(\mc{C})$ we have tensor structure on $\Mod_{\mc{C}}(I(\unit))$: $X\otimes I(\unit)=I(\unit)\otimes X$ so for $I(\unit)$-modules $X, Y$ $X\otimes_{I(\unit)}Y$ makes sense. 
%Central structure on $A$ comes from the observation that, for any $X, Y\in \mc{C}$\begin{equation*}\Hom(X, I(\unit)\otimes Y)=\Hom(X, Y\otimes I(\unit))=\Hom(F(X), F(Y))\end{equation*}as can easily be checked yielding canonical isomorphism $I(\unit)\otimes Y\simeq Y\otimes I(\unit)$. $A$ is furthermore an algebra in $Z(\mc{C})$: define $. Then $F'$ induces $Z(\mc{C})$-module structure on $\mc{D}$ making of 
Theorem 1 in the same paper says that  $\Mod_{\mc{C}}(I(\unit))\simeq \mc{D}$ as module categories over $\mc{C}$ via $F$ in (\ref{3}). Observe that \begin{equation*}
F(X)\otimes_{I(\unit)}F(Y)=(X\otimes I(\unit))\otimes_{I(\unit)}(Y\otimes I(\unit))=(X\otimes Y)\otimes  I(\unit) = F(X\otimes Y).
\end{equation*} 
Hence $F:X\mapsto X\otimes I(\unit)$ respects tensor structure. This completes the proof of the lemma.
\end{proof}
In what follows $G$ is a finite group and we write $\mc{E}:=\Rep(G)$, the symmetric fusion category of finite dimensional representations of $G$ in $Vec$. Let $\mc{C}$ be tensor category over $\mc{E}$ (Definition \ref{OverDef}) which we thereby view as a right $\mc{E}$-module category. Let $A$ be the regular representation of $G$. $A$ has the structure of an algebra in $\mc{E}$ and we therefore have the notion of $A$-module in $\mc{C}$. Denote by $\mc{C}_G$ the category Mod$_{\mc{C}}(A)$ of $A$-modules in $\mc{C}$. There is functor Free $:\mc{C}\rightarrow\mc{C}_G$, $X\mapsto X\otimes A$ left adjoint to Forg $:\mc{C}_G\rightarrow \mc{C}$ which forgets $A$-module structure (\cite[\S 4.1.9]{BraidedI}). We are now ready to prove the theorem.
\begin{proof}[Proof of Theorem \ref{functAlgebra}]
Let $F:=B_{\mc{C}, Vec}:\mc{C}\tens{}Vec\rightarrow\mc{C}\boxtimes_{\mc{E}}Vec$ be the canonical surjective right exact functor described in Definition \ref{DEF;tensor} which is tensor by Theorem \ref{de-equiGenProp}, and let $I$ be its right adjoint. Lemma \ref{lemgood} gives us tensor equivalence $\Mod_{\mc{C}}(I(\unit))\simeq\mc{C}\boxtimes_{\mc{E}}Vec$. Denote by $A'$ the image of the regular algebra $A$ in $\mc{E}$ under the composition 
\begin{equation}\label{compoz}
\mc{E}\rightarrow Z(\mc{C})\rightarrow\mc{C}. 
\end{equation}
We claim that $I(\unit)$ is $A'$

Let $X, Y\in\mc{C}$ be in distinct indecomposible $\mc{E}$-module subcategories of $\mc{C}$. Since the indecomposible $\mc{E}$-module subcategories of $\mc{C}$ are respected by $F$ the images of $X, Y$ under $F$ are in distinct $\mc{E}$-module components of $\mc{C}\boxtimes_{\mc{E}}Vec$. Not only does this imply that $F(X)$ and $F(Y)$ are not isomorphic but in fact $\Hom(F(X), F(Y))=0$. Thus if $F(X)$ contains a copy of the unit object $\unit\in\mc{C}\boxtimes_{\mc{E}}Vec$ then $X$ and $\unit\in\mc{C}$ must belong to the same indecomposible $\mc{E}$-module subcategory of $\mc{C}$. Thus any object whose $F$-image contains the unit object must be contained in the image of $\mc{E}$ in $\mc{C}$ under the composition (\ref{compoz}).

Note that the restriction of $F$ to the image of $\mc{E}$ in $\mc{C}$ gives a fiber functor $\mc{E}\rightarrow\mc{E}\boxtimes_{\mc{E}}Vec=Vec$. By \cite[\S 2.13]{BraidedI} the choice of a fiber functor from $\mc{E}$ determines a group $G_F\simeq G$ having the property that $Fun(G_F)$ is regular algebra $A$ in $\Rep(G)$ and as such is canonically isomorpic to $I(\unit)$. Thus we have tensor equivalence $\Mod_{\mc{C}}(A)=\mc{C}_G\simeq\mc{C}\boxtimes_{\mc{E}}Vec$ and the proof is complete.
\end{proof}
\section{Module categories over braided monoidal categories}\label{braidedmodulesection}
%Recall that a monoidal category $\mc{C}$ is said to be \textit{braided} if $\mc{C}$ is equipped with a class of natural isomorphisms \begin{equation*}c_{V, W} :V\otimes W\rightarrow W\otimes V\end{equation*}for objects $V, W\in\mc{C}$ satisfying a pair of hexagons describing how they interact with tensor associativity. When $\mc{C}$ is strict these reduce to the equations\begin{eqnarray}c_{U, V\otimes W} &=& (id_V\otimes c_{U, W})(c_{U, V}\otimes id_W)\label{eqn:brd1}\\c_{U\otimes V, W} &=& (c_{U, W}\otimes id_V)(id_U\otimes c_{V, W}) \label{eqn:brd2}.\end{eqnarray}We recommend \cite{K:QG} for details. Braiding allows us to give module categories bimodule structure.
%
\subsection{The center of a bimodule category}\label{braidedmodulesection1} In this section we describe a construction which associates to a strict $\mc{C}$-bimodule category $\mc{M}$ a new category having the structure of a $Z(\mc{C})$-bimodule category. Assume $\mc{C}$ to be braided with braiding $c_{X, Y}:X\otimes Y\rightarrow Y\otimes X$ (Definition \ref{braidingdef}). Our first proposition is well known and we provide a proof only for completeness. 
\begin{prop}\label{leftbimodulelemma}  Let $\mc{M}$ be a left $\mc{C}$-module category. Then $\mc{M}$ has canonical structure of $\mc{C}$-bimodule category.
\end{prop}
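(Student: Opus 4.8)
The plan is to use the braiding $c$ on $\mc{C}$ to convert the given left module structure into a right module structure, and then invoke Proposition \ref{rmk;bimd} to assemble the two into a bimodule structure. Concretely, if $(\mc{M},\mu)$ is a left $\mc{C}$-module category with associativity $\mu_{X,Y,M}:(X\otimes Y)\otimes M\to X\otimes(Y\otimes M)$, I would define a right action by $M\otimes X := X\otimes M$ with right associativity built from $\mu$ twisted by the braiding: $(M\otimes X)\otimes Y = Y\otimes(X\otimes M)$ should map to $(X\otimes Y)\otimes M = M\otimes(X\otimes Y)$ via $\mu_{Y,X,M}^{-1}$ followed by $c_{Y,X}\otimes \mathrm{id}_M$ (in the strict case this is cleanest). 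One checks the right-module pentagon using the two hexagon axioms for the braiding (equations (\ref{eqn:brd1}), (\ref{eqn:brd2}) in the strict setting) together with the pentagon for $\mu$.

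Next I would produce the consistency isomorphisms $\gamma_{X,M,Y}:(X\otimes M)\otimes Y \to X\otimes(M\otimes Y)$. Since both sides unwind (using $M\otimes(-) = (-)\otimes M$) to $Y\otimes(X\otimes M)$ and $X\otimes(Y\otimes M)$ respectively, the natural candidate for $\gamma$ is $\mu_{X,Y,M}\circ(c_{Y,X}\otimes\mathrm{id}_M)\circ\mu_{Y,X,M}^{-1}$, or simply the braiding-induced isomorphism in the strict case. Then the bulk of the proof is verifying the three pentagons in Proposition \ref{rmk;bimd}: the two mixed pentagons relating $\gamma$ to $\mu^{\ell}$ and $\mu^{r}$, and the unit triangle. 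Each of these should reduce, after expanding definitions, to an instance of a braiding hexagon or to naturality of $c$ combined with the module pentagon for $\mu$; the unit triangle uses that $c$ is compatible with the unit constraints. Having verified these, Proposition \ref{rmk;bimd} delivers the desired $(\mc{C},\mc{C})$-bimodule structure, and one notes it is canonical since every choice made (the assignment $M\otimes X := X\otimes M$, and $\gamma$) is forced by $\mu$ and $c$.

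I expect the main obstacle to be the bookkeeping in the mixed pentagons: one must carefully track where each braiding isomorphism is inserted and apply the correct hexagon (\ref{eqn:brd1}) versus (\ref{eqn:brd2}) at each stage, since a sign-error-like mistake (using $c$ where $c^{-1}$ is needed, or braiding the wrong pair) will break commutativity. Since the statement is asserted to be well known and the author only provides a proof ``for completeness,'' I would keep the verification terse: set up $\gamma$ explicitly, remark that the three pentagons follow from the hexagon axioms and naturality of $c$ by direct diagram chase, and defer the routine diagrams (as the paper does elsewhere, e.g. Proposition \ref{rmk;bimd}) to a reference such as \cite{JGThesis}.
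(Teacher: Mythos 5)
Your proposal is correct and matches the paper's proof in all essentials: the paper likewise defines the right action by $M\otimes X:=X\otimes M$ with associativity built from the braiding, takes $\gamma_{X,M,Y}=a_{X,Y,M}(c_{Y,X}\otimes id_M)a^{-1}_{Y,X,M}$ exactly as you do, and verifies the right-module pentagon and the consistency pentagons of Proposition \ref{rmk;bimd} by diagram chases reducing to the two braiding hexagons and naturality of $c$. Your orientation of the braiding in the right associativity ($c_{Y,X}$ rather than the inverse $c_{X,Y}^{-1}$ implicit in the paper's convention) is a harmless convention difference, since naturality of $c$ makes either choice compatible with the same $\gamma$.
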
 
\begin{proof} 
We begin with the following lemma. 
\begin{lem}\label{lemmleftright} $\mc{M}$ is right $\mc{C}$-module category via  $(M, X)\mapsto X\otimes M$ where $\otimes$ is left $\mc{C}$-module structure.
\end{lem} 
\begin{proof}
For left module associativity $a$ define natural isomorphism
\begin{equation*}
a'_{M, X, Y} = a_{Y, X, M}(id_{M}\otimes c_{X, Y}): M\otimes (X\otimes Y)\rightarrow (M\otimes X)\otimes Y
\end{equation*}
for $X, Y\in\mc{C}$ and $M\in\mc{M}$. In terms of the left module structure by which $M\otimes X$ is defined $a'_{M, X, Y}=a_{Y, X, M}(c_{X, Y}\otimes id_{M})=(X\otimes Y)\otimes M\rightarrow Y\otimes (X\otimes M)$. We show that $a'$ is module associativity for right module structure. Consider  diagram
\[
\!\!\!\!
\xymatrix{
(XYZ)M\ar[rr]^{c_{X, YZ}}\ar[dd]_{c_{XY, Z}}\ar[dr]^{c_{Y, Z}}&&(YZX)M\ar[dr]^{c_{Y,Z}}\ar[r]^{a_{YZ, X, M}}&(YZ)(XM)\ar[r]^{c_{Y, Z}}& (ZY)(XM)\ar[dd]^{a_{Z, Y, XM}}\\
&(XZY)M\ar[rr]^{c_{X, ZY}}\ar[dl]_{c_{X, Z}}&&(ZYX)M\ar[ur]_{a_{ZY, X, M}}\ar[d]^{a_{Z, YX, M}}&\\
(ZXY)M\ar[urrr]_{c_{X, Y}}\ar[rr]_{a_{Z, XY, M}}&&Z((XY)M)\ar[r]_{c_{X, Y}}& Z((YX)M)\ar[r]_{a_{Y, X, M}}&Z(Y(XM))
}\]
The upper left rectangle is naturality of $c$, upper right triangle naturality of $a$, leftmost triangle is equation (\ref{eqn:brd2}), triangle in lower half of diagram is equation (\ref{eqn:brd1}), central bottom rectangle is naturality of $a$ and rightmost rectangle is $a$-pentagon in $\mc{C}$. The two directed components of the external contour are precisely $a'_{MX, Y, Z}a'_{M, X, YZ}$ and $(a'_{M, X, Y}\otimes Z)a'_{M, XY, Z}$. The diagrams for action of unit in $\mc{C}$ are even easier.
\end{proof}
Define action of $X\boxtimes Y\in \mc{C}\boxtimes \mc{C}^{rev}$ using left and right actions, i.e. $(X\boxtimes Y)\otimes M = Y\otimes (X\otimes M)$. Define 
\begin{equation*}
\gamma_{X, M, Y} = a_{X, Y, M}(c_{Y, X}\otimes id_M)a^{-1}_{Y, X, M}: Y\otimes (X\otimes M)\rightarrow X\otimes (Y\otimes M).
\end{equation*}
In order to verify that the candidate action is indeed bimodule we must show that $\gamma$ satisfies the necessary pentagons (Remark \ref{rmk;bimd}). Commutativity of the first pentagon follows from an examination of the diagram below. 
\[
\!\!\!\!\!\!\!
\xymatrix{
(ZXY)M \ar[dr]^{a_{Z, XY, M}}\ar[rrrr]^{c_{Z, XY}}\ar[ddd]|{a_{ZX, Y, M}}&&&&(XYZ)M\ar[dl]_{a_{XY, Z, M}}\ar[ddd]|{a_{X, YZ, M}} \\
&Z((XY)M)\ar[d]_{a_{X, Y, M}}\ar[rr]^{\gamma_{XY, M, Z}} &&(XY)(ZM) \ar[d]^{a_{X, Y, ZM}}&\\
&Z(X(YM)) \ar[r]^{\gamma_{X, YM, Z}} &X(Z(YM)) \ar[r]^{\gamma_{Y, M, Z}} & X(Y(ZM)) &\\
(ZX)(YM) \ar[r]_{c_{Z, X}} \ar[ur]^{a_{Z, X, YM}} &(XZ)(YM) \ar[ur]|{a_{X, Z, YM}} &(XZY)M \ar[l]^{a_{XZ, Y, M}} \ar[r]_{a_{X, ZY, M}} & X((ZY)M) \ar[ul]|{a_{Z, Y, M}} \ar[r]_{c_{Z, Y}} &X((YZ)M)\ar[ul]_{a_{Y, Z, M}}
}
\]
Every peripheral rectangle is either the definition of $\gamma$ or the module associativity satisfied by $a$. Note that top left vertex can be connected to the lower center vertex by the map $c_{Z, X}\otimes id_{Y\otimes M}$ making commutative rectangle expressing naturality of $a$ in first index. Lower center vertex can be connected to uppermost right vertex by the map $id_X\otimes c_{Z, Y}\otimes id_M$ making commutative rectangle expressing naturality of $a$ in the second index. Commutativity of this new external triangle is (equation (\ref{eqn:brd1}))$\otimes M$. Thus the internal pentagon commutes, and this is precisely the first diagram in Remark  \ref{rmk;bimd}. Commutativity of second pentagon is similar. 
\end{proof}
Next we generalize of the notion of center to module categories. 
\begin{defn}\label{centralmoduledef} Let $\mc{M}$ be a $\mc{C}$-bimodule category. A \textit{central structure on} $\mc{M}$ is a family of isomorphisms $\varphi_{X, M}:X\otimes M\simeq M\otimes X$,  $X\in\mc{C}$, one for each object $M\in\mc{M}$, satisfying the condition 
%The Diamond Shaped Hexagon
%\[\xymatrix{&(XY)M\ar[dl]_{a^{\ell}_{X, Y, M}}\ar[dr]^{\varphi_{XY, M}}&\\X(YM)\ar[d]_{X\otimes\varphi_{Y, M}}&&M(XY)\ar[d]^{a^r_{M, XY}}\\X(MY)\ar[dr]_{\gamma_{X, M, Y}}&&(MX)Y\\&(XM)Y\ar[ur]_{\varphi_{X, M}\otimes Y}&}\]
\[
\xymatrix{
(XY)M\ar[d]_{a^{\ell}_{X, Y, M}}\ar[d]\ar[rrr]^{\varphi_{XY, M}}&&&M(XY)\ar[d]^{a^r_{M, XY}}\\
X(YM)\ar[d]_{X\otimes\varphi_{Y, M}}&&&(MX)Y\\
X(MY)\ar[rrr]_{\gamma_{X, M, Y}}&&&(XM)Y\ar[u]_{\varphi_{X, M}\otimes Y}
}\]
whenever $Y\in\mc{C}$ where $a^{\ell}, a^r$ are left and right module associativity in $\mc{M}$ and $\gamma$ bimodule consistency (Proposition \ref{rmk;bimd}). $\varphi_M$ is called the \textit{centralizing isomorphism} associated to $M$.
\end{defn}
Note that when $\mc{M}$ is strict as a bimodule category the hexagon reduces to
\[
\xymatrix{
XMY\ar[rr]^{\varphi_{X, M}\otimes id_Y}&&MXY\\
&XYM\ar[ul]^{id_X\otimes \varphi_{Y, M}}\ar[ur]_{\varphi_{XY, M}}
}
\]
In what follows assume $\mc{C}$ is strict. 
\begin{defn}\label{defCent} The \textit{center} $\cent{M}{C}$ of $\mc{M}$ over $\mc{C}$ consists of objects given by pairs $(M, \varphi_M)$ where $M\in\mc{M}$ and where $\varphi_M$ is a family of natural isomorphisms such that for $X\in\mc{C}$ $\varphi_{X, M}:X\otimes M\simeq M\otimes X$ satisfying Definition \ref{centralmoduledef}. A morphism from $(M, \varphi_M)$ to $(N, \varphi_N)$ in $\cent{M}{C}$ is a morphism $t:M\rightarrow N$ in $\mc{M}$ satisfying $\varphi_{X, N}(id_X\otimes t) = (t\otimes id_X)\varphi_{X, M}$.
\end{defn}
\begin{note} Definition \ref{defCent} appeared in \cite{GNN} in connection with centers of braided fusion categories. 
\end{note}
\begin{ex}\label{central restriction}
For $\mc{C}$ viewed as a having regular bimodule category structure $\cent{C}{C}=Z(\mc{C})$, the center of $\mc{C}$.
\end{ex}
\begin{defn}\label{centfunctdef} Let $\mc{M}, \mc{N}$ be bimodule categories central over $\mc{C}$. Then $\mc{C}$-bimodule functor $T:\mc{M}\rightarrow\mc{N}$ is called \textit{central} if the diagram
\[
\xymatrix{
T(X\otimes M)\ar[rr]^{f_{X, M}}\ar[d]_{T(\varphi_{X, M})}&&X\otimes T(M)\ar[d]^{\varphi_{X, T(M)}}\\
T(M\otimes X)\ar[rr]_{f_{M, X}}&&T(M)\otimes X
}\]
commutes for all $X\in\mc{C}$, $M\in\mc{M}$, where $\varphi$ denotes centralizing natural isomorphisms in $\mc{M}$ and $\mc{N}$. $f$ is linearity isomorphism for $T$. A \textit{central natural transformation} $\tau:F\Rightarrow G$ for central functors $F, G:\mc{M}\rightarrow \mc{N}$ is a bimodule natural transformation $F\Rightarrow G$ with the additional requirement that, for $X\in\mc{C}, M\in\mc{M}$ the diagram
\[
\xymatrix{
X\otimes F(M)\ar[rr]^{\varphi_{X, F(M)}}\ar[d]_{X\otimes\tau_M}&&F(M)\otimes X\ar[d]^{\tau_M\otimes X}\\
X\otimes G(M)\ar[rr]_{\varphi_{X, G(M)}}&&G(M)\otimes X
}
\]
commutes.
\end{defn}
It is evident that centrality of natural transformations is preserved by vertical (and horizontal) composition, and we thus have a category (indeed a bicategory) $Z(\mc{M}, \mc{N})$ for central bimodule categories $\mc{M}, \mc{N}$ consisting of central functors $\mc{M}\rightarrow \mc{N}$ having morphisms central natural transformations.
\begin{lem}\label{centralmodulelemma} $\Cent{\mc{M}}{\mc{C}}$ is a $\Cent{\mc{C}}{}$-bimodule category.
\end{lem}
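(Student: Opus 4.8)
The plan is to construct left and right $Z(\mc{C})$-module actions on $\Cent{\mc{M}}{\mc{C}}$ and exhibit the bimodule consistency data, invoking Proposition \ref{rmk;bimd} to package everything. First I would define the left action: for $(X,c)\in Z(\mc{C})$ and $(M,\varphi_M)\in\Cent{\mc{M}}{\mc{C}}$, set $(X,c)\otimes(M,\varphi_M):=(X\otimes M,\,\widehat{\varphi})$ where the new centralizing isomorphism $\widehat{\varphi}_{Y,X\otimes M}:Y\otimes(X\otimes M)\to(X\otimes M)\otimes Y$ is built by conjugating the half-braiding $c$ of $X$ (to move $Y$ past $X$) with $\varphi_M$ (to move $Y$ past $M$), i.e. schematically $\widehat{\varphi}_{Y} = (\varphi_{Y,M}\text{-part})\circ(c_Y\text{-part})$, using the strictness assumption on $\mc{C}$ to suppress associators on the $\mc{C}$-side and the bimodule consistency $\gamma$ on $\mc{M}$ to reconcile the two moves. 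I would then check that $\widehat{\varphi}$ satisfies the hexagon of Definition \ref{centralmoduledef}: this is a diagram chase combining the hexagon for $\varphi_M$, the hexagon defining membership of $(X,c)$ in $Z(\mc{C})$ (Definition \ref{DefCent}), and naturality of $\gamma$ — exactly parallel to the verification in Lemma \ref{leftbimodulelemma} that $\gamma$ satisfies its pentagons.

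Next I would define the right action $(M,\varphi_M)\otimes(X,c)$ symmetrically, using the right $\mc{C}$-module structure of $\mc{M}$ and again the half-braiding of $(X,c)$; the module associativity isomorphisms for both actions are inherited from those of $\mc{M}$, and one checks they are morphisms in $\Cent{\mc{M}}{\mc{C}}$ (i.e. compatible with centralizing isomorphisms) by a routine naturality argument. For the bimodule consistency isomorphism $\gamma^{Z}$ relating the left and right $Z(\mc{C})$-actions on $\Cent{\mc{M}}{\mc{C}}$, I would take it to be induced by the consistency $\gamma$ on $\mc{M}$ itself, and verify it lands in $\Cent{\mc{M}}{\mc{C}}$ — that is, $\gamma_{X,M,Y}$ intertwines the centralizing isomorphisms of $(X,c)\otimes(M,\varphi_M)\otimes(Y,c')$ computed the two ways — which follows by pasting the centrality hexagons together with the hexagon for $(X,c),(Y,c')\in Z(\mc{C})$. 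Finally, the three pentagons of Proposition \ref{rmk;bimd} for this data reduce to the corresponding pentagons already known to hold in $\mc{M}$ together with bookkeeping on the half-braidings, and the unit $(1,r^{-1}\ell)\in Z(\mc{C})$ acts as it does on $\mc{M}$.

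The main obstacle I expect is not any single step but the coherence bookkeeping: verifying that the composite $\widehat{\varphi}$ genuinely satisfies Definition \ref{centralmoduledef}'s hexagon requires carefully tracking how the half-braiding axiom for $Z(\mc{C})$ (which is itself a hexagon once associators are restored), the centrality hexagon for $\varphi_M$, and the naturality/coherence of $\gamma$ fit into one large commuting diagram. Since $\mc{C}$ is assumed strict (as stated just before Definition \ref{defCent}) this is considerably lighter than the general case, and the argument is structurally identical to the diagram chase in Lemma \ref{leftbimodulelemma}; so I would present the construction explicitly, state that the required hexagons commute by the same pattern of subdiagrams (naturality of $\gamma$, the $Z(\mc{C})$-hexagon, and the $\varphi$-hexagon), and leave the full pasting diagram to the reader or to \cite{JGThesis}, mirroring the paper's established style.
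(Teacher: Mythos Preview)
Your proposal is correct and follows essentially the same approach as the paper: define the left action via $(X,c)\otimes(M,\varphi_M)=(X\otimes M,\widehat{\varphi})$ with $\widehat{\varphi}_Y$ the composite of the half-braiding (moving $Y$ past $X$) followed by $\varphi_{Y,M}$ (moving $Y$ past $M$), define the right action symmetrically using $\varphi_{Y,M}$ then $c_{Y,X}$, and observe the bimodule consistency is inherited from $\mc{M}$. The paper's proof is in fact terser than your outline---it simply writes down the two centralizing isomorphisms $\varphi_{Y,X\otimes M}$ and $\varphi_{Y,M\otimes X}$ and declares the consistency checks easy---so your more careful accounting of which hexagons and pentagons need verifying is entirely appropriate.
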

\begin{proof} Assume $\mc{M}$ is strict bimodule category. We have left action of $Z(\mc{C})$ on $\cent{M}{C}$ given as follows: for $(X, c_X)\in Z(\mc{C})$ and $(M, \varphi_M)\in\cent{M}{C}$ define $(X, c_X)\otimes (M, \varphi_M)=(X\otimes M, \varphi_{X\otimes M})$ where for $Y\in\mc{C}$
\[
\xymatrix{
\varphi_{Y, X\otimes M}:=Y\otimes X\otimes M\ar[r]^>>>>{c^{-1}_{X, Y}\otimes M}&X\otimes Y\otimes M\ar[r]^{X\otimes\varphi_{Y, M}}&X\otimes M\otimes Y
}\]
so that $X\otimes M\in\Cent{\mc{M}}{\mc{C}}$. Define right action of $Z(\mc{C})$ by $(M, \varphi_M)\otimes (X, c_X)= (M\otimes X, \varphi_{M\otimes X})$ where
\[
\xymatrix{
\varphi_{Y, M\otimes X}:=Y\otimes M\otimes X\ar[r]^>>>>{\varphi_{Y, M}\otimes X}&M\otimes Y\otimes X\ar[r]^{M\otimes c_{Y, X}}&M\otimes X\otimes Y
}\]
putting $M\otimes X\in\cent{M}{C}$. It is easy to check that these actions are consistent in the way required of bimodule action.  
\end{proof}
\begin{prop}\label{centralcenter} $\cent{M}{C}$ has a canonical central structure over $Z(\mc{C})$. 
\end{prop}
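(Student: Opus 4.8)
The plan is to exhibit, for each object $(M,\varphi_M)\in\cent{M}{C}$, a centralizing isomorphism with respect to the $Z(\mc{C})$-bimodule structure of Lemma \ref{centralmodulelemma}, and then check the hexagon of Definition \ref{centralmoduledef} (in its strict form, since $\mc{C}$ is assumed strict). Concretely, given $(M,\varphi_M)\in\cent{M}{C}$ and $(X,c_X)\in Z(\mc{C})$, the left action produces $(X\otimes M,\varphi_{X\otimes M})$ and the right action produces $(M\otimes X,\varphi_{M\otimes X})$ with the explicit formulas written in the proof of Lemma \ref{centralmodulelemma}. I claim the candidate centralizing isomorphism $\Phi_{(X,c_X),(M,\varphi_M)}:(X,c_X)\otimes(M,\varphi_M)\to(M,\varphi_M)\otimes(X,c_X)$ is simply $\varphi_{X,M}:X\otimes M\to M\otimes X$ itself. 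First I would verify that $\varphi_{X,M}$ is a morphism in $\cent{M}{C}$ from $(X\otimes M,\varphi_{X\otimes M})$ to $(M\otimes X,\varphi_{M\otimes X})$, i.e. that for all $Y\in\mc{C}$ the square $\varphi_{Y,M\otimes X}\circ(Y\otimes\varphi_{X,M})=(\varphi_{X,M}\otimes Y)\circ\varphi_{Y,X\otimes M}$ commutes; unwinding the two definitions this becomes $(M\otimes c_{Y,X})\circ(\varphi_{Y,M}\otimes X)\circ(Y\otimes\varphi_{X,M}) = (\varphi_{X,M}\otimes Y)\circ(X\otimes\varphi_{Y,M})\circ(c^{-1}_{X,Y}\otimes M)$, which follows from naturality of $\varphi$ in the $\mc{C}$-variable together with the strict hexagon for $\varphi_M$ in Definition \ref{centralmoduledef} and the compatibility of $c_X$ with $\varphi$.

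Next I would check the naturality of $\Phi$ in the object $(M,\varphi_M)$ and in $(X,c_X)$: naturality in $M$ is naturality of $\varphi$ (noting morphisms in $\cent{M}{C}$ are just morphisms in $\mc{M}$ commuting with $\varphi$), and naturality in $X$ amounts to naturality of $\varphi_{-,M}$ against morphisms in $Z(\mc{C})$, which are morphisms in $\mc{C}$ commuting with the half-braidings — again immediate. The substantive point is the coherence hexagon of Definition \ref{centralmoduledef} for $\Phi$ over the braided category $Z(\mc{C})$: for $(X,c_X),(Y,c_Y)\in Z(\mc{C})$ and $(M,\varphi_M)\in\cent{M}{C}$ one must show (in the strict reduced form) that $\Phi_{(X,c_X)\otimes(Y,c_Y),(M,\varphi_M)}$ factors as $(\Phi_{(X,c_X),(M,\varphi_M)}\otimes \mathrm{id}_Y)\circ(\mathrm{id}_X\otimes\Phi_{(Y,c_Y),(M,\varphi_M)})$ after inserting the $\gamma$-isomorphism of the $Z(\mc{C})$-bimodule structure on $\cent{M}{C}$. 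Since the underlying maps are $\varphi_{X\otimes Y,M}$, $\varphi_{X,M}$, $\varphi_{Y,M}$, and $\gamma$ (coming from Proposition \ref{leftbimodulelemma} applied to $Z(\mc{C})$), this reduces precisely to the strict hexagon satisfied by $\varphi_M$ from Definition \ref{centralmoduledef}, with the $\gamma$ term accounting for the difference between the left and right $Z(\mc{C})$-actions. I would lay this out as a single commuting pentagon/hexagon diagram.

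I would also record that the assignment is functorial and compatible with the bimodule structure — i.e. the centralizing isomorphisms are themselves central bimodule natural in the appropriate sense — but these are routine diagram chases deferred to the reader, exactly as in Lemma \ref{centralmodulelemma}. The main obstacle I anticipate is purely bookkeeping: correctly tracking which occurrences of the braiding $c$ in $Z(\mc{C})$ appear with which variance in the definitions of the left and right $Z(\mc{C})$-actions (the left action uses $c^{-1}_{X,Y}$, the right uses $c_{Y,X}$), and making sure the $\gamma$ for the $Z(\mc{C})$-bimodule structure on $\cent{M}{C}$ — which by Proposition \ref{leftbimodulelemma} is built from the braiding of $Z(\mc{C})$, hence from the $c_X$'s — is inserted with the correct orientation so that the hexagon closes up to the single identity $(\varphi_{X,M}\otimes \mathrm{id}_Y)\circ(\mathrm{id}_X\otimes\varphi_{Y,M}) = \varphi_{X\otimes Y,M}$ valid in $\mc{M}$. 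Once the diagram is drawn correctly, every face is either naturality of $\varphi$, a hexagon for some $\varphi_M$, or a defining property of a morphism in $Z(\mc{C})$, so no genuinely new computation is needed.
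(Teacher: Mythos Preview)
Your approach is correct and coincides with the paper's: the centralizing isomorphism for $(M,\varphi_M)$ over $Z(\mc{C})$ is taken to be $\varphi_{X,M}$ itself, and the key step is verifying that $\varphi_{X,M}$ is a morphism in $\cent{M}{C}$ from $(X\otimes M,\varphi_{X\otimes M})$ to $(M\otimes X,\varphi_{M\otimes X})$, which the paper does via a single diagram whose faces are the strict hexagon for $\varphi_M$ (twice) and naturality of $\varphi$ in the $\mc{C}$-variable. One small correction: the bimodule consistency $\gamma$ for $\cent{M}{C}$ does not come from Proposition~\ref{leftbimodulelemma} applied to $Z(\mc{C})$ but directly from the left/right actions defined in Lemma~\ref{centralmodulelemma}, and in the strict setting it is the identity on underlying objects---so your hexagon verification indeed reduces exactly to the strict triangle $(\varphi_{X,M}\otimes id_Y)\circ(id_X\otimes\varphi_{Y,M})=\varphi_{X\otimes Y,M}$ as you anticipated.
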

\begin{proof} $\varphi_{X, M}:(X\otimes M, \varphi_{X\otimes M})\rightarrow (M\otimes X, \varphi_{M\otimes X})$ is a morphism in $\cent{M}{C}$ as can be seen by the diagram
\[
\xymatrix{
YXM\ar[rrr]^{Y\otimes\varphi_{X, M}}\ar[d]_{c_{Y, X}\otimes M}\ar[drrr]_{\varphi_{YX, M}}&&&YMX\ar[d]^{\varphi_{Y, M}\otimes X}\\
XYM\ar[d]_{X\otimes\varphi_{Y, M}}\ar[drrr]^{\varphi_{XY, M}}&&&MYX\ar[d]^{M\otimes c_{Y, X}}\\
XMY\ar[rrr]_{\varphi_{X, M}\otimes Y}&&&MXY
}\]
Triangles are Definition \ref{centralmoduledef} for $\varphi$ and the square is $\mc{C}$-naturality of $\varphi$.
\end{proof}
\begin{prop}\label{centralproplem} For $\mc{C}$-bimodule category $\mc{M}$ we have canonical $Z(\mc{C})$-bimodule equivalence $\underline{Fun}_{\mc{C}\boxtimes\mc{C}^{op}}(\mc{C}, \mc{M})\simeq\cent{M}{C}$.
\end{prop}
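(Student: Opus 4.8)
The plan is to exhibit mutually inverse $Z(\mc{C})$-bimodule functors between $\underline{Fun}_{\mc{C}\boxtimes\mc{C}^{op}}(\mc{C}, \mc{M})$ and $\cent{M}{C}$. First I would define a functor $\Phi:\underline{Fun}_{\mc{C}\boxtimes\mc{C}^{op}}(\mc{C}, \mc{M})\rightarrow\cent{M}{C}$ by evaluation at the unit object: given a $\mc{C}$-bimodule functor $(F, f^\ell, f^r):\mc{C}\rightarrow\mc{M}$, set $\Phi(F) := (F(\unit), \varphi^F)$ where the centralizing isomorphism $\varphi^F_{X, F(\unit)}:X\otimes F(\unit)\rightarrow F(\unit)\otimes X$ is built from the left and right module linearity maps of $F$ as the composite $X\otimes F(\unit)\xrightarrow{(f^\ell_{X,\unit})^{-1}}F(X\otimes\unit)=F(X)=F(\unit\otimes X)\xrightarrow{f^r_{\unit,X}}F(\unit)\otimes X$. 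The key point here is that the bimodule hexagon for $F$ (Remark \ref{bimodfunct}) translates \emph{exactly} into the central-structure hexagon of Definition \ref{centralmoduledef} for $\varphi^F$, so $\Phi(F)$ really does land in $\cent{M}{C}$; on a bimodule natural transformation $\tau:F\Rightarrow G$ one sets $\Phi(\tau) = \tau_{\unit}$, and the bimodule-naturality squares for $\tau$ give the morphism condition in $\cent{M}{C}$.

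In the other direction I would define $\Psi:\cent{M}{C}\rightarrow\underline{Fun}_{\mc{C}\boxtimes\mc{C}^{op}}(\mc{C}, \mc{M})$ by $\Psi(M, \varphi_M) := F_M$ where $F_M(X) = X\otimes M$ using the left action of $\mc{C}$ on $\mc{M}$, with left module structure the identity (it is strict by assumption) and right module structure $F_M(X)\otimes Y = (X\otimes M)\otimes Y = X\otimes(M\otimes Y)\xrightarrow{\;X\otimes\varphi^{-1}_{Y,M}\;}X\otimes(Y\otimes M) = F_M(X\otimes Y)$, so that the required right linearity isomorphism $F_M(X\otimes Y)\rightarrow F_M(X)\otimes Y$ is $X\otimes\varphi_{Y,M}$ (modulo the strict-associativity identifications). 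The central-structure hexagon for $\varphi_M$ is precisely what is needed to verify the bimodule-functor hexagon of Remark \ref{bimodfunct} for $F_M$, and the coherence (pentagon) axioms for the right module structure follow from naturality of $\varphi$ together with the hexagon. On morphisms $t:(M,\varphi_M)\rightarrow(N,\varphi_N)$ in $\cent{M}{C}$ put $\Psi(t)_X = \mathrm{id}_X\otimes t$; the condition $\varphi_{X,N}(\mathrm{id}_X\otimes t) = (t\otimes\mathrm{id}_X)\varphi_{X,M}$ is exactly what makes this a bimodule natural transformation.

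Next I would check $\Phi\Psi\simeq\mathrm{id}$ and $\Psi\Phi\simeq\mathrm{id}$. The first is essentially on the nose: $\Phi\Psi(M,\varphi_M) = (F_M(\unit), \varphi^{F_M}) = (\unit\otimes M, \varphi_M)$, which is canonically isomorphic to $(M,\varphi_M)$ via the left unit constraint $\ell_M$ (identity in the strict case), and one must check $\ell_M$ is a morphism in $\cent{M}{C}$ — immediate from naturality. For $\Psi\Phi\simeq\mathrm{id}$, given $(F, f^\ell, f^r)$ the functor $\Psi\Phi(F)$ sends $X\mapsto X\otimes F(\unit)$, and the left linearity $f^\ell$ of $F$ provides a natural isomorphism $F(X) = F(X\otimes\unit)\xrightarrow{f^\ell_{X,\unit}}X\otimes F(\unit) = \Psi\Phi(F)(X)$; I would verify this is a bimodule natural isomorphism by invoking the coherence axioms relating $f^\ell$ and $f^r$ — the same hexagon of Remark \ref{bimodfunct} shows compatibility with the right structures. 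Finally I would check that $\Phi$ (equivalently $\Psi$) intertwines the $Z(\mc{C})$-bimodule structures: the left $Z(\mc{C})$-action on $\underline{Fun}_{\mc{C}\boxtimes\mc{C}^{op}}(\mc{C}, \mc{M})$ from Proposition \ref{lem:FRlem4} is $((X,c_X)\otimes F)(Y) = F(Y\otimes X)\otimes\unit = F(Y\otimes X)$ with the bimodule-twisted linearity, and under $\Phi$ this becomes $F((X)\otimes\unit) = F(X)$ with centralizing structure involving $c_X$ — one matches this term-by-term with the left $Z(\mc{C})$-action on $\cent{M}{C}$ from Lemma \ref{centralmodulelemma}, namely $\varphi_{Y, X\otimes M} = (X\otimes\varphi_{Y,M})(c_{X,Y}^{-1}\otimes M)$, and similarly on the right.

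\textbf{Main obstacle.} The routine-but-delicate part, and the one I expect to be the real bookkeeping burden, is verifying that the twisted linearity maps in Proposition \ref{lem:FRlem4} (which involve the bimodule consistency isomorphisms $\gamma^{\mc{N}}$) correspond precisely to the twisted centralizing isomorphisms of Lemma \ref{centralmodulelemma} (which involve the braiding $c$ in $Z(\mc{C})$) — in other words, that $\Phi$ is a $Z(\mc{C})$-\emph{bimodule} equivalence and not merely an equivalence of categories. Tracking the interplay of $\gamma$, the braiding on $Z(\mc{C})$, and the associativity/unit constraints through both the left and right actions is where the calculation has the most moving parts; once the dictionary $f^\ell, f^r \leftrightarrow \varphi$ of the first paragraph is set up cleanly, however, everything reduces to unwinding definitions and applying naturality, so I would present that dictionary carefully and then assert the remaining diagram-chases are straightforward.
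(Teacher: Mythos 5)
Your proposal is correct and follows essentially the same route as the paper: evaluation at the unit object with centralizing isomorphism $f^r\circ(f^\ell)^{-1}$ in one direction, the functor $F_M:X\mapsto X\otimes M$ with right linearity twisted by $\varphi_M$ in the other, $\tau\mapsto\tau_{\unit}$ on $2$-cells, and the quasi-inverse checks via $f^\ell$ (the paper in fact gets $\Delta\Gamma=\mathrm{id}$ on the nose in the strict case). The only detail the paper records that you omit is the right exactness of $F_M$, which it deduces from left exactness of $\Hom(\score,\underline{\Hom}_{}(M,M))$.
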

\begin{proof} For simplicity assume $\mc{M}$ is strict as a $\mc{C}$-bimodule category. Define functor $\Delta:\underline{Fun}_{\mc{C}\boxtimes\mc{C}^{op}}(\mc{C}, \mc{M})\simeq\cent{M}{C}$ by sending $F\mapsto (F(1), f^r\circ{f^{\ell}}^{-1})$ where $f^{\ell}_{X}:F(X)\simeq X\otimes F(1)$ and $f^r_{X}:F(X)\simeq F(1)\otimes X$ are left/right module linearity isomorphisms for $F$. The diagram below implies $(F(1), f^r\circ{f^{\ell}}^{-1})\in\cent{M}{C}$:
\[
\xymatrix{
F(1)XY&&F(XY)\ar[ll]_{f^r_{XY}}\ar[dll]^{f^r_{XY}}\ar[drr]_{f^{\ell}_{XY}}\ar[rr]^{f^{\ell}_{XY}}&&XYF(1)\\
F(X)Y\ar[u]^{f^r_X\otimes Y}\ar[rr]_{f^{\ell}_X\otimes Y}&&XF(1)Y&&XF(Y)\ar[ll]^{X\otimes f^r_Y}\ar[u]_{X\otimes f^\ell_Y}
}\]
Left and right triangles are diagrams expressing module linearlity of $F$ and square is bimodularity of $F$ (Remark \ref{bimodfunct}). Inverting all $\ell$ superscripted isomorphisms  gives the diagram required for centrality of $f^r\circ{f^{\ell}}^{-1}$.

To complete definition of functor $\underline{Fun}_{\mc{C}\boxtimes\mc{C}^{op}}(\mc{C}, \mc{M})\rightarrow\cent{M}{C}$ we must define action on natural bimodule transformations. For $\tau:F\Rightarrow G$ a morphism in $\underline{Fun}_{\mc{C}\boxtimes\mc{C}^{op}}(\mc{C}, \mc{M})$ note that $\tau_1: (F(\unit), f^r\circ{f^{\ell}}^{-1})\rightarrow (G(\unit), g^r\circ{g^{\ell}}^{-1})$ is a morphism in $\cent{M}{C}$: indeed, diagram required of $\tau_1$ as central morphism is given by pasting together left/right module diagrams for $\tau$ along the edge $\tau_X:F(X)\rightarrow G(X)$.

We now define quasi-inverse $\Gamma$ for functor $\Delta$. For $M\in\mc{M}$ denote by $F_M$ the functor $\mc{C}\rightarrow \mc{M}$ defined by $F_M(X):=X\otimes M$. Right exactness of $F_M$ follows from (contravariant) left exactness of $\Hom(\score, \underline{\Hom}(M, M))$. Since $\mc{M}$ is a strict $\mc{C}$-bimodule category $F_M$ is strict as a left $\mc{C}$-module functor. For $(M, \varphi_M)\in\cent{M}{C}$ we give $F_M$ the structure of a right $\mc{C}$-module functor via
\begin{equation}\label{functorcenterbimod}
F_M(X)=X\otimes M\stackrel{\varphi_{X, M}}{\longrightarrow}M\otimes X = F_M(\unit)\otimes X
\end{equation}
and with this $F_M$ is $\mc{C}$-bimodule. Define $\Gamma(M, \varphi_M):=F_M$ with the bimodule structure given in (\ref{functorcenterbimod}). It is now trivial to verify that $\Delta\Gamma = id$ and that $\Gamma\Delta$ is naturally equivalent to $id$ via $f^{\ell}$. Finally, it is easy to see that $\Gamma$ is a strict $Z(\mc{C})$-bimodule functor.
%\begin{eqnarray*}((X, c_X)\otimes\Gamma(M, \varphi_M))(Y)&=&((X, c_X)\otimes F_M)(Y)\\&=&F_M(Y\otimes X)\\&=&F_{X\otimes M}(Y)\\&=&\Gamma((X, c_X)\otimes (M, \varphi_M))(Y)\end{eqnarray*}and\begin{eqnarray*}(\Gamma(M, \varphi_M)\otimes(X, c_X))(Y)&=&(F_M\otimes(X, c_X))(Y)\\&=&F_M(Y)\otimes X\\&=&F_{M\otimes X}(Y)\\&=&\Gamma((M, \varphi_M)\otimes(X, c_X))(Y).\end{eqnarray*}Thus $\Delta, \Gamma$ give an equivalence of bimodule categories.
\end{proof}
As a corollary we get a well known result which appears for example in \cite{EO:FTC}.
\begin{cor}\label{centerdualitycor} $(\mc{C}\boxtimes\mc{C}^{op})^*_{\mc{C}}\simeq Z(\mc{C})$ canonically as monoidal categories.
\end{cor}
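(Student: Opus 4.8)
The plan is to deduce Corollary \ref{centerdualitycor} directly from Proposition \ref{centralproplem} together with Example \ref{central restriction}. Recall that the dual category $(\mc{C}\boxtimes\mc{C}^{op})^*_{\mc{C}}$ is by definition the category $\underline{Fun}_{\mc{C}\boxtimes\mc{C}^{op}}(\mc{C}, \mc{C})$ of right exact $\mc{C}\boxtimes\mc{C}^{op}$-module endofunctors of $\mc{C}$, regarded as a monoidal category under composition. Proposition \ref{centralproplem} applied to the regular $\mc{C}$-bimodule category $\mc{M} = \mc{C}$ gives a $Z(\mc{C})$-bimodule equivalence
\[
\underline{Fun}_{\mc{C}\boxtimes\mc{C}^{op}}(\mc{C}, \mc{C}) \simeq \cent{C}{C},
\]
and Example \ref{central restriction} identifies $\cent{C}{C}$ with $Z(\mc{C})$. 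So as categories the equivalence is immediate; what remains is to check that it is monoidal.

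First I would make the monoidal structures explicit on both sides. On $\underline{Fun}_{\mc{C}\boxtimes\mc{C}^{op}}(\mc{C}, \mc{C})$ the tensor product is composition of functors, with unit the identity functor $id_{\mc{C}}$. On $Z(\mc{C})$ the tensor product is the one described in \S\ref{preliminariesSection} following Definition \ref{DefCent}, with unit $(\unit, id)$ (since $\mc{C}$ is strict). Next I would trace the equivalence from Proposition \ref{centralproplem} in both directions: $\Delta$ sends $F$ to $(F(\unit), f^r\circ (f^{\ell})^{-1})$, and its quasi-inverse $\Gamma$ sends $(M, \varphi_M)$ to the functor $F_M : X \mapsto X\otimes M$ with right module structure given by $\varphi$. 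Using $\Gamma$ is the cleaner route: given $(M,\varphi_M), (N,\varphi_N) \in \cent{C}{C}$, I would compute the composite $F_M \circ F_N$, which on objects is $X \mapsto X\otimes N\otimes M$, i.e. $F_{N\otimes M}$ on underlying objects, and then check that the right $\mc{C}$-module structure induced on $F_M\circ F_N$ by composing the module structures \eqref{functorcenterbimod} of $F_M$ and $F_N$ agrees with the centralizing isomorphism attached to the tensor product $(N,\varphi_N)\otimes(M,\varphi_M)$ in $Z(\mc{C})$. Since $\mc{C}$ is strict, this comes down to checking that the composite
\[
X\otimes N\otimes M \xrightarrow{\ \varphi_{X,N}\otimes M\ } N\otimes X\otimes M \xrightarrow{\ N\otimes\varphi_{X,M}\ } N\otimes M\otimes X
\]
is precisely the centralizing isomorphism of the product object in $Z(\mc{C})$ — which it is, by the very definition of the tensor product in $Z(\mc{C})$ (the composition displayed after Definition \ref{DefCent}, with associators trivial). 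The unit is handled by noting $F_{\unit} = id_{\mc{C}}$ with trivial module structure, matching the unit $(\unit, id)$ of $Z(\mc{C})$.

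Finally I would verify that the coherence (associativity and unit) 2-isomorphisms match up: on the functor side composition is strictly associative and unital, and on the $Z(\mc{C})$ side the monoidal structure is likewise strict because $\mc{C}$ is strict, so the tensor structure isomorphisms on $\Gamma$ may be taken to be identities, and the hexagon/triangle axioms for a monoidal functor hold trivially. The main obstacle — really the only point requiring care — is the bookkeeping in the previous paragraph: confirming that the right $\mc{C}$-module structure on the composite $F_M\circ F_N$ obtained by stacking the module structures of the two factors coincides on the nose with the centralizing isomorphism dictated by the $Z(\mc{C})$-tensor product, including getting the order of the tensor factors right (composition of endofunctors reverses order relative to $\otimes$ in $Z(\mc{C})$, which is exactly why one writes $F_M\circ F_N = F_{N\otimes M}$ rather than $F_{M\otimes N}$). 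Once that identification is in hand, the corollary follows, and I would remark that this recovers the well-known statement of \cite{EO:FTC}.
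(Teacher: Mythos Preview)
Your proposal is correct and follows the paper's approach exactly: the paper presents this as an immediate corollary of Proposition \ref{centralproplem} specialized to $\mc{M}=\mc{C}$ together with Example \ref{central restriction}, and gives no further detail (citing \cite{EO:FTC} for the well-known result). Your explicit verification of the monoidal structure via $\Gamma$, including the observation that $F_M\circ F_N = F_{N\otimes M}$ with the composite right-module structure matching the centralizing isomorphism of $(N,\varphi_N)\otimes(M,\varphi_M)$, is more than the paper provides; the only loose end is that the order reversal you flag makes $\Gamma$ an equivalence $Z(\mc{C})^{rev}\simeq(\mc{C}\boxtimes\mc{C}^{op})^*_{\mc{C}}$, which is harmless since the braiding on $Z(\mc{C})$ yields a monoidal equivalence $Z(\mc{C})\simeq Z(\mc{C})^{rev}$.
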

\subsection{The 2-categories $\mc{B}(\mc{C})$ and $Z(\mc{C})$-Mod} Recall that $\mc{B}(\mc{C})$ denotes the category of exact $\mc{C}$-bimodule categories. The main result of this section is Theorem \ref{braidingCenter} giving an equivalence $\mc{B}(\mc{C})\simeq Z(\mc{C})$-Mod. Before we give the first proposition of this subsection recall that $\mc{C}$ has a trivial $Z(\mc{C})$-module category structure given by the forgetful functor. 
\begin{prop}\label{CentreEquivFunct} The 2-functor $\mc{B}(\mc{C})\rightarrow Z(\mc{C})$-Mod given by $\mc{M}\mapsto Z_{\mc{C}}(\mc{M})=\underline{Fun}_{\mc{C}\tens{}\mc{C}^{op}}(\mc{C}, \mc{M})$ is an equivalence with inverse given by $\mc{N}\mapsto\underline{Fun}_{Z(\mc{C})}(\mc{C}^{op}, \mc{N})$.
\end{prop}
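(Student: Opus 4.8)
The plan is to exhibit the two 2-functors and show they are mutually quasi-inverse, using the machinery already developed. First I would observe that both assignments are genuine 2-functors: $\mc{M}\mapsto Z_{\mc{C}}(\mc{M})=\underline{Fun}_{\mc{C}\tens{}\mc{C}^{op}}(\mc{C},\mc{M})$ lands in $Z(\mc{C})$-Mod because, by Corollary \ref{centerdualitycor}, $(\mc{C}\tens{}\mc{C}^{op})^*_{\mc{C}}\simeq Z(\mc{C})$, so $\underline{Fun}_{\mc{C}\tens{}\mc{C}^{op}}(\mc{C},\mc{M})$ carries a canonical $Z(\mc{C})$-action (this is just Proposition \ref{lem:FRlem4} with $\mc{D}=\mc{E}=(\mc{C}\tens{}\mc{C}^{op})^*_{\mc{C}}$, plus the identification of the dual with $Z(\mc{C})$); and by Proposition \ref{centralproplem} this category is equivalent to $\cent{M}{C}$, which is a $Z(\mc{C})$-bimodule category by Lemma \ref{centralmodulelemma}, hence in particular a $Z(\mc{C})$-module category. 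Dually, $\mc{N}\mapsto\underline{Fun}_{Z(\mc{C})}(\mc{C}^{op},\mc{N})$ lands in $\mc{B}(\mc{C})$ since $\mc{C}^{op}$ is naturally a $(Z(\mc{C}),\mc{C}\tens{}\mc{C}^{op})$-bimodule category (its right $\mc{C}\tens{}\mc{C}^{op}$-action is the regular one; its left $Z(\mc{C})$-action comes from the canonical functor $Z(\mc{C})\to\mc{C}$), so by Proposition \ref{lem:FRlem4} the functor category $\underline{Fun}_{Z(\mc{C})}(\mc{C}^{op},\mc{N})$ is a $\mc{C}\tens{}\mc{C}^{op}$-module category, i.e. a $\mc{C}$-bimodule category.

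Next I would compose the two functors in both orders and invoke Frobenius reciprocity (Corollary \ref{cor;frbns}) to collapse the composites. Going one way, for $\mc{M}\in\mc{B}(\mc{C})$ we get
\[
\underline{Fun}_{Z(\mc{C})}\!\left(\mc{C}^{op},\ \underline{Fun}_{\mc{C}\tens{}\mc{C}^{op}}(\mc{C},\mc{M})\right)\simeq \underline{Fun}_{\mc{C}\tens{}\mc{C}^{op}}\!\left(\mc{C}\tens{Z(\mc{C})}\mc{C}^{op},\ \mc{M}\right),
\]
so the whole thing reduces to identifying $\mc{C}\tens{Z(\mc{C})}\mc{C}^{op}$ with the regular $\mc{C}\tens{}\mc{C}^{op}$-bimodule category $\mc{C}\tens{}\mc{C}^{op}$ — equivalently, by Theorem \ref{prop;exist}, to showing $\underline{Fun}_{Z(\mc{C})}(\mc{C},\mc{C}^{op})\simeq\mc{C}\tens{}\mc{C}^{op}$ as bimodule categories, which says precisely that $\mc{C}$ is an invertible $Z(\mc{C})$-module category realizing the Morita self-duality, a fact dual to Corollary \ref{centerdualitycor}. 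Going the other way, for $\mc{N}\in Z(\mc{C})$-Mod,
\[
\underline{Fun}_{\mc{C}\tens{}\mc{C}^{op}}\!\left(\mc{C},\ \underline{Fun}_{Z(\mc{C})}(\mc{C}^{op},\mc{N})\right)\simeq\underline{Fun}_{Z(\mc{C})}\!\left(\mc{C}^{op}\tens{\mc{C}\tens{}\mc{C}^{op}}\mc{C},\ \mc{N}\right),
\]
and Proposition \ref{lem;unit} identifies $\mc{C}^{op}\tens{\mc{C}\tens{}\mc{C}^{op}}\mc{C}$ with the regular $Z(\mc{C})$-module category $Z(\mc{C})$, whence the composite is $\underline{Fun}_{Z(\mc{C})}(Z(\mc{C}),\mc{N})\simeq\mc{N}$. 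Both chains of equivalences are bimodule (hence module) by the bimodule refinements in Corollary \ref{cor;frbns}, Theorem \ref{prop;exist} and Remark \ref{lem;unimod}, and all are natural in their arguments, so they assemble into 2-natural equivalences of 2-functors.

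The main obstacle I expect is the bookkeeping in the very first step: getting the $Z(\mc{C})$-module (in fact bimodule) structure on $\underline{Fun}_{\mc{C}\tens{}\mc{C}^{op}}(\mc{C},\mc{M})$ to literally match, under the equivalence of Proposition \ref{centralproplem}, the structure on $\cent{M}{C}$ from Lemma \ref{centralmodulelemma} — i.e.\ checking that the dual-category action and the ``conjugate by the centralizing isomorphism'' action coincide on the nose, not merely up to some uncontrolled isomorphism. Relatedly, one must verify that the Morita-duality input $\underline{Fun}_{Z(\mc{C})}(\mc{C},\mc{C}^{op})\simeq\mc{C}\tens{}\mc{C}^{op}$ is compatible with the $\mc{C}\tens{}\mc{C}^{op}$-actions used above; this is essentially Corollary \ref{centerdualitycor} read through Theorem \ref{prop;exist} and Lemma \ref{Natadjoint}, but the identifications of actions deserve care. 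Once those two compatibilities are pinned down, everything else is a formal diagram chase through the adjunctions already established.
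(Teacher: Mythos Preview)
Your approach is essentially the paper's: apply Frobenius reciprocity (Corollary \ref{cor;frbns}) to each composite and then identify the resulting tensor product with the appropriate unit. Two small corrections. First, the identification $\mc{C}^{op}\boxtimes_{\mc{C}\tens{}\mc{C}^{op}}\mc{C}\simeq Z(\mc{C})$ is not Proposition \ref{lem;unit} (that proposition is only about tensoring with the regular module); it comes from Theorem \ref{prop;exist} together with Corollary \ref{centerdualitycor}, since $\mc{C}^{op}\boxtimes_{\mc{C}\tens{}\mc{C}^{op}}\mc{C}\simeq\underline{Fun}_{\mc{C}\tens{}\mc{C}^{op}}(\mc{C},\mc{C})=(\mc{C}\tens{}\mc{C}^{op})^*_{\mc{C}}\simeq Z(\mc{C})$. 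Second, in the other direction the paper is more explicit than your ``dual to Corollary \ref{centerdualitycor}'': it identifies $\mc{C}\boxtimes_{Z(\mc{C})}\mc{C}^{op}$ with $Z(\mc{C})^*_{\mc{C}}$ (again via Theorem \ref{prop;exist}) and then invokes the double-dual theorem $(\mc{C}^*_{\mc{M}})^*_{\mc{M}}\simeq\mc{C}$ from \cite{EO:FTC} (Theorem 3.27) with $\mc{M}=\mc{C}$ to obtain $Z(\mc{C})^*_{\mc{C}}\simeq((\mc{C}\tens{}\mc{C}^{op})^*_{\mc{C}})^*_{\mc{C}}\simeq\mc{C}\tens{}\mc{C}^{op}$. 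That external input is the substantive step you flagged as an obstacle, and you should cite it directly rather than appeal to an unspecified duality.
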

\begin{proof} In Proposition \ref{centralproplem} we saw that $Z_{\mc{C}}(\mc{M})$ is a $Z(\mc{C})$-module category whenever $\mc{M}$ is a $\mc{C}$-bimodule category (here module structure is just composition of functors). The category of $Z(\mc{C})$-module functors $\underline{Fun}_{Z(\mc{C})}(\mc{C}^{op}, \mc{N})$ for $Z(\mc{C})$-module category $\mc{N}$ has the structure of a $\mc{C}$-bimodule category with actions 
\begin{equation*}
(F\otimes X)(Z):=F(X\otimes Z),\qquad (Y\otimes F)(Z):=F(Z\otimes Y).
\end{equation*}
To see that $\underline{Fun}_{\mc{C}\tens{}\mc{C}^{op}}(\mc{C}, -)$ and $\underline{Fun}_{Z(\mc{C})}(\mc{C}^{op}, -)$ are quasi-inverses first note that 
\begin{equation}\label{FirstDirectionFunctorsZ}
\underline{Fun}_{Z(\mc{C})}(\mc{C}^{op}, \underline{Fun}_{\mc{C}\tens{}\mc{C}^{op}}(\mc{C}, \mc{N}))\simeq\underline{Fun}_{\mc{C}\tens{}\mc{C}^{op}}(\mc{C}\boxtimes_{Z(\mc{C})}\mc{C}^{op}, \mc{N})\simeq\underline{Fun}_{\mc{C}\tens{}\mc{C}^{op}}(Z(\mc{C})^*_{\mc{C}}, \mc{N})
\end{equation}
as $\mc{C}$-bimodule categories for any bimodule category $\mc{N}$ where we have used equation \ref{eqn:frbns} freely. Theorem 3.27 in \textit{loc. cit.} gives a canonical equivalence $(\mc{C}^*_\mc{M})^*_{\mc{M}}\simeq\mc{C}$ for any (exact) $\mc{C}$-module category $\mc{M}$. In the case that $\mc{M}=\mc{C}$ this and Corollary \ref{centerdualitycor} imply $Z(\mc{C})^*_{\mc{C}}\simeq((\mc{C}\tens{}\mc{C}^{op})^*_{\mc{C}})^*_{\mc{C}}\simeq\mc{C}\tens{}\mc{C}^{op}$. Thus the last category of functors in (\ref{FirstDirectionFunctorsZ}) is canonically equivalent to $\underline{Fun}_{\mc{C}\tens{}\mc{C}^{op}}(\mc{C}\tens{}\mc{C}^{op}, \mc{N})\simeq\mc{N}$.

In the opposite direction we have, for $Z(\mc{C})$-module category $\mc{M}$, 
\begin{equation}\label{SecondDirectionFunctorsZ}
\underline{Fun}_{\mc{C}\tens{}\mc{C}^{op}}(\mc{C}, \underline{Fun}_{Z(\mc{C})}(\mc{C}^{op}, \mc{M}))\simeq\underline{Fun}_{Z(\mc{C})}(\mc{C}^{op}\boxtimes_{\mc{C}\tens{}\mc{C}^{op}}\mc{C}, \mc{M}).
\end{equation}
Note that $\mc{C}^{op}\boxtimes_{\mc{C}\tens{}\mc{C}^{op}}\mc{C}\simeq(\mc{C}\tens{}\mc{C}^{op})^*_{\mc{C}}\simeq Z(\mc{C})$ (Corollary \ref{centerdualitycor}) and thus the last category of functors in (\ref{SecondDirectionFunctorsZ}) is canonically equivalent to $\underline{Fun}_{Z(\mc{C})}(Z(\mc{C}), \mc{M})\simeq\mc{M}$.
\end{proof}
\begin{lem}\label{centoplem} As $Z(\mc{C})$-bimodule categories, $\Cent{\mc{M}^{op}}{\mc{C}}\simeq\Cent{\mc{M}}{\mc{C}}^{op}$.
\end{lem}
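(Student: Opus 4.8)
The plan is to exhibit an explicit functor $\Cent{\mc{M}^{op}}{\mc{C}}\to\Cent{\mc{M}}{\mc{C}}^{op}$ and check it is an equivalence of $Z(\mc{C})$-bimodule categories. Recall from Proposition \ref{bimodprop} that $\mc{M}^{op}$ carries the $\mc{C}$-bimodule structure $(Y\boxtimes X, M)\mapsto X^*\otimes M\otimes{}^*Y$, and since $\mc{C}$ is assumed strict here, on $\mc{M}^{op}$ the action of $(X\boxtimes Y)$ on $M$ is just $X^*\otimes M\otimes{}^*Y$ (with the $\mc{M}$-tensor symbols now interpreted in $\mc{M}$). An object of $\Cent{\mc{M}^{op}}{\mc{C}}$ is a pair $(M,\psi_M)$ where $\psi_{X,M}\colon X\otimes^{op} M\simeq M\otimes^{op} X$ in $\mc{M}^{op}$, i.e.\ $\psi_{X,M}\colon M\otimes{}^*X\to X^*\otimes M$ reversed, equivalently a map $X^*\otimes M\to M\otimes{}^*X$ in $\mc{M}$. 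Using rigidity of $\mc{C}$ one rewrites this datum: given $(M,\psi_M)$, set $\varphi_{X,M}\colon X\otimes M\to M\otimes X$ in $\mc{M}$ to be $(\psi_{{}^*X, M})^{-1}$ transported through the canonical isomorphisms $X\cong{}^{**}X$ and ${}^*({}^*X)\cong X$; concretely, $\Phi\colon\Cent{\mc{M}^{op}}{\mc{C}}\to\Cent{\mc{M}}{\mc{C}}^{op}$ sends $(M,\psi_M)$ to $(M,\varphi_M)$ with $\varphi_{X,M}$ obtained from $\psi$ at the object ${}^*X$ by dualizing. On morphisms $\Phi$ is the identity (a morphism in $\mc{M}^{op}$ is a morphism in $\mc{M}$ reversed, and a morphism in $\Cent{\mc{M}}{\mc{C}}^{op}$ is likewise).

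First I would verify that $\varphi_M$ as defined actually satisfies the centrality hexagon of Definition \ref{centralmoduledef} (which in the strict case is the triangle $\varphi_{XY,M}=(\varphi_{X,M}\otimes Y)(X\otimes\varphi_{Y,M})$, up to order). This is a direct translation: the hexagon for $\psi_M$ in $\mc{M}^{op}$, when one unwinds what composition and tensor mean in the opposite bimodule category (Proposition \ref{bimodprop} tells us $\tilde\gamma_{Y,M,X}=\gamma_{X^*,M,{}^*Y}$, and in the strict case all the $a$'s and $\gamma$'s are identities), becomes exactly the hexagon for $\varphi_M$ in $\mc{M}$ but with the order of the factors reversed — which is precisely what ``living in $\Cent{\mc{M}}{\mc{C}}^{op}$'' records. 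Then I would check $\Phi$ is essentially surjective and fully faithful: essential surjectivity because the assignment $\psi\mapsto\varphi$ is invertible (dualizing is an involution on $\mc{C}$ up to canonical isomorphism, so $\psi$ is recovered from $\varphi$ by the same recipe applied at ${}^*X$), and full faithfulness because $\Phi$ is the identity on Hom-sets once one checks that $t\colon M\to N$ is a morphism $(M,\psi_M)\to(N,\psi_N)$ in $\Cent{\mc{M}^{op}}{\mc{C}}$ iff (the reverse of) $t$ is a morphism $(M,\varphi_M)\to(N,\varphi_N)$, again a formal dualization of the compatibility square.

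Finally I would check $\Phi$ is a $Z(\mc{C})$-bimodule functor. The $Z(\mc{C})$-bimodule structure on $\Cent{\mc{M}}{\mc{C}}$ is given in Lemma \ref{centralmodulelemma} by the explicit formulas for $\varphi_{Y,X\otimes M}$ and $\varphi_{Y,M\otimes X}$; the structure on $\Cent{\mc{M}^{op}}{\mc{C}}$ is the same formulas applied in $\mc{M}^{op}$, and the structure on $\Cent{\mc{M}}{\mc{C}}^{op}$ is that of Proposition \ref{bimodprop} applied to the $Z(\mc{C})$-bimodule category $\Cent{\mc{M}}{\mc{C}}$ (note $Z(\mc{C})$ is braided, hence rigid, so $(X,c)^*$ makes sense and $\Cent{\mc{M}}{\mc{C}}^{op}$ is a $Z(\mc{C})$-bimodule category via $((Y,c_Y)\boxtimes(X,c_X),M)\mapsto (X,c_X)^*\otimes M\otimes{}^*(Y,c_Y)$). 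One then matches the two by the same dualization bookkeeping used in the object-level check: the forgetful functor $Z(\mc{C})\to\mc{C}$ intertwines left/right duals, so transporting $\psi$ to $\varphi$ commutes with both actions up to the canonical coherence isomorphisms. I expect the main obstacle to be purely notational: keeping straight, across four layers (op of module category, op of the center category, the two-sided action, and the $\mc{C}$- versus $Z(\mc{C})$-duals), which tensor symbol lives in which category and which canonical isomorphism ${}^*({}^*X)\cong X$ is being invoked where — the mathematics is a routine unwinding of Proposition \ref{bimodprop}, Lemma \ref{centralmodulelemma} and Definition \ref{centralmoduledef}, but it is easy to drop a dual or reverse an arrow. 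Accordingly I would set up notation once at the start (fixing the identifications $X\cong{}^{**}X$ etc.) and then let all diagrams be ``the same diagram with every arrow reversed and every object replaced by its left dual,'' so that commutativity in $\mc{M}^{op}$ and commutativity in $\mc{M}$ are literally the same statement.
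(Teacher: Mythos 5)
Your strategy---an explicit, identity-on-objects identification of $\Cent{\mc{M}^{op}}{\mc{C}}$ with $\Cent{\mc{M}}{\mc{C}}^{op}$---is genuinely different from the paper's, which obtains the equivalence abstractly as the composite
$\underline{Fun}_{\mc{C}\boxtimes\mc{C}^{op}}(\mc{C},\mc{M}^{op})\simeq\underline{Fun}_{\mc{C}\boxtimes\mc{C}^{op}}(\mc{M}^{op},\mc{C})^{op}\simeq\underline{Fun}_{\mc{C}\boxtimes\mc{C}^{op}}(\mc{C},\mc{M})^{op}$
using Proposition \ref{centralproplem}, Lemma \ref{Natadjoint} (passage to left adjoints) and Corollary \ref{cor;frbns}. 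Unfortunately your version has a real gap, not just a notational one. By Proposition \ref{bimodprop} the actions on $\mc{M}^{op}$ are $X\otimes^{op}M=M\otimes{}^*X$ and $M\otimes^{op}X=X^*\otimes M$, so a centralizing datum on $\mc{M}^{op}$ is, written as arrows of $\mc{M}$, a natural family $\psi_X\colon X^*\otimes M\to M\otimes{}^*X$. To turn this into a half-braiding $X\otimes M\to M\otimes X$ you must reindex: substituting $X\mapsto{}^*Y$ gives a family $Y\otimes M\to M\otimes{}^{**}Y$, while substituting $X\mapsto Y^*$ gives $Y^{**}\otimes M\to M\otimes Y$; either way a double dual survives on exactly one side. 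Your construction then appeals to ``the canonical isomorphism $X\cong{}^{**}X$''---but no such canonical (let alone monoidal natural) isomorphism exists in a general finite tensor category; that is precisely a pivotal structure, which the paper does not assume. The only genuinely canonical identifications available are ${}^*(X^*)\cong X$ and $({}^*X)^*\cong X$, and neither removes the double dual here.

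Consequently the functor $\Phi$ you describe, which is the identity on underlying objects and on morphisms, cannot be defined as stated. This is not mere bookkeeping: the paper's equivalence is built from Lemma \ref{Natadjoint}, which replaces a functor by its adjoint and hence shifts underlying objects by internal homs and duals, so it has no reason to fix the underlying object $M$, and in general it will not. To salvage a direct argument you would either have to assume $\mc{C}$ pivotal (and then also check compatibility of the pivotal structure with the hexagons), or manufacture the missing isomorphism out of $\psi$ itself together with evaluation and coevaluation maps, in the style of the Drinfeld morphism---a genuinely new ingredient your proposal does not contain. The remaining steps (the hexagon, the morphism condition, the $Z(\mc{C})$-bimodule compatibility) would indeed be the routine unwinding you describe once the object-level identification is in place, but that identification is exactly where the proof must do real work, and the paper's detour through categories of functors is how it avoids the issue.
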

\begin{proof}
For $\mc{M}$, $\mc{C}$ as above we have the bimodule equivalences
\begin{eqnarray*} 
\underline{Fun}_{\mc{C}\boxtimes\mc{C}^{op}}(\mc{C}, \mc{M}^{op})\simeq\underline{Fun}_{\mc{C}\boxtimes\mc{C}^{op}}(\mc{M}^{op}, \mc{C})^{op} \simeq\underline{Fun}_{\mc{C}\boxtimes\mc{C}^{op}}(\mc{C}, \mc{M})^{op}.
\end{eqnarray*}
The first equivalence is Lemma \ref{Natadjoint} and the second uses Corollary \ref{cor;frbns}. By Proposition \ref{centralproplem} the first term is equivalent to $\Cent{\mc{M}^{op}}{\mc{C}}$ and the last to $\cent{M}{C}^{op}$.
\end{proof}
\begin{thm}\label{braidingCenter}The 2-equivalence $Z_{\mc{C}}:\mc{B}(\mc{C})\simeq Z(\mc{C})$-Mod is monoidal in that $\Cent{\bitens{M}{C}{N}}{\mc{C}}\simeq\cent{M}{C}\boxtimes_{Z(\mc{C})}\cent{N}{C}$ whenever $\mc{M}, \mc{N}$ are $\mc{C}$-bimodule categories.
%DIAGRAMS???
\end{thm}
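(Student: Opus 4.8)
The plan is to combine the central-functor description of $\cent{M}{C}$ (Proposition \ref{centralproplem}), Frobenius reciprocity (Corollary \ref{cor;frbns}), and the duality identities already established for $\mc{C}$ viewed as a $\mc{C}\boxtimes\mc{C}^{op}$-module category. The key observation is that by Proposition \ref{centralproplem} we may replace every occurrence of $\cent{(-)}{C}$ by $\underline{Fun}_{\mc{C}\boxtimes\mc{C}^{op}}(\mc{C}, -)$, so the claim becomes an identity about iterated categories of module functors, which is exactly the sort of thing Corollary \ref{cor;frbns} is designed to manipulate. All equivalences below are as $Z(\mc{C})$-bimodule categories, which is guaranteed at each step by the bimodule strengthening of Corollary \ref{cor;frbns} and of Proposition \ref{centralproplem}.

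First I would write, using Proposition \ref{centralproplem} and Lemma \ref{centoplem},
\[
\cent{M}{C}\boxtimes_{Z(\mc{C})}\cent{N}{C}\simeq \underline{Fun}_{Z(\mc{C})}\!\big((\cent{M}{C})^{op}, \cent{N}{C}\big)\simeq \underline{Fun}_{Z(\mc{C})}\!\big(\Cent{\mc{M}^{op}}{\mc{C}}, \Cent{\mc{N}}{\mc{C}}\big),
\]
invoking Theorem \ref{prop;exist} for the first step. Next, expanding both arguments via Proposition \ref{centralproplem} and applying Corollary \ref{cor;frbns} twice (first to collapse the inner $\underline{Fun}_{\mc{C}\boxtimes\mc{C}^{op}}(\mc{C}, \mc{N})$, then again to re-associate), one rewrites this as $\underline{Fun}_{\mc{C}\boxtimes\mc{C}^{op}}(\mc{C}\boxtimes_{Z(\mc{C})}\mc{M}^{op\,op}, \mc{N})$-type expressions; the point is to manoeuvre the relative tensor products so that a factor of the form $\mc{C}\boxtimes_{Z(\mc{C})}\mc{C}^{op}$ (or its dual) appears and can be replaced, by the identities $Z(\mc{C})^*_{\mc{C}}\simeq\mc{C}\boxtimes\mc{C}^{op}$ and $(\mc{C}\boxtimes\mc{C}^{op})^*_{\mc{C}}\simeq Z(\mc{C})$ from Corollary \ref{centerdualitycor} and the proof of Proposition \ref{CentreEquivFunct}, by $\mc{C}\boxtimes\mc{C}^{op}$. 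Doing this carefully should reduce the right-hand side to $\underline{Fun}_{\mc{C}\boxtimes\mc{C}^{op}}(\mc{C}, \bitens{M}{C}{N})$, which by Proposition \ref{centralproplem} is precisely $\Cent{\bitens{M}{C}{N}}{\mc{C}}$, completing the proof. An alternative, perhaps cleaner, route is to observe directly that $Z_{\mc{C}}=\underline{Fun}_{\mc{C}\boxtimes\mc{C}^{op}}(\mc{C}, -)$ is the composite of the forgetful-type functor $\mc{B}(\mc{C})\to\mc{B}(\mc{C}\boxtimes\mc{C}^{op})$ (trivial on objects) with $\underline{Fun}_{\mc{C}\boxtimes\mc{C}^{op}}(\mc{C}, -)$, and that monoidality then follows from Frobenius reciprocity together with the self-duality $\mc{C}^{op}\boxtimes_{\mc{C}\boxtimes\mc{C}^{op}}\mc{C}\simeq Z(\mc{C})$ already recorded in the proof of Proposition \ref{CentreEquivFunct}.

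The main obstacle I anticipate is purely bookkeeping: keeping straight which relative tensor product is taken over $\mc{C}$, over $\mc{C}\boxtimes\mc{C}^{op}$, or over $Z(\mc{C})$, and ensuring the $op$'s in Lemma \ref{centoplem} and in the description of $\bitens{M}{D}{N}$ via functors (Theorem \ref{prop;exist}) cancel correctly; a sign error here is easy and would produce the wrong category. A secondary point requiring care is that every equivalence used must be checked to be $Z(\mc{C})$-bimodule (not merely abelian), but this is covered by the bimodule clauses already proved in Corollary \ref{cor;frbns}, Theorem \ref{prop;exist}, and Proposition \ref{centralproplem}, so it amounts to citing those strengthenings at each step rather than proving anything new. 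Given the groundwork, I would present the argument as a short chain of canonical equivalences with each arrow annotated by the result justifying it, and relegate the compatibility of the associativity and unit 2-cells to a remark, since those follow from the same universal properties.
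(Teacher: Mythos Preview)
Your proposal is correct and uses exactly the same ingredients as the paper's proof: Proposition \ref{centralproplem}, Theorem \ref{prop;exist}/Corollary \ref{cor;frbns}, Lemma \ref{centoplem}, and the duality from Corollary \ref{centerdualitycor}. The paper runs the chain in the opposite direction (starting from $\Cent{\bitens{M}{C}{N}}{\mc{C}}$) and handles your vague middle step in one move by observing that the Morita 2-equivalence $(\mc{C}\boxtimes\mc{C}^{op})\textrm{-Mod}\simeq Z(\mc{C})\textrm{-Mod}$ preserves hom-categories, giving $\underline{Fun}_{\mc{C}\boxtimes\mc{C}^{op}}(\mc{M}^{op}, \mc{N})\simeq\underline{Fun}_{Z(\mc{C})}(\Cent{\mc{M}^{op}}{\mc{C}}, \cent{N}{C})$ directly---this is precisely the ``cleaner route'' you mention at the end, and adopting it would spare you the bookkeeping you were worried about.
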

\begin{proof} 
% As such it preserves categories of 1-cells, in this case categories of functors. 
We have seen that $Z_{\mc{C}}(\mc{M})$ is a $Z(\mc{C})$-module category whenever $\mc{M}$ is a $\mc{C}$-bimodule category. We have canonical $Z(\mc{C})$-bimodule equivalences
\begin{eqnarray*}
\Cent{\bitens{M}{C}{N}}{\mc{C}}&\simeq&\underline{Fun}_{\mc{C}\boxtimes\mc{C}^{op}}(\mc{C}, \bitens{M}{C}{N})\simeq\underline{Fun}_{\mc{C}\boxtimes\mc{C}^{op}}(\mc{M}^{op}, \mc{N})\\
&\simeq&\underline{Fun}_{Z(\mc{C})}(\Cent{\mc{M}^{op}}{\mc{C}}, \cent{N}{C})\simeq\underline{Fun}_{Z(\mc{C})}(\Cent{\mc{M}}{\mc{C}}^{op}, \cent{N}{C})\\
&\simeq&\underline{Fun}_{Z(\mc{C})}(Z(\mc{C}), \cent{M}{C}\boxtimes_{Z(\mc{C})}\cent{N}{C})\simeq\cent{M}{C}\boxtimes_{Z(\mc{C})}\cent{N}{C}.
\end{eqnarray*}
The first equivalence is Proposition \ref{centralproplem}, the second and fifth are Corollary  \ref{cor;frbns}, the third follows from the fact that the equivalence of 2-categories $Z(\mc{C})$-Mod $\simeq(\mc{C}\boxtimes\mc{C}^{op})^*_{\mc{C}}$-Mod (Corollary \ref{centerdualitycor}) preserves categories of 1-cells, and the fourth follows from Lemma \ref{centoplem}. Example \ref{central restriction} shows that $Z_{\mc{C}}$ preserves units.
\end{proof}
\begin{cor}
Let $\mc{M}$ be a $\mc{C}$-module category for finite tensor $\mc{C}$. There is a canonical $2$-equivalence $\mc{B}(\mc{C})\simeq\mc{B}(\mc{C}^*_\mc{M})$ respecting monoidal structure.
\end{cor}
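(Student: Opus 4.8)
The plan is to combine Theorem~\ref{braidingCenter} with the classical fact that $Z(\mc{C}) \simeq Z(\mc{C}^*_{\mc{M}})$ as braided tensor categories whenever $\mc{M}$ is an exact $\mc{C}$-module category (this appears in \cite{EO:FTC} and is closely related to Corollary~\ref{centerdualitycor}, which is the special case $\mc{M} = \mc{C}\boxtimes\mc{C}^{op}$ giving $Z(\mc{C}) \simeq (\mc{C}\boxtimes\mc{C}^{op})^*_{\mc{C}}$). The key observation is that the monoidal $2$-equivalence of Theorem~\ref{braidingCenter} is stated entirely in terms of the center $Z(\mc{C})$, so both $\mc{B}(\mc{C})$ and $\mc{B}(\mc{C}^*_{\mc{M}})$ become identified with the \emph{same} target $2$-category $Z(\mc{C})$-Mod.

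First I would invoke Theorem~\ref{braidingCenter} to obtain a monoidal $2$-equivalence $Z_{\mc{C}} : \mc{B}(\mc{C}) \xrightarrow{\sim} Z(\mc{C})\textrm{-Mod}$. Writing $\mc{D} := \mc{C}^*_{\mc{M}}$, the same theorem applied to $\mc{D}$ gives a monoidal $2$-equivalence $Z_{\mc{D}} : \mc{B}(\mc{D}) \xrightarrow{\sim} Z(\mc{D})\textrm{-Mod}$. Next I would cite the braided equivalence $Z(\mc{C}) \simeq Z(\mc{D})$: since $\mc{D} = \mc{C}^*_{\mc{M}}$ for exact $\mc{M}$, the categories $\mc{C}$ and $\mc{D}$ are ``Morita equivalent'' in the sense of \cite{EO:FTC}, and Morita equivalent finite tensor categories have braided-equivalent centers. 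A braided tensor equivalence $\Phi : Z(\mc{C}) \xrightarrow{\sim} Z(\mc{D})$ induces a monoidal $2$-equivalence $\Phi_* : Z(\mc{C})\textrm{-Mod} \xrightarrow{\sim} Z(\mc{D})\textrm{-Mod}$ by restriction of scalars along $\Phi^{-1}$ (a braided equivalence carries the tensor product $\boxtimes_{Z(\mc{C})}$ to $\boxtimes_{Z(\mc{D})}$ because the relative tensor product depends only on the module structures, which are transported by $\Phi$). Finally I would compose: the $2$-functor
\[
\mc{B}(\mc{C}) \xrightarrow{\ Z_{\mc{C}}\ } Z(\mc{C})\textrm{-Mod} \xrightarrow{\ \Phi_*\ } Z(\mc{D})\textrm{-Mod} \xrightarrow{\ Z_{\mc{D}}^{-1}\ } \mc{B}(\mc{D}) = \mc{B}(\mc{C}^*_{\mc{M}})
\]
is a composite of monoidal $2$-equivalences, hence itself a monoidal $2$-equivalence, which is exactly the claim.

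The main obstacle I anticipate is justifying the braided equivalence $Z(\mc{C}) \simeq Z(\mc{C}^*_{\mc{M}})$ with the correct level of rigor, and verifying that the induced functor $\Phi_*$ genuinely respects the monoidal structure $\boxtimes_{Z(\mc{C})}$ rather than merely the underlying $2$-category. For the first point, one can either cite the invariance of the Drinfeld center under Morita equivalence directly from \cite{EO:FTC}, or build $\Phi$ by hand: by Corollary~\ref{centerdualitycor} applied appropriately and the double-dual identity $(\mc{C}^*_{\mc{M}})^*_{\mc{M}} \simeq \mc{C}$ used in the proof of Proposition~\ref{CentreEquivFunct}, one has $Z(\mc{C}) \simeq (\mc{C}\boxtimes\mc{C}^{op})^*_{\mc{C}}$ and symmetrically $Z(\mc{D}) \simeq (\mc{D}\boxtimes\mc{D}^{op})^*_{\mc{D}}$, and these dual categories are equivalent as braided categories because $\mc{C}\boxtimes\mc{C}^{op}$ and $\mc{D}\boxtimes\mc{D}^{op}$ are Morita equivalent over $\mc{C}$ and $\mc{D}$ respectively via $\mc{M}$. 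For the second point, the compatibility of $\Phi_*$ with relative tensor products is formal from the universal property in Definition~\ref{DEF;tensor}: balanced functors over $Z(\mc{C})$ correspond bijectively, via $\Phi$, to balanced functors over $Z(\mc{D})$, so the universal objects match. I would present the proof compactly, citing \cite{EO:FTC} for the center invariance and noting that the remaining verifications are routine applications of the universal properties established earlier in the paper.
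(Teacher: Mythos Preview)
Your proposal is correct and follows essentially the same approach as the paper: cite the braided equivalence $Z(\mc{C})\simeq Z(\mc{C}^*_{\mc{M}})$ from \cite{EO:FTC} (the paper points specifically to Corollary~3.35 there) and then apply Theorem~\ref{braidingCenter} on both sides. The paper's proof is a two-line version of exactly what you wrote; your additional remarks on why the induced $\Phi_*$ respects $\boxtimes_{Z(\mc{C})}$ are more careful than the paper bothers to be, but the underlying argument is identical.
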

\begin{proof} Corollary 3.35 in \cite{EO:FTC} says that $Z(\mc{C})\simeq Z(\mc{C}^*_\mc{M})$. The result follows from Theorem \ref{braidingCenter}.
\end{proof}
\section{Fusion rules for $\Rep(G)$-module categories}\label{sec;ex}
\subsection{The Burnside Ring and Monoidal Structure in $Vec_G$-Mod}\label{first ex sec} Much in the beginning of this section is basic and can be found for example in \cite{CRII}. Let $G$ be a finite group. Recall that the Burnside Ring $\Omega(G)$ is defined to be the commutative ring generated by isomorphism classes of $G$-sets with addition and multiplication given by disjoint union and cartesian product:
\begin{eqnarray*}
\langle H\rangle + \langle K\rangle = G/H\cup G/K\\
\langle H\rangle\langle K\rangle = G/H\times G/K
\end{eqnarray*}
%That is, define\begin{equation*}I := \{ G/H_i | H_i<G \text{ and } H_i \neq_G H_j \text{ for } i\neq j\}\end{equation*}where $H_i \neq_G H_j$ means $H_i, H_j$ not conjugate subgroups in $G$. 
Here $\langle H\rangle$ denotes the isomorphism class of the $G$-set $G/H$ for $H<G$ and $G$ acts diagonally over $\times$. Evidently we have
\begin{equation*}
\langle H\rangle\langle G\rb = \lb H\rb,\quad \lb H\rb\lb 1\rb = [G:H]\lb 1\rb
\end{equation*}
so $\Omega(G)$ is unital with $1 = \lb G\rb$. It is a basic exercise to check that multiplication in $\Omega(G)$ satisfies the equation\footnotemark
\begin{equation*}\label{orbits lemma}
\langle H\rangle\langle K\rangle = \sum_{HaK\in H\setminus G/K}\langle H\cap {}^aK\rangle.
\end{equation*}
\footnotetext{One uses the fact that there is a bijection between the $G$-orbits of $(xH, yK)\in G/H\times G/K$ and double cosets $H\setminus G/K$ given by $(sH, tK)\mapsto Hs^{-1}tK$. The orbit corresponding to the coset $HaK$ contains $(H, aK)$ with stabilizer $H\cap {}^aK$, thus orbit $\mc{O}_G(H, aK)$ of $(H, aK)$ is $G/(H\cap {}^aK)$ as $G$-sets giving the formula.}
We are interested in a twisted variant of the Burnside ring. Here we take as basis elements $\langle H, \sigma\rangle$ where $G/H$ is a $G$-set and $\sigma$ is a $k^{\times}$-valued 2-cocycle on $H$. Multiplication of basic elements takes the form
\begin{equation*}
\langle H, \mu\rangle\langle K, \sigma\rangle = \sum_{HaK\in H\setminus G/K}\langle H\cap{}^aK, \mu\sigma^a\rangle
%\langle H, \mu\rangle\langle K, \sigma\rangle = \sum_{HaK\in H\setminus G/K}\langle H\cap{}^aK, \res(\mu)\res(\sigma^a)\rangle
\end{equation*}
where on the right $\mu, \sigma^a$ refer to restriction to the subgroup $H\cap {}^aK$ from $H, {}^aK$, respectively. The cocycle $\sigma^a:{}^aK\times {}^aK\rightarrow k^{\times}$ is defined by $\sigma^a(x, y) = \sigma(x^a, y^a)$. 

\begin{note} The decomposition for twisted Burnside products described above occurred in \cite{OdaYoshida} in order to study crossed Burnside rings, and in \cite{Rose} in connexion with the extended Burnside ring of semisimple $\Rep(G)$-module categories $\mc{M}$ having exact faithful module functor $\mc{M}\rightarrow\Rep(G)$. 
\end{note}
Recall that indecomposable $Vec_G$-module categories are parametrized by pairs $(H, \mu)$ where $H<G$ and $\mu\in H^2(H, k^{\times})$. Denote module category associated to such a pair by $\mc{M}(H, \mu)$. Explicitly simple objects of $\mc{M}(H, \mu)$ form a $G$-set with stabilizer $H$ and are thus in bijection with cosets in $G/H$. Module associativity is given by scalars $\mu(g_1, g_2)(X)$, for $\mu\in Z^2(G, Fun(G/H, k^{\times}))$, associated to the natural isomorphisms $(g_1g_2)\otimes X\rightarrow g_1\otimes(g_2\otimes X)$ whenever $g_i\in G$ and $X\in G/H$. Module structures are classified by non-comologous cocycles so we take as module associativity constraint any representative of the cohomology class $[\mu]$. Identifying $\mu\in H^2(G, Fun(G/H, k^{\times})) = H^2(G, Ind_H^Gk^{\times})$ with its image in $H^2(H, k^{\times})$ by Shapiro's Lemma we may classify such constraints by $H^2(H, k^{\times})$. 
The categories $Vec_G$ and $\Rep(G)$ are Morita equivalent via $Vec$: $(Vec_G)_{Vec}^*\simeq\Rep(G)$ (send representation $(V, \rho)$ to the functor $Vec\rightarrow Vec$ having $F(k) = V$ with $Vec_G$-linearity given by $\rho$). Since $\Rep(G)$ is braided the category $\Rep(G)$-Mod has monoidal structure $\boxtimes_{\Rep(G)}$. Although $Vec_G$ is not braided the  category $Vec_G$-Mod has monoidal structure as follows. For $\mc{M}, \mc{N}\in Vec_G$-Mod define new $Vec_G$-module category structure on $\mc{M}\boxtimes \mc{N}$ by $g\otimes (m\boxtimes n) := (g\otimes m)\boxtimes (g\otimes n)$ for simple object $k_g:= g$ in $Vec_G$, and linearly extend to all of $Vec_G$. Let $\mc{M}\odot\mc{N}$ denote $\mc{M}\tens{}\mc{N}$ with this module category structure.
\begin{prop}[$Vec_G$-Mod fusion rules]\label{Vec_G Frules} With notation as above 
\begin{equation*}
\mc{M}(H, \mu)\odot\mc{M}(K, \sigma)\simeq\bigoplus_{HaK\in H\setminus G/K}\mc{M}(H\cap{}^aK, \mu\sigma^{a}).
\end{equation*}
\end{prop}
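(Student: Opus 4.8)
The plan is to reduce the statement to the twisted Burnside product formula recalled just above, by identifying the relevant module categories with objects in a category of module functors via Theorem~\ref{prop;exist}. First I would describe the simple objects of $\mc{M}(H,\mu)\odot\mc{M}(K,\sigma)$ concretely: since $\mc{M}(H,\mu)\boxtimes\mc{M}(K,\sigma)$ is the Deligne product of two semisimple categories, its simple objects are $X\boxtimes Y$ with $X$ a simple of $\mc{M}(H,\mu)$ (indexed by $G/H$) and $Y$ a simple of $\mc{M}(K,\sigma)$ (indexed by $G/K$); so the simple objects are indexed by $G/H\times G/K$. The point of forming $\mc{M}\odot\mc{N}$ (diagonal $Vec_G$-action) rather than an honest relative tensor product is that $Vec_G$ is not braided, so this is the substitute construction: a $Vec_G$-module category whose underlying abelian category is still $\mc{M}\boxtimes\mc{N}$ but with $g$ acting diagonally. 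Thus the decomposition into indecomposable summands is governed by the orbits of the diagonal $G$-action on $G/H\times G/K$, and these orbits are in bijection with the double cosets $H\backslash G/K$, the orbit of $HaK$ being $G/(H\cap{}^aK)$ as a $G$-set (exactly the combinatorial fact proved in the footnote). So the underlying abelian semisimple category of the left side decomposes as $\bigoplus_{HaK}\mc{M}(H\cap{}^aK,\nu_{a})$ for some cohomology classes $\nu_a\in H^2(H\cap{}^aK,k^\times)$ to be identified.

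Next I would pin down the cocycle $\nu_a$ on the summand indexed by $HaK$. On the orbit through $(eH,aK)\in G/H\times G/K$, the stabilizer is $H\cap{}^aK$, and the module associativity constraint on this summand is the diagonal product of the two module associativities restricted to this subgroup: the $\mc{M}(H,\mu)$-factor contributes $\mu$ restricted from $H$ to $H\cap{}^aK$, and the $\mc{M}(K,\sigma)$-factor contributes $\sigma$ restricted from $K$, but transported through the element $a$ — i.e.\ restricted from ${}^aK$ to $H\cap{}^aK$ after the relabelling $\sigma\mapsto\sigma^a$ with $\sigma^a(x,y)=\sigma(x^a,y^a)$. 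Hence $\nu_a=\mu\,\sigma^a$ as an element of $H^2(H\cap{}^aK,k^\times)$, matching the twisted Burnside product. The bookkeeping here is just the observation that under the identification of simple objects with cosets, the diagonal module structure scalar on $g_1g_2\otimes(X\boxtimes Y)\to g_1\otimes(g_2\otimes(X\boxtimes Y))$ is the \emph{product} of the two scalars $\mu(g_1,g_2)(X)$ and $\sigma(g_1,g_2)(Y)$, and restricting to the stabilizer and translating by $a$ turns this into $\mu\sigma^a$ via Shapiro's lemma as in the paragraph preceding the proposition.

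I expect the main obstacle to be the second step: carefully justifying that the $Vec_G$-module associativity on each indecomposable summand really is represented by $\mu\sigma^a$ and not by some cohomologous-but-differently-presented cocycle, and in particular checking that the choice of double-coset representative $a$ only changes $\mu\sigma^a$ within its cohomology class (so the right-hand side is well defined). This amounts to a cocycle computation using the explicit description of $Z^2(G,\mathrm{Fun}(G/H,k^\times))$ and the Shapiro isomorphism, together with the naturality of the module associativity isomorphisms under the $G$-equivariant bijection $\mc{O}_G(eH,aK)\cong G/(H\cap{}^aK)$. Once that identification is in hand, the decomposition of simple objects into $G$-orbits gives the direct sum decomposition, and comparing module-associativity data on each summand with the definition of $\mc{M}(H\cap{}^aK,\mu\sigma^a)$ completes the proof. (One may alternatively phrase the whole argument through Theorem~\ref{prop;exist}, writing $\mc{M}(H,\mu)\odot\mc{M}(K,\sigma)$ as a category of module functors and computing internal homs, but the direct orbit-counting argument above is the most transparent route.)
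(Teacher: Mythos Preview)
Your proposal is correct and follows essentially the same route as the paper: decompose the simple objects of $\mc{M}(H,\mu)\odot\mc{M}(K,\sigma)$ into $G$-orbits on $G/H\times G/K$ (indexed by double cosets), then identify the associativity cocycle on the summand through $(H,aK)$ as the product $\mu(g,h)(H)\cdot\sigma(g,h)(aK)$, which under the Shapiro isomorphism becomes $\mu\sigma^a$. The paper's proof is terser and does not pause over the well-definedness of the representative $a$, but the argument is the same; your aside about Theorem~\ref{prop;exist} is not needed and the paper does not invoke it here either.
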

\begin{proof}
Send $\langle H, \sigma\rangle$ to module category $\mc{M}(H, \sigma)$. This association is clearly well defined and respects the action of $G$. Applying the proof above for decomposition of basic elements in $\Omega(G)$ to simple objects in $\mc{M}(H, \mu)\odot\mc{N}(K, \sigma)$ verifies the stated decomposition on the level of objects. We must check only the module associativity constraints for the summand categories. To do this we simply evaluate associativity for a simple object in the summand category having set of objects $G/H\cap{}^aK$. We may choose representative $H\boxtimes aK$. For $g, h\in G$ we have
\begin{equation*}
gh\otimes (H\boxtimes aK)\simeq g\otimes(h\otimes H)\boxtimes g\otimes (h\otimes aK)
\end{equation*}
via $\mu(g, h)(H)\boxtimes\sigma(g, h)(aK)$. Noting that $G/K\simeq G/{}^aK$ as $G$-sets, restricting $\varphi: H^2(G, Fun(G/K, k^{\times}))\simeq H^2({}^aK, k^{\times})$ to coset $aK$ on the right gives $\varphi(\sigma)(k_1, k_2) = \sigma(k_1, k_2)(aK)$ for $k_1, k_2\in {}^aK$. Thus $\varphi(\sigma)^a(k_1, k_2) = \varphi(\sigma)(k_1^a, k_2^a)\in H^2({}^aK, k^{\times})$, and this we simply denote by $\sigma^a$; module associativity is $\mu\boxtimes\sigma^{a}$ which is idential to $\mu\sigma^a$ since each is a scalar on simple objects.
\end{proof}

\begin{cor}\label{InvIrrObj} The Grothendeick group of invertible irreducible $Vec_G$-module categories is isomorphic to $H^2(G, k^{\times})$.
\end{cor}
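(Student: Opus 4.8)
The plan is to combine the fusion rules of Proposition \ref{Vec_G Frules} with the classification of indecomposable $Vec_G$-module categories by pairs $(H,\mu)$ to identify which such pairs give invertible objects and to compute the group law on them. First I would determine when $\mc{M}(H,\mu)$ is invertible. An invertible object $\mc{M}(H,\mu)$ must in particular be \emph{irreducible} (indecomposable) and must have an inverse which is also irreducible, so I look for $(K,\sigma)$ with $\mc{M}(H,\mu)\odot\mc{M}(K,\sigma)\simeq\mc{M}(G,1)$ (the unit of $Vec_G$-Mod, which is the regular module category $Vec_G$ itself, corresponding to $H=G$ and trivial cocycle). By Proposition \ref{Vec_G Frules}, $\mc{M}(H,\mu)\odot\mc{M}(K,\sigma)$ decomposes as a direct sum indexed by double cosets $H\backslash G/K$, so invertibility forces there to be exactly one double coset, i.e. $HK=G$, \emph{and} the unique summand $\mc{M}(H\cap {}^aK,\mu\sigma^a)$ must equal $\mc{M}(G,1)$, forcing $H\cap {}^aK=G$, hence $H=K=G$. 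Therefore the only invertible irreducible $Vec_G$-module categories are those of the form $\mc{M}(G,\mu)$ with $\mu\in H^2(G,k^\times)$.

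Next I would compute the group structure. For $H=K=G$ there is a single double coset (represented by $a=1$, so $\sigma^a=\sigma$), and Proposition \ref{Vec_G Frules} gives
\begin{equation*}
\mc{M}(G,\mu)\odot\mc{M}(G,\sigma)\simeq\mc{M}(G,\mu\sigma),
\end{equation*}
where $\mu\sigma$ denotes the pointwise product of cocycles (equivalently the sum in $H^2(G,k^\times)$ written multiplicatively). The unit is $\mc{M}(G,1)$, which was already identified as the unit object of the monoidal $2$-category $Vec_G$-Mod, and each $\mc{M}(G,\mu)$ has inverse $\mc{M}(G,\mu^{-1})$. Hence the assignment $\mu\mapsto\mc{M}(G,\mu)$ is a bijection from $H^2(G,k^\times)$ onto the set of (isomorphism classes of) invertible irreducible $Vec_G$-module categories carrying the product $\odot$ to the group operation of $H^2(G,k^\times)$, so it is a group isomorphism on the level of Grothendieck groups.

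The main technical point — and the place where a little care is needed rather than a genuine obstacle — is the first step: verifying that invertibility really does force $H=G$. One must rule out, for instance, the a priori possibility that a sum of several summands could conspire to be invertible, and that the ``$\odot$'' product could have invertible elements that are not irreducible; but since the monoidal unit $\mc{M}(G,1)$ is itself irreducible and $\odot$ preserves direct sums, any factorization of the unit must have all factors irreducible and the sum in Proposition \ref{Vec_G Frules} must be a single term, which pins down $H\cap{}^aK=G$ as above. The matching of the cocycle bookkeeping — that $\mu\sigma^a$ with $a=1$ is literally $\mu\sigma$ in $H^2(G,k^\times)$, and that this product is well defined on cohomology classes — is immediate from the analogous remark in the proof of Proposition \ref{Vec_G Frules} that module associativity on simple objects is just a scalar, so pointwise multiplication of representatives descends to the group operation on $H^2(G,k^\times)$. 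This generalizes the abelian-group case of \cite{ENO:homotop}, where $H^2(G,k^\times)$ is the usual Schur multiplier.
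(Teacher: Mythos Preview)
Your proposal is correct and follows essentially the same approach as the paper: both use the fusion rules of Proposition \ref{Vec_G Frules} to see that $\mc{M}(H,\mu)\odot\mc{M}(K,\sigma)\simeq\mc{M}(G,1)$ forces a single double coset with $H\cap{}^aK=G$, hence $H=K=G$, and then read off the group law $\mc{M}(G,\mu)\odot\mc{M}(G,\mu')\simeq\mc{M}(G,\mu\mu')$. Your version is somewhat more careful in explicitly ruling out non-irreducible invertibles, but the argument is the same.
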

\begin{proof}
Without taking twisting into consideration, invertible irreducible $Vec_G$-module categories correspond to invertible basis elements of the Burnside ring $\Omega(G)$. Suppose $\lb H\rb\lb H'\rb = \lb G\rb$ in $\Omega(G)$. Then $\sum\lb H\cap {}^aH'\rb = \lb G\rb$ which can happen only if there is a single double coset $HH'$ and if $H\cap{}^aH' = G$, and this occurs only if $H = H' = G$. It follows from Proposition \ref{Vec_G Frules} that 
\begin{equation*}
\mc{M}(G, \mu)\odot\mc{M}(G, \mu') = \mc{M}(G, \mu\mu')
\end{equation*}
Sending $\mc{M}(G, \mu)$ to $\mu$ gives the desired isomorphism.
\end{proof}
%
%
%\begin{ex}\label{non-based example} For finite group $G$ the ring $K_0(Vec_G$-Mod$)$ has basis given by pairs \begin{equation*}\{(H_i, \mu^i_j)|H_i<G,\mu^i_j\in H^2(H_i, k^{\times}), H_i\neq_GH_j \text{ if } i\neq j, \mu^i_j\neq_G\mu^i_k \text{ if }j\neq k\} \end{equation*}where $\neq_G$ means ``is not $G$-conjugate with" in both occurrences. $K_0(\mc{B}(Vec_G))$ has unit $(G, 1)$ and group of units $\{(G, \mu)|\mu\in H^2(G, k^{\times})\}\simeq H^2(G, k^{\times})$. Observe that $(H, \mu)^{\vee}=(H, \mu^{-1})$ so the multiplicity of $(G, 1)$ in $(H, \mu)(K, \sigma)$ is equal to 0 whenever $H, K$ are proper subgroups of $G$. \end{ex}
%
%
We have an equivalence of 2-categories $Vec_G$-Mod $\rightarrow\Rep(G)$-Mod defined by sending $\mc{M}\mapsto\overline{\mc{M}}$ where
\begin{equation}
\overline{\mc{M}}:=Fun_{Vec_G}(Vec, \mc{M}).
\end{equation}
Observe that $Fun_{Vec_G}(Vec, Vec)$ acts on $Fun_{Vec_G}(\mc{M}, \mc{N})$ on the right by the formula $(F\otimes S)(M) = F(M)\odot S(k)$ whenever $M\in\mc{M}$ and $S:Vec\rightarrow Vec$ is a $Vec_G$-module functor. $F\otimes S$ is trivially a $Vec_G$-module functor: 
\begin{eqnarray*}
(F\otimes S)(g\otimes M) &\simeq& (g\otimes F(M))\odot S(k)\\
&=& (g\otimes F(M))\odot (g\otimes S(k))\\
&=& g\otimes(F(M)\otimes S(k))\\
&=& g\otimes (F\otimes S)(M).
\end{eqnarray*}
The isomorphism is $Vec_G$-linearity of $F$ and the second line follows from the fact that simple objects of $Vec_G$ (one dimensional vector spaces) act trivially on $Vec$. Let $T:Vec\rightarrow Vec$ over $Vec_G$. Associativity of the action is also trivial:  
\begin{eqnarray*}
(F\otimes ST)(M) &=& F(M)\odot ST(k)\\
&=& F(M)\odot S(k\otimes T(k))\\
&=& F(M)\odot (S(k)\otimes T(k))\\
&=& (F(M)\odot S(k))\odot T(k)\\
&=& (F\otimes S)(M)\odot T(k) = ((F\otimes S)\otimes T)(M) 
\end{eqnarray*}
The second line is tensor product (composition) in $Fun_{Vec_G}(Vec, Vec)$ and the isomorphism is due to the canonical action of $Vec$ on $\mc{N}$ given by internal hom.
\begin{prop}\label{rep vec duality} For $H<G$ and $\mu\in H^2(H, k^{\times})$ denote by $\Rep_{\mu}(H)$ the category of projective representations of $H$ with Schur multiplier $\mu$. Then $\Rep_{\mu}(H)\simeq\overline{\mc{M}(H, \mu)}$ as $\Rep(G)$-module categories.
\end{prop}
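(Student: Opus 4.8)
The plan is to identify $\overline{\mc{M}(H,\mu)} = Fun_{Vec_G}(Vec, \mc{M}(H,\mu))$ explicitly as a category and equip it with its induced $\Rep(G)$-module structure, then match it with $\Rep_\mu(H)$. First I would unwind the definition: a $Vec_G$-module functor $F:Vec\to\mc{M}(H,\mu)$ is determined by the single object $F(k)\in\mc{M}(H,\mu)$ together with its module structure isomorphisms $f_{g,k}:F(g\otimes k)=F(k)\to g\otimes F(k)$. Since $g\otimes k\cong k$ in $Vec_G$ for every simple $g$, and since the only objects receiving nonzero maps from $g\otimes(\textrm{a simple at coset }aH)$ are simples at coset $gaH$, the data of such an $F$ forces $F(k)$ to have the same number of copies of each simple; writing $F(k)$ as a vector space graded by $G/H$ with fiber a vector space $V$, the coherence diagram for $f$ makes $V$ into a space carrying an action of $H$ twisted precisely by the module-associativity cocycle $\mu$ — i.e. a $\mu$-projective representation of $H$. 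Conversely a $\mu$-projective representation of $H$ induces such a functor. This gives an equivalence of abelian categories $\overline{\mc{M}(H,\mu)}\simeq\Rep_\mu(H)$; the content of Shapiro's Lemma, already invoked in the discussion preceding the proposition, is exactly what translates the $Fun(G/H,k^\times)$-valued cocycle on $G$ into a $k^\times$-valued cocycle on $H$.

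The second and more delicate step is to check that this equivalence is $\Rep(G)$-linear. The $\Rep(G)$-module structure on $\overline{\mc{M}}$ arises from the Morita equivalence $(Vec_G)^*_{Vec}\simeq\Rep(G)$ together with the general action of $Fun_{Vec_G}(Vec,Vec)$ on $Fun_{Vec_G}(Vec,\mc{M})$ described just before the proposition, namely $(F\otimes S)(M)=F(M)\odot S(k)$; under the identification $Fun_{Vec_G}(Vec,Vec)\simeq\Rep(G)$ a representation $(W,\rho)$ corresponds to the functor sending $k\mapsto W$ with $Vec_G$-linearity given by $\rho$, and the $\odot$-product here is just the Deligne tensor in $Vec$ equipped with the diagonal $Vec_G$-action, which on fibers is the ordinary tensor product of vector spaces. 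Tracing this through the equivalence of the first paragraph: if $F$ corresponds to the $\mu$-projective representation $V$ of $H$, then $F\otimes S_{(W,\rho)}$ corresponds to $V\otimes \res_H^G W$ with the evident twisted action, and the twisting cocycle is unchanged because $\res_H^G W$ is an honest (untwisted) representation. This is precisely the $\Rep(G)$-module structure on $\Rep_\mu(H)$ given earlier in the excerpt (the example where $W\otimes V:=\res(W)\otimes V$). So the functor constructed above intertwines the two actions, and the required module-structure isomorphisms are the obvious associativity/coherence maps for tensor product of vector spaces, whose coherence is automatic.

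The main obstacle I anticipate is bookkeeping with the cocycles: one must be careful that the representative $\mu\in Z^2(G, Fun(G/H,k^\times))$ chosen as module associativity for $\mc{M}(H,\mu)$, when restricted via the orbit $eH$ and pushed through Shapiro's isomorphism $H^2(G,\mathrm{Ind}_H^G k^\times)\simeq H^2(H,k^\times)$, yields exactly the class $\mu\in H^2(H,k^\times)$ naming $\Rep_\mu(H)$, with no spurious coboundary, and that changing the representative changes the functor only by a natural isomorphism. This is essentially the same computation already performed in the proof of Proposition \ref{Vec_G Frules} (the passage identifying $\varphi(\sigma)$ with the restricted cocycle), so I would reuse that argument. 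Once the cocycles are pinned down, functoriality on morphisms (module natural transformations correspond to intertwiners of projective representations) and the verification that the construction is quasi-inverse to $\Rep_\mu(H)\to Fun_{Vec_G}(Vec,\mc{M}(H,\mu))$ are routine, and I would leave them to the reader.
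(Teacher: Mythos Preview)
Your proposal is correct and follows the same route as the paper: the map $F\mapsto F(k)$, with the $\Rep(G)$-action on $\Rep_\mu(H)$ given by $\res^G_H\otimes id$. The paper's proof is a two-line sketch that states only this map and the module structure; you have simply unpacked what the paper leaves implicit, namely how the $Vec_G$-linearity isomorphisms $f_{g,k}$ encode a $\mu$-twisted $H$-action on the fiber and why the induced $\Rep(G)$-action matches under the $\odot$-formula.
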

\begin{proof}Send functor $F:Vec\rightarrow\mc{M}(H, \mu)$ to $F(k)$. $\Rep(G)$-module structure on $\Rep_{\mu}(H)$ is given by $\res\otimes id$: for $V\in\Rep(G)$ and $W\in\Rep_{\mu}(H)$ the action is defined by $V\otimes W :=\res^G_H(V)\otimes W$ where $\otimes$ on the right is tensor product in $\Rep_\mu(G)$.
\end{proof}
One of the main results of this section is the following theorem.
\begin{thm}\label{prime theorem} The 2-equivalence $\mc{M}\mapsto\overline{\mc{M}}$ between $(Vec_G$-Mod$, \odot)$ and $(\Rep(G)$-Mod$, \boxtimes_{\Rep(G)})$ is monoidal in the sense that 
\begin{equation*}
\overline{\mc{M}\odot\mc{N}} \simeq \overline{\mc{M}}\boxtimes_{\Rep{G}}\overline{\mc{N}}
\end{equation*}
as $\Rep(G)$-module categories.
\end{thm}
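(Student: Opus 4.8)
The plan is to recognize the $2$-functor $\overline{(-)}=Fun_{Vec_G}(Vec,-)$ as $G$-equivariantization and to compute \emph{both} sides of the asserted equivalence as one and the same category of $Vec_G$-module functors. Since $Vec$ is the $Vec_G$-module category realizing the Morita equivalence $(Vec_G)^{*}_{Vec}\simeq\Rep(G)$ (its unique simple object is fixed by every $k_g$), a $Vec_G$-module functor $Vec\to\mc{L}$ is exactly an object of $\mc{L}$ equipped with a $G$-equivariant structure; hence $\overline{\mc{L}}\simeq\mc{L}^{G}$ for every $Vec_G$-module category $\mc{L}$, naturally as a $\Rep(G)$-module category. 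In particular $\overline{\mc{M}\odot\mc{N}}\simeq(\mc{M}\boxtimes\mc{N})^{G}$ with the diagonal $G$-action. To obtain a canonical comparison I first write down the functor $\Phi:\mc{M}^{G}\boxtimes\mc{N}^{G}\to(\mc{M}\boxtimes\mc{N})^{G}$ sending $(m,\{u_g\})\boxtimes(n,\{v_g\})$ to $m\boxtimes n$ with diagonal equivariant structure $\{u_g\boxtimes v_g\}$ (extended over $\mc{M}^{G}\boxtimes\mc{N}^{G}$ by universality of the Deligne product); a direct verification shows $\Phi$ is right exact and $\Rep(G)$-balanced — here the symmetric braiding of $\Rep(G)$ enters, since one factor must be regarded as a right $\Rep(G)$-module — so $\Phi$ descends to a right exact $\Rep(G)$-module functor $\overline{\Phi}:\overline{\mc{M}}\boxtimes_{\Rep(G)}\overline{\mc{N}}\to\overline{\mc{M}\odot\mc{N}}$, and it remains to prove $\overline{\Phi}$ is an equivalence.

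For the codomain of $\overline{\Phi}$: Theorem \ref{prop;exist} with $\mc{C}=Vec$ gives $\mc{M}\boxtimes\mc{N}\simeq\underline{Fun}(\mc{M}^{op},\mc{N})$, and one checks that this equivalence carries the diagonal $Vec_G$-module structure on $\mc{M}\boxtimes\mc{N}$ to the conjugation action $F\mapsto L_g\circ F\circ L_g^{-1}$ on the functor category (with $L_g$ denoting the $g$-action on $\mc{N}$, resp.\ on $\mc{M}^{op}$). A $G$-equivariant object for the conjugation action is precisely a $Vec_G$-module functor, so passing to $G$-fixed points yields $(\mc{M}\boxtimes\mc{N})^{G}\simeq Fun_{Vec_G}(\mc{M}^{op},\mc{N})$ as $\Rep(G)$-module categories.

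For the domain of $\overline{\Phi}$: again by Theorem \ref{prop;exist} (in its bimodule form), $\overline{\mc{M}}\boxtimes_{\Rep(G)}\overline{\mc{N}}=\mc{M}^{G}\boxtimes_{\Rep(G)}\mc{N}^{G}\simeq Fun_{\Rep(G)}\big((\mc{M}^{G})^{op},\mc{N}^{G}\big)$. Equivariantization commutes with passage to opposite categories — the argument of Lemma \ref{centoplem} applies verbatim, giving $(\mc{M}^{G})^{op}\simeq(\mc{M}^{op})^{G}=\overline{\mc{M}^{op}}$ as $\Rep(G)$-module categories — so this is $Fun_{\Rep(G)}(\overline{\mc{M}^{op}},\overline{\mc{N}})$. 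Finally, since $\overline{(-)}$ is a $2$-equivalence between $Vec_G$-Mod and $\Rep(G)$-Mod it induces equivalences of $1$-morphism categories $Fun_{Vec_G}(\mc{M}^{op},\mc{N})\simeq Fun_{\Rep(G)}(\overline{\mc{M}^{op}},\overline{\mc{N}})$, compatibly with the $\Rep(G)$-actions. Composing the three displayed equivalences identifies $\overline{\mc{M}}\boxtimes_{\Rep(G)}\overline{\mc{N}}$ with $Fun_{Vec_G}(\mc{M}^{op},\mc{N})$, the very category obtained for $\overline{\mc{M}\odot\mc{N}}$ in the previous paragraph; a diagram chase then shows $\overline{\Phi}$ corresponds to the identity under these identifications, so $\overline{\Phi}$ is an equivalence and the theorem follows.

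The main obstacle is the compatibility claim in the second paragraph: one must genuinely verify that the Deligne-product-as-functor-category equivalence of Theorem \ref{prop;exist} intertwines the diagonal $Vec_G$-action with the conjugation action, while keeping careful track of the left/right module conventions for $\mc{M}^{op}$ and of the $\Rep(G)$-module structures (as opposed to mere abelian equivalences) at each step — the symmetry of $\Rep(G)$ being used to pass freely between left and right $\Rep(G)$-modules wherever a relative tensor product demands it — so that the resulting equivalence is one of $\Rep(G)$-module categories and so that $\overline{\Phi}$ is indeed carried to the identity.
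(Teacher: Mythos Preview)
Your argument is correct and follows essentially the same route as the paper's proof: both sides are identified with $Fun_{Vec_G}(\mc{M}^{op},\mc{N})$ via the chain
\[
\overline{\mc{M}\odot\mc{N}}\;\simeq\;Fun_{Vec_G}(\mc{M}^{op},\mc{N})\;\simeq\;Fun_{\Rep(G)}(\overline{\mc{M}^{op}},\overline{\mc{N}})\;\simeq\;Fun_{\Rep(G)}(\overline{\mc{M}}^{op},\overline{\mc{N}})\;\simeq\;\overline{\mc{M}}\boxtimes_{\Rep(G)}\overline{\mc{N}},
\]
using Theorem~\ref{prop;exist} at the last step and the $2$-equivalence on $1$-cells for the middle step. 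The paper packages your ``diagonal action $\leftrightarrow$ conjugation action'' verification and your $(\mc{M}^{G})^{op}\simeq(\mc{M}^{op})^{G}$ step into separate lemmas (Lemmas~\ref{RepLem1/2}, \ref{RepLem 2}, \ref{RepLem 1}, \ref{RepG-modEq lem}) and does not bother to construct the explicit comparison $\overline{\Phi}$; your equivariantization language $\overline{\mc{L}}\simeq\mc{L}^{G}$ is a helpful reformulation but changes nothing substantive.
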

The action of $\Rep(G)\simeq Fun_{Vec_G}(Vec, Vec)$ is given by composition of functors. Since the correspondence is an equivalence of 2-categories we may identify abelian categories of 1-cells:
\begin{equation}\label{1-cell equ}
Fun_{Vec_G}(\mc{M}, \mc{N})\simeq Fun_{\Rep(G)}(\overline{\mc{M}}, \overline{\mc{N}}).
\end{equation}
In what follows we provide a few lemmas which show that useful formulas provided earlier for monoidal 2-categories hold also over the category of $Vec_G$-modules.
\begin{lem}\label{RepG-modEq lem} The 2-equivalence $\mc{M}\mapsto\overline{\mc{M}}$ from $Vec_G$-Mod to $\Rep(G)$-Mod when restricted to 1-cells is an equivalence of right $\Rep(G)$-module categories.
\end{lem}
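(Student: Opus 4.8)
The plan is to verify two things: that the equivalence $\mc{M}\mapsto\overline{\mc{M}}$ carries the right $\Rep(G)$-action on $Fun_{Vec_G}(\mc{M},\mc{N})$ to the one on $Fun_{\Rep(G)}(\overline{\mc{M}},\overline{\mc{N}})$, and that the resulting comparison is coherent. First I fix the two actions. Identify $\Rep(G)\simeq Fun_{Vec_G}(Vec,Vec)$, the endomorphism category of the $\odot$-unit $Vec$; under this identification $S$ corresponds to the representation $(S(k),\rho_S)$, with $\rho_S$ read off from the $Vec_G$-linearity of $S$. On the source, $S$ acts by $(F\otimes S)(M)=F(M)\odot S(k)=F(M)\otimes S(k)$ as recorded above. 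On the target, $\overline{\mc{N}}$ is a $\Rep(G)$-module category, hence (since $\Rep(G)$ is braided, Proposition \ref{leftbimodulelemma}) a $\Rep(G)$-bimodule category, and the right action on $Fun_{\Rep(G)}(\overline{\mc{M}},\overline{\mc{N}})$ is the one supplied by Proposition \ref{lem:FRlem4}, namely $(T\otimes V)(P)=T(P)\otimes V$; this is precisely the action of the $\boxtimes_{\Rep(G)}$-unit-endomorphisms $\End(\unit)$, because $\Rep(G)$ is the $\boxtimes_{\Rep(G)}$-unit.

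Next I record how the equivalence acts. On $1$-cells $\Phi_{\mc{M},\mc{N}}$ sends $F$ to post-composition $F_*\colon\Psi\mapsto F\circ\Psi$, which is the equivalence of abelian categories (\ref{1-cell equ}). Applying $\Phi$ to the $1$-cell $S\colon Vec\to Vec$ and transporting along $\overline{Vec}\simeq\Rep(G)$, $T\mapsto T(k)$, the functor $S_*\colon T\mapsto S\circ T$ becomes $V\mapsto S(k)\otimes V$ via $S(T(k))\simeq S(k)\otimes T(k)$; as $\Rep(G)$ is symmetric this is tensoring by the object $S(k)$. Thus the object of $\Rep(G)$ acting through $\Phi_{Vec,Vec}(S)$ is exactly the $S(k)$ that appears in the source action, so $\Phi_{Vec,Vec}$ is (up to the canonical identifications) the identity monoidal equivalence of $\Rep(G)$.

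With the actions matched I construct the module-functor structure on $\Phi_{\mc{M},\mc{N}}$. For $\Psi\in\overline{\mc{M}}$ both
\[
\Phi_{\mc{M},\mc{N}}(F\otimes S)(\Psi)=(F\otimes S)\circ\Psi
\qquad\text{and}\qquad
\bigl(\Phi_{\mc{M},\mc{N}}(F)\otimes S\bigr)(\Psi)=(F\circ\Psi)\otimes S(k)
\]
evaluate at $k$ to $F(\Psi(k))\otimes S(k)$. The comparison isomorphism $\Phi(F\otimes S)\simeq\Phi(F)\otimes S$ is then induced by the unit equivalence $\mc{N}\odot Vec\simeq\mc{N}$ of Proposition \ref{lem;unit} — concretely the reassociation of the object $F(\Psi(k))$ tensored by the vector space $S(k)$ — and I must check that it is a $\Rep(G)$-module natural transformation, that is, compatible with the $\Rep(G)$-linearity data of $F_*$ and with the right $\Rep(G)$-structure of $\overline{\mc{N}}$, and natural in $\Psi$, $F$, and $S$.

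The remaining point is coherence: the module-functor pentagon for $\Phi_{\mc{M},\mc{N}}$ must commute. This reduces to the associativity of the two right actions, i.e.\ to the associativity of $\odot$ on the source and of composition in $Fun_{Vec_G}(Vec,Vec)$ (equivalently $\otimes$ in $\Rep(G)$) on the target, together with the identity $S(k\otimes T(k))\simeq S(k)\otimes T(k)$; all of these hold by construction. Granting this, $\Phi_{\mc{M},\mc{N}}$ is a $\Rep(G)$-module functor that is an equivalence by (\ref{1-cell equ}), hence an equivalence of right $\Rep(G)$-module categories. I expect the genuine work — and the main obstacle — to lie not in the underlying-functor computation (the trivial reassociation above) but in tracking the $G$-equivariance/linearity isomorphisms through $\overline{Vec}\simeq\Rep(G)$ and confirming that the comparison is $\Rep(G)$-linear and pentagon-coherent.
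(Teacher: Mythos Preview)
Your argument is correct and follows essentially the same approach as the paper: the equivalence on $1$-cells is post-composition $F\mapsto (Q\mapsto F\circ Q)$, and one checks directly that this intertwines the two right $\Rep(G)$-actions, both sides evaluating (up to canonical $Vec$-linearity isomorphisms) to $F(\Psi(k))\otimes S(k)$. The paper's proof is terser---it defines the target action ad hoc as $(T\otimes V)(Q)=T(Q)\circ E^V$ rather than invoking Proposition~\ref{lem:FRlem4}, and omits the coherence discussion you supply---but the underlying computation is identical.
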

\begin{proof} The equivalence of 1-cells $\zeta:Fun_{Vec_G}(\mc{M}, \mc{N})\simeq Fun_{\Rep(G)}(\overline{\mc{M}}, \overline{\mc{N}})$ takes functor $F:\mc{M}\rightarrow\mc{N}$ over $Vec_G$ to the functor defined by $Q\mapsto FQ$ for $\Rep(G)$-module functor $Q:Vec\rightarrow \mc{M}$. We must check that this correspondence respects $\Rep(G)$ action. 

Any functor $E:Vec\rightarrow Vec$ over $Vec_G$ determines representation $E(k)$, and any representation $V$ determines functor $E^{V}(k) = V$. $V\in\Rep(G)\simeq\overline{Vec}$ right-acts on $F\in Fun_{\Rep(G)}(\overline{\mc{M}}, \overline{\mc{N}})$ by $(F\otimes V)(Q) = F(Q)\circ E^V$. %We may exchange $V, E^V$ and write $F\otimes E^V$. 
Writing $<\zeta(F), Q>$ for the functor in $\overline{\mc{N}}$ determined by $F , Q$ we have, for $W\in Vec$,
\begin{eqnarray*}
<\zeta(F\otimes E^V), Q>(W) &=& (F\otimes E^V)(Q)(W)\\
%&=& FQ(W)\otimes V\\
%&\simeq& F(Q(W)\otimes V)\\
%&\simeq& F(Q(W\otimes V))\\
&=& FQE^V(W) \\
&=& <\zeta(F)\otimes E^V, Q>(W).
\end{eqnarray*}
\end{proof}
\begin{lem}\label{RepLem1/2} Let $\mc{M}$, $\mc{N}$ be left $Vec_G$-module categories. Then 
$\mc{M}\odot\mc{N}\simeq Fun(\mc{M}^{op}, \mc{N})$ as left $Vec_G$-module categories. 
\end{lem}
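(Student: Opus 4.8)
The plan is to obtain this from Theorem \ref{prop;exist} applied with $\mc{C} = Vec$, carefully propagating the two $Vec_G$-actions along the resulting equivalence. First I would note that $\boxtimes_{Vec} = \boxtimes$ and that, when $\mc{C} = Vec$, the internal hom $\inthom{M'}{\mc{M}}{M}$ is just the finite-dimensional vector space $\Hom_{\mc{M}}(M', M)$; so Theorem \ref{prop;exist} gives a canonical equivalence of abelian categories $I\colon\mc{M}\boxtimes\mc{N}\xrightarrow{\ \sim\ }Fun(\mc{M}^{op}, \mc{N})$ with $I(M\boxtimes N) = \Hom_{\mc{M}}(-, M)\otimes N$ (in the semisimple situation at hand right exact, exact and additive functors coincide, so the notation $Fun$ is unambiguous). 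This settles the underlying abelian equivalence; what remains is to upgrade it to a $Vec_G$-module equivalence, where the $Vec_G$-action on $Fun(\mc{M}^{op}, \mc{N})$ is the conjugation action $(g\otimes F)(M') := g\otimes F(g^{-1}\otimes M')$ for $g\in Vec_G$ simple --- using the module action of $\mc{M}$ in the inner slot $g^{-1}\otimes M'$ and that of $\mc{N}$ for the outer $g\otimes-$ --- extended $k$-linearly, together with module-associativity built from those of $\mc{M}$ and $\mc{N}$.

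The heart of the argument will be a natural isomorphism $I\circ(g\otimes-)\cong(g\otimes-)\circ I$ for each simple $g$. Here I would use that simple objects of $Vec_G$ are invertible, so $g\otimes-\colon\mc{M}\to\mc{M}$ is an equivalence with quasi-inverse $g^{-1}\otimes-$; then for $M,M'\in\mc{M}$ and $N\in\mc{N}$,
\begin{align*}
I\big(g\otimes(M\boxtimes N)\big)(M') &= \Hom_{\mc{M}}(M', g\otimes M)\otimes(g\otimes N)\\
 &\cong \Hom_{\mc{M}}(g^{-1}\otimes M', M)\otimes(g\otimes N)\\
 &\cong g\otimes\big(\Hom_{\mc{M}}(g^{-1}\otimes M', M)\otimes N\big)\\
 &= \big(g\otimes I(M\boxtimes N)\big)(M'),
\end{align*}
the first isomorphism being the adjunction $(g\otimes-,\,g^{-1}\otimes-)$ on $\mc{M}$ and the second the fact that tensoring by a vector space commutes with the additive endofunctor $g\otimes-$ of $\mc{N}$ (both sides are $(g\otimes N)^{\oplus\dim\Hom_{\mc{M}}(g^{-1}\otimes M', M)}$, naturally). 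These isomorphisms are natural in $M,N,M'$, hence assemble on simple tensors $M\boxtimes N$, and they extend uniquely to all of $\mc{M}\boxtimes\mc{N}$ since the simple tensors generate and both composites are right exact (Lemma \ref{universelemma}).

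Finally I would verify that $I$ intertwines the module-associativity and unit constraints: by construction the constraints of the diagonal action on $\mc{M}\odot\mc{N}$ are the ``diagonals'' of those of $\mc{M}$ and $\mc{N}$, and the displayed isomorphisms carry these precisely to the constraint of $\mc{M}$ acting in the source slot together with that of $\mc{N}$ in the target slot, i.e.\ to the module-associativity of the conjugation action on $Fun(\mc{M}^{op}, \mc{N})$. Each such identity is an equality of natural transformations between right exact functors out of $\mc{M}\boxtimes\mc{N}$, so it suffices to check it on simple tensors and then invoke Lemma \ref{universelemma}. I expect this last coherence bookkeeping to be the only laborious step; the rest is forced by the universal property of the Deligne product.
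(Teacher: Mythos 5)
Your proposal is correct and follows essentially the same route as the paper: the paper defines the same functor $\Phi(M\odot N)(M'):=\Hom(M',M)\otimes N$, cites the internal-hom equivalence (Lemma \ref{IJQInverseLemma}, i.e.\ the $\mc{C}=Vec$ case of Theorem \ref{prop;exist}) for the underlying abelian equivalence, and verifies compatibility with the conjugation action $(g\otimes F)(M)=g\otimes F(g^{-1}\otimes M)$ via the same canonical isomorphism $\Hom(g^{-1}\otimes M',M)\otimes(g\otimes N)\simeq\Hom(M',g\otimes M)\otimes(g\otimes N)$ that you derive from invertibility of the simple objects of $Vec_G$. The only cosmetic difference is that the paper also records the resulting associativity cocycle ($\mu\sigma$ up to coboundary) explicitly, since it is needed for the fusion-rule computation that follows.
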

\begin{proof} Let $\mc{M}:=\mc{M}(H, \mu)$ and $\mc{N}:=\mc{M}(K, \sigma)$ as above. Define 
\begin{equation}
\Phi:\mc{M}\odot\mc{N}\rightarrow Fun(\mc{M}^{op}, \mc{N}),\quad\Phi(M\odot N)(M'):=\Hom(M', M)\otimes N.
\end{equation}
Clearly $\Phi$ is an equivalence of abelian categories (see Lemma \ref{IJQInverseLemma} for example) and it remains to show that it respects $Vec_G$-module structure. The category $Fun(\mc{M}^{op}, \mc{N})$ carries $Vec_G$-module structure $(g\otimes F)(M):= g\otimes F(g^{-1}\otimes M)$ for simple objects $g$ in $Vec_G$. Left action on $\mc{M}^{op}$ is given by $X\otimes^{op}M = X\otimes M$ with inverse module associativity. We have
\begin{eqnarray*}
(gh\otimes F)(M) &=& gh\otimes F(h^{-1}g^{-1}\otimes M)\\
&\simeq& g\otimes(h\otimes F(h^{-1}\otimes(g^{-1}\otimes M)))\\
&=& g\otimes(h\otimes F)(g^{-1}\otimes M) = (g\otimes(h\otimes F))(M)
\end{eqnarray*}
where $\simeq$ is $\sigma(g, h)\mu^{-1}(h^{-1}, g^{-1})$ which is cohomologous to $\sigma(g, h)\mu(g, h)$, i.e. module associativity on functors is given by $\mu\sigma$. For simple objects $M, M'$ in $\mc{M}$, $N\in\mc{N}$
\begin{eqnarray*} (g\otimes\Phi(M\odot N))(M') &=& g\otimes(\Hom(g^{-1}\otimes M', M)\otimes N)\\
&\simeq&\Hom(M', g\otimes M)\otimes (g\otimes N)\\
&=&\Phi(g\otimes(M\odot N))(M') 
\end{eqnarray*}
where $\simeq$ is canonical. $\Phi$ respects $Vec_G$-module structure.
\end{proof}
\begin{lem}\label{RepLem 2} $Fun_{Vec_G}(\mc{M}, \mc{N})\simeq \overline{\mc{M}^{op}\odot\mc{N}}$ as right $\Rep(G)$-module categories.
\end{lem}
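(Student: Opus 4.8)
The plan is to obtain the equivalence directly from Lemma~\ref{RepLem1/2} by pushing it through the $2$-equivalence $\overline{(-)}=Fun_{Vec_G}(Vec,-)$ and then ``unfolding'' a module functor out of $Vec$.

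\textbf{Step 1.} In the convention of Lemma~\ref{RepLem1/2} the opposite module category $\mc{M}^{op}$ carries the left $Vec_G$-action $X\otimes^{op}M=X\otimes M$ with the module associativity inverted, and inverting twice recovers the original constraint, so $(\mc{M}^{op})^{op}\simeq\mc{M}$ as left $Vec_G$-module categories. Applying Lemma~\ref{RepLem1/2} with $\mc{M}^{op}$ in place of $\mc{M}$ then yields an equivalence $\mc{M}^{op}\odot\mc{N}\simeq Fun(\mc{M},\mc{N})$ of left $Vec_G$-module categories, where $Fun(\mc{M},\mc{N})$ carries the action $(g\otimes F)(M)=g\otimes F(g^{-1}\otimes M)$ on simple objects $g=k_g$, exactly as in the proof of that lemma.

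\textbf{Step 2.} Next I would apply $\overline{(-)}=Fun_{Vec_G}(Vec,-)$. Post-composition with a $Vec_G$-module equivalence commutes with post-composition by the functors $E^V\colon Vec\to Vec$, which is how $\Rep(G)\simeq\overline{Vec}$ acts on every $\overline{\mc{K}}$; hence Step~1 gives an equivalence $\overline{\mc{M}^{op}\odot\mc{N}}\simeq Fun_{Vec_G}(Vec,Fun(\mc{M},\mc{N}))$ of right $\Rep(G)$-module categories. So it remains to identify the right-hand side with $Fun_{Vec_G}(\mc{M},\mc{N})$, $\Rep(G)$-linearly.

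\textbf{Step 3.} As a $Vec_G$-module category $Vec$ is $\mc{M}(G,1)$: it has a single simple object $k$ with $k_g\otimes k\simeq k$. Consequently an additive functor $\Phi\colon Vec\to Fun(\mc{M},\mc{N})$ is determined by $F:=\Phi(k)$, with $\Phi(W)\simeq F\odot W$ canonically, and a $Vec_G$-module structure on $\Phi$ amounts to isomorphisms $\psi_g\colon F\stackrel{\sim}{\rightarrow}k_g\otimes F$ obtained from $\Phi(k_g\otimes k)\simeq k_g\otimes\Phi(k)$, coherent with the associativity constraints of $Vec$ and of $Fun(\mc{M},\mc{N})$. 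Unwinding the latter action, $\psi_g$ is a natural isomorphism $F(k_g\otimes M)\stackrel{\sim}{\rightarrow}k_g\otimes F(M)$ and the coherence condition on the family $\{\psi_g\}$ is precisely the one making $(F,\psi)$ a $Vec_G$-module functor $\mc{M}\to\mc{N}$. Thus $\Phi\mapsto(F,\psi)$, together with the induced bijection on morphisms, is an equivalence $Fun_{Vec_G}(Vec,Fun(\mc{M},\mc{N}))\simeq Fun_{Vec_G}(\mc{M},\mc{N})$. It is $\Rep(G)$-linear: the action of $V\in\Rep(G)\simeq\overline{Vec}$ on the left is $\Phi\mapsto\Phi\circ E^V$, and $(\Phi\circ E^V)(k)=\Phi(V)\simeq F\odot V$ canonically, which corresponds on the right to the functor $M\mapsto F(M)\odot V$, i.e. to the action $(F\otimes V)(M)=F(M)\odot V$ defined before the lemma. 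Composing the equivalences of Steps~2 and~3 completes the proof.

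The only delicate points are the bookkeeping in Step~3 --- matching the module-functor coherence of $\Phi$ with that of $(F,\psi)$, and matching the module-constraint isomorphisms on the nose --- together with the routine verification that $(\mc{M}^{op})^{op}\simeq\mc{M}$ in the opposite-module convention being used. Everything else is formal, and I expect Step~3 to be the main obstacle.
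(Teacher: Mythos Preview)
Your proposal is correct and is essentially the same argument as the paper's, just unpacked more carefully. The paper uses Lemma~\ref{RepLem1/2} to identify $\mc{M}^{op}\odot\mc{N}$ with $Fun(\mc{M},\mc{N})$ and then writes down the equivalence $\psi:Fun_{Vec_G}(\mc{M},\mc{N})\to\overline{\mc{M}^{op}\odot\mc{N}}$ directly as $(\psi F)(V)(M)=F(M)\odot V$ with quasi-inverse $\Phi\mapsto\Phi(k)$; your Step~3 is exactly this ``evaluate at $k$'' correspondence, and your $\Rep(G)$-linearity check in Step~3 matches the paper's displayed computation verbatim.
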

\begin{proof} We have an equivalence $\psi : Fun_{Vec_G}(\mc{M}, \mc{N})\rightarrow \overline{\mc{M}^{op}\odot\mc{N}}$, $F\mapsto \psi F$ where $\psi F(V)(M)  := F(M)\odot V$ whenever $V\in Vec, M\in \mc{M}$ and where we have used Lemma \ref{RepLem1/2} to express $\mc{M}^{op}\odot\mc{N}$ as category of functors is an equivalence. $\psi$ has quasi-inverse %$\Theta: Fun_{Vec_G}(Vec, \mc{M}^{op}\odot\mc{N}) \rightarrow Fun_{Vec_G}(\mc{M}, \mc{N})$ defined by%
$F\mapsto F(k)$:
%With notation analogous to that in Lemma \ref{RepG-modEq lem} 
\begin{eqnarray*}
<\psi(F\otimes V), W>(M) &=& (F(M)\odot V)\odot W\\
&\simeq& F(M)\odot(V\otimes W)\\
&=& \psi F(V\otimes W)(M)\\
&=& \psi F(E^V(W))(M) = <\psi F\circ E^V,W>(M).
\end{eqnarray*}
\end{proof}
\begin{lem}\label{RepLem 1} $\overline{\mc{M}^{op}}\simeq\overline{\mc{M}}^{op}$ as $\Rep(G)$-module categories.
\end{lem}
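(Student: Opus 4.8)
The plan is to exhibit an explicit $\Rep(G)$-module equivalence between $\overline{\mc{M}^{op}}$ and $\overline{\mc{M}}^{op}$ and check it respects all the relevant structure. Recall that $\overline{\mc{M}} = Fun_{Vec_G}(Vec, \mc{M})$, so $\overline{\mc{M}^{op}} = Fun_{Vec_G}(Vec, \mc{M}^{op})$, where $\mc{M}^{op}$ carries the $Vec_G$-module structure of Proposition \ref{bimodprop} (using that $Vec_G$ is its own ``op'' up to the duality twist, and that the left action on $\mc{M}^{op}$ is $X \otimes^{op} M := X^* \otimes M = {}^*X \otimes M$ since simple objects of $Vec_G$ are invertible). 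First I would invoke Lemma \ref{Natadjoint}: since $Vec$ and $\mc{M}$ are exact $Vec_G$-module categories, the functor category $Fun_{Vec_G}(Vec, \mc{M})$ is equivalent, via passing to adjoints, to $Fun_{Vec_G}(\mc{M}, Vec)^{op}$.

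The key steps, in order: (1) Observe $Fun_{Vec_G}(Vec, \mc{M}^{op}) \simeq Fun_{Vec_G}(\mc{M}^{op}, Vec)^{op}$ by Lemma \ref{Natadjoint}, sending a module functor to its left adjoint. (2) Identify $Fun_{Vec_G}(\mc{M}^{op}, Vec)$ with $Fun_{Vec_G}(Vec, \mc{M})$: a module functor $\mc{M}^{op} \to Vec$ corresponds under the opposite-category formalism (again Lemma \ref{Natadjoint}, or directly via the internal-hom pairing) to a module functor $Vec \to \mc{M}$; concretely, a functor $Vec\to\mc{M}$ is determined by its value $M = F(k)\in\mc{M}$ together with module linearity, which is exactly the datum of an object of $\mc{M}$, and likewise $Fun_{Vec_G}(\mc{M}^{op},Vec)$ is determined by an object of $\mc{M}$ via $G\mapsto \Hom_{\mc{M}}(G(k_1),-)$-type representability. (3) Compose: $\overline{\mc{M}^{op}} = Fun_{Vec_G}(Vec,\mc{M}^{op}) \simeq Fun_{Vec_G}(\mc{M}^{op},Vec)^{op} \simeq Fun_{Vec_G}(Vec,\mc{M})^{op} = \overline{\mc{M}}^{op}$. (4) Track the right $\Rep(G) \simeq Fun_{Vec_G}(Vec,Vec)$-action through each equivalence: by Lemma \ref{Natadjoint} the adjunction equivalence is a bimodule equivalence, so step (1) is $\Rep(G)$-linear; step (2) is $\Rep(G)$-linear because it is induced by the same $Vec$-duality applied throughout; hence the composite respects the right $\Rep(G)$-module structures (noting that passing to the opposite category swaps left and right actions in a way consistent with how $\Rep(G)$ acts on both sides here — since $\Rep(G)$ is braided, left and right actions are interchangeable).

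Alternatively, and perhaps more cleanly, I would give the equivalence directly on objects: both $\overline{\mc{M}^{op}}$ and $\overline{\mc{M}}^{op}$ have, as underlying abelian category, something equivalent to $\mc{M}^{op}$ itself (since $\overline{\mc{M}} = Fun_{Vec_G}(Vec,\mc{M})\simeq\mc{M}$ as an abelian category, evaluation at $k$ being an equivalence — this is implicit in Proposition \ref{rep vec duality} and the surrounding discussion). So I would define $\Theta: \overline{\mc{M}^{op}} \to \overline{\mc{M}}^{op}$ on a module functor $F:Vec\to\mc{M}^{op}$ by sending it to the module functor $Vec\to\mc{M}$ with value $F(k)$ regarded now in $\mc{M}$, equipped with the module structure obtained by dualizing $F$'s module structure (using invertibility of simples in $Vec_G$ so no higher dualities intervene), and then check: $\Theta$ is an equivalence of abelian categories (clear), and $\Theta$ intertwines the right $\Rep(G)$-actions, which amounts to the computation $\Theta(F \otimes E^V)(k) = \Theta(F)(k) \otimes V$ up to canonical isomorphism — this uses that $V \in Vec_G$ acts on $Vec$ trivially and that $\odot$ with $V$ commutes with the op-twist, exactly as in the verifications in Lemma \ref{RepLem 2} and Lemma \ref{RepLem1/2}.

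The main obstacle I anticipate is bookkeeping rather than conceptual: making sure that the two ``op'' operations — the one defining $\mc{M}^{op}$ as a $Vec_G$-module category (Proposition \ref{bimodprop}, which involves duals) and the one on the functor category arising from Lemma \ref{Natadjoint} (which reverses 2-cells) — are compatibly matched, so that the module-associativity cocycles (which in the concrete model $\mc{M}(H,\mu)$ are given by explicit scalars $\mu$) transform correctly on both sides. Concretely, one should verify that the inverse-and-dual twist on $\mc{M}^{op}$'s associativity constraint matches the cocycle one reads off from $\overline{\mc{M}}^{op}$; since $Vec_G$'s simple objects are invertible and $k^\times$ is abelian, $\mu^{-1}$ computed via duals is cohomologous to the naive $\mu^{-1}$, and this is the only place a genuine check is needed. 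I expect this to go through by the same cohomology-class manipulation already used (e.g. ``$\sigma(g,h)\mu^{-1}(h^{-1},g^{-1})$ is cohomologous to $\sigma(g,h)\mu(g,h)$'') in the proof of Lemma \ref{RepLem1/2}.
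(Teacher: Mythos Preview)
Your outline is broadly sensible, but step (2) hides a circularity. If you justify $Fun_{Vec_G}(\mc{M}^{op}, Vec) \simeq Fun_{Vec_G}(Vec, \mc{M})$ by ``again Lemma~\ref{Natadjoint}'', you only get $Fun_{Vec_G}(\mc{M}^{op}, Vec) \simeq Fun_{Vec_G}(Vec, \mc{M}^{op})^{op}$; feeding this back into step (1) yields the tautology $\overline{\mc{M}^{op}} \simeq (\overline{\mc{M}^{op}}{}^{op})^{op} = \overline{\mc{M}^{op}}$. So Lemma~\ref{Natadjoint} alone cannot close the loop --- some input that actually relates $\mc{M}^{op}$ to $\mc{M}$ (not just swaps source and target) is required. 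Your fallback ``determined by an object of $\mc{M}$'' is on the right track but, as you note, needs the cocycle bookkeeping to be made precise before it counts as a proof.

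The paper's argument supplies exactly this missing input via a different route. It first invokes Lemma~\ref{RepLem1/2} (with $\mc{N}=Vec$ and $\mc{M}$ replaced by $\mc{M}^{op}$) to obtain a $Vec_G$-module equivalence $\mc{M}^{op}\simeq Fun(\mc{M},Vec)$, whence $\overline{\mc{M}^{op}}=Fun_{Vec_G}(Vec,\mc{M}^{op})\simeq Fun_{Vec_G}(\mc{M},Vec)$ directly, with no op to cancel. It then transports to the $\Rep(G)$ side using the 2-equivalence on 1-cells (equation~(\ref{1-cell equ})): $Fun_{Vec_G}(\mc{M},Vec)\simeq Fun_{\Rep(G)}(\overline{\mc{M}},\Rep(G))$, and identifies the latter with $\overline{\mc{M}}^{op}$. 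The key difference is that the paper never tries to juggle two applications of the adjoint lemma; it uses the concrete $Vec_G$-specific description of $\mc{M}^{op}$ furnished by Lemma~\ref{RepLem1/2}, which already encodes the relationship between $\mc{M}$ and $\mc{M}^{op}$. Your direct ``alternative'' approach (matching objects and checking cocycles) would also work and is closer in spirit to the paper's use of Lemma~\ref{RepLem1/2}, but the chain-of-adjoints version as written does not.
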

\begin{proof} $Fun_{Vec_G}(Vec, \mc{M}^{op})\simeq Fun_{Vec_G}(\mc{M}, Vec)\simeq Fun_{\Rep(G)}(\overline{\mc{M}}, \Rep(G))$
where first $\simeq$ is Lemma \ref{RepLem1/2} and the second comes from the 2-equivalence. The first term is $\overline{\mc{M}^{op}}$ and the last is $\overline{\mc{M}}^{op}$.
\end{proof}
\begin{proof}[Proof of Theorem \ref{prime theorem}] With notation as above,
\begin{eqnarray*}
\overline{\mc{M}\odot\mc{N}}&\simeq& Fun_{Vec_G}(\mc{M}^{op}, \mc{N})\\
&\simeq& Fun_{\Rep(G)}(\overline{\mc{M}^{op}}, \overline{\mc{N}})\\
&\simeq& Fun_{\Rep(G)}(\overline{\mc{M}}^{op}, \overline{\mc{N}})\simeq \overline{\mc{M}}\boxtimes_{\Rep(G)}\overline{\mc{N}}.
%&\simeq& \overline{\mc{M}}\boxtimes_{Rep{G}}\overline{\mc{N}}.
\end{eqnarray*}
First line is Lemma \ref{RepLem 2}, second is Lemma \ref{RepG-modEq lem} and third is Lemma \ref{RepLem 1}.
\end{proof}
Theorem \ref{prime theorem}, together with the observation in Remark \ref{rep vec duality}, immediately gives a formula for $\Rep(G)$-module fusion rules.
\begin{cor}[$\Rep(G)$-Mod fusion rules] The twisted Burnside ring $\Omega(G)$ is isomorphic to the ring $K_0(\Rep(G)\textrm{-Mod})$ of equivalence classes of $\Rep(G)$-module categories with multiplication induced by $\boxtimes_{\Rep(G)}$. That is, for irreducible $\Rep(G)$-module categories $Rep_{\mu}(H), Rep_{\sigma}(K)$ we have, as $\Rep(G)$-module categories 
\begin{equation}\label{Rep(G)FusionFormula}
\Rep_{\mu}(H)\boxtimes_{\Rep(G)}Rep_{\sigma}(K)\simeq\bigoplus_{HaK\in H\setminus G/K} \Rep_{\mu\sigma^a}(H\cap{}^aK).
\end{equation}
%For $\mc{M}(H, \mu), \mc{M}(K, \sigma)$ irreducible $Vec_G$-module categories\begin{equation*}
%Fun_{Vec_G}(\mc{M}(H, \mu), \mc{M}(K, \sigma))\simeq\bigoplus_{HaK\in H\setminus G/K}Rep_{\mu^{-1}\sigma^a}(H\cap{}^aK).Fun_{Vec_G}(\mc{M}(H, \mu), \mc{M}(K, \sigma))\simeq\bigoplus_{HaK\in H\setminus G/K}\mc{M}(H\cap{}^aK, \mu^{-1}\sigma^a).\end{equation*}
\end{cor}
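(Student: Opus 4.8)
The plan is to derive the fusion formula \eqref{Rep(G)FusionFormula} by combining the monoidal $2$-equivalence of Theorem \ref{prime theorem} with the explicit $Vec_G$-module fusion rules of Proposition \ref{Vec_G Frules} and the identification of duals in Proposition \ref{rep vec duality}. First I would recall that, by Proposition \ref{rep vec duality}, the indecomposable $\Rep(G)$-module category $\Rep_{\mu}(H)$ is canonically equivalent to $\overline{\mc{M}(H,\mu)}$, where $\mc{M}(H,\mu)$ is the indecomposable $Vec_G$-module category attached to the pair $(H,[\mu])$. This reduces the claim to a computation on the $Vec_G$ side, transported across the bar-functor $\mc{M}\mapsto\overline{\mc{M}}$.

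The key steps, in order, are as follows. Step one: apply Theorem \ref{prime theorem} to write
\[
\Rep_{\mu}(H)\boxtimes_{\Rep(G)}\Rep_{\sigma}(K)\simeq\overline{\mc{M}(H,\mu)}\boxtimes_{\Rep(G)}\overline{\mc{M}(K,\sigma)}\simeq\overline{\mc{M}(H,\mu)\odot\mc{M}(K,\sigma)}.
\]
Step two: apply the $Vec_G$-Mod fusion rules of Proposition \ref{Vec_G Frules} to expand the argument of the bar-functor as $\bigoplus_{HaK\in H\setminus G/K}\mc{M}(H\cap{}^aK,\mu\sigma^a)$. Step three: observe that $\mc{M}\mapsto\overline{\mc{M}}=Fun_{Vec_G}(Vec,\mc{M})$ is additive (it is a $2$-equivalence of additive $2$-categories and in particular preserves finite direct sums of module categories, since $Fun_{Vec_G}(Vec,-)$ commutes with $\boxtimes$-coproducts), so that $\overline{\bigoplus_a\mc{M}(H\cap{}^aK,\mu\sigma^a)}\simeq\bigoplus_a\overline{\mc{M}(H\cap{}^aK,\mu\sigma^a)}$. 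Step four: apply Proposition \ref{rep vec duality} once more to each summand to identify $\overline{\mc{M}(H\cap{}^aK,\mu\sigma^a)}\simeq\Rep_{\mu\sigma^a}(H\cap{}^aK)$, which yields \eqref{Rep(G)FusionFormula}. The assertion that $\Omega(G)$ (the twisted Burnside ring as defined above) is isomorphic to $K_0(\Rep(G)\textrm{-Mod})$ then follows by comparing \eqref{Rep(G)FusionFormula} with the multiplication rule $\langle H,\mu\rangle\langle K,\sigma\rangle=\sum_{HaK}\langle H\cap{}^aK,\mu\sigma^a\rangle$, once one checks that the irreducible module categories $\Rep_\mu(H)$ form a basis of $K_0$ indexed exactly by the basis elements $\langle H,\mu\rangle$ of $\Omega(G)$ --- this is the classification of indecomposable $Vec_G$-module categories by pairs $(H,[\mu])$ recalled before Proposition \ref{Vec_G Frules}, transported through the $2$-equivalence.

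I expect the main obstacle to be bookkeeping with the cocycles: one must verify that the module-associativity constraint produced by the $\odot$-product on $\mc{M}(H,\mu)\odot\mc{M}(K,\sigma)$, restricted to the summand with object set $G/(H\cap{}^aK)$, is genuinely cohomologous to the twisted restriction $\mu\sigma^a\in H^2(H\cap{}^aK,k^\times)$ appearing in the twisted Burnside product, under the Shapiro isomorphism $H^2(G,Fun(G/K,k^\times))\simeq H^2({}^aK,k^\times)$ and the identification $G/K\simeq G/{}^aK$ of $G$-sets. Most of this is already carried out in the proof of Proposition \ref{Vec_G Frules}; here it suffices to note that the bar-functor $\mc{M}\mapsto\overline{\mc{M}}$ sends $\mc{M}(H,\nu)$ to $\Rep_\nu(H)$ with the same Schur multiplier, so no further cocycle manipulation is introduced by passing from $Vec_G$-Mod to $\Rep(G)$-Mod. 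Everything else is formal consequence of Theorems \ref{prime theorem} and Propositions \ref{Vec_G Frules}, \ref{rep vec duality}.
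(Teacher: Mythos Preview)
Your proposal is correct and follows exactly the approach the paper itself indicates: the corollary is stated as an immediate consequence of Theorem \ref{prime theorem} together with Proposition \ref{rep vec duality} (and implicitly Proposition \ref{Vec_G Frules}), which is precisely your Steps one through four. The extra care you take with additivity of the bar-functor and with the cocycle bookkeeping is reasonable but not strictly needed beyond what is already absorbed into the proofs of those cited results.
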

\begin{cor}\label{InvIrrRep} The group of invertible irreducible $\Rep(G)$-module categories is isomorphic to $H^2(G, k^{\times})$.
\end{cor}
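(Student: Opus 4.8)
The plan is to deduce Corollary \ref{InvIrrRep} directly from the fusion formula (\ref{Rep(G)FusionFormula}) established in the preceding corollary, mimicking the argument already given for $Vec_G$ in Corollary \ref{InvIrrObj}. First I would observe that an irreducible $\Rep(G)$-module category is invertible for $\boxtimes_{\Rep(G)}$ precisely when there exists another irreducible module category whose product with it is the unit object. By the classification used above, every irreducible $\Rep(G)$-module category is of the form $\Rep_{\mu}(H)$ for a subgroup $H<G$ and $\mu\in H^2(H,k^\times)$, and the unit object of $(\Rep(G)\textrm{-Mod},\boxtimes_{\Rep(G)})$ is $\Rep(G)=\Rep_1(G)$ itself (this is the image of $\langle G\rangle$ under the isomorphism with the twisted Burnside ring, equivalently it corresponds to the trivial double coset decomposition). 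So the task reduces to determining which basis elements $\langle H,\mu\rangle$ of $\Omega(G)$ are invertible.

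Next I would run the double-coset counting argument at the level of the underlying Burnside ring. If $\Rep_{\mu}(H)\boxtimes_{\Rep(G)}\Rep_{\sigma}(K)\simeq\Rep(G)$, then in particular the right-hand side of (\ref{Rep(G)FusionFormula}) must be a single indecomposable summand, forcing the double coset space $H\setminus G/K$ to be a singleton and $H\cap{}^aK=G$ for the unique representative $a$; this can only happen if $H=K=G$ and $a$ may be taken to be the identity. Hence the only candidates for invertible irreducible module categories are the $\Rep_{\mu}(G)$ with $\mu\in H^2(G,k^\times)$. Conversely, specializing (\ref{Rep(G)FusionFormula}) to $H=K=G$ gives
\begin{equation*}
\Rep_{\mu}(G)\boxtimes_{\Rep(G)}\Rep_{\nu}(G)\simeq\Rep_{\mu\nu}(G),
\end{equation*}
since there is a single (trivial) double coset and $\mu\sigma^a$ restricted to $G\cap{}^eG=G$ is just $\mu\nu$. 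In particular each $\Rep_{\mu}(G)$ is invertible with inverse $\Rep_{\mu^{-1}}(G)$, and the displayed formula shows the assignment $\Rep_{\mu}(G)\mapsto\mu$ is a group homomorphism from the group of invertible irreducible $\Rep(G)$-module categories to $H^2(G,k^\times)$.

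Finally I would check that this homomorphism is a bijection: it is surjective because every $\mu\in H^2(G,k^\times)$ arises as a module associativity constraint on the module category with a single simple object, and it is injective because $\Rep_{\mu}(G)\simeq\Rep_{\nu}(G)$ as $\Rep(G)$-module categories exactly when $\mu$ and $\nu$ are cohomologous, by the classification of module category structures by noncohomologous cocycles recalled above. Thus the group of invertible irreducible $\Rep(G)$-module categories is isomorphic to $H^2(G,k^\times)$, extending Corollary \ref{InvIrrObj} and the results of \cite{ENO:homotop} from the abelian case to arbitrary finite $G$. I do not anticipate a serious obstacle here; the only point requiring a little care is confirming that the unit object of $(\Rep(G)\textrm{-Mod},\boxtimes_{\Rep(G)})$ really is $\Rep_1(G)$ and that the isomorphism of $K_0(\Rep(G)\textrm{-Mod})$ with the twisted Burnside ring carries it to $\langle G,1\rangle$, but this follows from Proposition \ref{lem;unit} together with the identification in Remark \ref{rep vec duality} and Theorem \ref{prime theorem}.
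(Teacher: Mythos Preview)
Your proposal is correct and follows essentially the same route as the paper: the paper's proof simply reads ``The proof is equivalent to that of Corollary \ref{InvIrrObj},'' and what you have written is precisely the spelled-out version of that argument, transported from the $Vec_G$ side to the $\Rep(G)$ side via the fusion formula (\ref{Rep(G)FusionFormula}). Your additional remarks on identifying the unit object and on injectivity/surjectivity are accurate and fill in details the paper leaves implicit.
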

\begin{proof} The proof is equivalent to that of Corollary \ref{InvIrrObj}.
\end{proof}
\begin{note} Corollary \ref{InvIrrRep} generalizes Corollary 3.17(ii) in \cite{ENO:homotop} where it was given for finite abelian groups. Indeed when $A$ is abelian $Vec_A = \Rep(A^*)$ for $A^*$ group homomorphisms $\Hom(A, k^{\times})$.
\end{note}
%
%
%
%
%
%
%
%
%
%
%
%
%
%

%\bibliography{all}
%\bibliographystyle{amsalpha}

\end{document}